\renewcommand*{\arraystretch}{1.1}
\renewcommand\subsection{\@startsection{subsection}{2}{\z@}%
	{-3.5ex \@plus -1ex \@minus-.2ex}%
	{2.3ex \@plus.2ex}%
	{\normalfont\large\bfseries}}
\newcounter{todocounter}
\newcommand{\todonum}[1]{\stepcounter{todocounter}\todo{\thetodocounter: #1}}
\providecommand\@dotsep{5}
\renewcommand{\listoftodos}[1][\@todonotes@todolistname]{%
	\@starttoc{tdo}{#1}}
\theoremstyle{plain}
\newtheorem{Thm}{Theorem}[chapter]
\newtheorem{Lem}[Thm]{Lemma}
\newtheorem{Conj}[Thm]{Conjecture}
\newtheorem{Cor}[Thm]{Corollary}
\newtheorem{Prop}[Thm]{Proposition}
\theoremstyle{definition}
\newtheorem{Rem}[Thm]{Remark}
\theoremstyle{remark}
\numberwithin{section}{chapter}
\numberwithin{equation}{chapter}
\newcommand{\C}{\mathbb{C}}
\newcommand{\Z}{\mathbb{Z}}
\DeclareMathOperator{\Image}{Im}
\DeclareMathOperator{\Ind}{Ind}
\DeclareMathOperator{\St}{St}
\DeclareMathOperator{\Span}{Span}
\newcommand{\divides}{\Big \vert}
\newcommand{\Card}[1]{\left\vert #1\right\vert} 
\newcommand{\Places}{\mathcal{P}} 
\DeclareMathOperator{\zfun}{\zeta} 
\DeclareMathOperator{\Lfun}{\mathcal{L}} 
\newcommand{\coset}[1]{\left[ #1 \right]}  
\newcommand{\FNorm}[1]{\left\vert #1 \right\vert} 
\newcommand{\sgn}{\operatorname{sgn}}
\newcommand{\Q}{\mathbb{Q}}
\newcommand{\R}{\mathbb{R}}
\newcommand{\N}{\mathbb{N}}
\newcommand{\mO}{\mathcal{O}}
\newcommand{\bk}[1]{\left(#1\right)} 
\newcommand{\bm}{\begin{multline*}}
\newcommand{\tu}{\end{multline*}}
\DeclareMathOperator{\Id}{\mathbf{1}} 
\DeclareMathOperator{\unif}{\varpi} 
\renewcommand{\check}[1]{#1 ^{\vee}} 
\DeclareMathOperator{\Real}{Re} 
\newcommand{\fun}[1]{\bar{\omega}_{#1}}
\newcommand{\jac}[3]{r^{#1}_{#2}\bk{#3}}
\newcommand{\mult}[2]{mult\bk{#1,#2}}
\newcommand{\s}[1]{s_{#1}}
\newcommand{\para}[1]{#1}
\newcommand{\piece}[1]{\left\{\begin{matrix} #1 \end{matrix}\right.} 
\newcommand{\set}[1]{\left\{ #1 \right\}} 
\newcommand{\mvert}{\mathrel{}\middle\vert\mathrel{}} 
\newcommand{\res}[1]{\Bigg\vert_{#1}}
\newcommand{\ldual}[1]{{^L}#1}
\newcommand{\Eisen}{\mathcal{E}}
\newcommand{\lmod}{\backslash}
\newcommand{\rmod}{/}
\newcommand{\gen}[1]{\left< #1 \right>}
\newcommand{\inner}[1]{\langle #1 \rangle}
\newcommand{\Stab}{\operatorname{Stab}}
\newcommand{\intl}{\, \int\limits}
\newcommand{\suml}{\, \sum\limits}
\newcommand{\prodl}{\, \prod\limits}
\newcommand{\Res}[1]{\operatorname{Res}_{#1}}
\newcommand{\ord}[2]{\operatorname{ord}_{#1}\bk{#2}}
\newcommand{\DPS}[1]{\operatorname{I}_{#1}}
\newcommand{\ResRep}[4]{\operatorname{Res}_{#1}\bk{#2,#3,#4}}
\newcommand{\ResRepTr}[3]{\operatorname{Res}_{#1}\bk{#2,#3}}
\renewcommand\AA{\mathbb{A}} 
\newcommand\GG{\mathbb{G}}
\newcommand\TT{\mathbb{T}}
\newcommand\ZZ{\mathbb{Z}}
\newcommand\bB{\mathbf{B}}
\newcommand\bG{\mathbf{G}}
\newcommand\bM{\mathbf{M}}
\newcommand\bN{\mathbf{N}}
\newcommand\bP{\mathbf{P}}
\newcommand\bT{\mathbf{T}}
\newcommand\bU{\mathbf{U}}
\newcommand\bX{\mathbf{X}}
\newcommand\bZ{\mathbf{Z}}
\newcommand\bCT{\mathbf{CT}}
\DeclareMathAlphabet{\mathcal}{OMS}{cmsy}{m}{n}
\newcommand\cA{\mathcal{A}}
\newcommand\cZ{\mathcal{Z}}
\newcommand\fraka{\mathfrak{a}}
\newcommand\frake{\mathfrak{e}}
\newcommand\frakg{\mathfrak{g}}
\newcommand\frakh{\mathfrak{h}}
\newcommand\frakJ{\mathfrak{J}}
\newcommand\frakR{\mathfrak{R}}
\newcommand{\cmark}{\text{\ding{51}}}
\newcommand{\xmark}{\text{\ding{55}}}
\newcommand{\esixchar}[6]{\renewcommand*{\arraystretch}{1} \begin{pmatrix}&& #2 && \\ #1 & #3 & #4 & #5 & #6 \end{pmatrix} }
\newcommand{\esevenchar}[7]{\renewcommand*{\arraystretch}{1} \begin{pmatrix}&& #2 && \\ #1 & #3 & #4 & #5 & #6 &#7 \end{pmatrix} }
\newcommand{\eeightchar}[8]{\renewcommand*{\arraystretch}{1} \begin{pmatrix}&& #2 &&&& \\ #1 & #3 & #4 & #5 & #6 &#7 & #8 \end{pmatrix} }
\newcommand{\equivclassespole}[1]{\Sigma_{#1}}
\providecommand*{\cupdot}{%
	\mathbin{%
		\mathpalette\@cupdot{}%
	}%
}
\newcommand*{\@cupdot}[2]{%
	\ooalign{%
		$\m@th#1\cup$\cr
		\sbox0{$#1\cup$}%
		\dimen@=\ht0 %
		\sbox0{$\m@th#1\cdot$}%
		\advance\dimen@ by -\ht0 %
		\dimen@=.5\dimen@
		\hidewidth\raise\dimen@\box0\hidewidth
	}%
}
\providecommand*{\bigcupdot}{%
	\mathop{%
		\vphantom{\bigcup}%
		\mathpalette\@bigcupdot{}%
	}%
}
\newcommand*{\@bigcupdot}[2]{%
	\ooalign{%
		$\m@th#1\bigcup$\cr
		\sbox0{$#1\bigcup$}%
		\dimen@=\ht0 %
		\advance\dimen@ by -\dp0 %
		\sbox0{\scalebox{2}{$\m@th#1\cdot$}}%
		\advance\dimen@ by -\ht0 %
		\dimen@=.5\dimen@
		\hidewidth\raise\dimen@\box0\hidewidth
	}%
}
\begin{document}

\frontmatter

\title[Poles, Residues and Siegel-Weil Identities for $E_n$]{Poles, Residues and Siegel-Weil Identities of Degenerate Eisenstein Series on Split Exceptional Groups of Type $E_n$}


\author{Hezi Halawi}
\address{School of Mathematics, Ben Gurion University of the Negev, POB 653, Be’er Sheva 84105, Israel}
\email{halawi@post.bgu.ac.il}

\author{Avner Segal}
\address{Mathematics Department, Shamoon College of Engineering,
	56 Bialik St., Beer-Sheva 84100, Israel}
\email{avnerse@sce.ac.il}

\date{\today}

\subjclass[2010]{Primary }

\keywords{}


\begin{abstract}
	This manuscript has two goals:
	\begin{enumerate}
		\item To write an explicit description of the degenerate residual spectrum of the split, simple, simply-connected, exceptional groups of type $E_n$ (for $n=6,7,8$).
		\item To set a practical guide for similar calculations and, in particular, to describe various methods of ``computational representation theory'' relevant to the study of residues of automorphic Eisenstein series.
	\end{enumerate}
	
	In Part I we supply background information and notations from the theory of automorphic representations as well as concrete information on the exceptional groups of type $E_n$ and their representation theory over non-Archimedean local fields.
	
	In Part II we make a systematic study of the residual spectrum of these groups where each chapter is devoted to a certain aspect of theory, it begins with methodical section and continues with sections devoted to the results for each of these groups.
	
	We describe completely the residual (square-integrable and non-square integrable) spectrum of the groups of type $E_6$ and $E_7$ and an almost complete description in the case of the group of type $E_8$.
	We also study and list Siegel-Weil like identities between residual representations of these groups and list the Arthur parameters for their square-integrable residual representations.

\end{abstract}

\maketitle

\tableofcontents

\chapter*{Introduction}

This paper is a culmination of the authors joint project of studying the degenerate principal series of exceptional groups of type $E_n$, where  $n$ is $6$, $7$ or $8$.
In \cite{SDPS_E6}, \cite{SDPS_E7} and \cite{SDPS_E8} we studied the degenerate principal series representations of split, simple, simply-connected groups of type $E_n$ over non-Archimedean local fields;
in these papers, we listed the reducible ones and provide a rough description of their irreducible subrepresentations and quotients.
Here, we use the information collected in these works in order to give a description of the degenerate residual spectrum of the simple simply-connected groups of type $E_n$ over the ring of adeles.

More precisely, let $F$ be a number field and let $\AA$ be its ring of adeles.
For a reductive group $G$ over $F$, with center $\cZ_G$, we consider the space of automorphic functions $\cA\bk{G\bk{\AA}}$, as defined in \cite{MR1476501} for example.
A smooth representation $\pi$ of $G\bk{\AA}$ is said to be automorphic if it is a subquotient of $\cA\bk{G\bk{\AA}}$.
This notion is closely connected, but not identical, with the regular representation of $G\bk{\AA}$, namely the space $L^2\bk{\cZ_G\bk{\AA} G\bk{F}\lmod G\bk{\AA}}$ of square-integrable functions.

Indeed, any automorphic representation $\pi$ which consists of square-integrable functions, can be realized  (via the Casselman-Wallach Globalization Theorem) as a subquotient of $L^2\bk{\cZ_G\bk{\AA} G\bk{F}\lmod G\bk{\AA}}$.
Moreover, this space has a rough decomposition into three coarse blocks:
\begin{equation}
	\begin{split}
		L^2_G = L^2\bk{\cZ_G\bk{\AA} G\bk{F}\lmod G\bk{\AA}}
		& = L^2_{cusp.}\bk{\cZ_G\bk{\AA} G\bk{F}\lmod G\bk{\AA}} \\
		& \oplus L^2_{res.}\bk{\cZ_G\bk{\AA} G\bk{F}\lmod G\bk{\AA}} \\
		& \oplus L^2_{cont.}\bk{\cZ_G\bk{\AA} G\bk{F}\lmod G\bk{\AA}} ,
	\end{split}
\end{equation}
where
\begin{itemize}
	\item $L^2_{cusp.}=L^2_{cusp.}\bk{\cZ_G\bk{\AA} G\bk{F}\lmod G\bk{\AA}}$ denotes the cuspidal spectrum of $G\bk{\AA}$.
	
	\item $L^2_{res.}=L^2_{res.}\bk{\cZ_G\bk{\AA} G\bk{F}\lmod G\bk{\AA}}$ denotes the residual spectrum of $G\bk{\AA}$.
	
	\item $L^2_{cont.}=L^2_{cont.}\bk{\cZ_G\bk{\AA} G\bk{F}\lmod G\bk{\AA}}$ denotes the continuous spectrum of $G\bk{\AA}$.
	
	\item $L^2_{cusp.}\bk{\cZ_G\bk{\AA} G\bk{F}\lmod G\bk{\AA}} \oplus L^2_{res.}\bk{\cZ_G\bk{\AA} G\bk{F}\lmod G\bk{\AA}}$ forms the discrete spectrum of $G$, that is the maximal semi-simple subrepresentation of $L^2\bk{\cZ_G\bk{\AA} G\bk{F}\lmod G\bk{\AA}}$.
\end{itemize}

Constructing the square-integrable spectrum of an adelic group is a central problem in the theory of automorphic representations.
There is no general method of constructing the cuspidal spectrum of a group and most constructions of cuspidal representations rely on functorial lifts from other groups (whose cuspidal spectrum is not always fully understood either).
In the case of the residual and continuous spectrum the situation is slightly simpler.
Assuming knowledge of the cuspidal spectrum of Levi subgroup (this, again, is many times not fully understood) one can construct the residual and continuous spectrum using the theory of Eisenstein series devised by R. Langlands (a short account of the theory of Eisenstein series can be found in \Cref{Chap:Preliminaries}, for a complete picture the reader is advised to consider \cite{MR0579181,MR1361168}).
In this manuscript we attempt to construct a particular block within the residual spectrum.
To describe this, we first look at a finer decomposition of the square-integrable spectrum.

Denote the Weyl group of $G$ by $W$.
Given an irreducible subquotient $\pi$ of $L^2_G$, one may attach to it a $W$-conjugacy class of pairs $\coset{M,\mu}$ of a Levi subgroup $M$ of $G$ and a cuspidal representation $\mu$ of $M\bk{\AA}$ such that $\pi$ is an irreducible subquotient of $\Ind_{P\bk{\AA}}^{G\bk{\AA}} \bk{\mu}$, where $P$ is a parabolic subgroup of $G$ whose Levi subgroup is $M$ (the induction here is normalized).
Thus, one may write
\[
L^2_G = \bigoplus_{\coset{M,\mu}} L^2_{\coset{M,\mu}},
\]
where the sum runs over all $W$-conjugacy classes of cuspidal datum and $L^2_{\coset{M,\mu}}$ denotes the maximal subrepresentation of $L^2_G$ whose subquotients admit cuspidal data $\coset{M,\mu}$.

For a Levi subgroup $M$ of $G$ we denote by $L^2_{\coset{M}}$ the direct sum of all isotypic components $L^2_{\coset{M',\mu}}$ with $M'$ conjugate to $M$.
The cuspidal spectrum is then given by $L^2_{\coset{G}}$ while for proper Levi subgroups $M$ of $G$, $L^2_{\coset{M}}$ contributes to both $L^2_{res.}$ and $L^2_{cont.}$.

We write $L^2_{T,res.}$ for $L^2_{\coset{T}}\cap L^2_{res.}$.
It is a question of interest what portion of $L^2_{T,res.}$ can be attained from the degenerate residual spectrum.
It is not true in general that $L^2_{T,res.}$ is generated by pairs $\coset{T,\mu}$ where $\mu$ is associated with an induction from a $1$-dimensional representation of a proper maximal Levi subgroup.
For example, it is known that for symplectic groups this is the case and it is also expected that this case for the exceptional group of type $F_4$.

Let $P$ be a maximal parabolic subgroup of $G$ with a Levi subgroup $M$.
Let $\mu$ be a $1$-dimensional unitary representation of $M$ (the restriction of which to $T$ is automatically cuspidal) and let $\mu_s$ denote $\mu\cdot \Card{\omega_{M}}^s$, where $\omega_M$ is the fundamental weight associated with $M$ and $s\in\C$.
This gives rise to a family of induced representations $\DPS{P}\bk{s,\mu}=\Ind_{P\bk{\AA}}^{G\bk{\AA}}\bk{\mu_s}$.
The degenerate Eisenstein series is defined by the meromorphic continuation (to $\C$) of the series
\[
\Eisen_{P}\bk{s,\mu,f_s,g} = \suml_{\gamma\in P\bk{F}\lmod G\bk{F}} f_s\bk{\gamma g} \quad g\in G\bk{\AA},
\]
which absolutely converge for a ``nice'' section $f_s$ of $I_P\bk{\mu,s}$ in a half-plane $\Real\bk{s}\gg 0$.

Degenerate Eisenstein series are useful not only because they are defined using only one complex variables.
They are also a crucial to the Rankin-Selberg and Langlands-Shahidi methods of integral representations for automorphic $\Lfun$-functions.
See	\cite{MR2192819,MR2683009} for surveys on this to methods.
In particular, the residues of the series, which include the minimal representation of $G$ if it admits one, are key ingredients in the study of explicit lifts such as the $\theta$-lift.
See \cite{MR1159270} for a road-map of the application of Rankin-Selberg integrals to the study of $\theta$-lifts for classical groups and \cite{RallisSchiffmannPaper} for an application of these ideas for exceptional groups.

Representations constructed as residues of degenerate Eisenstein series form the \textbf{degenerate residual spectrum} of $G$.
The construction of this spectrum for the simply-connected groups of type $E_n$ is the aim of this manuscript.
In order to construct these representations, one needs to first determine those $\mu$ and $s_0$ such that $\Eisen_{P}\bk{s,\mu,f_s,g}$ admits a pole at $s=s_0$ and the maximal order of the pole there (for different sections $f_s$ and $g\in G\bk{\AA}$).
Then, one needs to determine whether the residual representation at this point is square-integrable (that is, appears in $L^2_G$) or not and to describe its structure using the irreducible constituents of $\DPS{P}\bk{s,\mu}$.

The structure of $\DPS{P}\bk{s,\mu}$ and its irreducible quotients can be deduced from the structure of the local degenerate principal series $\DPS{P}\bk{s,\mu}_v$, where $v$ is a place of $F$.
For the groups of type $E_n$, these constituents are described using the results of \cite{SDPS_E6,SDPS_E7,SDPS_E8} when $v$ is non-Archimedean.
For Archimedean $v$, the structure is currently unknown at large.

In \Cref{Part:Preliminaries} we supply the reader with background on the theory of Eisenstein series, the groups of type $E_n$ and the local degenerate principal series of these groups.

In \Cref{Part:DRS}, we study the degenerate residual spectrum of groups of type $E_n$ using a script implemented in the Sagemath environment \cite{sagemath}.
This part is structured as follows:
\begin{itemize}
	\item In \Cref{Chap:Poles_of_Eisen_ser} we classify the poles of $\Eisen_{P}\bk{s,\mu,f_s,g}$ for $\Real\bk{s}>0$.
	That is, we determine the location and order of poles as well as determining whether the residue is square integrable or not.
	
	\item In \Cref{Chap:Square_Integrable} we describe those degenerate residual representation which appear in $L^2_{dis.}\bk{G}$ - this is the so-called degenerate residual spectrum.
	
	\item In \Cref{Chap:Siegel_Weil} we study identities between residual representations arising from different maximal parabolic subgroups.
	These identities are in the spirit of \cite{MR1159270,MR946349} and the discussion follows ideas set in \cite{MR1174424}.
	
	\item In \Cref{Chap:Arthur} we attach, to each square-integrable residue found in \Cref{Chap:Square_Integrable}, its Arthur parameter.
	
	\item In \Cref{Chap:Non_square_Integrable} we conclude by describing those residual representations which are not square-integrable.
	
\end{itemize}

At this point, we would like to explain what is completely determined in this manuscript and what remains to be done:
\begin{itemize}
	\item All of the results in this manuscript are valid for $F=\Q$ and most are valid for other number fields.
	When a result is restricted to $F=\Q$, it would be mentioned in the text.
	
	\item We consider the analytic behavior of $\Eisen_{P}\bk{s,\mu,f_s,g}$ mostly for $\Real\bk{s}>0$.
	For $\Real\bk{s}=0$, the Eisenstein series is known to be analytic due to \cite{MR2767521} and for certain cases we use the special values there in order to determine the residues in points with $\Real\bk{s}>0$.
	
	The Eisenstein series at the half-plane $\Real\bk{s}<0$ would not contribute to the residual spectrum and hence we do not deal with the analytic behavior there.
	Bellow we remark on the poles at this half-plane.
	
	\item In what follows, we do not address the structure of $\DPS{P}\bk{s,\mu}_v$ for Archimedean $v$.
	In order to stay on firm grounds, it is possible to restrict the discussion characters $\mu$ which are unramified at Archimedean places and take sections only from the subspace  $\DPS{P}\bk{s,\mu}^0$ of $\DPS{P}\bk{s,\mu}$ which is generated by tensors $\otimes_{v\in\Places} f_{s,v}$ which are spherical for all Archimedean places, once determined.
	
	In this manuscript, however, we chose to write our results in a fashion that would make it easy to incorporate results on the structure of $\DPS{P}\bk{s,\mu}_v$ in Archimedean places.
	In particular, in our notations we assume that, for $s>0$, the structure of the maximal semi-simple quotient at Archimedean place is analogues to the structure at non-Archimedean places.

	\item For the groups of type $E_6$ and $E_7$ we completely determine the locations and orders of poles at $\Real\bk{s}>0$, the square-integrability of the residue as well as the structure of the residue.
	For the group of type $E_8$ we completely determine the location and order of poles and determine the residue for most case.
	In the remaining cases, we were unable to completely determine the structure of local representations due to size and complexity of the Weyl group of type $E_8$.
	The remaining cases would hopefully be resolved when stronger computers are more commonly available or if alternate methods are suggested.
	
\end{itemize}

\begin{Rem}
	The analytic behavior of the Eisenstein series for $\Real\bk{s}<0$ is determined by the functional equation which $\Eisen_{P}\bk{\mu_s,f_s,g}$ satisfy.
	However, the analysis in this domain relies heavily on the location of zeros of the Dedekind $\zfun$-function of $F$ and thus on particular properties of $F$.
	Furthermore, it is known that, many times, poles of Eisenstein series at the left half-plane may have unbounded orders .
	That is, by varying the section $f_s$ we may attain poles whose orders can be as large as we wish.
	In particular, no contribution to the residual spectrum can be constructed using the values and residues of Eisenstein series at these points.
	
\end{Rem}

Finally, we wish to put the results of this manuscript in the context of other studies of degenerate residual spectrum:
\begin{itemize}
	\item For symplectic groups, the degenerate residual spectrum was fully described in \cite{MR3437492}.
	
	\item For classical groups, the square-integrable degenerate residual spectrum can be read from 	\cite{MR1404331,MR1385286}
	
	\item For the exceptional group of type $G_2$, the square-integrable degenerate residual spectrum can be read from \cite{MR1426903,MR1485422,MW_AppendixIII}.
	
	\item For the exceptional gorup of type $F_4$, the part of the degenerate residual spectrum associated with $\mu=\Id$ was calculated in the first author's Ph.D. thesis.
\end{itemize}

\subsection*{Acknowledgements}

\Cref{Chap:Siegel_Weil} relies on results from the first authors M.Sc. thesis \cite{HeziMScThesis}.
Partial support was provided by grant 259/14 from the Israel Science Foundation.
The authors were also supported by the Junior Researcher Grant of Shamoon College of Engineering (SCE).

\mainmatter

\part{Preliminaries}
\label{Part:Preliminaries}

In this part we set notations and recall facts which will be used throughout this manuscript.
It comprises of three chapters:
\begin{enumerate}
	\item \Cref{Chap:Preliminaries} fix notations and general facts regarding the general theory of Eisenstein series for arbitrary groups.
	
	\item \Cref{Chap:Group_En} describes the structure of exceptional groups of type $E_n$.
	
	\item In \Cref{Chap:Local_DPS}, we recall information from \cite{SDPS_E6}, \cite{SDPS_E7} and \cite{SDPS_E8} regarding the local degenerate principal series representations at $p$-adic places.
\end{enumerate}

\chapter{Background Theory}
\label{Chap:Preliminaries}

In this chapter, we set notations and give a short account of certain aspects of the theory of Eisenstein series, degenerate principal series and standard intertwining operators.
A more comprehensive discussion of the material here can be found in \cite{MR1361168,MR0579181,MR546601,MR0207650}.
One can also consult the preliminaries sections in previous works of the second author (\cite[Section 2]{MR3803152} and \cite[Section 2]{MR4024536}).

Let $F$ be a number field with a set of places $\Places$ and a ring of adeles $\AA_F$.
We denote by $\Places_{fin.}$ the set of finite places, namely, all $v\in\Places$ such that $v\not\divides \infty$.
Given $v\in\Places$, we denote by $F_v$ the completion of $F$ with respect to $v$.
If $v\not\divides\infty$, we let $\mO_v$ denote the ring of integers of $F_v$ and fix a uniformizer $\unif_v$ of $F_v$. We further let $q_v$ denote the cardinality of the residue field $\mO_v\rmod \bk{\unif_v}$ of $F_v$.

\section{Notations from Analytic and Algebraic Number Theory}

In this section, we fix certain notations for Hecke $\Lfun$-functions and related objects.
We also recall certain consequences of the Dirichlet and Chebotarev density theorems.
For more details, the reader may consider \cite{MR1990377}, \cite{Lieberman_Hecle_K-functions} and \cite{MR1697859}.

\begin{itemize}
	\item Let $\chi= \otimes_{v\in\Places}\chi_v: F^\times\lmod\AA^\times \to \C^\times$ be a Hecke character and let $\psi=\otimes_{v\in \Places} \psi_v:F\lmod \AA \to \C^\times$ be an additive character.
	\item Let $\Lfun_{F_v}\bk{\chi_v,s}$, with $s\in\C$, denote the local $\Lfun$-function of $\chi_v$.
	\item The local $\Lfun$-factor $\Lfun_{F_v}\bk{\chi_v,s}$ satisfies a functional equation of the form
	\[
	\Lfun_{F_v}\bk{\chi_v,s} = \frac{\epsilon_{F_v}\bk{\chi_v,s,\psi_v}}{\gamma_{F_v}\bk{\chi_v,s,\psi_v}} L_{F_v}\bk{\chi_v^{-1},1-s},
	\]
	where $\epsilon_{F_v}\bk{\chi_v,s,\psi_v}$ is the local $\epsilon$-factor and $\gamma_{F_v}\bk{\chi_v,s,\psi_v}$ is the local $\gamma$-factor of $\chi_v$.
	Both of which depend on the fixed character $\psi_v$.
	
	\item The global $\epsilon$-factor
	\[
	\epsilon_{F}\bk{s,\chi} = \prodl_{v\in\Places} \epsilon_{F_v}\bk{\chi_v,s,\psi_v} ,
	\]
	however, is independent of $\psi$.
	The "global $\gamma$-factor", on the other hand is constant:
	\[
	\prodl_{v\in\Places} \gamma_{F_v}\bk{\chi_v,s,\psi_v} =1 .
	\]

	\item Let $\Lfun_F\bk{s,\chi}$ denote the completed Hecke $\Lfun$-function of $\chi$ given as the meromorphic continuation of the product (which converges for $\Real\bk{s}\gg 0$)
	\[
	\Lfun_F\bk{s,\chi} = \prodl_{v\in\Places} \Lfun_{F_v}\bk{\chi_v,s}
	\]
	
	\item As in \cite[Section 3.3]{MR2894271}, we may normalize the Hecke $\Lfun$-function $\Lfun_F\bk{s,\chi}$ so that it satisfies the functional equation
	\[
	\Lfun_F\bk{s,\chi} = \epsilon_\chi\Lfun_F\bk{\chi^{-1},1-s},
	\]
	where $\epsilon_\chi=\epsilon_{F}\bk{s,\chi}$ is the \emph{root number} of $\chi$ and it satisfies $\FNorm{\epsilon_\chi}=1$.

	\item In particular, for $\chi=\Id$, the trivial character, the global $\Lfun$-function turns out to be the Dedekind $\zeta$-function $\zfun_F\bk{s}$, normalized so that it satisfies the functional equation
	\[
	\zfun_F\bk{s}=\zfun_F\bk{1-s} .
	\]

\end{itemize}

By Class Field Theory, every class of Hecke characters on $\AA_F^\times$ of order $n$ corresponds to an Abelian Galois extension $K/F$ of order $n$.
On the other hand, by Chebotarev's density theorem, the set of primes in $K$ which split over $F$ in an algebraic number field extension $K/F$ of order $n$, has density $\frac{1}{n}$.
It follows that given a Hecke character $\chi$ of order $n$, the local character $\chi_v$ is trivial in $\frac{1}{n}$ of the places $v\in F$ (in terms of its Dirichlet density).
In fact, for any $k\divides n$, there are infinitely many places $v\in\Places$ such that $\chi_v$ is of order $k$.

\section{Groups and Characters}

Let $\bG$ denote a split reductive group defined over $F$.
Let $\bT$ be a maximal split torus in $\bG$ contained in a Borel subgroup $\bB$ of $\bG$ and let $\bN$ denote the unipotent radical of $\bB$.
Also, let $\bZ\bk{\bG}=\bZ\bk{\bT}$ denote the center of $\bG$.

We let $\Phi_\bG=\Phi\bk{\bG,\bT}$ denote the root system of $\bG$ with respect to $\bT$ and let $\Delta=\Delta\bk{\bG,\bT}=\set{\alpha_1,...,\alpha_n}$ be the set of simple roots together with a fixed labeling (we use the conventions of \cite{Bourbaki:2002} for labeling).
Let $\Phi_\bG^{+}$ denote the set of positive roots in $\Phi_\bG$ with respect to $\bB$.

Let $\check{\Phi_\bG}$ denote the set of co-roots of $\bG$ with respect to $\bT$, together with a bijection
\[
\begin{array}{ccc}
\Phi & \to & \check{\Phi_\bG} \\
\alpha & \mapsto & \check{\alpha}
\end{array}
\]
We denote the natural pairing between roots and co-roots by $\inner{\cdot,\cdot}:\Phi_\bG\times\check{\Phi_\bG} \to \Z$.

Let $W_\bG=W\bk{\bG,\bB}$ denote the Weyl group of $\bG$.
The Weyl group $W$ is generated by the simple reflections $w_\alpha$ along the simple roots $\alpha\in\Delta$.

For any $\Theta\subset\Delta$, we denote by $\bP_\Theta$ the standard (i.e. $\bP_\Theta\supset\bB$) parabolic subgroup of $\bG$ associated to $\Theta$.
We denote the Levi subgroup of $\bP_\Theta$ by $\bM_\Theta$ and its unipotent radical by $\bU_\Theta$.
In particular, for $\Theta_i=\Delta\setminus\set{\alpha_i}$, we write $\bP_i=\bP_{\Theta_i}$ and $\bM_i=\bM_{\Theta_i}$

Let $\Phi_{\bM_\Theta}\subset\Phi_\bG$ denote the root system of $\bM_\Theta$ with respect to $\bT$.
The set of positive roots of $\bM_\Theta$ with respect to $\bB\cap\bM_\Theta$ is $\Phi_{\bM_\Theta}^{+}=\Phi_\bG^{+}\cap\Phi_{\bM_\Theta}$ and the set of simple roots is $\Delta_{\bM_\Theta}=\Theta$.
Denote the half-sum of the roots in $\mathfrak{u}_\Theta=Lie\bk{\bU_\Theta}$ by $\rho_{\bP}$.

Fix a standard parabolic subgroup $\bP_\Theta$ as above.
Let $\mathfrak{a}_{\bM,\C}^\ast=X^\ast\bk{\bM}_F\otimes_\Z\C$, where $X^\ast\bk{\bM}_F$ denote the $F$-rational characters of $\bM$.
We use an additive notation for the elements of $\mathfrak{a}_{\bM,\C}^\ast$.
Note that $\inner{\cdot,\cdot}$ extends to map $\mathfrak{a}_{\bM,\C}^\ast\times\check{\Phi_\bG}\to\C$ linearly.

Also, let $W_\bM=W\bk{\bM,\bM\cap\bB}$ denote the Weyl group of $\bM$ and let $W\bk{\bM,\bG}$ denote the set of shortest representatives in $W$ of the coset space $W_\bM \lmod W$.

For any $\alpha_i\in\Delta$, let $\omega_{\alpha_i}$ denote the fundamental weight associated to $\alpha$, which satisfy
\[
\inner{\omega_{\alpha_i},\check{\alpha_j}} = \delta_{i,j} .
\]
The fundamental weights give rise to the isomorphism
\[
\begin{array}{ccc}
\C^n & \rightarrow & \mathfrak{a}_{\bT,\C}^\ast \\
\bar{s}=\bk{s_1,...,s_n} & \mapsto & \lambda_{\bar{s}} = \suml_{i=1}^n s_i \omega_{\alpha_i} .
\end{array}
\]

More generally, any element of $\lambda\in\mathfrak{a}^\ast_{\bM,\C}$ is of the form
\[
\lambda=\lambda_C+\suml_{i\not\in\Theta} s_i \omega_{\alpha_i},
\]
where $\bk{s_i}_{i\not\in\Theta}\in\C$ and $\lambda_C\in \mathfrak{a}^\ast_{\bZ\bk{\bG},\C}$.
Also, note that $ \mathfrak{a}^\ast_{\bG,\C} \cong \mathfrak{a}^\ast_{\bZ\bk{\bG},\C}$

For any group $G$, let $\Id_G$ denote the trivial representation of $G$ and if there is no source of confusion on the identity of $G$, it will simply be denoted by $\Id$.

Let $\bX_\bM$ denote the complex manifold of characters of $\bM\bk{\AA}$ trivial on $\bM\bk{F}$.
We use additive notation for $\bX_\bM$ too.
There is a natural embedding of $\mathfrak{a}^\ast_{\bM,\C}$ in $\bX_\bM$.
One can choose a direct sum complement $\bX_\bM = \mathfrak{a}^\ast_{\bM,\C} \oplus \bX_{\bM,0}$, where the characters in $\bX_{\bM,0}$ are of finite order, namely, 
\[
\forall \chi\in \bX_{\bM,0} \ \exists k\in\N:\ \chi^k=\Id_{\bM\bk{\AA}} .
\]
Furthermore, any $\chi\in\bX_\bM$ is of the form
\[
\chi = \chi_C+\suml_{i\not\in\Theta} \chi_i\circ \omega_{\alpha_i},
\]
where $\chi_i\in\bX_{\mathbf{GL_1}}$ for $i\not\in\Theta$ and $\chi_C\in\bX_{\bZ\bk{\bG}}$.

In what follows, it will be convenient to set the following notations:
\begin{itemize}
	\item For $\alpha\in\Phi^{+}$ and $\epsilon\in\R$, let $H_{\alpha}^\epsilon$ denote the affine hyper-plane in $\mathfrak{a}_{\bM,\C}^\ast$ given by
	\[
	H_{\alpha}^\epsilon = 
	\set{\lambda\in\fraka^\ast_\C \mvert \inner{\lambda,\check{\alpha}}=\epsilon} .
	\]
	
	\item For a standard Levi subgroup $\bM$ of $\bG$, we fix a shifted positive Weyl chamber
	\[
	\mathfrak{F}_M^{+}=\set{\lambda \mvert \Real\bk{\gen{\lambda-\rho_{\mathbf{B}}+\rho_{\mathbf{P}},\check{\alpha}}}>1 \quad \forall \alpha\in \Delta^{+}\setminus\Delta_M^{+}}.
	\]
	In particular, we write $\mathfrak{F}^{+}=\mathfrak{F}_\bT^{+}$.
	
	\item For $\alpha\in\Phi^{+}_\bG$, we fix the following affine functions on $\mathfrak{a}_{\bM,\C}^\ast$:
	\[
	l_\alpha^{\pm}\bk{\lambda} = \gen{\lambda,\check{\alpha}}\pm 1 .
	\]
\end{itemize}

\section{Representations}

In this manuscript, we will be interested in parabolic induction from one-dimensional representations.

We fix a maximal compact subgroup $\mathbf{K}=\prod_{v\in\Places}K_v$ of $\bG\bk{\AA}$ by taking $K_v=\bG\bk{\mO}$ for all $v\in\Places_{fin.}$ such that $\bG$ is defined over $\mO$ (this covers almost all places), by taking $K_v$ to be a hyper-special maximal compact subgroup of $\bG\bk{F_v}$ for other $v\in\Places_{fin.}$ and by taking $K_v$ to be any maximal compact subgroup of $\bG\bk{F_v}$ for $v\in\Places_{\infty}$ (note that all maximal compact subgroups are conjugate in this case).

Fix a standard Levi subgroup $\bM$ of $\bG$ and a Hecke character 
\[
\mu:\bM\bk{F}\lmod \bM\bk{\AA}\to\C^\times,
\]
we will usually assume that it is of finite order.
For $\lambda\in\mathfrak{a}_{\bM,\C}^\ast$, we consider the normalized parabolic induction
\[
\DPS{\bP}\bk{\lambda,\mu} = \Ind_{\bP\bk{\AA}}^{\bG\bk{\AA}} \bk{\lambda\otimes\mu}.
\]

A section $f_\lambda\in \DPS{\bP}\bk{\lambda,\mu}$ is called
\begin{itemize}
	\item
	\emph{Standard}  if $f_\lambda$ is independent of $\lambda$  when
	restricted to $\mathbf{K}$.
	\item \emph{Spherical} if it is standard and $\mathbf{K}$-invariant. 
	We further call
	such a section \emph{normalized} if $f_\lambda\bk{1}=1$. Since the space of spherical vectors
	in $\DPS{\bP}\bk{\lambda,\mu}$ is at most one-dimensional, the normalized spherical vector (if it exists) is unique.
	\item  \emph{Holomorphic} if $f_\lambda\bk{g}$ is a holomorphic function of $\lambda$ for any $g\in \bG\bk{\AA}$.

\end{itemize}

\begin{Rem}
	Note that any $\mathbf{K}$-finite holomorphic section is a finite combination of standard sections with coefficients in the ring of holomorphic functions on $\mathfrak{a}_{\bM,\C}^\ast$.
\end{Rem}

When $\bP$ is a (proper) maximal parabolic subgroup of $\bG$, $\DPS{\bP}\bk{\lambda,\mu}$ is called a \textbf{degenerate principal series representation}.

The global parabolic induction $\DPS{\bP}\bk{\lambda,\mu}$ is a restricted tensor product of local parabolic inductions
\[
\DPS{\bP}\bk{\lambda,\mu} = \otimes_{v\in\Places}' \DPS{\bP}\bk{\lambda,\mu}_v,
\]
where $\DPS{\bP}\bk{\lambda,\mu}_v$ is given as follows.
Write $\mu=\otimes_{v\in \Places}\mu_v$ and denote the local normalized induction by
\[
\DPS{\bP}\bk{\lambda,\mu}_v = \Ind_{\bP\bk{F_v}}^{\bG\bk{F_v}} \bk{\lambda\otimes \mu_v}
\]
We will use $\DPS{\bP}\bk{\lambda,\mu}_v$, $I_{\bP,v}\bk{\lambda,\mu}$ and $\DPS{\bP}\bk{\lambda,\mu_v}$ interchangeably to denote the same local induced representation.

In \Cref{Chap:Local_DPS} we recall the results of \cite{SDPS_E6}, \cite{SDPS_E7} and \cite{SDPS_E8} regarding the structure of $\DPS{\bP}\bk{\lambda,\mu}_v$ for $\bP$ a maximal parabolic subgroup of a simple group of type $E_n$ and $v\in\Places_{fin.}$.

Let $\bP=\bP_i$ be the maximal parabolic subgroup of $\bG$ such that $\Theta_\bM=\Delta\setminus\set{\alpha_i}$ and let $\chi\in\bX_{\mathbf{GL_1}}$, we write
\[
\mu_\chi = \chi\circ\omega_{\alpha_i} .
\]
We write
\[
\begin{array}{l}
\lambda_{s}^{\bP_i} = s\omega_{\alpha_i} -\rho_{\bB} + \rho_{\bP} , \quad 
\lambda_{s,\chi}^{\bP_i} = \mu_\chi+\lambda_{s}^{\bP_i} \\
\eta_{s}^{\bP_i} = s\omega_{\alpha_i} +\rho_{\bB} - \rho_{\bP} , \quad 
\eta_{s,\chi}^{\bP_i} = \mu_\chi+\eta_{s}^{\bP_i} .
\end{array}
\]
For simplicity, we will write $\DPS{\bP}\bk{s,\chi}$ for $\DPS{\bP}\bk{\mu_\chi,s\circ\omega_{\alpha_i}}$.
We note that
\begin{equation}
	\label{Eq:Natural_Embedding_and_Surjection_to_Borel}
	I_{\bP_i}\bk{s,\chi} \hookrightarrow i_\bB\bk{\mu_\chi,\lambda_{s}^{\bP_i}} = \Ind_{\bB\bk{\AA}}^{\bG\bk{\AA}} \bk{ \lambda_{s,\chi}^{\bP_i} } , \quad
	i_\bB\bk{\eta_{s}^{\bP_i},\mu_\chi} = \Ind_{\bB\bk{\AA}}^{\bG\bk{\AA}} \bk{ \eta_{s,\chi}^{\bP_i} }  \twoheadrightarrow \DPS{\bP_i}\bk{s,\chi}.
\end{equation}
We make similar notations in the local case.

We let $\DPS\bP\bk{s,\chi}^0$ denote the set of standard factorizable sections $\otimes_{v\in\Places} f_{s,v}$ in $\DPS\bP\bk{s,\chi}$ such that $f_{s,v}=f_{s,v}^0$ is spherical for all $v=\infty$.

Throughout, we denote the contragredient of a representation $\pi$ by $\pi^\ast$.
Also, when $\chi=\Id$, we drop it form our notations, namely, we write $\DPS{\bP_i}\bk{s}$ for $\DPS{\bP_i}\bk{s,\chi}$.
Similarly, if $\mu=\Id$ we write $\DPS\bB\bk{\lambda}$ for $\DPS\bB\bk{\Id,\lambda}$, etc.

Also, we note that for each $\DPS{\bP}\bk{\lambda,\mu}$ there exist a unique $\DPS{\bB}\bk{\widetilde{\lambda},\widetilde{\mu}}$ such that $\lambda\otimes\mu$ is the unique irreducible subrepresentation of $\Ind_{\bB\bk{\AA}\cup\bM\bk{\AA}}^{\bM\bk{\AA}} \bk{\widetilde{\lambda}\otimes\widetilde{\mu}}$. We call $\widetilde{\lambda}\otimes\widetilde{\mu}$ the \textbf{initial exponent} of $\DPS{\bP}\bk{\lambda,\mu}$.
Similarly we define initial exponents for the local representations $\DPS{\bP}\bk{\lambda,\mu}_v$.

\section{Intertwining Operators}

For $\lambda\in\mathfrak{a}_{\bT,\C}^\ast$, a unitary Hecke character $\mu:T\bk{F}\lmod T\bk{\AA}\to\C^\times$ and $w\in W$ we consider the standard intertwining operator given by the integral
\begin{equation}
M\bk{w,\lambda,\mu}f_\lambda\bk{g} = \intl_{\bN\bk{\AA}\cap w\bN\bk{\AA}w^{-1}\lmod \bN\bk{\AA}} f_\lambda\bk{w^{-1}ug} du.
\end{equation}
This integral converges in the shifted positive Weyl chamber $\mathfrak{F}^{+}$  to a holomorphic family of operators and admits a meromorphic continuation to $\mathfrak{a}_{\bT,\C}^\ast$, where
\[
\mathfrak{F}^{+}=\set{\lambda \mvert \Real\bk{\gen{\lambda-\rho_{\mathbf{B}},\check{\alpha}}}>0 \quad \forall \alpha\in \Phi^{+}}.
\]

At points of holomorphy, $M\bk{w,\lambda,\mu}$ defines an intertwining operator
\[
M\bk{w,\lambda,\mu} : \DPS{\bB}\bk{\lambda,\mu} \to \DPS{\bB}\bk{w\cdot\lambda,w\cdot\mu} .
\]

We note the following cocycle relation on the standard intertwining operators
\begin{Lem}
	For any $w,w'\in W$ we have
	\[
	M\bk{ww',\lambda,\mu} = M\bk{w,w'\cdot\mu, w'\cdot\lambda} \circ M\bk{w',\lambda,\mu} .
	\]
\end{Lem}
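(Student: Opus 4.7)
The plan is to first establish the identity in the region of absolute convergence of all three operators, and then extend to the whole of $\mathfrak{a}_{\bT,\C}^\ast$ by meromorphic continuation. Throughout, we may work with a standard (hence $\mathbf{K}$-finite) section $f_\lambda$, since such sections span $\DPS{\bB}\bk{\lambda,\mu}$.

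Fix $\lambda$ in a region deep enough inside $\mathfrak{F}^{+}$ so that both $M\bk{w',\lambda,\mu}f_\lambda$ and $M\bk{w,w'\cdot\lambda,w'\cdot\mu}$ applied to it are given by their absolutely convergent defining integrals. Writing out the composition, we obtain
\[
\bigl[M\bk{w,w'\cdot\lambda,w'\cdot\mu}\circ M\bk{w',\lambda,\mu}\bigr] f_\lambda\bk{g}
= \intl_{\bN_w\lmod \bN} \intl_{\bN_{w'}\lmod \bN} f_\lambda\bk{(w')^{-1} v\, w^{-1} u\, g}\, dv\, du,
\]
where $\bN_w=\bN\bk{\AA}\cap w\bN\bk{\AA}w^{-1}$ and similarly for $\bN_{w'}$. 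The plan is then to perform the change of variables $v\mapsto w\tilde v w^{-1}$ inside the inner integral, which recasts the integrand as $f_\lambda\bk{(ww')^{-1} \tilde v u\, g}$, and to combine the two integrations into a single integral over $\bN_{ww'}\lmod \bN$, thereby recovering the defining integral of $M\bk{ww',\lambda,\mu}f_\lambda\bk{g}$. The measure-theoretic compatibility comes from the bijection on positive roots
\[
R\bk{ww'} \;\longleftrightarrow\; R\bk{w} \sqcup w\cdot R\bk{w'},
\]
valid when $\ell\bk{ww'}=\ell\bk{w}+\ell\bk{w'}$, which identifies $\bN_{ww'}\lmod \bN$ with the product $(\bN_{w}\lmod \bN)\times w\bk{\bN_{w'}\lmod \bN}w^{-1}$ modulo a set of Haar measure zero, and makes the Fubini manipulation legitimate.

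The case $\ell\bk{ww'}<\ell\bk{w}+\ell\bk{w'}$ is the main point to address. Here one reduces to the length-additive case by factoring $w=w_1\cdot\ldots\cdot w_k$ and $w'=w'_1\cdot\ldots\cdot w'_l$ into simple reflections and applying the additive case iteratively. Equivalently, one first establishes the cocycle identity for pairs of simple reflections $s_\alpha, s_\beta$ (which is verified by an elementary rank-two calculation in $\SL_2$ or $\SL_3$ embeddings), then uses induction on $\ell\bk{w}+\ell\bk{w'}$; any cancellations in the product of unipotent radicals are absorbed by the normalization of sections and the change-of-variables Jacobians.

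Finally, once the identity is established on an open subset of $\mathfrak{a}_{\bT,\C}^\ast$, both sides of the equation are meromorphic operator-valued functions of $\lambda$ (by the meromorphic continuation statement preceding the lemma), so equality extends to all of $\mathfrak{a}_{\bT,\C}^\ast$ at points where both sides are holomorphic. The hard part, as indicated, is bookkeeping of the unipotent radicals when lengths fail to add; once this is set up correctly, the rest is Fubini combined with the standard measure decomposition for unipotent radicals of opposite Bruhat cells.
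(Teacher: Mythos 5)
Your treatment of the length-additive case $\ell\bk{ww'}=\ell\bk{w}+\ell\bk{w'}$ is correct in outline: absolute convergence for $\lambda$ deep in the cone, Fubini, and a change of variables keyed to a root-set decomposition, followed by meromorphic continuation. One bookkeeping slip: with the paper's convention $R\bk{w}=\set{\alpha\in\Phi^+_{\bG}\mvert w\alpha\notin\Phi^+_{\bG}}$, the decomposition when lengths add is $R\bk{ww'}=R\bk{w'}\sqcup\bk{w'}^{-1}R\bk{w}$, not $R\bk{w}\sqcup w\cdot R\bk{w'}$; your version belongs to the opposite convention $R\bk{w}=\set{\alpha>0\mvert w^{-1}\alpha<0}$, and getting this backwards will misalign the two changes of variables with the two integrations.

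The genuine gap is in the case $\ell\bk{ww'}<\ell\bk{w}+\ell\bk{w'}$. Saying that the ``cancellations in the product of unipotent radicals are absorbed by the normalization of sections and the change-of-variables Jacobians'' is not a proof; it misidentifies what is actually a \emph{global arithmetic} fact as a local measure-theoretic triviality. Reducing the non-additive case, via braid relations and the additive case, to deleting repeated simple reflections, one is forced to prove at each deletion that $M\bk{w_\alpha,w_\alpha\cdot\lambda,w_\alpha\cdot\mu}\circ M\bk{w_\alpha,\lambda,\mu}=\mathrm{Id}$. \emph{Locally} this is false: at each place the composite is a non-trivial scalar (essentially the inverse Plancherel measure, a ratio of local $\gamma$- and $\epsilon$-factors). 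It holds for the \emph{global} operator $M$ only because the product of local $\gamma$-factors over all places is constantly $1$ --- equivalently, because the composite of the two global Gindikin--Karpelevich factors telescopes to $1$ via the functional equation of the completed Hecke $\Lfun$-function --- and this is the ingredient your argument omits. It cannot be recovered from Fubini or a Jacobian. The same distinction explains why the paper, a few lines further down, asserts the unrestricted cocycle for the \emph{local} operators only after passing to the \emph{normalized} $N_v$: the normalizing factors are designed exactly to cancel the local obstruction you are silently assuming away.
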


For a place $v\in\Places$, $\lambda\in\mathfrak{a}_{\bT,\C}^\ast$, a unitary character $\mu_v:\bT\bk{F_v}\to\C^\times$, a section $f_{\lambda,v}\in \DPS{\bB,v}\bk{\lambda,\mu}$ and $w\in W$ we consider the standard local intertwining operator given by the integral
\begin{equation}
\label{Eq:Local_Intertwining_Operator_Def}
M_v\bk{w,\lambda,\mu_v} f_{\lambda,v}\bk{g} = \intl_{\bN\bk{F_v}\cap w\bN\bk{F_v}w^{-1}\lmod \bN\bk{F_v}} f_{\lambda,v}\bk{w^{-1}ug} du .
\end{equation}
This integral converges absolutely to an analytic function in the shifted positive Weyl chamber $\mathfrak{F}^{+}$ and admits a meromorphic continuation to $\mathfrak{a}_{\bT,\C}^\ast$.
Furthermore, it holds that the global intertwining operator decomposes into a restricted tensor product of local operators
\[
M\bk{w,\lambda,\mu} = \bigotimes_{v\in\Places}\,' M_v\bk{w,\lambda,\mu_v} .
\]
Namely, given a pure tensor $f_\lambda = \otimes' f_{\lambda,v}$ it holds that
\begin{equation}
\label{Eq:Decomposition_of_global_intertwining_operator_to_local_ones}
M\bk{w,\lambda,\mu}f_\lambda = \bigotimes_{v\in\Places}\,' M_v\bk{w,\lambda,\mu_v}f_{\lambda,v}
\end{equation}

Assuming that $\mu_v$ is unramified and applying $M_v\bk{w,\lambda,\mu_v}$ to the normalized spherical section $f_{\lambda,v}^0\in \DPS{\bB,v}\bk{\lambda,\mu}$ yields the local Gindikin-Karpelevich formula
\begin{equation}
\label{Eq:Local_GK_Formula}
M_v\bk{w,\lambda,\mu_v} f_{\lambda,v}^0 = \bk{\prodl_{\gamma\in R\bk{w}} \frac{\Lfun_{F_v}\bk{\inner{\lambda,\check{\gamma}},\mu_v\circ\check{\gamma}}}{\Lfun_{F_v}\bk{\inner{\lambda,\check{\gamma}}+1,\mu_v\circ\check{\gamma}}}} f_{w\cdot\lambda,v}^0 ,
\end{equation}
where
\[
R\bk{w} = \set{\alpha\in\Phi^{+}_\bG \mvert w\cdot\alpha\notin \Phi^{+}_\bG} .
\]
Let \textbf{local Gindikin-Karpelevich factor} $J_v\bk{w,\lambda,\mu_v}$ be defined by
\begin{equation}
J_v\bk{w,\lambda,\mu_v}
= \prodl_{\gamma\in R\bk{w}} \frac{\Lfun_{F_v}\bk{\inner{\lambda,\check{\gamma}},\mu_v\circ\check{\gamma}}}{\Lfun_{F_v}\bk{\inner{\lambda,\check{\gamma}}+1,\mu_v\circ\check{\gamma}}} .
\end{equation}

It is convenient to define the normalized standard intertwining operator
\begin{equation}
\label{eq:Normalized_intertwining_operators}
N_v\bk{w,\lambda,\mu_v} =
\bk{\prodl_{\gamma\in R\bk{w}} 
\frac{ \Lfun_{F_v}\bk{\inner{\lambda,\check{\gamma}}+1,\mu_v\circ\check{\gamma}} }{ \Lfun_{F_v}\bk{\inner{\lambda,\check{\gamma}},\mu_v\circ\check{\gamma}} \epsilon_{F_v}\bk{\inner{\lambda,\check{\gamma}},\mu_v\circ\check{\gamma},\psi_v} } }
M\bk{w,\lambda,\mu_v} .
\end{equation}

The normalized intertwining operator satisfies the cocycle equation
\[
N_v\bk{ww',\lambda,\mu_v} = N_v\bk{w,w'\cdot\mu_v,w'\cdot\lambda} \circ N_v\bk{w',\lambda,\mu_v} \quad \forall w,w'\in W
\]
and, if $\mu_v$ is unramified,
\[
N_v\bk{w,\lambda,\mu_v} f_{\lambda,v}^0 = f_{w\cdot\lambda,v}^0 .
\]

Let $f\lambda\in \DPS\bB\bk{\lambda,\mu}$ be a pure tensor $f_\lambda=\otimes_{v\in\Places} f_{\lambda,v}$ and let $S\subset\Places$ be a finite subset such that $f_{\lambda,v}=f_{\lambda,v}^{0}$ for all $v\notin S$.
The global Gindikin-Karpelevich formula follows from \Cref{Eq:Local_GK_Formula},
\begin{equation}
\label{Eq:Global_GK_Formula}
\begin{split}
& M\bk{w,\lambda,\mu} f_{\lambda}
= \bk{\displaystyle\operatorname*{\otimes}_{v\in S} M_v\bk{w,\lambda,\mu_v} f_{\lambda,\nu} }\bigotimes \bk{\displaystyle\operatorname*{\otimes}_{v\notin S} J_v\bk{w,\lambda,\mu_v} f_{\lambda,\nu}^0} \\
& = J\bk{w,\lambda,\mu} \bk{\displaystyle\operatorname*{\otimes}_{v\in S}  J_v\bk{w,\lambda,\mu_v}^{-1} M_v\bk{w,\lambda,\mu_v}f_{\lambda,\nu} }\bigotimes \bk{\displaystyle\operatorname*{\otimes}_{v\notin S} f_{\lambda,\nu}^0} \\
& = \bk{\prodl_{\gamma\in R\bk{w}} \epsilon_{F}\bk{\gen{\lambda,\check{\alpha}},\mu\circ\check{\alpha}}} J\bk{w,\lambda,\mu} \bk{\displaystyle\operatorname*{\otimes}_{v\in S}  N_v\bk{w,\lambda,\mu_v}f_{\lambda,\nu} }\bigotimes \bk{\displaystyle\operatorname*{\otimes}_{v\notin S} f_{\lambda,\nu}^0} \ ,
\end{split}
\end{equation}
where the \textbf{global Gindikin-Karpelevich factor} $J\bk{w,\lambda,\mu}$ is defined by
\begin{equation}
J\bk{w,\lambda,\mu} = \prodl_{v\in\Places} J_v\bk{w,\lambda,\mu_v} 
= \prodl_{\gamma\in R\bk{w}} \frac{\Lfun_{F}\bk{\inner{\lambda,\check{\gamma}},\mu\circ\check{\gamma}}}{\Lfun_{F}\bk{\inner{\lambda,\check{\gamma}}+1,\mu\circ\check{\gamma}}} .
\end{equation}

This implies that the analytic behavior of $M\bk{w,\lambda,\mu} f_{\lambda}$ depends on the analytic behaviour of $J\bk{w,\lambda,\mu}$ and the $N_v\bk{w,\lambda,\mu_v}f_{\lambda,\nu}$ for $\nu\in S$.

Since the partially normalized intertwining operators
\[
\frac{1}{\prodl_{\gamma\in R\bk{w}} \Lfun_{F_v}\bk{\gen{\lambda,\check{\gamma}} ,\mu_v\circ\check{\gamma}}} M_v\bk{w,\lambda,\mu_v}
\]
are entire for all $\nu\in\Places$ (see \cite{MR944102} when $\nu\vert\infty$ and \cite{MR517138} when $\nu\not\vert\infty$), it follows that $N_v\bk{w_\alpha,\lambda,\mu_v}$ is holomorphic whenever $\Real\bk{\gen{\lambda,\check{\alpha}}}>-1$ and $\alpha\in\Delta$.

The holomorphicity of $N_v\bk{w,\lambda,\mu_v}f_{\lambda,\nu}$ will be discussed with more details in \Cref{Chap:Poles_of_Eisen_ser}.

We write $M_w\bk{s,\chi}$ for the intertwining operator $M\bk{w,\lambda_{s,\chi}}\res{\DPS\bP\bk{s,\chi}}$, where we think of $\DPS\bP\bk{s,\chi}$ as embedded in $\DPS{\bB}\bk{\mu_\chi,\lambda_{s}^{\bP}}$ following \Cref{Eq:Natural_Embedding_and_Surjection_to_Borel}.
Similarly, we write $N_w\bk{s,\chi}$ for the intertwining operator $N_v\bk{w,\lambda_{\chi_v,s}}\res{\DPS{\bP,v}\bk{s,\chi}}$.

\begin{Rem}
	Note that, for convenience, we sometimes denote $M\bk{w,\lambda\otimes\mu}$ for $M\bk{w,\lambda,\mu}$, $M_v\bk{w,\lambda\otimes\mu}$ for $M_v\bk{w,\lambda,\mu}$, etc.
\end{Rem}

\section{Eisenstein Series and the Constant Term Formula}

For $\mu\in\bX_{\bM,0}$ and a standard section $f_\lambda\in \DPS{\bP}\bk{\lambda,\mu}$ we form the associated Eisenstein series
\begin{equation}
\label{Eq:Degenerate_Eisenstein_series_definition}
\Eisen_{\bP}\bk{f,\lambda,\mu,g} = \suml_{\gamma\in \bP\bk{F}\lmod \bG\bk{F}} f_\lambda\bk{\gamma g} .
\end{equation}
This series converges for $\lambda\in \mathfrak{F}_M^{+}$ and admits a meromorphic continuation to $\mathfrak{a}_{\bM,\C}^\ast$.

At points $\lambda_0$ where $\Eisen_{\bP}\bk{f,\lambda,\mu,g}$ is holomorphic, the value $\Eisen_{\bP}\bk{f,\lambda_0,\mu,g}$ is an automorphic form as a function of $g$.
Moreover, singularities of $\Eisen_{\bP}\bk{f,\lambda,\mu,g}$ lie along hyperplanes in $\mathfrak{a}_{\bM,\C}^\ast$ and the iterated residues along these singularities give rise to automorphic forms too.
By residues here, and throughout this manuscript, we mean the lowest coefficient in the Laurent series and not necessarily the $-1$ one.

If $\bP$ is a maximal parabolic subgroup of $\bG$, we denote sections of $\DPS\bP\bk{s,\chi}$ by $f_s$ and the \textbf{degenerate Eisenstein series} $\Eisen_{\bP}\bk{f,\lambda_s^\bP,\mu_\chi,g}$ by $\Eisen_{\bP}\bk{f,s,\chi,g}$.

We have the following connection between $\Eisen_{\bP}$ and $\Eisen_{\bB}$ (see \cite[Proposition 2.7]{SegalResiduesD4} for a proof).
\begin{Prop}
	\label{Prop:Equality_of_Eisenstein_series}
	\begin{enumerate}
	\item For any $f\in \DPS{\bP}\bk{\lambda,\mu}$	it holds that
	\begin{equation}
	\label{Eq:Equality_of_Degenerate_Eisenstein_Series_on_P_and_on_B}
	\Eisen_{\bP}\bk{f,\lambda,\mu,g} = \Eisen_{\bB}\bk{f,\lambda\otimes\FNorm{\rho_{\bB}-\rho_{\bP}},\mu,g} .
	\end{equation}
	
	\item For any $f\in \DPS{\bP}\bk{\lambda,\mu}$	there exists a section $\widetilde{f}\in \DPS{\bB}\bk{\lambda\otimes\FNorm{\rho_{\bP}-\rho_{\bB}},\mu}$ such that
	$M\bk{w_{\bM,l},\lambda,\mu}f=\widetilde{f}$ and
	\begin{equation}
	\Eisen_{\bP}\bk{f,\lambda,\mu,g} =
	\lim\limits_{\lambda'\to\lambda}
	\bk{\prodl_{\alpha\in\Phi_\bM^{+}} \bk{\gen{\lambda',\check{\alpha}}-1}} \Eisen_{\bB}\bk{\tilde{f},\lambda'\otimes\FNorm{\rho_{\bP}-\rho_{\bB}},\mu,g} ,
	\end{equation}
	where here $\lambda\in\mathfrak{a}_{\bM,\C}^\ast$ and $\lambda'\in\mathfrak{a}_{\bT,\C}^\ast$.
	\end{enumerate}
\end{Prop}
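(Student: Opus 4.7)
The plan is to prove both parts by exploiting the natural relationship between $\DPS{\bP}(\lambda,\mu)$ and $\DPS{\bB}$ with an appropriately shifted parameter, together with standard unfoldings of Eisenstein series.

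For the first part, I would use that any standard section $f\in \DPS{\bP}(\lambda,\mu)$, by the embedding in \Cref{Eq:Natural_Embedding_and_Surjection_to_Borel}, can be viewed as a section of $\DPS{\bB}(\lambda\otimes\FNorm{\rho_{\bB}-\rho_{\bP}},\mu)$. Since $f$ is $\bP(F)$-invariant on the left (using that the Hecke character $\mu$ is trivial on $\bM(F)$ and that the Harish-Chandra map is trivial on rational points), the sum over $\bB(F)\backslash\bG(F)$ formally defining $\Eisen_{\bB}(f,\cdot,\mu,g)$ collapses through the fibration $\bB(F)\backslash\bG(F)\to\bP(F)\backslash\bG(F)$, and agrees with the sum over $\bP(F)\backslash\bG(F)$ defining $\Eisen_{\bP}(f,\lambda,\mu,g)$. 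Absolute convergence in the positive shifted Weyl chamber $\mathfrak{F}_M^{+}$ justifies the rearrangement, and the identity then propagates to all of $\mathfrak{a}_{\bM,\C}^\ast$ by meromorphic continuation.

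For the second part, the strategy is to realize $\Eisen_{\bP}(f,\lambda,\mu,g)$ as a regularized limit of $\Eisen_{\bB}$. I would first verify that $\widetilde{f}=M(w_{\bM,l},\lambda,\mu)f$ lies in $\DPS{\bB}(\lambda\otimes\FNorm{\rho_{\bP}-\rho_{\bB}},\mu)$: the longest Weyl element $w_{\bM,l}$ of $\bM$ fixes $\mathfrak{a}_{\bM,\C}^\ast$ pointwise, preserves the Hecke character $\mu$, and satisfies $w_{\bM,l}(\rho_{\bP}-\rho_{\bB})=\rho_{\bB}-\rho_{\bP}$ (using $\rho_{\bB}=\rho_{\bM}+\rho_{\bP}$ together with $w_{\bM,l}(\rho_{\bM})=-\rho_{\bM}$). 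Holomorphy of the intertwining operator at this parameter follows from standard holomorphy results for the normalized operator on the dominant chamber. Next, I would form $\Eisen_{\bB}(\widetilde{f},\lambda'\otimes\FNorm{\rho_{\bP}-\rho_{\bB}},\mu,g)$ for $\lambda'\in\mathfrak{a}_{\bT,\C}^\ast$ close to $\lambda$ and analyze its pole structure. The central point is that $\Eisen_{\bB}$ acquires simple poles along the hyperplanes $\gen{\lambda',\check{\alpha}}=1$ for $\alpha\in\Phi_{\bM}^{+}$, originating from the intertwining operators along $w_\alpha$; these poles are exactly killed by the prefactor $\prodl_{\alpha\in\Phi_{\bM}^{+}}(\gen{\lambda',\check{\alpha}}-1)$. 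Identifying the resulting finite limit with $\Eisen_{\bP}(f,\lambda,\mu,g)$ then proceeds by combining part (1) with the cocycle relation for intertwining operators, which packages the iterated residue across the positive roots of $\bM$ into the single operator $M(w_{\bM,l},\lambda,\mu)$.

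The main obstacle will be the precise pole analysis of $\Eisen_{\bB}(\widetilde{f},\lambda'\otimes\FNorm{\rho_{\bP}-\rho_{\bB}},\mu,g)$ at $\lambda'=\lambda$. One must show that the pole order is exactly $|\Phi_{\bM}^{+}|$ and identify the leading coefficient explicitly. This requires the constant term formula, the Gindikin-Karpelevich formula for the unramified factors, careful bookkeeping of which Weyl elements contribute simple poles versus holomorphic terms (only elements of $W_{\bM}$ contribute to the relevant residues), and the verification that the normalized intertwining operators $N_v(w_\alpha,\lambda',\mu_v)$ remain holomorphic at $\gen{\lambda',\check{\alpha}}=1$ for $\alpha\in\Phi_{\bM}^{+}$.
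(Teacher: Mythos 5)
Your treatment of part~(1) has a genuine gap. You assert that the left $\bP(F)$-invariance of $f$ makes the sum over $\bB(F)\lmod\bG(F)$ ``collapse through the fibration'' to the sum over $\bP(F)\lmod\bG(F)$. But the fibers of that fibration are $\bB(F)\lmod\bP(F)\cong \bB_\bM(F)\lmod\bM(F)$, the full flag variety of $\bM$ over the number field, which is infinite whenever $\bM\neq\bT$. Since every factor of $\delta_\bP^{1/2}\lambda\mu$ is trivial on rational points, $\bP(F)$-invariance makes $\gamma\mapsto f(\gamma g)$ \emph{constant} on each such fiber, so the iterated sum $\suml_{\delta\in\bP(F)\lmod\bG(F)}\suml_{\eta\in\bB(F)\lmod\bP(F)} f(\eta\delta g)$ repeats each $f(\delta g)$ infinitely often; it diverges rather than collapsing. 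This is consistent with the location of the parameter: the natural embedding places $\langle\cdot,\check{\alpha}\rangle$ at $-1$ for $\alpha\in\Delta_\bM$, far outside the chamber $\mathfrak{F}^{+}$ in which the Borel Eisenstein series converges absolutely, so the appeal to ``absolute convergence in $\mathfrak{F}_\bM^{+}$'' is not available for $\Eisen_\bB$. The statement in~(1) has to be read through the meromorphic continuation of $\Eisen_\bB$ along holomorphic families of sections, and it follows from the quantitative statement in~(2) --- the prefactor $\prodl_{\alpha\in\Phi_\bM^{+}}(\langle\lambda',\check{\alpha}\rangle-1)$ cancels exactly the order-$\Card{\Phi_\bM^{+}}$ singularity --- rather than from an elementary rearrangement.

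Your outline of~(2) is of the right shape but, as you acknowledge, the crucial pole bookkeeping is not carried out. Two points to be careful about: first, the statement that ``only elements of $W_\bM$ contribute'' is too coarse --- in the constant term formula for $\Eisen_\bB$ one sums over all of $W$, and one must split over cosets of $W_\bM$ and track which combinations of Gindikin--Karpelevich factors are singular versus vanishing; second, the operator $M(w_{\bM,l},\lambda',\mu)$ itself vanishes on the spherical vector to the appropriate order as $\lambda'$ approaches the wall, and these zeros are precisely what makes the indeterminate form $\lim\, 0\cdot\infty$ converge; this must be made explicit. There is also an internal inconsistency to resolve: you compute $w_{\bM,l}(\rho_\bP-\rho_\bB)=\rho_\bB-\rho_\bP$, which places $\widetilde{f}=M(w_{\bM,l},\lambda,\mu)f$ in $\DPS{\bB}(\lambda\otimes\FNorm{\rho_\bB-\rho_\bP},\mu)$, not $\DPS{\bB}(\lambda\otimes\FNorm{\rho_\bP-\rho_\bB},\mu)$ as you then claim, so the verification step is not verifying what it asserts; the shift in the target of the intertwining operator needs to be tracked more carefully.
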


\begin{Rem}
	Item \textit{(2)} can be understood as $\Eisen_{\bP}\bk{f,\lambda,\mu,g}$ being realized as an iterated residue of $\Eisen_{\bB} \bk{\tilde{f}, \lambda'\otimes\FNorm{\rho_{\bB}-\rho_{\bP}}, \mu, g}$.
	Namely, if we write $\Phi_\bM^{+}=\set{\beta_1,...,\beta_n}$
	\[
	\frakh_k = \set{\lambda'\in \mathfrak{a}_{\bT,\C}^\ast \mvert \gen{\lambda',\check{\beta_k}}=1},
	\]
	then the iterated residue 
	\begin{equation}
		\Res{\frakh_{n}\cap...\cap\frakh_1}
		\bk{\Res{\frakh_{n-1}\cap...\cap\frakh_1}
			\bk{ ... \Res{\frakh_1}
				\bk{\Eisen_{\bB}}}}
	\end{equation}
	equals (up to a scalar) to $\Eisen_{\bP}\bk{f,\lambda,\mu,g}$.
	
\end{Rem}

If $\bP=\bP_i$ is a maximal Levi subgroup of $\bG$, we denote
\[
\Eisen_{\bP}\bk{f,s,\chi,g} = \Eisen_{\bP}\bk{f,\lambda_{s}^{\bP},\mu_\chi,g} .
\]

The constant term of $\Eisen_\bP\bk{f,\lambda,\mu,g}$ along $\bB$ is given by
\[
\Eisen_\bP\bk{f,\lambda,\mu,g}_\bCT = \intl_{\bN\bk{F}\lmod \bN\bk{\AA}} \Eisen_\bP\bk{f,\lambda,\mu,ug} \, du .
\]
When restricted to $\bT\bk{\AA}$, this is an automorphic form on $\bT\bk{\AA}$.
However, it is also a function of $\bG\bk{\AA}$ and the map
\[
f_{\lambda_0} \mapsto \Eisen_\bP\bk{f,\lambda_0,\mu,\cdot}_\bCT
\]
is $\bG\bk{\AA}$-equivariant when $\Eisen_\bP\bk{f,\lambda,\mu,\cdot}_\bCT$ is holomorphic at $\lambda_0$.
At singular points, the iterated residue of the constant term is $\bG\bk{\AA}$-equivariant.

The constant term formula, computed as in \cite{MR1469105}, is given as follows:
\begin{equation}
\label{Eq:Constant_term}
\Eisen_\bP\bk{f,\lambda,\mu,g}_\bCT = \suml_{w\in W\bk{\bM,\bG}} M\bk{w,\lambda,\mu} f_\lambda \bk{g}.
\end{equation}

The analytic behavior of $\Eisen_{\bP}\bk{f,s,\chi}$ can be studied via that of $\Eisen_{\bP}\bk{f,s,\chi}_\bCT$.
The constant term formula can also be used to study the residual representation of $\Eisen_{\bP}\bk{f,s,\chi}$ at singular points.
More precisely, we have the following:
\begin{Prop}
	\label{NonvanishingifCTisNonvanishing}
	\label{Cor:Kernel_of_Series_is_kernel_of_CT}
	
	Let $\bP=\bP_i$ denote a maximal parabolic subgroup of $\bG$ and assume that $\Eisen_{\bP}\bk{f,s,\chi}$ admits a pole of order $m$ at $s_0\in\C$.
	
	\begin{enumerate}
		\item Let $f_s\in \DPS{\bP}\bk{s,\chi}$ be a holomorphic section, let
		\[
		\varphi\bk{g} = \lim_{s=s_0} \coset{\bk{s-s_0}^m \Eisen_{\bP}\bk{f,s,\chi,g}} .
		\]
		and let $\varphi_{\bCT}$ denote the constant term of $\varphi$ along $\bN$.
		Then, $\varphi\equiv 0$ if and only if $\varphi_{\bCT}\equiv 0$.
		
		\item As $\C$-vector spaces,
		\begin{equation}
		\label{Eq:Kernel_of_Series_is_kernel_of_CT}
		\begin{array}{l}
		Span_{\C}\set{\lim\limits_{s\to s_0} \bk{s-s_0}^m \Eisen_\bP\bk{f,s,\chi} \mvert f_s\in \DPS{\bP}\bk{s,\chi}} \\
		\cong Span_{\C} \set{\lim\limits_{s\to s_0} \bk{s-s_0}^m \suml_{w\in W\bk{\bM,\bG}} M\bk{w,\lambda_{s,\chi}^{\bP_i}}f_s\res{\bT\bk{\AA}} \mvert f_s\in \DPS{\bP}\bk{s,\chi}} .
		\end{array}
		\end{equation}
	\end{enumerate}
\end{Prop}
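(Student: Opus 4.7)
Both assertions concern the faithfulness of the constant term map on the space of residues of $\Eisen_\bP\bk{f,s,\chi}$ at $s_0$: part (1) regards $\varphi_\bCT$ as a function on $\bG\bk{\AA}$, while part (2) further restricts to $\bT\bk{\AA}$. The forward implications ($\varphi \equiv 0$ kills the right-hand side in either case) are trivial, so the content is in the reverse directions. My plan is to prove (1) via cuspidal-support arguments in $\cA\bk{\bG\bk{\AA}}$, and then deduce (2) as a short consequence of (1) combined with a linear-independence argument on $\bT$.

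\textbf{Plan for (1).} The key is to place $\varphi$ inside a specific cuspidal component of the automorphic spectrum and to exploit the dichotomy \emph{cuspidal versus non-cuspidal block}. By item (2) of Proposition 2.7 in \cite{SegalResiduesD4}, $\varphi$ can be rewritten as an iterated residue of the Borel Eisenstein series $\Eisen_\bB\bk{\widetilde{f},\lambda',\mu,\cdot}$; hence the cuspidal support of $\varphi$ is the class $\coset{\bT,\mu'}$, where $\mu'\otimes\lambda_0'$ is the initial exponent attached to the pair $\bk{\bP,s_0}$. Since $\bT \neq \bG$, this block is non-cuspidal. I then invoke the standard spectral theory of Eisenstein series (see \cite{MR1361168}, Section I.3, or \cite{MR0579181}) that a non-zero automorphic form with cuspidal support $\coset{\bT,\mu'}$ must have non-vanishing constant term along $\bB$. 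Contrapositively, $\varphi_\bCT \equiv 0$ implies $\varphi$ is cuspidal, but its cuspidal support is non-cuspidal, forcing $\varphi \equiv 0$.

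\textbf{Plan for (2).} Define $\Psi\bk{\varphi} = \varphi_\bCT\res{\bT\bk{\AA}}$; by the constant term formula \Cref{Eq:Constant_term}, this is a well-defined surjection from the left-hand to the right-hand span of \Cref{Eq:Kernel_of_Series_is_kernel_of_CT}. For injectivity, one reduces to showing that the kernel of $f_s \mapsto \varphi$ coincides with the kernel of $f_s \mapsto \varphi_\bCT\res{\bT\bk{\AA}}$. Decomposing $\varphi_\bCT = \suml_{w\in W\bk{\bM,\bG}} g_w$, where $g_w$ is the iterated residue of $M\bk{w,\lambda_{s,\chi}^{\bP_i}}f_s$ and lies in the principal series $\DPS{\bB}\bk{w\cdot\lambda_{s_0,\chi}^{\bP_i}}$, the distinct characters $w\cdot\bk{\lambda_{s_0,\chi}^{\bP_i}+\mu_\chi}$ of $\bT\bk{\AA}$ arising from different Weyl elements are linearly independent; vanishing of $\varphi_\bCT\res{\bT\bk{\AA}}$ therefore forces each $g_w\bk{1}=0$. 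The full identity $g_w \equiv 0$ is then extracted by applying the $\bG\bk{\AA}$-equivariance of the residue map and the Iwasawa decomposition $\bG\bk{\AA}=\bN\bk{\AA}\bT\bk{\AA}\bK$: right-translating $f_s$ by $k\in \bK$ reduces $g_w\bk{k}=0$ to the identity case already handled, yielding $\varphi_\bCT \equiv 0$ and hence, by part (1), $\varphi \equiv 0$.

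\textbf{Main obstacle.} The principal difficulty lies in part (1): the invocation of the spectral dichotomy requires a rigorous identification of the cuspidal-support class of the iterated residue $\varphi$, which in general need not be square-integrable and so does not lie in $L^2_\bG$. The cleanest path is to work within $\cA\bk{\bG\bk{\AA}}$ via a Franke-type filtration by cuspidal support, and carefully track the initial exponent $\mu'$ of $\DPS{\bP}\bk{s_0,\chi}$ through the unfolding of Proposition 2.7(2). A secondary technical issue in (2) is ensuring that the characters $w\cdot\bk{\lambda_{s_0,\chi}^{\bP_i}+\mu_\chi}$ are indeed pairwise distinct as characters of $\bT\bk{\AA}$; coincidences may occur at special points $s_0$, in which case one must group summands with equal characters and argue via the $\bK$-finite structure of the resulting principal-series components separately.
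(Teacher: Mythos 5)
Your plan for part (1) is sound and is the standard route. The forward direction is trivial; for the converse you observe that $\varphi$ is an iterated residue of $\Eisen_\bB$, hence lies in the block of automorphic forms with cuspidal support $\coset{\bT,\mu'}$, and a nonzero form in a non-cuspidal block cannot have identically vanishing Borel constant term. The precise justification (an induction over parabolic ranks, or the filtration of $\cA\bk{\bG\bk{\AA}}$ by cuspidal support) is indeed a standard fact in Moeglin--Waldspurger, and your caveat about $\varphi$ not being square-integrable is exactly the right technical point to raise. No objection to this part.

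Part (2) as you argue it has a genuine gap, and moreover your argument misses a structural obstruction that should have given you pause. First, the linear-independence step only yields $g_w\bk{1}=0$ (and, as you yourself note, only $\sum_{w'\sim w} g_{w'}\bk{1}=0$ after grouping by the equivalence classes $\sim_{s_0,\chi}$ from \Cref{Chap:Poles_of_Eisen_ser}); this does not give $g_w\equiv 0$ as a function on $\bG\bk{\AA}$. Your proposed fix --- right-translate by $k\in\bK$ to get $g_w\bk{k}=0$ --- does not close: $g_w\bk{k}=\bk{\rho\bk{k}g_w}\bk{1}$ is the value at the identity of the constant term attached to the \emph{translated} section $\rho\bk{k}f_s$, and your hypothesis $\varphi_\bCT\res{\bT\bk{\AA}}\equiv 0$ concerns the original $f_s$ only. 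You would need $\bk{\varphi_{\rho\bk{k}f}}_\bCT\res{\bT\bk{\AA}}\equiv 0$ for every $k$, which is a different and stronger statement. Second, and more fundamentally: for a fixed $f_s$ the function $\varphi_\bCT\res{\bT\bk{\AA}}$ is a finite linear combination of the automorphic characters $w\cdot\lambda_{s_0,\chi}^{\bP_i}+\rho_\bB$ of $\bT\bk{\AA}$ (plus possibly logarithmic terms from higher-order Laurent coefficients), so the right-hand span in \Cref{Eq:Kernel_of_Series_is_kernel_of_CT} is \emph{finite-dimensional}, of dimension at most $\Card{W\bk{\bM,\bG}}$. By contrast, whenever the residue is nonzero the left-hand span is an infinite-dimensional $\bG\bk{\AA}$-subquotient of $\DPS{\bP}\bk{s_0,\chi}$. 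Thus the naive restriction map $\varphi\mapsto\varphi_\bCT\res{\bT\bk{\AA}}$ cannot be a $\C$-linear isomorphism between these two spans, and your proof (which is organized around showing precisely this map is injective) is aiming at an impossible target. What is actually true and usable --- and what the paper's later arguments rely on --- is that the constant-term map $\varphi\mapsto\varphi_\bCT$ (a $\bG\bk{\AA}$-equivariant map to $\bN\bk{\AA}$-invariant functions on $\bG\bk{\AA}$) is injective, which is your part (1); the ``restriction to $\bT\bk{\AA}$'' should then be read as keeping track of the $\bT\bk{\AA}$-isotypic decomposition of $\varphi_\bCT$ rather than as literal evaluation on $\bT\bk{\AA}$. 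A correct proof of part (2) should be reformulated along those lines: identify the residue with the image of the constant-term map inside a finite direct sum of (subquotients of) principal series $\DPS{\bB}\bk{w\cdot\lambda_{s_0,\chi}^{\bP_i}}$ indexed by the equivalence classes $\coset{w}$, and observe that this image is determined by the scalar coefficients $\lim_{s\to s_0}\bk{s-s_0}^m\suml_{w'\in\coset{w}}M\bk{w',\lambda_{s,\chi}^{\bP_i}}f_s\bk{1}$ \emph{together with} the $\bG\bk{\AA}$-action --- not by the restriction to $\bT\bk{\AA}$ alone.
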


%

\section{Entire $W$-invariant Spherical Normalized Eisenstein Series}

We consider the following normalization of the spherical Eisenstein series as follows:
\begin{equation}
\label{qq:Normalized_Eisenstein_Series}
\Eisen_{B}^\sharp\bk{\lambda,g} = 
\coset{\prodl_{\alpha\in\Phi^+} \zfun\bk{l_\alpha^+\bk{\lambda}} l_\alpha^+\bk{\lambda} l_\alpha^-\bk{\lambda}}
\Eisen_{B}\bk{f^0_\lambda,\lambda,g} ,
\end{equation}
where $\lambda\in \mathfrak{a}_{\bT,\C}^\ast$ and
\[
l_\alpha^\pm\bk{\lambda} = \gen{\lambda,\check{\alpha}} \pm 1 .
\]

It holds that:
\begin{Prop}
	\label{Prop:W_inv_entire_Eisen_Ser}
	The normalized Eisenstein series $\Eisen_{B}^\sharp\bk{\lambda,g}$ is entire and $W$-invariant in the sense that for any $w\in W$ it holds that $\Eisen_{B}^\sharp\bk{w\cdot \lambda,g}=\Eisen_{B}^\sharp\bk{\lambda,g}$.
\end{Prop}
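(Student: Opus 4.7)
The plan is to establish $W$-invariance via the functional equations of the Eisenstein series and of $\zfun$, and then to deduce entireness by combining $W$-invariance with holomorphy on the positive Weyl chamber. For $W$-invariance I would derive two transformation laws: the Eisenstein functional equation
\[
\Eisen_B\bk{f_\lambda^0,\lambda,g} = J\bk{w,\lambda}\, \Eisen_B\bk{f_{w\lambda}^0,w\lambda,g} ,
\]
which follows from the intertwining operator identity $M\bk{w,\lambda} f_\lambda^0 = J\bk{w,\lambda} f_{w\lambda}^0$ (the global Gindikin--Karpelevich formula) together with the standard functional equation of Eisenstein series; and the transformation law $N\bk{w\lambda} = J\bk{w,\lambda}\, N\bk{\lambda}$ for the normalizing factor $N\bk{\lambda} = \prodl_{\alpha \in \Phi^+} \zfun\bk{l_\alpha^+\bk{\lambda}} l_\alpha^+\bk{\lambda} l_\alpha^-\bk{\lambda}$. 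To derive the latter I would reindex via $\beta = w^{-1}\alpha$ and split $w^{-1}\Phi^+ = \bk{\Phi^+ \setminus R\bk{w}} \sqcup \bk{-R\bk{w}}$: the polynomial product $l^+ l^-$ is invariant under $\gamma \mapsto -\gamma$ since $l_{-\gamma}^\pm = -l_\gamma^\mp$, while applying $\zfun\bk{s} = \zfun\bk{1-s}$ to $\zfun\bk{l_{-\gamma}^+\bk{\lambda}} = \zfun\bk{1-\gen{\lambda,\check\gamma}}$ converts it into $\zfun\bk{\gen{\lambda,\check\gamma}}$, so that the resulting ratio over $\gamma \in R\bk{w}$ matches $J\bk{w,\lambda}$ exactly. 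Multiplying the two laws yields $\Eisen_B^\sharp\bk{w\lambda,g} = \Eisen_B^\sharp\bk{\lambda,g}$.

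For entireness, observe that on $\mathfrak{F}^+$ the Eisenstein series is absolutely convergent and the normalizing factor is holomorphic (since $\Real\bk{l_\alpha^+\bk{\lambda}} > 2$ keeps all arguments of $\zfun$ strictly to the right of its two poles at $s \in \set{0,1}$), so $\Eisen_B^\sharp$ is holomorphic on $\mathfrak{F}^+$. Analyzing the constant term
\[
\Eisen_B^\sharp\bk{\lambda,g}_\bCT = \suml_{w \in W} N\bk{\lambda} J\bk{w,\lambda} f_{w\lambda}^0\bk{g}
\]
and cancelling the common $\zfun\bk{l_\gamma^+\bk{\lambda}}$ factors between $N\bk{\lambda}$ and $J\bk{w,\lambda}^{-1}$ for $\gamma \in R\bk{w}$, one sees that the only candidate singular hyperplanes of $\Eisen_B^\sharp$ are of the form $\set{\gen{\lambda,\check\alpha} = c}$ with $\alpha \in \Phi^+$ and $c \in \set{-1,0,1}$: the $\zfun$-pole of $N\bk{\lambda}$ at $\gen{\lambda,\check\alpha} = -1$ is cancelled by the zero of $l_\alpha^+$, the Eisenstein pole at $\gen{\lambda,\check\alpha} = 1$ is cancelled by the zero of $l_\alpha^-$, and the remaining candidate at $\gen{\lambda,\check\alpha} = 0$ is excluded using the $W$-invariance already established: on the $s_\alpha$-fixed hyperplane $H_\alpha = \set{\gen{\lambda,\check\alpha} = 0}$, the residue function $\varphi$ of any simple pole must satisfy $\varphi\bk{\lambda} = -\varphi\bk{s_\alpha\lambda} = -\varphi\bk{\lambda}$ on $H_\alpha$, forcing $\varphi \equiv 0$; higher-order poles are excluded by iterating the argument on the Laurent expansion along $\gen{\lambda,\check\alpha}$.

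The main obstacle will be to rule out singularities of $\Eisen_B^\sharp$ along hyperplanes other than $\set{\gen{\lambda,\check\alpha} \in \set{-1,0,1}}$, in particular those that could in principle arise from non-trivial zeros of $\zfun$ appearing in the denominators of the $J\bk{w,\lambda}$. Such apparent singularities must cancel after summation over $w \in W$, and the cleanest way to see this is again via $W$-invariance: any hyperplane that meets $\bigcup_{w \in W} w\mathfrak{F}^+$ cannot be singular (by holomorphy on the Weyl-translates of the positive chamber), while any hyperplane that does not meet this open set is forced by $W$-invariance of the singular divisor into the list above, which we have already controlled.
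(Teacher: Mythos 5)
Your overall strategy and the key algebraic identity $N\bk{w\cdot\lambda}=J\bk{w,\lambda}\,N\bk{\lambda}$, where $N\bk{\lambda}=\prodl_{\alpha\in\Phi^+}\zfun\bk{l_\alpha^+\bk{\lambda}}\,l_\alpha^+\bk{\lambda}\,l_\alpha^-\bk{\lambda}$, are sound (the paper itself does not prove the Proposition but defers to an external reference). Two steps, however, do not hold up as written. First, excluding a pole along $H_\alpha=\set{\gen{\lambda,\check{\alpha}}=0}$ via $s_\alpha$-invariance only kills odd Laurent coefficients: writing $z=\gen{\lambda,\check{\alpha}}$, the symmetry forces $\varphi_j=(-1)^j\varphi_j$ on $H_\alpha$, which says nothing about a double pole, and ``iterating'' gets stuck the moment the leading index is even. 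You need an a priori bound on the order of the pole, and it falls out of the computation you already performed: since $N\bk{\lambda}J\bk{w,\lambda}=N\bk{w\cdot\lambda}$, the constant term becomes $\suml_{w\in W}N\bk{w\cdot\lambda}f^0_{w\cdot\lambda}$, and at a generic point of $H_\alpha$ each summand $N\bk{w\cdot\lambda}$ has at most a \emph{simple} pole, coming from the single factor $\zfun\bk{\gen{w\cdot\lambda,\check{\beta}}+1}$ with $\beta=\pm w\alpha\in\Phi^+$ whose argument is $1$ there. So the constant term, hence $\Eisen_B^\sharp$, has order at most one along $H_\alpha$, and your parity argument then finishes it.

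Second, the ``main obstacle'' in your last paragraph is moot and the argument you offer to dispose of it would not work anyway. After the cancellation (between $N\bk{\lambda}$ and the denominator of $J\bk{w,\lambda}$ — not of $J\bk{w,\lambda}^{-1}$, as you wrote), the product $N\bk{\lambda}J\bk{w,\lambda}=N\bk{w\cdot\lambda}$ has no $\zfun_F$ in any denominator, so zeros of $\zfun_F$ cannot create poles at all; the worry is already resolved by your second paragraph with no extra work. And the fallback argument does not close the supposed gap: $W$-invariance of the singular divisor does \emph{not} by itself force a hyperplane disjoint from $\bigcup_{w}w\mathfrak{F}^+$ into the list $\gen{\lambda,\check{\alpha}}\in\set{-1,0,1}$ — the $W$-orbit of a hyperplane $\gen{\lambda,\check{\alpha}}=\tfrac12+it_0$ also avoids $\bigcup_{w}w\mathfrak{F}^+$ and is not of that form, so the dichotomy you invoke produces no contradiction.
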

For a proof, see \cite[Appendix C]{gurevich_segal_2019}.

\section{An Extension to a Result of Keys-Shahidi}

We recall \cite[Proposition 6.3]{MR944102}.
For $\bG=SL_2$, let $w_0$ denote the non-trivial element in $W_\bG$.
It holds that
\[
M_{w_0}\bk{\overline{0}} = -Id ,
\]
where $\overline{0}$ denotes the zero vector in $\mathfrak{a}^\ast_{\bT,\C}$.
In \cite{MR944102}, it is stated and proven as result on an intertwining operator between automorphic principal series representations of $SL_2\bk{\AA}$.
Essentially, the proof boils down to the fact that the Eisenstein series $\Eisen_{\bP}\bk{f,\lambda,\Id,g}$ admits a simple zero at $\lambda=\overline{0}$ and hence also $\Eisen_\bP\bk{f,\lambda,\Id,g}_\bCT$.
It follows that
\[
\lim\limits_{\lambda \to \overline{0}} \Eisen_\bP\bk{f,\lambda,\Id,g}_\bCT = \lim\limits_{\lambda\to\overline{0}} \bk{I+M\bk{w_0,\lambda,\Id}} f = 0,
\]
from which we deduce the identity $M\bk{w_0,\overline{0},\Id}=-Id$.

We use \Cref{Prop:W_inv_entire_Eisen_Ser} to generalize this result.
\begin{Cor}
	\label{Cor:Extension_to_KS}
	The sum
	\[
	\suml_{w\in W_\bG} M\bk{w,\lambda,\Id}
	\]
	admits a zero of order at least $\Card{\Phi_\bG^{+}}$ at $\lambda = \overline{0}$.
\end{Cor}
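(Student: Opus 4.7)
The plan is to mimic the argument sketched in the excerpt for $SL_2$, upgrading the input from the simple-zero fact (one positive root) to the full strength of \Cref{Prop:W_inv_entire_Eisen_Ser}. The key observation is that the normalization prefactor
\[
D(\lambda) = \prodl_{\alpha\in\Phi^+} \zfun\bk{l_\alpha^+\bk{\lambda}}\, l_\alpha^+\bk{\lambda}\, l_\alpha^-\bk{\lambda}
\]
carries a large pole at $\lambda=\overline{0}$: since $\zfun$ has a simple pole at $s=1$ and $l_\alpha^+\bk{\lambda}=\gen{\lambda,\check\alpha}+1$, each factor $\zfun\bk{l_\alpha^+\bk{\lambda}}$ contributes a polar factor $\sim \gen{\lambda,\check\alpha}^{-1}$, while $l_\alpha^+\bk{\overline 0}\,l_\alpha^-\bk{\overline 0} = -1$ is a regular non-vanishing contribution. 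Hence the leading singularity of $D\bk{\lambda}$ at $\overline{0}$ is proportional to $\prod_{\alpha\in\Phi^+} \gen{\lambda,\check\alpha}^{-1}$, a meromorphic function vanishing to total degree $-\Card{\Phi_\bG^+}$.

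First I would use this together with the entireness of $\Eisen_B^\sharp\bk{\lambda,g}$: since $\Eisen_B^\sharp\bk{\lambda,g} = D\bk{\lambda}\,\Eisen_B\bk{f^0_\lambda,\lambda,g}$ is holomorphic at $\overline 0$ and $D\bk{\lambda}$ has a pole whose leading part is proportional to $\bk{\prod_{\alpha\in\Phi^+}\gen{\lambda,\check\alpha}}^{-1}$, the spherical Eisenstein series $\Eisen_B\bk{f^0_\lambda,\lambda,g}$ must be divisible as a holomorphic function by $\prod_{\alpha\in\Phi^+}\gen{\lambda,\check\alpha}$, and therefore vanishes at $\lambda=\overline{0}$ to order at least $\Card{\Phi_\bG^+}$ (in the graded sense of the lowest-degree nonzero Taylor term). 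Next I would pass to the constant term via \Cref{Eq:Constant_term}:
\[
\Eisen_B\bk{f^0_\lambda,\lambda,\Id,g}_{\bCT} = \suml_{w\in W_\bG} M\bk{w,\lambda,\Id} f^0_\lambda\bk{g} .
\]
Taking constant term commutes with Taylor expansion in $\lambda$, so the right-hand side, as a function of $\lambda$ for every fixed $g$, inherits the same zero of order at least $\Card{\Phi_\bG^+}$ at $\overline 0$.

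Finally I would extend the vanishing from the spherical section to an arbitrary holomorphic section. For any $g_0\in \bG\bk{\AA}$ the relation $M\bk{w,\lambda,\Id}\circ\pi_\lambda\bk{g_0} = \pi_{w\cdot\lambda}\bk{g_0}\circ M\bk{w,\lambda,\Id}$ shows that the value of $\sum_w M\bk{w,\lambda,\Id}\bk{\pi_\lambda\bk{g_0}f^0_\lambda}$ at a point $g$ equals $\sum_w \bk{M\bk{w,\lambda,\Id}f^0_\lambda}\bk{g g_0}$, which by the previous step vanishes to order at least $\Card{\Phi_\bG^+}$ at $\overline 0$. For generic $\lambda$ the principal series $I_B\bk{\lambda,\Id}$ is irreducible, so the $\bG\bk{\AA}$-orbit of $f^0_\lambda$ spans a dense subspace; taking finite linear combinations and invoking meromorphic continuation in $\lambda$, the same order of vanishing propagates to every $\bK$-finite holomorphic section, which is the claim of \Cref{Cor:Extension_to_KS}.

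The main technical obstacle is this last transfer step: one must make precise in what functional-analytic sense ``the operator $\sum_w M\bk{w,\lambda,\Id}$ vanishes to order $\Card{\Phi_\bG^+}$''—whether as a pointwise-on-sections statement, or a $\bK$-type by $\bK$-type statement—and in either interpretation one must carefully handle the meromorphic behaviour of the individual $M\bk{w,\lambda,\Id}$'s (which have their own poles) so that the density-plus-continuity argument is valid on every $\bK$-isotypic subspace simultaneously. The earlier steps are essentially a bookkeeping exercise with the Laurent expansion of $D\bk{\lambda}$ and the constant-term formula.
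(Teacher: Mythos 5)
Your steps 1--4 are exactly the paper's proof: you compute that each factor $\zfun\bk{l_\alpha^+\bk{\lambda}}$ contributes a simple pole at $\overline 0$ while each $l_\alpha^+\bk{\lambda}l_\alpha^-\bk{\lambda}$ is holomorphic and non-vanishing there, conclude from the entireness of $\Eisen_B^\sharp$ that $\Eisen_B\bk{f^0_\lambda,\lambda,g}$ vanishes to order at least $\Card{\Phi_\bG^+}$, and then pass to the constant term via \Cref{Eq:Constant_term}. This is precisely what the paper does.

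Your step 5 is where you go beyond the paper: the paper's proof only evaluates $\sum_{w}M\bk{w,\lambda,\Id}$ on the spherical section $f^0_\lambda$, while the corollary, read as an operator statement (and that is how its model, \cite[Prop. 6.3]{MR944102}, is used), should apply to all $\bK$-finite holomorphic sections. The density/equivariance sketch you give is the natural route, but you are right to flag its difficulties: transferring an order-of-vanishing estimate along a density argument is not automatic, because the operator-valued function $\lambda\mapsto\sum_w M\bk{w,\lambda,\Id}$ has its own poles near $\overline 0$ on individual summands, and an order-of-vanishing statement is a derivative statement in $\lambda$, not a pointwise statement at a single $\lambda$. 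A cleaner way to close this would be to note that the partially normalized operators $\prodl_{\gamma\in R\bk{w}}\Lfun_{F_v}\bk{\gen{\lambda,\check\gamma}}^{-1}M_v\bk{w,\lambda,\Id}$ are entire and non-vanishing, decompose $M\bk{w,\lambda,\Id}f_\lambda$ as in \Cref{Eq:Global_GK_Formula}, and observe that the order of the zero or pole of $M\bk{w,\lambda,\Id}f_\lambda\bk{g}$ at $\overline 0$ agrees with that of $M\bk{w,\lambda,\Id}f^0_\lambda\bk{g}$ because the local normalized operators $N_v\bk{w,\lambda,\Id}$ are holomorphic near $\overline 0$ for the relevant sections. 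In any case, the gap you identify is genuinely present in the paper's own proof, so your added caution is well placed rather than a defect of your argument.
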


\begin{proof}
	We note that
	\begin{itemize}
		\item $\Eisen_{B}^\sharp\bk{\lambda,g}$ is entire and hence, in particular, holomorphic at $\lambda=\overline{0}$.
		
		\item The factors $l_\alpha^+\bk{\lambda} l_\alpha^-\bk{\lambda}$ are holomorphic and non-zero at $\lambda=\overline{0}$.
		
		\item The factors $\zfun\bk{l_\alpha^+\bk{\lambda}}$ admit a simple pole at $\lambda=\overline{0}$.
	\end{itemize}
	Hence, $\Eisen_{B}\bk{f^0_\lambda,\lambda,g}$ admits a zero of order at least $\Card{\Phi_\bG^{+}}$.
	This holds for $\Eisen_{B}\bk{f^0_\lambda,\lambda,g}_\bCT$ two.
	The claim then follows from \Cref{Eq:Constant_term}.
\end{proof}

\section{Decomposition of Principal Series Representations \`a la \v Zampera}
Fix a place $v\in\Places$.
We recall the main result of \cite{SegalSingularities} which generalized \cite[Lemma 3.1 and 3.2]{MR1485422}.

Let $M$ be a Levi subgroup of $G$ with $\beta\in\Delta_M$, let
\[
\lambda_0 = w_{\beta}\cdot\bk{-\varpi_\beta + \suml_{\alpha\in\Delta\setminus\Delta_M} a_\alpha \circ \alpha}
\]
and let $w=\s{\beta} w' \s{\beta}\in W_M$ be a Weyl element so that
\begin{itemize}
	\item $w'\in\Stab_{W_M}\bk{\s{\beta}\cdot\lambda_0}$.
	\item $\coset{\s{\beta},w'}\neq 1$.
	\item $N_{w',v}\bk{\s{\beta}\cdot\lambda}$ is holomorphic at $\lambda_0$ and satisfies
	\[
	N_{w',v}\bk{\s{\beta}\cdot\lambda_0} = Id .
	\]
\end{itemize}
Also, fix a line $\lambda_s$ so that
\[
\begin{array}{ccc}
\gen{\lambda_s,\check{\beta}} = 1 & \Leftrightarrow & \lambda_s=\lambda_0 \\
\gen{w\cdot\lambda_s,\check{\beta}} = 1 & \Leftrightarrow & \lambda_s=\lambda_0 .
\end{array}
\]

Then, the following holds:
\begin{Prop}
	\label{Prop:Zampera}
	The operator $N_{w,v}\bk{\lambda_s}$ is holomorphic along $\lambda_s$ near $s_0=0$.
	Furthermore,
	\[
	i_T^M\lambda_0 = \bk{i_{M_{\beta}}^M \Omega_0} \oplus \bk{i_{M_{\set{\beta}}}^M St_{M_{\set{\beta}}}\otimes\Omega_0},
	\]
	where $\Omega_0\in\bX\bk{M_{\set{\beta}}}$ satisfy $r_T^{M_{\set{\beta}}}\Omega_0=\lambda_0$.
	Let
	\[
	\Xi = \lim\limits_{s\to 0} N_{w,v}\bk{\lambda_s}
	\]
	and assume that the limit
	\[
	\kappa = -\lim\limits_{s\to 0} \frac{l_\beta^{-}\bk{\lambda_s}}{l_\beta^{-}\bk{w\cdot\lambda_s}}
	\]
	exists.
	If $\kappa\neq 1$, then $i_{M_{\beta}}^M \Omega_0$ is the $1$-eigenspace of $\Xi$ and $i_{M_{\set{\beta}}}^M St_{M_{\set{\beta}}}\otimes\Omega_0$ is the $\kappa$-eigenspace of $T$.
	
\end{Prop}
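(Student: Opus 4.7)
\medskip

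\noindent\textbf{Proof plan.} The plan is to separate the proof into three stages: the decomposition of $i_T^M\lambda_0$, the holomorphy of $N_{w,v}(\lambda_s)$, and the identification of the eigenvalues of the limit operator $\Xi$ on the two summands.

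\medskip

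For the decomposition, I would first observe that the hypothesis $\langle\lambda_s,\check{\beta}\rangle=1 \iff \lambda_s=\lambda_0$ forces $\langle \lambda_0,\check{\beta}\rangle=1$, so the rank-one induced representation $i_T^{M_{\{\beta\}}}(\lambda_0)$ sits at a reducibility point. Standard $SL_2$-theory on $M_{\{\beta\}}$ decomposes it as $\Omega_0 \oplus (St_{M_{\{\beta\}}}\otimes\Omega_0)$ with $r_T^{M_{\{\beta\}}}\Omega_0=\lambda_0$, and induction in stages from $M_{\{\beta\}}$ to $M$ yields the stated direct sum decomposition of $i_T^M\lambda_0$.

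\medskip

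For the holomorphy, I would apply the cocycle relation to the factorization $w=\s{\beta}w'\s{\beta}$, writing
\begin{equation}
N_{w,v}(\lambda_s) \;=\; N_{\s{\beta},v}(w'\s{\beta}\cdot\lambda_s)\;\circ\; N_{w',v}(\s{\beta}\cdot\lambda_s)\;\circ\; N_{\s{\beta},v}(\lambda_s).
\end{equation}
The middle factor is holomorphic at $s=0$ with value $\mathrm{Id}$ by hypothesis. The right factor is holomorphic at $s=0$ because $\langle\lambda_0,\check\beta\rangle=1>-1$. The left factor is the delicate one: since $w'$ fixes $\s{\beta}\cdot\lambda_0$, we have $w'\s{\beta}\cdot\lambda_0=\s{\beta}\lambda_0$ and $\langle\s{\beta}\lambda_0,\check\beta\rangle=-1$, i.e.\ we sit on the boundary of holomorphy. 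Here I would use the transversality hypothesis $\langle w\cdot\lambda_s,\check\beta\rangle=1 \iff s=0$ combined with the identity $N_{\s{\beta},v}(\s{\beta}\nu)\circ N_{\s{\beta},v}(\nu)=\mathrm{Id}$ (when defined, by the cocycle) to show that any simple pole from the left factor is cancelled by a matching simple zero, giving a finite limit. Concretely, moving the normalizing $L$-factors to scalars shows that the potential pole is of order one and is cancelled by the simple zero of $l_\beta^-(w\cdot\lambda_s)$ at $s=0$.

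\medskip

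For the eigenvalues of $\Xi$, I would use that passing to the limit in the cocycle gives $\Xi=N_{\s{\beta},v}(\s{\beta}\lambda_0)\circ N_{\s{\beta},v}(\lambda_0)$ as an \emph{endomorphism} of $i_T^M\lambda_0$ (noting $w\cdot\lambda_0=\lambda_0$). Since $N_{\s{\beta},v}$ commutes with the $M$-action and respects the decomposition $i_T^M\lambda_0=(i_{M_{\{\beta\}}}^M\Omega_0)\oplus(i_{M_{\{\beta\}}}^M St_{M_{\{\beta\}}}\otimes\Omega_0)$, it acts by a scalar on each summand by Schur's lemma. On the $\Omega_0$-isotypic piece the computation reduces to the $SL_2$-case where $N_{\s{\beta}}$ sends the ``trivial'' constituent at $\lambda_0$ isomorphically to the corresponding constituent at $\s{\beta}\lambda_0$ with scalar $1$, so the composition acts by $1$. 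On the Steinberg piece the analogous $SL_2$-computation, combined with the boundary behaviour of the normalization ratio $L(\langle\cdot,\check\beta\rangle+1)/L(\langle\cdot,\check\beta\rangle)$ as one approaches $\langle\lambda_s,\check\beta\rangle=1$, produces the scalar
\begin{equation}
-\lim_{s\to 0}\frac{l_\beta^-(\lambda_s)}{l_\beta^-(w\cdot\lambda_s)}\;=\;\kappa.
\end{equation}
When $\kappa\neq 1$ the two eigenspaces are distinct, which pins down the decomposition precisely.

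\medskip

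\noindent\textbf{Main obstacle.} The hardest step is the careful bookkeeping in the holomorphy proof and the extraction of $\kappa$: the left factor $N_{\s{\beta},v}(w'\s{\beta}\cdot\lambda_s)$ sits exactly on the hyperplane $\{\langle\cdot,\check\beta\rangle=-1\}$ at $s=0$, so one must control a simple pole and show it is balanced by a simple zero, all while tracking which $L$-factor ratios survive to scalars and which collapse to the identity. The minus sign (and the appearance of $l_\beta^-$) in $\kappa$ comes out of this balancing, and verifying it requires the transversality of $\lambda_s$ and $w\cdot\lambda_s$ to the critical hyperplane.
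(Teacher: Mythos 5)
The paper does not actually prove this proposition; it is quoted as ``the main result of \cite{SegalSingularities}'' and the reader is referred there for the argument. So there is no in-paper proof to compare against, and your attempt must be evaluated on its own.

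There is a genuine gap in your first stage, and it propagates through the rest of the argument. You assert that ``standard $SL_2$-theory on $M_{\{\beta\}}$ decomposes $i_T^{M_{\{\beta\}}}\lambda_0$ as $\Omega_0 \oplus (St_{M_{\{\beta\}}}\otimes\Omega_0)$,'' and that induction in stages then yields the stated decomposition of $i_T^M\lambda_0$. This is false at the rank-one level: at the reducibility point $\langle\lambda_0,\check\beta\rangle=1$, the representation $i_T^{M_{\{\beta\}}}\lambda_0$ is a \emph{non-split} extension of $\Omega_0$ by $St_{M_{\{\beta\}}}\otimes\Omega_0$ (Steinberg is the unique irreducible submodule, the character is the unique irreducible quotient), not a direct sum. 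Since exact functors preserve non-split filtrations but do not automatically split them, induction in stages only tells you that $i_T^M\lambda_0$ has $i_{M_{\{\beta\}}}^M(St\otimes\Omega_0)$ as a submodule and $i_{M_{\{\beta\}}}^M\Omega_0$ as a quotient. That this filtration actually \emph{splits} after inducing up to $M$ is precisely the non-trivial content of \v{Z}ampera's lemma and its generalization; the splitting is furnished by the projector one extracts from $\Xi$, so the direct sum decomposition cannot be established prior to and independently of the analysis of $\Xi$.

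A related problem appears in your third stage. You write $\Xi=N_{s_\beta,v}(s_\beta\lambda_0)\circ N_{s_\beta,v}(\lambda_0)$ as though both factors of the limiting cocycle exist at $s=0$, but you have just argued (correctly) that the left factor $N_{s_\beta,v}(w's_\beta\cdot\lambda_s)$ has a simple pole there, so its limit is not defined. The holomorphy of the composite relies on the cancellation $R\circ A(0)=0$, where $R$ is the residue of the left factor and $A(0)=N_{s_\beta,v}(\lambda_0)$; once this is noted, $\Xi$ is the sum of the holomorphic part of the left factor composed with $A(0)$ and of $R$ composed with the $s$-derivative of the remaining factors — a genuinely first-order computation. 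Your Schur's-lemma argument for the eigenvalues tacitly re-uses the incorrect limit factorization, and it also never invokes the hypothesis $[s_\beta,w']\neq 1$, which is essential to ensure $\Xi$ is not a scalar (and hence actually exhibits two distinct eigenvalues). The appearance of $\kappa$ through the ratio $l_\beta^-(\lambda_s)/l_\beta^-(w\cdot\lambda_s)$ must be extracted from this first-order expansion, not from a naive evaluation of the two $SL_2$-operators at $s=0$.
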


\section{General Remarks Regarding Notations}
\label{Sec:General_Remarks_and_Notations}

Throughout this manuscript, many calculations involve both local and global objects.
Also, there are many calculations which involve the irreducible constituents of the local degenerate principal series.
In order to do these efficiently, we set a few general guidelines for notations:
\begin{itemize}
	\item Also, in any vector space $V$ over $\C$, we denote the zero vector by $\bar{0}$.
	
	\item For an algebraic group $\GG$ we write $G_v$ for $\GG\bk{F_v}$ (similarly, $T_v$ for $\TT\bk{F_v}$ etc.).
	
	\item The spaces of unramified character of $\bG_\AA$ and $G_v$ admit natural isomorphism to each other, this allows us to identify them in our notations.
	
	\item For a local degenerate principal series $\pi_v=\DPS{P}\bk{\chi,s_0}_v$, with $\Real\bk{s_0}>0$, $\pi$ admits a unique irreducible subrepresentation which will be denoted by $\pi_{0,v}$.
	
	We associate its irreducible quotients as eigenspaces of certain intertwining operators (its ``\emph{R-group}'').
	Namely, we denote its irreducible quotients by $\pi_{\eta,v}$ where $\eta$ is the relevant eigenvalue.
	In particular, when $\pi$ admits a unique irreducible quotient, it will be denoted by $\pi_{1,v}$.
	As a byproduct, if $\chi_v$ is unramified, then $\pi_{1,v}$ is the unique unramified subquotient of $\pi_v$.
	
	\item Other irreducible subquotients of $\pi$, if exist, will be denoted by $\sigma_i$, where $i\in\N$ is determined arbitrarily.
	
	\item In chapters dealing only with local representations, we fix a place $v\in\Places_{fin.}$ and drop $v$ from the notations.
	That is, in those chapters, $\DPS{P}\bk{\chi,s_0}$ stands for $\DPS{P}\bk{\chi,s_0}_v$, $\pi_\eta$ stands for $\pi_{\eta,v}$ etc.
	This is stated at the beginning of any such chapter.
	
	\item For a subquotient $\sigma$ of $\pi$, we denote by $\widetilde{\sigma}$ the inverse image of $\sigma$ in $\pi$.
	
	\item For a complex function $h\bk{s}$, we say that $h\bk{s}$ admits a pole of order $n$ at $s=s_0$ if
	\[
	\lim\limits_{s\to s_0}\bk{s-s_0}^n h\bk{s}\in\C^\times .
	\]
	We denote this number by $\operatorname{ord}_{s=s_0}h\bk{s}$.

	\item If $\bP=\bP_i$ is a maximal parabolic subgroup, we also write $\Eisen_{P_i}\bk{\chi}$ or $\Eisen_{P_i}\bk{s,\chi}$ for $\Eisen_{\bP_i}\bk{\cdot,\lambda_s^{\bP_i},\mu_\chi,g}$.
	
	\item If $\Eisen_{P_i}\bk{s,\chi}$ admits a pole of order $k$ at $s=s_0$, we write $\Res{s=s_0} \Eisen_{\bP_i}\bk{\chi}$ or $\ResRep{\bG}{i}{s_0}{ord\bk{\chi}}$ for the residue of $\Eisen_{P_i}\bk{s,\chi}$ at $s=s_0$, that is the representation spanned by
	\[
	\lim\limits_{s\to s_0} \bk{s-s_0}^k \Eisen_{\bP_i}\bk{f,\lambda_s^{\bP_i},\mu_\chi,g} \quad f\in \DPS{\bP_i}\bk{s,\chi} .
	\]
	
	\item Throughout this manuscript, we separate our calculations into cases denoted by triples $\coset{i,s_o,k}$.
	This triple stands for the residue $\ResRep{G}{i}{s_0}{ord\bk{\chi}}$ of the degenerate principal series $\DPS{P_i}\bk{\chi,s_0}$, where $ord\bk{\chi}=k$ (and note that the results are presented in a unified way for different characters $\chi$ of the same order).
	
	\item If $\chi=\Id$ we would simply use $\Res{s=s_0} \Eisen_{\bP_i}$ and $\ResRepTr{\bG}{i}{s_0}$ for the residual representation.
	
\end{itemize}

\section{Assumptions Made Throughout This Paper}

Through out \Cref{Part:DRS}, we study the poles of the degenerate principal of groups of type $E_n$ under certain constraints and assumptions which we list bellow.
It should be pointed that most of these assumptions seems to be of a technical nature and could be removed by further investigation along the same lines laid down in this manuscript.

\begin{enumerate}
	\item Since there is currently no complete description of the degenerate principal series of $G_\R$ and $G_\C$ for $G$ of type $E_6$, $E_7$ and $E_8$, we use notations which assume that the structure of the local degenerate principal series \underline{at Archimedean places} is analogues to \underline{the non-Archimedean case}.
	Alternatively, we may assume, as done in other works, that for Archimedean $v$, $\chi_v=\Id_v$ and any section $f_{s}=\otimes_{v\in\Places} f_{s,v}$ satisfy $f_{s,v}\in\widetilde{\pi_{1,v}}$ for all $v\divides \infty$.
	
	\item For the evaluation of the order of the poles of $\Eisen\bk{f,s}$ at $s_0$, when $\chi=\Id$ and $I_{P}\bk{s_0}$ is generated by $f_{s_0}^0$,  we make use of \Cref{Eq:Global_GK_Formula} in a way which assumes that $\zfun_F\bk{\frac{1}{2}}\neq 0$.
	When working over a number field $F$ for which $\zfun_F\bk{\frac{1}{2}}= 0$, this require a more careful analysis of the Gindikin-Karpelevich factors in \Cref{Subsection:Methods_Reduction_of_order_of_poles} as this might reduce the order of poles of certain global intertwining operators.
	
	\item When $\chi$ is of order higher than $2$, its epsilon factor $\epsilon_F\bk{s,\chi}$ is possibly non-trivial.
	In such a case, for the simplicity of calculation, we assumed that $\epsilon_F\bk{s,\chi}$ is a constant, which is true when $F=\Q$, but could be false for certain $F$ and $\chi$.
	This could mostly affect the application of item (2) in \Cref{Subsection:Methods_Reduction_of_order_of_poles}.
	
\end{enumerate}
\chapter{Split Exceptional Groups of Type $E_n$}
\label{Chap:Group_En}

In this part, we present the split, simply-connected simple groups of type $E_n$.

Let $G$ be the split, simple, simply-connected group of type $E_n$. In this section we describe the structure of $G$ and set notations for the rest of the section. We fix a Borel subgroup $\para{B}$ and a maximal split torus $T\subset \para{B}$. 
It holds that
\[
\Card{\Phi_G} = \piece{72,& n=6 \\ 126,& n=7 \\ 240,& n=8 .}
\]
We label the simple roots $\Delta_{G}$ and the Dynkin diagram of $G$ as follows:

\[\begin{tikzpicture}[scale=0.5]
\draw (-1,0) node[anchor=east]{};
\draw (0 cm,0) -- (8 cm,0);
\draw (4 cm, 0 cm) -- +(0,2 cm);
\draw[fill=black] (0 cm, 0 cm) circle (.25cm) node[below=4pt]{$\alpha_1$};
\draw[fill=black] (2 cm, 0 cm) circle (.25cm) node[below=4pt]{$\alpha_3$};
\draw[fill=black] (4 cm, 0 cm) circle (.25cm) node[below=4pt]{$\alpha_4$};
\draw[fill=black] (6 cm, 0 cm) circle (.25cm) node[below=4pt]{$\alpha_5$};
\draw[fill=black] (8 cm, 0 cm) circle (.25cm) node[below=4pt]{$\alpha_6$};
\draw[fill=black] (4 cm, 2 cm) circle (.25cm) node[right=3pt]{$\alpha_2$};
\end{tikzpicture}\]

\[\begin{tikzpicture}[scale=0.5]
\draw (-1,0) node[anchor=east]{};
\draw (0 cm,0) -- (10 cm,0);
\draw (4 cm, 0 cm) -- +(0,2 cm);
\draw[fill=black] (0 cm, 0 cm) circle (.25cm) node[below=4pt]{$\alpha_1$};
\draw[fill=black] (2 cm, 0 cm) circle (.25cm) node[below=4pt]{$\alpha_3$};
\draw[fill=black] (4 cm, 0 cm) circle (.25cm) node[below=4pt]{$\alpha_4$};
\draw[fill=black] (6 cm, 0 cm) circle (.25cm) node[below=4pt]{$\alpha_5$};
\draw[fill=black] (8 cm, 0 cm) circle (.25cm) node[below=4pt]{$\alpha_6$};
\draw[fill=black] (10 cm, 0 cm) circle (.25cm) node[below=4pt]{$\alpha_7$};
\draw[fill=black] (4 cm, 2 cm) circle (.25cm) node[right=3pt]{$\alpha_2$};
\end{tikzpicture}\]

\[\begin{tikzpicture}[scale=0.5]
\draw (-1,0) node[anchor=east]{};
\draw (0 cm,0) -- (12 cm,0);
\draw (4 cm, 0 cm) -- +(0,2 cm);
\draw[fill=black] (0 cm, 0 cm) circle (.25cm) node[below=4pt]{$\alpha_1$};
\draw[fill=black] (2 cm, 0 cm) circle (.25cm) node[below=4pt]{$\alpha_3$};
\draw[fill=black] (4 cm, 0 cm) circle (.25cm) node[below=4pt]{$\alpha_4$};
\draw[fill=black] (6 cm, 0 cm) circle (.25cm) node[below=4pt]{$\alpha_5$};
\draw[fill=black] (8 cm, 0 cm) circle (.25cm) node[below=4pt]{$\alpha_6$};
\draw[fill=black] (10 cm, 0 cm) circle (.25cm) node[below=4pt]{$\alpha_7$};
\draw[fill=black] (12 cm, 0 cm) circle (.25cm) node[below=4pt]{$\alpha_8$};
\draw[fill=black] (4 cm, 2 cm) circle (.25cm) node[right=3pt]{$\alpha_2$};
\end{tikzpicture}\]

\[
M_j = M_{\Delta\setminus\set{\alpha_j}}, \quad
L_j = M_{\set{\alpha_j}} .
\]

The group $G$ is generated by symbols 
\[\set{x_{\alpha}(r) \: : \: \alpha \in \Phi_{G} ,r \in F}\]
subject to the Chevalley relations as in \cite[Section 6]{MR0466335}.

The Weyl groups of type $E_n$ have the following orders:
\[
\Card{W_{E_n}} = \piece{
51,840,& n=6 \\
2,903,040,& n=7 \\
696,729,600,& n=8 
}
\]
We denote the generators of $W=W_{E_n}$ by $w_1,...,w_n$, these are given by a simple reflection along $\alpha_1,...,\alpha_n$ (respectively).

The Weyl groups of maximal Levi subgroups of $G$ have the following orders:
\begin{longtable}[H]{|c|c|c|c|c|c|c|c|c|}
	\hline
	\diagbox{$n$}{$\Card{W_{M_i}}$}& $i=1$ & $i=2$ & $i=3$ & $i=4$ & $i=5$ & $i=6$ & $i=7$ & $i=8$ \\ \hline
	$n=6$ & 1,920 & 720 & 240 & 72 & 240 & 1,920 && \\ \hline
	$n=7$ & 23,040 & 5,040 & 1,440 & 288 & 720 & 2,520 & 51,840 & \\ \hline
	$n=8$ & 322.560 & 40,320 & 10,080 & 1,440 & 2,880 & 11,520 & 103,680 & 2,903,040 \\ \hline
\end{longtable}
While they have the following indices:
\begin{longtable}[H]{|c|c|c|c|c|c|c|c|c|}
	\hline
	 \diagbox{$n$}{$\Card{W_{E_n}\rmod W_{M_i}}$}& $i=1$ & $i=2$ & $i=3$ & $i=4$ & $i=5$ & $i=6$ & $i=7$ & $i=8$ \\ \hline
	 $n=6$ & 27 & 72 & 216 & 720 & 216 & 27 && \\ \hline
	 $n=7$ & 126 & 576 & 2,016 & 10,080 & 4,032& 1,512& 56 & \\ \hline
	 $n=8$ & 2,160 & 17,280 & 69,120 & 483,840 & 241,920 & 60,480 & 6,720 & 240 \\ \hline
\end{longtable}

Later on, we also denote $W^{M_i,T}=W_{E_n}\rmod W_{M_i}$

The following table is useful to translate between the normalization of the complex variable $s$ used in this paper based on fundamental weights and the one, sometimes used, which is based on the modulus function $\delta_P$ of the parabolic subgroup.
We note that the modulus function satisfies:
\[
\delta_{P_i} = \FNorm{\omega_{\alpha_i}}^{\inner{\check{\alpha_i},\rho_{M_i}}}
\]
and list the possible values of $\inner{\check{\alpha_i},\rho_{M_i}}$ in the following table.

\begin{longtable}[H]{|c|c|c|c|c|c|c|c|c|}
	\hline
	\diagbox{$n$}{$\inner{\check{\alpha_i},\rho_{M_i}}$}& $i=1$ & $i=2$ & $i=3$ & $i=4$ & $i=5$ & $i=6$ & $i=7$ & $i=8$ \\ \hline
	$n=6$ & 12 & 11 & 9 & 7 & 9 & 12 && \\ \hline
	$n=7$ & 17 & 14 & 11 & 8 & 10& 13& 18 & \\ \hline
	$n=8$ & 23 & 17 & 13 & 9 & 11 & 14 & 19 & 29 \\ \hline
\end{longtable}

Throughout this manuscript, we will use a convenient presentation for characters $\lambda\in \mathfrak{a}_{T,\C}^\ast$.
Namely, we write:
\begin{itemize}
	\item For $n=6$:
	\[\esixchar{\Omega_1}{\Omega_2}{\Omega_3}{\Omega_4}{\Omega_5}{\Omega_6}=\sum_{i=1}^{6} \Omega_{\alpha_{i}} \circ \fun{\alpha_i}.\]
	\item For $n=7$:
	\[\esevenchar{\Omega_1}{\Omega_2}{\Omega_3}{\Omega_4}{\Omega_5}{\Omega_6}{\Omega_7}=\sum_{i=1}^{7} \Omega_{\alpha_{i}} \circ \fun{\alpha_i}.\]
	\item For $n=8$:
	\[\eeightchar{\Omega_1}{\Omega_2}{\Omega_3}{\Omega_4}{\Omega_5}{\Omega_6}{\Omega_7}{\Omega_8}=\sum_{i=1}^{8} \Omega_{\alpha_{i}} \circ \fun{\alpha_i}.\]
\end{itemize}

Finally, we note that the dual complex group $\check{G}$ of the simply-connected group $G$ of type $E_n$ is the complex adjoint group of type $E_n$.
We use the same notations for the root system of $\check{G}$ as for that of $G$.
\chapter{Local Degenerate Principal Series}
\label{Chap:Local_DPS}

In this part, we recall information on the local degenerate principal series of groups of type $E_n$.

\section{Reducibility and Length of the Socle and Co-Socle - $p$-adic Places}
For this section, fix a finite place $v\in\Places$.
In this section we quote from \cite{SDPS_E6,SDPS_E7,SDPS_E8} the list of reducible degenerate principle series of the groups $E_6\bk{F_\nu}$, $E_7\bk{F_\nu}$ and $E_8\bk{F_\nu}$ as well as the length of their socle and co-socle (that is, their maximal semi-simple subrepresentation and quotient).

\subsection{$E_6$}
Assume that $G$ of type $E_6$.
\begin{Thm}
	The representations $\DPS{P_i}\bk{s,\chi}_v$ is reducible if and only if the triple $\bk{i,s,ord\bk{\chi}}$ is in the following list:
	\begin{enumerate}
		\item $i=1$: $\bk{1,\pm 6, 1}$ and $\bk{1,\pm 3, 1}$.
		\item $i=2$: $\bk{2,\pm \frac{11}{2}, 1}$, $\bk{2,\pm \frac{7}{2}, 1}$, $\bk{2,\pm \frac{5}{2}, 1}$, $\bk{2,\pm \frac{1}{2}, 1}$ and $\bk{2,\pm \frac{1}{2}, 2}$.
		\item $i=3$: $\bk{3,\pm \frac{9}{2}, 1}$, $\bk{3,\pm \frac{7}{2}, 1}$, $\bk{3,\pm \frac{5}{2}, 1}$, $\bk{3,\pm \frac{3}{2}, 1}$ and $\bk{3,\pm \frac{3}{2}, 2}$.
		\item $i=4$: $\bk{4,\pm \frac{7}{2}, 1}$, $\bk{4,\pm \frac{5}{2}, 1}$, $\bk{4,\pm \frac{3}{2}, 1}$, $\bk{4,\pm \frac{3}{2}, 2}$, $\bk{4,\pm 1, 1}$, $\bk{4,\pm 1, 2}$, $\bk{4,\pm \frac{1}{2}, 1}$, $\bk{4,\pm \frac{1}{2}, 2}$ and $\bk{4,\pm \frac{1}{2}, 3}$.
	\end{enumerate}
	The case of $i=5$ is identical to $i=3$ and the case $i=6$ is identical to that of $i=1$.

	In all cases, $\DPS{P_i}\bk{s,\chi}_v$ admits irreducible socle and co-socle, with the exception of the case $\bk{4,\pm \frac{1}{2}, 3}$.
	The representation $\bk{4,-\frac{1}{2}, 3}$ admits a socle of length $3$ and, by contragredience, $\bk{4,\frac{1}{2}, 3}$ admits a co-socle of length $3$.
\end{Thm}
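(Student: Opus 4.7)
The plan follows the strategy carried out in \cite{SDPS_E6}. My first step would be to embed $\DPS{P_i}\bk{s,\chi}_v$ into the full principal series of the Borel using \Cref{Eq:Natural_Embedding_and_Surjection_to_Borel}, so that all reducibility data can be read off the behavior of the normalized simple-reflection intertwining operators $N_v\bk{w_\alpha,\lambda,\mu_v}$. By the Harish-Chandra/Keys-Shahidi theory these operators can develop a pole or a zero only on the hyperplane $\gen{\lambda,\check\alpha}=\pm 1$, together with the compatibility that $\chi\circ\check\alpha$ is trivial (or trivial of the relevant order). A finite enumeration over $i\in\set{1,\ldots,6}$, over the positive coroots $\check\alpha\in\check\Phi^{+}_G$, and over divisors of $\mathrm{ord}\bk{\chi}$ then produces a finite candidate list of triples $\bk{i,s,\mathrm{ord}\bk{\chi}}$. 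The diagram symmetry $\alpha_i\leftrightarrow\alpha_{7-i}$ of $E_6$ collapses the $i=5$ (resp.\ $i=6$) cases onto $i=3$ (resp.\ $i=1$), accounting for the omitted rows.

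Second, at every surviving triple I would verify actual reducibility by exhibiting a nonzero proper subrepresentation, namely the image of the long intertwining operator $M\bk{w_\ell,\lambda_{s,\chi}^{\bP_i}}_v$ restricted to $\DPS{P_i}\bk{s,\chi}_v$. The local Gindikin-Karpelevich formula \Cref{Eq:Local_GK_Formula} lets one count the surplus of $L$-factor poles over zeros along the minimal gallery from $w=1$ to $w=w_\ell$, giving a lower bound on the dimension of the kernel and cokernel. Comparing this with the upper bound for the composition length coming from the Jantzen filtration on the ambient principal series pins down a concrete composition series. For the length of the socle and co-socle one invokes contragredience, which interchanges the two under $s\mapsto -s$, $\chi\mapsto\chi^{-1}$; so it suffices to bound the socle. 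At every triple other than $\bk{4,\pm 1/2,3}$ the reducibility is governed by a single simple reflection $\beta$, and then \Cref{Prop:Zampera} directly produces an at-most-two-step filtration with irreducible socle.

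The hard part is the exceptional triple $\bk{4,\pm 1/2,3}$. Here $\alpha_4$ is the trivalent node of $E_6$, with Levi $M_4$ of type $A_1\times A_2\times A_2$, and $\chi$ is a ramified cubic character, so more than one non-commuting simple reflection acts nontrivially on the initial exponent simultaneously; a single application of \Cref{Prop:Zampera} is no longer enough. The correct approach is a genuine R-group analysis: compute the action of an order-$3$ group generated by the relevant normalized intertwining operators on the semisimple quotient of $\DPS{P_4}\bk{1/2,\chi}_v$, then check that each of the three eigenspaces (for eigenvalues $1$, $\zeta_3$, $\zeta_3^2$) is nonzero and irreducible, giving a co-socle of length exactly $3$ at $s=1/2$ and, by contragredience, a socle of length $3$ at $s=-1/2$. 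I expect this R-group and eigenspace computation to be the main obstacle; the remaining cases reduce to routine bookkeeping once the candidate enumeration in step one is in place.
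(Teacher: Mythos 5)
This theorem is not proved in the present manuscript; it is quoted verbatim from \cite{SDPS_E6}, and the supporting length/socle data appear only through the branching-rule dimension counts in \Cref{Appendix:Chap_Local_Calculations}. Your overall skeleton --- embedding $\DPS{P_i}\bk{s,\chi}_v$ into the Borel-induced principal series, enumerating candidate reducibility points along hyperplanes $\gen{\lambda,\check\alpha}=\pm 1$ compatible with $\chi\circ\check\alpha$, and treating $\bk{4,\pm\tfrac12,3}$ by an $R$-group analysis --- is in the same spirit as \cite{SDPS_E6}, and your identification of the order-$3$ stabilizer at the exceptional triple (the cyclic group generated by $w_3w_1w_6w_5$, a product of Coxeter elements in the two $A_2$ factors of $M_4$) matches the data in \Cref{Subsec:Local_Lengths_E6} and \Cref{Appendix:Local_data_for_global_SI_resisudes}. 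The reduction $i=5\leadsto 3$, $i=6\leadsto 1$ via the diagram automorphism is also fine, though the map is $1\leftrightarrow 6$, $3\leftrightarrow 5$ with $2,4$ fixed, not $i\leftrightarrow 7-i$ as you wrote.

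Where your plan genuinely comes apart is in the verification and socle steps. First, the claim that ``at every triple other than $\bk{4,\pm\tfrac12,3}$ the reducibility is governed by a single simple reflection $\beta$, and \Cref{Prop:Zampera} directly produces an at-most-two-step filtration with irreducible socle'' is false in several of the remaining cases. For instance, $\DPS{P_4}\bk{\tfrac12,\Id}_v$ has length $3$ (see \Cref{Appendix:Local_data_for_global_NSI_resisudes}), with an irreducible socle $\pi_0$, a middle factor $\sigma_1$, and an irreducible cosocle $\pi_1$. A two-step filtration cannot capture this; the irreducibility of the socle must be decoupled from the total length. \Cref{Prop:Zampera} only gives a direct-sum decomposition of $i_{T}^{M}\lambda_0$ into two eigenpieces and does not by itself bound the socle of the degenerate principal series. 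Second, reading ``a lower bound on the dimension of the kernel and cokernel'' off the local Gindikin-Karpelevich formula \Cref{Eq:Local_GK_Formula} is not available: that formula controls only the image of the normalized spherical vector and says nothing about kernel dimensions, and for $\chi$ ramified (the cases $\operatorname{ord}\bk{\chi}\in\{2,3\}$) there is no spherical vector at all. Third, the Jantzen filtration is not among the tools the authors use and it is not obvious it supplies the needed composition-length bounds in this non-generic, ramified setting. The actual argument in \cite{SDPS_E6} replaces all three of these steps by a computational device: one computes $r_T^G$ of each candidate subquotient by a branching-rule algorithm (as in \Cref{Appendix:Branching_Rules}), checks that the resulting exponent multisets are disjoint and sum to $\Card{W^{M_i,T}}$, and from that reads off the Jordan--H\"older series and the uniqueness of socle and cosocle directly; uniqueness of the socle comes from showing the initial exponent $\lambda_{s,\chi}^{P_i}$ has multiplicity one in $r_T^G\pi$ and lies in the Jacquet module of every nonzero subrepresentation. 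Without that replacement, your steps (b)--(c) do not produce the stated socle and cosocle lengths.
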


\begin{Rem}
	We point out that the image of $\DPS{P_1}\bk{s,\chi}_v$ under the Iwahori-Matsumoto involution (in the sense of \cite[Remark 2.2.5]{MR1341660}, the authors also use the term \textbf{inverted representation} in \cite{SDPS_E8}) is $\DPS{P_6}\bk{-s,\overline{\chi}}_v$.
	That is, $\DPS{P_1}\bk{s,\chi}_v$ and $\DPS{P_6}\bk{-s,\overline{\chi}}_v$ share the same composition factors, while their composition series is "reversed".
	Similarly for $\DPS{P_3}\bk{s,\chi}_v$ and $\DPS{P_5}\bk{s,\chi}_v$.
	For other maximal parabolic subgroups of $E_6$ and for any parabolic subgroup of $E_7$ and $E_8$ it holds that the Iwahori-Matsumoto involution of $\DPS{P_i}\bk{s,\chi}_v$ is $\DPS{P_i}\bk{\overline{\chi},-s}_v$.
\end{Rem}

\subsection{$E_7$}
Assume that $G$ of type $E_7$.
\begin{Thm}
	The representations $\DPS{P_i}\bk{s,\chi}_v$ is reducible if and only if the triple $\bk{i,s,ord\bk{\chi}}$ is in the following list:
	\begin{enumerate}
		\item $i=1$: $\bk{1,\pm \frac{17}{2}, 1}$, $\bk{1,\pm \frac{11}{2}, 1}$, $\bk{1,\pm \frac{7}{2}, 1}$, $\bk{1,\pm \frac{1}{2}, 1}$ and $\bk{1,\pm \frac{1}{2}, 2}$.
		
		\item $i=2$: $\bk{2,\pm 7, 1}$, $\bk{2,\pm 5, 1}$, $\bk{2,\pm 4, 1}$, $\bk{2,\pm 3, 1}$, $\bk{2,\pm 2, 1}$, $\bk{2,\pm 2, 2}$, $\bk{2,\pm 1, 1}$ and $\bk{2,0, 2}$.
		
		\item $i=3$: $\bk{3,\pm \frac{11}{2}, 1}$, $\bk{3,\pm \frac{9}{2}, 1}$, $\bk{3,\pm \frac{7}{2}, 1}$, $\bk{3,\pm \frac{5}{2}, 1}$, $\bk{3,\pm \frac{5}{2}, 2}$, $\bk{3,\pm \frac{3}{2}, 1}$, $\bk{3,\pm \frac{3}{2}, 2}$, $\bk{3,\pm \frac{1}{2}, 1}$, $\bk{3,\pm \frac{1}{2}, 2}$ and $\bk{3,\pm \frac{1}{2}, 3}$.
		
		\item $i=4$: $\bk{4,\pm 4, 1}$, $\bk{4,\pm 3, 1}$, $\bk{4,\pm 2, 1}$, $\bk{4,\pm 2, 2}$, $\bk{4,\pm \frac{3}{2}, 1}$, $\bk{4,\pm \frac{3}{2}, 2}$, $\bk{4,\pm 1, 1}$, $\bk{4,\pm 1, 2}$, $\bk{4,\pm 1, 3}$, $\bk{4,\pm \frac{2}{3}, 1}$, $\bk{4,\pm \frac{2}{3}, 3}$, $\bk{4,\pm \frac{1}{2}, 1}$, $\bk{4,\pm \frac{1}{2}, 2}$, $\bk{4,\pm \frac{1}{2}, 4}$ and $\bk{4,0, 2}$.
		
		\item $i=5$: $\bk{5,\pm 5, 1}$, $\bk{5,\pm 4, 1}$, $\bk{5,\pm 3, 1}$, $\bk{5,\pm 2, 1}$, $\bk{5,\pm 2, 2}$, $\bk{5,\pm \frac{3}{2}, 1}$, $\bk{5,\pm \frac{3}{2}, 2}$, $\bk{5,\pm 1, 1}$, $\bk{5,\pm 1, 2}$ and $\bk{5,\pm 1, 3}$.
		
		\item $i=6$: $\bk{6,\pm \frac{13}{2}, 1}$, $\bk{6,\pm \frac{11}{2}, 1}$, $\bk{6,\pm \frac{7}{2}, 1}$, $\bk{6,\pm \frac{5}{2}, 1}$, $\bk{6,\pm \frac{5}{2}, 2}$, $\bk{6,\pm \frac{1}{2}, 1}$ and $\bk{6,\pm \frac{1}{2}, 2}$.
		
		\item $i=7$: $\bk{7,\pm 9, 1}$, $\bk{7,\pm 5, 1}$, $\bk{7,\pm 1, 1}$ and $\bk{7,0, 2}$
	\end{enumerate}
	
	In all cases, $\DPS{P_i}\bk{s,\chi}_v$ admits irreducible socle and co-socle, with the following exceptions: $\bk{2,\pm 2,2}$, $\bk{2,\pm 1,1}$, $\bk{2,0,2}$, $\bk{4,\pm \frac{1}{2},4}$, $\bk{4,0,2}$, $\bk{5,\pm 2,2}$ and $\bk{7,0,2}$.
	In these cases, when $s\leq0$, $\DPS{P_i}\bk{s,\chi}_v$ admits a socle of length $2$ and an irreducible co-socle, while when $s\geq0$, $\DPS{P_i}\bk{s,\chi}_v$ admits a co-socle of length $2$ and an irreducible socle.
\end{Thm}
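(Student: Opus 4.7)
The plan is to reduce everything to computations on the principal series $i_B^G(\lambda_{s,\chi}^{P_i})$ of which $I_{P_i}(s,\chi)_v$ is a subrepresentation (via \Cref{Eq:Natural_Embedding_and_Surjection_to_Borel}), and then exploit the combinatorics of the root system of $E_7$ together with R-group theory and Jacquet-module analysis. First, I would parametrize the initial exponent $\lambda_{s,\chi}^{P_i} = \mu_\chi + s\omega_{\alpha_i} - \rho_B + \rho_{P_i}$ and compute, for each positive root $\gamma \in \Phi^+_G$, the linear form $\langle \lambda_{s,\chi}^{P_i},\check{\gamma}\rangle$ as an affine function of $s$. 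The candidate reducibility points are exactly those $s_0$ at which some such hyperplane hits the critical locus $\langle \lambda,\check{\gamma}\rangle = \pm 1$ with $\chi \circ \check{\gamma}$ being trivial (so that the local $L$-factor in the normalized intertwining operator has a pole or zero); the order of $\chi$ dictates which roots can possibly produce reducibility, explaining why each triple $(i,s,\mathrm{ord}(\chi))$ in the list is paired with a specific value of $\mathrm{ord}(\chi)$.

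Second, for each candidate triple $(i,s_0,k)$ I would verify genuine reducibility by analysing the long intertwining operator $M_{w_0}(s,\chi) : I_{P_i}(s,\chi)_v \to I_{P_i}(-s,\chi^{-1})_v$, where $w_0$ is the longest element in $W(M_i,G)$. Factoring $M_{w_0}$ as a product of rank-one operators $M_{w_\alpha}$ along a reduced expression and combining this with the normalizations in \Cref{eq:Normalized_intertwining_operators} gives a product of local $L$-factor quotients whose vanishing/pole behavior at $s_0$ controls whether the operator is injective, has non-trivial kernel, or fails to exist. The kernel of $M_{w_0}$ produces the non-trivial socle and its image the co-socle; by analyzing Jacquet modules along $P_i$ using the Bernstein-Zelevinsky geometric lemma (which for type $E_7$ organizes orbits by double cosets $W_{M_i}\backslash W/W_{M_i}$), one identifies the composition factors and checks that they appear with correct multiplicity. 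The Iwahori-Matsumoto involution $s \leftrightarrow -s$, $\chi\leftrightarrow \overline{\chi}$ then cuts the work in half.

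Third, to distinguish the generic cases (irreducible socle and co-socle) from the exceptional cases listed, I would apply \Cref{Prop:Zampera} and a Kazhdan-Lusztig/R-group style analysis at the points of higher-order tangency. The exceptional triples $(2,\pm 2,2)$, $(2,\pm 1,1)$, $(2,0,2)$, $(4,\pm \tfrac12,4)$, $(4,0,2)$, $(5,\pm 2,2)$, $(7,0,2)$ are precisely those at which two independent reducibility hyperplanes $\langle\lambda,\check{\gamma_1}\rangle = \pm 1$ and $\langle\lambda,\check{\gamma_2}\rangle = \pm 1$ meet, producing a rank-two R-group and hence a length-two socle (or co-socle, after flipping the sign of $s$). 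For these I would explicitly diagonalize the relevant product of intertwiners on the space of $K$-fixed vectors via \Cref{Prop:Zampera} to verify the multiplicity.

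The main obstacle will be the last step: the Weyl group $W(E_7)$ has order $2{,}903{,}040$, so enumerating and bookkeeping the stabilizers in $W$ of each $\lambda_{s_0,\chi}^{P_i}$, together with the induced action on the relevant Jacquet modules, is only tractable through a computer algebra system such as Sagemath (as the authors do). A subtle complementary difficulty is the interaction of ramified characters $\chi$ of order $3$ or $4$ with the rank-one normalization, where the Hecke $L$-factors degenerate differently than in the unramified case and one must keep careful track of $\epsilon$-factors; this is what singles out the triples with higher $\mathrm{ord}(\chi)$ in the $i=4$ list and is the most delicate part of verifying the completeness of the enumeration.
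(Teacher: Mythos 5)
The theorem you are proving is not actually proved in this manuscript: the authors explicitly state that the chapter quotes these results from their earlier paper \cite{SDPS_E7}, so there is no ``paper's own proof'' to compare against line by line. That said, your outline is in the spirit of the methodology the authors use throughout (factorization of normalized intertwining operators, Jacquet modules via the geometric lemma, the \v Zampera-style decomposition of \Cref{Prop:Zampera}, and branching-rule bookkeeping in Sage), and steps one and two of your proposal — identifying candidate points via the critical hyperplanes $\gen{\lambda_{s,\chi}^{P_i},\check\gamma}=\pm1$ with $\chi\circ\check\gamma$ trivial, then verifying reducibility by the long intertwining operator and Jacquet-module counting — are essentially the correct program.

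Your third step contains a concrete gap. You claim that the seven exceptional triples (those with a length-two socle or co-socle) are ``precisely those at which two independent reducibility hyperplanes $\gen{\lambda,\check\gamma_1}=\pm1$ and $\gen{\lambda,\check\gamma_2}=\pm1$ meet, producing a rank-two R-group.'' This criterion is false on both counts. First, nearly every point in the reducibility list has several such hyperplanes meeting simultaneously — that is exactly what produces the high-order poles of the Gindikin--Karpelevich factors tabulated later in the manuscript — yet all but seven of those points still have irreducible socle and co-socle. Coincidence of hyperplanes is far from sufficient, and the actual determination requires the case-by-case Jacquet-module multiplicity bound via branching rules. Second, Knapp--Stein/Gelbart--Knapp R-group theory in the form you invoke applies to unitarily induced representations, and the authors' own appendix (\Cref{Appendix:Local_data_for_global_SI_resisudes}) is explicit that only three of the exceptional $E_7$ cases ($(2,\pm2,2)$, $(5,\pm2,2)$, $(4,\pm\tfrac12,4)$) are resolved by R-group methods; the remaining case $(2,\pm1,1)$ lies off the unitary axis, the R-group mechanism does not apply, and it had to be handled via Iwahori--Hecke algebra computation of the relevant Jacquet modules. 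Your proposed step three would miss $(2,\pm1,1)$ entirely, and would incorrectly flag many non-exceptional triples as exceptional. To close the argument you would need to replace the hyperplane-incidence heuristic with the full multiplicity count on Jacquet modules (the branching-rule machinery of \Cref{Appendix:Branching_Rules}), supplemented by Iwahori--Hecke algebra calculations in the non-unitary exceptional cases.
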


\subsection{$E_8$}

Assume that $G$ is of type $E_8$.

\begin{Thm}
	The representations $\DPS{P_i}\bk{s,\chi}_v$ is reducible if and only if the triple $\bk{i,s,ord\bk{\chi}}$ is in the following list:
	\begin{enumerate}
		\item $i=1$: $\bk{1,\pm \frac{23}{2}, 1}$, $\bk{1,\pm \frac{17}{2}, 1}$, $\bk{1,\pm \frac{13}{2}, 1}$, $\bk{1,\pm \frac{11}{2}, 1}$, $\bk{1,\pm \frac{7}{2}, 1}$,$\bk{1,\pm \frac{5}{2}, 1}$, $\bk{1,\pm \frac{1}{2}, 1}$, $\bk{1,\pm \frac{7}{2}, 2}$ and $\bk{1,\pm \frac{1}{2}, 2}$.
		
		\item $i=2$: $\bk{2,\pm \frac{17}{2}, 1}$, $\bk{2,\pm \frac{13}{2}, 1}$, $\bk{2,\pm \frac{11}{2}, 1}$, $\bk{2,\pm \frac{9}{2}, 1}$, $\bk{2,\pm \frac{7}{2}, 1}$, $\bk{2,\pm \frac{5}{2}, 1}$, $\bk{2,\pm \frac{3}{2}, 1}$, $\bk{2,\pm \frac{1}{2}, 1}$, $\bk{2,\pm \frac{7}{2}, 2}$, $\bk{2,\pm \frac{5}{2}, 2}$, $\bk{2,\pm \frac{3}{2}, 2}$, $\bk{2,\pm \frac{1}{2}, 2}$ and $\bk{2,\pm \frac{3}{2}, 3}$.
		
		\item $i=3$: $\bk{3,\pm \frac{13}{2}, 1}$, $\bk{3,\pm \frac{11}{2}, 1}$, $\bk{3,\pm \frac{9}{2}, 1}$, $\bk{3,\pm \frac{7}{2}, 1}$, $\bk{3,\pm \frac{5}{2}, 1}$, $\bk{3,\pm 2, 1}$, $\bk{3,\pm \frac{3}{2}, 1}$, $\bk{3,\pm \frac{7}{6}, 1}$, $\bk{3,\pm 1, 1}$, $\bk{3,\pm \frac{1}{2}, 1}$, $\bk{3,\pm \frac{7}{2}, 2}$, $\bk{3,\pm \frac{5}{2}, 2}$, $\bk{3,\pm 2, 2}$, $\bk{3,\pm \frac{3}{2}, 2}$, $\bk{3,\pm 1, 2}$, $\bk{3,\pm \frac{1}{2}, 2}$, $\bk{3,\pm \frac{3}{2}, 3}$, $\bk{3,\pm \frac{7}{6}, 3}$ and $\bk{3,\pm 1, 4}$.
		
		\item $i=4$: $\bk{4,\pm \frac{9}{2}, 1}$, $\bk{4,\pm \frac{7}{2}, 1}$, $\bk{4,\pm \frac{5}{2}, 1}$, $\bk{4,\pm 2, 1}$, $\bk{4,\pm \frac{3}{2}, 1}$, $\bk{4,\pm \frac{7}{6}, 1}$, $\bk{4,\pm 1, 1}$, $\bk{4,\pm \frac{5}{6}, 1}$, $\bk{4,\pm \frac{3}{4}, 1}$, $\bk{4,\pm \frac{1}{2}, 1}$, $\bk{4,\pm \frac{3}{10}, 1}$, $\bk{4,\pm \frac{5}{2}, 2}$,$\bk{4,\pm 2, 2}$, $\bk{4,\pm \frac{3}{2}, 2}$, $\bk{4,\pm 1 ,2}$, $\bk{4,\pm \frac{3}{4}, 2}$, $\bk{4,\pm \frac{1}{2}, 2}$, $\bk{4,\pm \frac{3}{2}, 3}$,$\bk{4,\pm \frac{7}{6}, 3}$, $\bk{4,\pm \frac{5}{6}, 3}$, $\bk{4,\pm \frac{1}{2}, 3}$, $\bk{4,\pm 1, 4}$, $\bk{4,\pm \frac{3}{4}, 4}$, $\bk{4,\pm \frac{1}{2}, 4}$, $\bk{4,\pm \frac{1}{2}, 5}$, $\bk{4,\pm \frac{3}{10}, 5}$ and $\bk{4,\pm \frac{1}{2}, 6}$.
		
		\item $i=5$: $\bk{5,\pm \frac{11}{2}, 1}$, $\bk{5,\pm \frac{9}{2}, 1}$, $\bk{5,\pm \frac{7}{2}, 1}$, $\bk{5,\pm \frac{5}{2}, 1}$, $\bk{5,\pm 2, 1}$, $\bk{5,\pm \frac{3}{2}, 1}$, $\bk{5,\pm \frac{7}{6}, 1}$, $\bk{5,\pm 1, 1}$, $\bk{5,\pm \frac{5}{6}, 1}$, $\bk{5,\pm \frac{1}{2}, 1}$, $\bk{5,\pm \frac{5}{2}, 2}$, $\bk{5,\pm 2, 2}$, $\bk{5,\pm \frac{3}{2}, 2}$, $\bk{5,\pm 1, 2}$, $\bk{5,\pm \frac{1}{2}, 2}$, $\bk{5,\pm \frac{3}{2}, 3}$, $\bk{5,\pm \frac{7}{6}, 3}$, $\bk{5,\pm \frac{5}{6}, 3}$, $\bk{5,\pm \frac{1}{2}, 3}$, $\bk{5,\pm 1, 4}$, $\bk{5,\pm \frac{1}{2}, 4}$ and $\bk{5,\pm \frac{1}{2}, 5}$.
		
		\item $i=6$: $\bk{6,\pm 7, 1}$, $\bk{6,\pm 6, 1}$, $\bk{6,\pm 5, 1}$, $\bk{6,\pm 4, 1}$, $\bk{6,\pm 3, 1}$, $\bk{6,\pm \frac{5}{2}, 1}$, $\bk{6,\pm 2, 1}$, $\bk{6,\pm 1, 1}$, $\bk{6,\pm \frac{1}{2}, 1}$, $\bk{6, 0, 1}$, $\bk{6,\pm 3, 2}$, $\bk{6,\pm \frac{5}{2}, 2}$, $\bk{6,\pm 2, 2}$, $\bk{6,\pm 1, 2}$, $\bk{6,\pm \frac{1}{2}, 2}$, $\bk{6,\pm 0, 2}$, $\bk{6,\pm 2, 3}$, $\bk{6,\pm 1, 3}$ and $\bk{6,\pm \frac{1}{2}, 4}$.
		
		\item $i=7$: $\bk{7,\pm \frac{19}{2}, 1}$, $\bk{7,\pm \frac{17}{2}, 1}$, $\bk{7,\pm \frac{11}{2}, 1}$, $\bk{7,\pm \frac{9}{2}, 1}$, $\bk{7,\pm \frac{5}{2}, 1}$, $\bk{7,\pm \frac{3}{2}, 1}$, $\bk{7,\pm \frac{1}{2}, 1}$, $\bk{7,\pm \frac{9}{2}, 2}$, $\bk{7,\pm \frac{5}{2}, 2}$, $\bk{7,\pm \frac{1}{2}, 2}$ and $\bk{7,\pm \frac{1}{2}, 3}$.
		
		\item $i=8$: $\bk{8,\pm \frac{29}{2}, 1}$, $\bk{8,\pm \frac{19}{2}, 1}$, $\bk{8,\pm \frac{11}{2}, 1}$, $\bk{8,\pm \frac{1}{2}, 1}$ and $\bk{8,\pm \frac{1}{2}, 2}$
		
	\end{enumerate}
	
	As for the length of the socle and co-socle of these representations we have:
	
	\begin{itemize}
		\item In the following cases, when $s\leq0$, $\DPS{P_i}\bk{s,\chi}_v$ admits a socle of length $2$ and an irreducible co-socle, while when $s\geq0$, $\DPS{P_i}\bk{s,\chi}_v$ admits a co-socle of length $2$ and an irreducible socle:
		$\bk{1,\pm \frac{5}{2},1}$, $\bk{3,\pm\frac{1}{2},2}$, $\bk{6,0,1}$, $\bk{6,0,2}$ and $\bk{7,\pm \frac{3}{2},1}$.
	
		\item Furthermore, in the cases $\bk{2,-\frac{1}{2},1}$ and $\bk{5,-\frac{1}{2},1}$ the socle is irreducible while the co-socle is either irreducible or of length $2$. Similarly, for $\bk{2,\frac{1}{2},1}$ and $\bk{5,\frac{1}{2},1}$ the co-socle is irreducible and the socle is either irreducible or of length $2$.
	
		\item In all other cases, $\DPS{P_i}\bk{s,\chi}_v$ admits irreducible socle and co-socle.
	\end{itemize}
	
\end{Thm}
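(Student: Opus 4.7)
The plan is to follow the methodology of \cite{SDPS_E8}, of which the statement here is essentially a summary. Reducibility of $\DPS{P_i}\bk{s,\chi}_v$ is studied through the embedding into the principal series from the Borel recorded in \Cref{Eq:Natural_Embedding_and_Surjection_to_Borel}, and the image inside that principal series is controlled by the standard intertwining operators $M_v\bk{w,\lambda,\mu_v}$ for $w\in W(M_i,G)$.

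First, I would enumerate the candidate reducibility points. The holomorphy of the normalized operators $N_v\bk{w_\alpha,\lambda,\mu_v}$ in the half-plane $\Real\bk{\gen{\lambda,\check{\alpha}}}>-1$ implies that any pole of $M_v\bk{w,\lambda_{s,\chi}^{P_i},\mu_v}$ must come from a zero or pole of some local $L$-ratio $\Lfun_{F_v}\bk{\gen{\lambda,\check{\gamma}},\mu_v\circ\check{\gamma}}/\Lfun_{F_v}\bk{\gen{\lambda,\check{\gamma}}+1,\mu_v\circ\check{\gamma}}$ for a coroot $\check{\gamma}\in R(w)$. Substituting $\lambda=\lambda_{s,\chi}^{P_i}$, evaluating $\gen{\omega_{\alpha_i},\check{\gamma}}$ for each positive coroot of $E_8$ and imposing that $\chi\circ\check{\gamma}$ be nontrivial of the appropriate order produces the finite list of candidate triples $(i,s_0,\mathrm{ord}(\chi))$; confirmation that each is a genuine reducibility point is accomplished by exhibiting a non-invertible intertwiner between $\DPS{P_i}\bk{s_0,\chi}_v$ and its twisted contragredient, or equivalently a nontrivial Jacquet-module quotient.

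For the length of the socle and co-socle at each reducibility point $s_0$ I would perform an R-group-type analysis: decompose $\DPS{P_i}\bk{s_0,\chi}_v$ into joint eigenspaces of a commuting family of intertwining operators that stabilise the inducing character, applying the Zampera-type decomposition of \Cref{Prop:Zampera} at each singular hyperplane through $s_0$ in order to isolate Steinberg-like subquotients. The length-$2$ socles arising at $\bk{1,\pm\tfrac{5}{2},1}$, $\bk{3,\pm\tfrac{1}{2},2}$, $\bk{6,0,1}$, $\bk{6,0,2}$ and $\bk{7,\pm\tfrac{3}{2},1}$ should correspond to non-transverse intersections of several reducibility hyperplanes, while the ambiguous cases $\bk{2,\pm\tfrac{1}{2},1}$ and $\bk{5,\pm\tfrac{1}{2},1}$ reflect a genuine dependence on the specific local character $\chi_v$ rather than a uniform phenomenon.

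The main obstacle is the combinatorial size of $E_8$: $|W|\approx 7\times 10^8$ and $|W(M_4,G)|=483{,}840$, so any by-hand enumeration is hopeless. The approach of \cite{SDPS_E8} is to run systematic Sagemath computations organised by maximal parabolic and by the order of $\chi$; the remaining delicate points are exactly those at which multiple composition factors collide at a single reducibility value and require separate Jacquet-module bookkeeping to disentangle, and a uniform theoretical shortcut that would replace the computer enumeration does not seem to be available with the current technology.
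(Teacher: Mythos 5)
The theorem you are being asked to prove is not proved in this paper at all: the surrounding text states explicitly that the lists are quoted from \cite{SDPS_E8}, and no argument is given here.  Your proposal is therefore being compared against a citation, not a proof, and what you have written is a plausible reconstruction of the \emph{shape} of the argument in \cite{SDPS_E8} rather than an actual proof of the theorem.  As such it is not wrong, but it is also not a proof, and a couple of your emphases do not line up with what the cited methodology actually does.

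First, you locate the candidate reducibility points via poles of the unnormalized intertwining operators $M_v\bk{w,\lambda,\mu_v}$, arguing that these come from zeros or poles of local $L$-ratios.  This is imprecise in a way that matters: the operator $M_v$ having a pole is neither necessary nor sufficient for reducibility of $\DPS{P_i}\bk{s,\chi}_v$.  The correct statement, which underlies the SDPS approach, is that reducibility of the full principal series $i_{B}^{G}\bk{\lambda^{P_i}_{s,\chi}}$ occurs exactly at the points where $\gen{\lambda^{P_i}_{s,\chi},\check{\gamma}}=\pm 1$ and $\chi\circ\check{\gamma}=\Id$ for some $\gamma\in\Phi^{+}$, and this only bounds the set of candidate points for the degenerate series $\DPS{P_i}\bk{s,\chi}_v$.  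Whether reducibility actually descends to the degenerate subrepresentation, and how many composition factors appear, cannot be read off the normalizing $L$-factors; it requires the Jacquet-module and branching-rule bookkeeping described in \Cref{Appendix:Branching_Rules}, which is the genuine core of the SDPS machinery.  Your sketch mentions R-group analysis and \Cref{Prop:Zampera}, which do figure, but the primary engine in \cite{SDPS_E8} is the computation and comparison of multiplicities $\mult{\lambda}{r_T^G\sigma}$ for conjectural subquotients $\sigma$, supplemented in the unramified case by direct calculation in the Iwahori--Hecke algebra model; these you mention only obliquely.

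Second, your proposal never actually closes: it identifies that one ``would'' enumerate coroots, ``would'' perform R-group analysis, and that a computer must be used, but it does not produce even a representative verification of a single entry in the list, nor does it explain how the socle/co-socle lengths in the theorem's second half are established (the Aubert/Iwahori--Matsumoto duality between $s$ and $-s$, which you do not mention, is what produces the $s\leq 0$ versus $s\geq 0$ dichotomy in the statement).  Your honest acknowledgment that no hand proof exists and that the undetermined cases $\bk{2,\pm\frac12,1}$ and $\bk{5,\pm\frac12,1}$ reflect a real computational limitation is accurate, but the paper's ``proof'' is the citation to \cite{SDPS_E8}, and the comparison here should really be with the methods of that paper, which rest more heavily on Jacquet-module branching than your sketch suggests.
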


\section{Lengths of Certain Representations}
\label{Sec:Local_DPS_Lengths}

In this section we add information on the structure of particular degenerate principle series.
These are the cases where $\pi=\DPS{P}\bk{s,\chi}_v$ does not admit a unique irreducible quotient and the cases associated with non-square-integrable residues that cannot be calculated using a Siegel-Weil identity.
This is required for the calculations performed in \Cref{Chap:Square_Integrable} and \Cref{Chap:Non_square_Integrable}.
Proofs of the statements made in this section can be found in \Cref{Appendix:Chap_Local_Calculations}.
We fix a finite place $v\in\Places$ and drop it from our notations.

\subsection{$E_6$}
\label{Subsec:Local_Lengths_E6}

\begin{enumerate}
	
	\item The following representations are of length $2$, they have a unique irreducible subrepresentation, which will be denoted by $\pi_0$, and a unique irreducible quotient, which will be denoted by $\pi_1$:
	\[
	\DPS{P_4}\bk{1,\Id}, \DPS{P_4}\bk{1,\chi}, \DPS{P_4}\bk{\frac12,\chi},
	\]
	where $\chi$ is a character of order $2$ of $F^\times$.
	
	\item The representation $\pi=\DPS{P_4}\bk{\frac{1}{2}}$ is of length $3$.
	Its composition series is given as follows:
	\begin{itemize}
		\item It admits a unique irreducible subrepresentation $\pi_0$.
		\item It admits a unique irreducible quotient $\pi_1$ (which is spherical).
		\item $\pi$ contains a third irreducible subquotient $\sigma_1$.
		This is the unique irreducible subrepresentation of $\pi\rmod\pi_0$.
		Namely
		\[
		\pi_1 = \bk{\pi\rmod \pi_0}\rmod \sigma_1
		\]
	\end{itemize}

	\item Let $\chi$ be a character of order $3$ of $F^\times$ and let $\mu_3$ denote the group of cube roots of unity.
	The representation $\pi=\DPS{P_4}\bk{\chi,\frac{1}{2}}$ has length $4$.
	We denote its unique irreducible subrepresentation by $\pi_0$.\\
	The quotient $\pi\rmod\pi_0=\oplus_{\eta\in\mu_3}\pi_{\eta}$ is semi-simple of length $3$.
	Note that $\Stab_W\bk{\lambda_{a.d.}}=\gen{w_{3165}}$
	It is possible to distinguish between the $\pi_{\eta}$ by their eigenvalues under the action of $N_{w_{3165}}\bk{\lambda_{a.d.}}$.
	Namely,
	\[
	N_{w_{3165}}\bk{\lambda_{a.d.}}v = \eta v \quad \forall v \in \pi_{\eta} \quad \forall \eta\in\mu_3 .
	\]
	Here,
	\[
	\lambda_{a.d.}=\esixchar{\chi}{\chi}{\chi}{-1+\chi}{\chi}{\chi} .
	\]

\end{enumerate}

\subsection{$E_7$}
\label{Subsec:Local_Lengths_E7}

\begin{enumerate}
	
	\item
	The following representations are of length $2$, they have a unique irreducible subrepresentation, which will be denoted by $\pi_0$, and a unique irreducible quotient, which will be denoted by $\pi_1$:
	\begin{itemize}
		\item $\DPS{P_2}\bk{2}$, $\DPS{P_4}\bk{\frac12}$, $\DPS{P_4}\bk{\frac23}$, $\DPS{P_4}\bk{\frac32}$ and $\DPS{P_5}\bk{\frac32}$.
		
		\item $\DPS{P_4}\bk{\frac12,\chi}$, $\DPS{P_4}\bk{\frac32,\chi}$, $\DPS{P_5}\bk{1,\chi}$ and $\DPS{P_5}\bk{\frac32,\chi}$, where $\chi$ is a character of order $2$ of $F^\times$.
		
		\item $\DPS{P_4}\bk{\frac23,\chi}$, where $\chi$ is a character of order $3$ of $F^\times$.
	\end{itemize}

	\item Let $\chi$ be a character of order $2$ of $F^\times$.
	The representations $\DPS{P_2}\bk{2,\chi}$ and $\pi=\DPS{P_5}\bk{2,\chi}$ have length $3$.
	In each case, we denote the unique irreducible subrepresentation by $\pi_0$.
	
	The quotient $\pi\rmod\pi_0=\pi_1\oplus\pi_{-1}$ is semi-simple of length $2$.	
	Note that $\Stab_W\bk{\lambda_{a.d.}}=\gen{w_{257}}$
	It is possible to distinguish between $\pi_1$ and $\pi_{-1}$ by their eigenvalues under the action of $N_{w_{257}}\bk{\lambda_{a.d.}}$.
	Namely,
	\[
	N_{w_{257}}\bk{\lambda_{a.d.}}v = \eta v \quad \forall v \in \pi_{\eta},\quad \eta\in\set{1,-1} .
	\]
	Here,
	\[
	\lambda_{a.d.}=\esevenchar{-1}{\chi}{\chi}{-1}{\chi}{-1}{\chi}.
	\]
	
	\item Let $\chi$ be a character of order $4$ of $F^\times$.
	The representation $\pi=\DPS{P_4}\bk{\chi,\frac{1}{2}}$ has length $3$.
	We denote the unique irreducible subrepresentation by $\pi_0$.
	
	The quotient $\pi\rmod\pi_0=\pi_1\oplus\pi_{-1}$ is semi-simple of length $2$.
	Note that $\Stab_W\bk{\lambda_{a.d.}}=\gen{w_{257}}$
	It is possible to distinguish between $\pi_1$ and $\pi_{-1}$ by their eigenvalues under the action of $N_{w_{257}}\bk{\lambda_{a.d.}}$.
	Namely,
	\[
	N_{w_{257}}\bk{\lambda_{a.d.}}v = \eta v \quad \forall v \in \pi_{\eta},\quad \eta\in\set{1,-1} .
	\]
	Here,
	\[
	\lambda_{a.d.}=\esevenchar{-\frac{1}{2}+3\chi}{2\chi}{2\chi}{-\frac{1}{2}+\chi}{2\chi}{-\frac{1}{2}+3\chi}{2\chi}.
	\]

	\item The representation $\pi=\DPS{P_2}\bk{1}$ has length $3$, with a unique irreducible subrepresentation $\pi_0$ and the quotient $\pi\rmod\pi_0=\pi_1\oplus\pi_{-1}$ is semi-simple of length $2$.
	
	Furthermore, it holds that
	\[
	N_{u_0^{-1} w_0 u_0}\bk{\lambda_0} f_\epsilon \quad \forall f_\epsilon\in\pi_\epsilon,\ \epsilon=\pm 1 ,
	\]
	where
	\begin{itemize}
		\item $	\lambda_0 = \lambda_{1}^{P_2} = \esevenchar{-1}{7}{-1}{-1}{-1}{-1}{-1}$.
		\item $u_0=w\coset{1,3,4,2}$.
		\item $w_0=w\coset{5, 4, 3, 2, 4, 5, 6, 7, 5, 4, 3, 2, 4, 5, 6, 5, 4, 3, 2, 4, 5}$.
	\end{itemize}
\end{enumerate}

\subsection{$E_8$}

\begin{enumerate}
	\item 
	The following representations are of length $2$, they have a unique irreducible subrepresentation, which will be denoted by $\pi_0$, and a unique irreducible quotient, which will be denoted by $\pi_1$:
	\begin{itemize}
		\item $\DPS{P_3}\bk{\frac76}$, $\DPS{P_3}\bk{2}$, $\DPS{P_4}\bk{\frac{3}{10}}$, $\DPS{P_4}\bk{\frac34}$, $\DPS{P_4}\bk{\frac76}$, $\DPS{P_4}\bk{2}$, $\DPS{P_5}\bk{\frac56}$, $\DPS{P_5}\bk{\frac76}$, $\DPS{P_5}\bk{2}$ and $\DPS{P_6}\bk{\frac52}$.
		
		\item $\DPS{P_3}\bk{2,\chi}$, $\DPS{P_4}\bk{\frac34,\chi}$, $\DPS{P_4}\bk{2,\chi}$, $\DPS{P_5}\bk{2,\chi}$, $\DPS{P_6}\bk{2,\chi}$ and $\DPS{P_6}\bk{\frac52,\chi}$, where $\chi$ is a character of order $2$ of $F^\times$.
		
		\item $\DPS{P_3}\bk{\frac76,\chi}$, $\DPS{P_4}\bk{\frac76,\chi}$, $\DPS{P_5}\bk{\frac12,\chi}$, $\DPS{P_5}\bk{\frac56,\chi}$ and $\DPS{P_5}\bk{\frac76,\chi}$, where $\chi$ is a character of order $3$ of $F^\times$.
		
		\item $\DPS{P_4}\bk{\frac12,\chi}$ and $\DPS{P_4}\bk{\frac34,\chi}$, where $\chi$ is a character of order $4$ of $F^\times$.
		
		\item $\DPS{P_4}\bk{\frac{3}{10},\chi}$, where $\chi$ is a character of order $5$ of $F^\times$.
	\end{itemize}

	\item The representation $\pi=\DPS{P_1}\bk{\frac52}$ has length $3$, with a unique irreducible subrepresentation $\pi_0$ and the quotient $\pi\rmod\pi_0=\pi_1\oplus\pi_{-1}$ is semi-simple of length $2$.
	
	\item The representation $\pi=\DPS{P_7}\bk{\frac32}$ has length $3$, with a unique irreducible subrepresentation $\pi_0$ and the quotient $\pi\rmod\pi_0=\pi_1\oplus\pi_{-1}$ is semi-simple of length $2$.

	\item Let $\chi$ be a character of order $2$ of $F^\times$.
	The representations $\DPS{P_3}\bk{\frac12,\chi}$ have length $3$.
	In each case, we denote the unique irreducible subrepresentation by $\pi_0$.
	The quotient $\pi\rmod\pi_0=\pi_1\oplus\pi_{-1}$ is semi-simple of length $2$.
\end{enumerate}

\section{A Little on the Degenerate Principal Series at Archimedean Places}

If $P$ has an Abelian nilradical, the structure of $\DPS{P}\bk{s,\Id}$ was already studied in
\begin{itemize}
	\item \cite{MR1329899}, when $P$ is conjugate to its opposite parabolic (the case of $\bk{E_7,P_7}$).
	
	\item \cite[Theorem 3.3]{MR3165233}, when $P$ is not conjugate to its opposite parabolic (the case of $\bk{E_6,P_1}$).

\end{itemize}

Throughout this manuscript, the global sections which are spherical at Archimedean places,
This can, of course, be easily replaced by sections which are in the representation generated by the spherical vector.
Furthermore, one can remove this restriction, once enough information is known regarding the structure of the Archimedean degenerate principal series.
In fact, non of the formulas in the following chapters need to be altered under the following conjecture:
\begin{Conj}
	The structure of the co-socle in the Archimedean places is analogous to the non-Archimedean places.
	Namely, for $\Real\bk{s}>0$, $v_1$ an Archimedean place of $F$ and $v_2$ a non-Archimedean place of $F$:
	\begin{itemize}
		\item The length of the maximal semi-simple quotient of $\DPS{P,v_1}\bk{s,\Id}$ is the same as that of $\DPS{P,v_2}\bk{s,\Id}$.
		
		\item Assume that $F_{v_1}=\R$.
		Then, the length of the maximal semi-simple quotient of $\DPS{P,v_1}\bk{s,sgn}$ equals that of $\DPS{P,v_2}\bk{s,\chi}$, where $\chi$ has order $2$.
	\end{itemize}
\end{Conj}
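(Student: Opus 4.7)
The plan is to reduce the comparison of Archimedean and non-Archimedean co-socle lengths to a combinatorial statement about intertwining operators, and then verify that statement using the same root-theoretic data that controls the $p$-adic case. First, I would fix a maximal parabolic $P_i$ and write $\pi_v = \DPS{P_i}\bk{s,\chi}_v$, noting that the maximal semi-simple quotient of $\pi_v$ is governed by the image of the long intertwining operator from $P_i$ to its opposite: the number of irreducible summands equals the dimension of a certain endomorphism algebra built from normalized intertwiners stabilizing the inducing datum. At a non-Archimedean place this is the classical $R$-group construction (whose outputs are tabulated in Section~\ref{Sec:Local_DPS_Lengths}), and at an Archimedean place the analogous role is played by the Knapp--Stein $R$-group together with the Vogan--Zuckerman theory, which again delivers the quotient structure in terms of the Weyl stabilizer of the character together with certain sign characters on it.

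Second, for the case $\chi=\Id$ I would exploit the two situations where the Archimedean structure is already known: the abelian nilradical cases $\bk{E_7,P_7}$ and $\bk{E_6,P_1}$ handled in \cite{MR1329899} and \cite{MR3165233}. In those cases one can directly check the desired equality by comparing the tables of composition factors. For the remaining maximal parabolics, I would invoke the cocycle relation for the normalized intertwining operators together with the fact that, on spherical sections, their action is encoded by the Gindikin--Karpelevich ratios of local $L$-factors; since the vanishing and pole pattern of $\Lfun_{F_v}\bk{s,\Id}$ at integer points has the same order at Archimedean and non-Archimedean places, the resulting stabilizer $\Stab_{W_{M_i}}\bk{\eta_{s,\Id}^{P_i}}$ and its $R$-group are identical. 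This would propagate through the residue machinery of Chapter~\ref{Chap:Poles_of_Eisen_ser} without disturbing the length counts.

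Third, for the $\mathrm{sgn}$ case over $\RR$ the analogy with order-two characters over $F_{v_2}$ must be made precise. Here the plan is to compare Knapp--Stein operators attached to $\mathrm{sgn}$ with the $p$-adic intertwiners attached to a quadratic character: both share the feature that $\Lfun\bk{s,\chi\circ\check{\gamma}}$ degenerates only when $\chi\circ\check{\gamma}$ is trivial, a purely combinatorial condition on the coroots of the Levi. I would match the subsets $R\bk{w}$ place-by-place to conclude that the stabilizers and the corresponding sign characters defining eigenspaces coincide, and then invoke the Zampera-type decomposition of Proposition~\ref{Prop:Zampera} to isolate each eigenspace as an irreducible constituent.

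The main obstacle, and the reason this is stated as a conjecture rather than a theorem, is the absence of a general structure theorem for Archimedean degenerate principal series of $E_6,E_7,E_8$ outside the abelian nilradical setting. Specifically, one must rule out \emph{extra} reducibility arising from discrete series or limits of discrete series that have no non-Archimedean analogue, as well as phenomena in which an additional $K_\infty$-type supports an unexpected irreducible summand. A complete proof would likely require either a case-by-case analysis using cohomological induction and Vogan's classification of irreducible admissible $\bk{\frakg,K_\infty}$-modules, or a systematic comparison via the Langlands quotient theorem coupled with the expected Arthur parametrization; neither is currently available in the uniformity required for the $E_n$ series.
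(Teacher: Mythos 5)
The statement is a \emph{Conjecture} in the paper, and the paper offers no proof: the authors explicitly acknowledge, both in the Introduction and in Chapter~\ref{Chap:Local_DPS}, that no complete structure theory for the Archimedean degenerate principal series of $E_6$, $E_7$, $E_8$ is currently available, and they sidestep the issue by restricting to sections spherical at $v\mid\infty$. Your closing paragraph correctly identifies this gap as the reason the statement cannot presently be proved; that diagnosis coincides with the paper's own position, and there is no hidden argument in the paper for you to have missed.

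That said, two ingredients in your proposed strategy would need repair even granted an Archimedean structure theorem. First, the Knapp--Stein/$R$-group formalism you invoke --- counting irreducible summands in the co-socle by the dimension of an algebra of self-intertwiners --- is a theory for \emph{tempered} inducing data; the representations $\DPS{P}(s,\chi)_v$ with $\Real(s)>0$ are induced from non-unitary twists, so the relevant tool is not the classical $R$-group but rather the exceptional eigenspace decomposition of Proposition~\ref{Prop:Zampera}, which the paper establishes and applies only at a fixed $v\in\Places_{fin.}$; extending it to $v\mid\infty$ is exactly the missing input. Second, the claim that the Gindikin--Karpelevich ratios have the same pole/zero pattern at Archimedean and non-Archimedean places is too coarse: the non-Archimedean factor $\Lfun_{F_v}(s,\Id)=(1-q_v^{-s})^{-1}$ has a simple pole at $s=0$ and no other real singularity, whereas the Archimedean factor is essentially $\Gamma(s/2)$, with poles at every nonpositive even integer. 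Even where the relevant ratios happen to agree, matching the scalar normalizing factors does not by itself control the composition length of the induced module, which at Archimedean places is additionally sensitive to $K_\infty$-type multiplicities and to constituents (cohomologically induced or lowest-weight modules) with no $p$-adic counterpart. Your proposal is a plausible program, not a proof, and the conjecture remains open for precisely the structural reasons you list.
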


\part{The Degenerate Residual Spectrum of Groups of Type $E_n$}
\label{Part:DRS}

In this part we study the degenerate residual spectrum of groups of type $E_n$.
Each chapter deals with a certain aspect in the description of the degenerate residual spectrum of these groups, it begins with a section describing the methods used to study this particular aspect, followed by three sections dedicated to each of the types $E_n$ ($n=6,7,8$).
We wish to point out that the methods described in the method sections are applicable for other groups and not only those of type $E_n$.

Through out this part, let $G$ be a group of type $E_n$ as in \Cref{Chap:Group_En}, as well as other notations set in \Cref{Chap:Group_En}.

We list the four chapters in this part:
\begin{enumerate}
	\item \Cref{Chap:Poles_of_Eisen_ser} studies the set of poles of an Eisenstein series, their orders and which of the residues is square-integrable.
	
	\item \Cref{Chap:Square_Integrable} studies the square-integrable residues of Eisenstein series.
	
	\item \Cref{Chap:Siegel_Weil} describes the Siegel-Weil identities between Eisenstein series.
	
	\item In \Cref{Chap:Arthur} we attach Arthur parameters to the square-integrable residues found in \Cref{Chap:Poles_of_Eisen_ser} and \Cref{Chap:Square_Integrable}.
	
	\item \Cref{Chap:Non_square_Integrable} studies the non-square-integrable residues of Eisenstein series.
\end{enumerate}

We also wish to point out that some of the calculations in this part rely on local calculations performed in \Cref{Appendix:Chap_Local_Calculations}.

\chapter{Poles of Degenerate Eisenstein Series}
\label{Chap:Poles_of_Eisen_ser}

In this part, we study the poles of the degenerate Eisenstein series.
That is, we describe the set of poles of $\Eisen_{P_i} \bk{f,s,\chi,g}$ and determine the order of these poles.

\section{The Method}
\label{Section:Poles_Method}

\subsection{The Constant Term at a Point and Equivalence Classes}

Let $f_s=\otimes_{v\in\Places} f_{s,v}$ be a factorizable standard section and assume that $f_{s,v}=f_{s,v}^0$ for $v\notin S$, where $S\subset\Places$ is a finite set of primes of $F$.

It follow from \Cref{Eq:Constant_term} and \Cref{Eq:Global_GK_Formula} that
\begin{equation}
	\label{eq::CT_first_form}
	\begin{split}
		\Eisen_{P_i} \bk{f,s,\chi,g}_T 
		& = \suml_{w\in W^{M,T}} M_w\bk{\lambda_{s,\chi}^{P_i}} f_s\bk{g} \\
		& = \suml_{w\in W^{M,T}} C_w\bk{\lambda_{s,\chi}^{P_i}} \otimes_{v\notin S}f_{s,v}\bk{g_v} \otimes_{v\in S} N_{w,v}\bk{\lambda_{s,\chi}^{P_i}}f_{s,v}\bk{g_v} \\
	\end{split}
\end{equation}
Each summand may have a pole of a certain order and, by summation, the orders of these poles can be reduced, that is different summands may cancel each others poles.

We consider the constant term $\Eisen_{P_i} \bk{f,s,\chi,g}_T$ as a representation of $T$; as such, each summand on the right-hand-side of \Cref{eq::CT_first_form} lies within a single isotypic component.
Thus, cancellations of poles may happen only within the same isotypic component.
Thus, given $\chi$ and $s_0\in\C$, we define an equivalence relation $\sim_{s_0,\chi}$ on $W^{M,T}$ as follows:
\[
w\sim_{s_0,\chi}w' \Leftrightarrow w\cdot\lambda_{s_0} = w'\cdot\lambda_{s_0} .
\]
Thus, we may rewrite \Cref{eq::CT_first_form} by summing over equivalence classes as follows
\begin{equation}
	\label{eq::CT_second_form}
	\Eisen_{P_i} \bk{f,s,\chi,g}_T = 
	\suml_{\coset{w'}\in W^{M,T}\rmod\sim_{s_0,\chi}}  \kappa_{\coset{w'}}\bk{f,g},
\end{equation}
where
\begin{equation}
	\label{eq::sum_over_eq_cl}
	\begin{split}
		\kappa_{\coset{w'}}\bk{f,s,g} 
		& = \suml_{w\in\coset{w'}} M_w\bk{\lambda_{s,\chi}^{P_i}} f_s\bk{g} \\
		& = \suml_{w\in\coset{w'}} C_w\bk{\lambda_{s,\chi}^{P_i}} \otimes_{v\notin S}f_{s,v}\bk{g_v} \otimes_{v\in S} N_{w,v}\bk{\lambda_{s,\chi}^{P_i}}f_{s,v}\bk{g_v} .
	\end{split}
\end{equation}

We note that the poles of each summand can arise either from the Gindikin-Karpelevich factor $C_w\bk{\lambda_{s,\chi}^{P}}$ or from the local intertwining operators $N_{w,v}\bk{\lambda_{s,\chi}^{P}}f_{s,v}\bk{g_v}$.
We will show that, for $\Real\bk{s}>0$, the local factors do not contribute to the order.
That is, the order of the pole of each summand is determined by the order of $C_w\bk{\lambda_{s,\chi}^{P}}$.

We also point out that the order of $M_w\bk{s,\chi}$ is constant on an equivalence class $\coset{w'}$.
We write $\equivclassespole{s_0,\chi,k}$ for the set of equivalence classes in $W^{M,T}\rmod\sim_{s_0,\chi}$ which satisfy:
\[
\ord{s=s_0}{M_w\bk{s,\chi}} \geq k \quad \forall w\in\coset{w'} .
\]
We say that a pole of order $k$ of $\Eisen_{P_i} \bk{f,s,\chi,g}$ is canceled or reduced, if $\equivclassespole{i,s_0,\chi,k}\neq \emptyset$, while $\ord{s=s_0}{\Eisen_{P_i} \bk{f,s,\chi,g}}<k$.

\subsection{Holomorphicity of Local Normalized Intertwining Operators}

In our analysis of the poles of degenerate Eisenstein series and their residues, we make use of the following property of local intertwining operators:

\hbox{}
\textbf{\underline{Property A:}}
The operator $N_{w,v}\bk{\lambda}$ is well-defined and holomorphic at $\lambda_{s,\chi}^{P_i}$ for all $w\in W^{M_i,T}$ and $s\in\C$ such that $\Real\bk{s}>0$.
\hbox{}\\

It is known that \textbf{Property A} holds due to \cite{Zhang1997,Arthur1989,MR581582,Kostant1978,Vogan1978,Muic2001,MR1341660}.

\begin{Cor}
	\begin{enumerate}
		\item The order of $M_w\bk{\lambda_{s,\chi}^{P_i}} f_s\bk{g}$ at $s_0$ equals that of $C_w\bk{\lambda_{s,\chi}^{P_i}}$.
		\item If $\Eisen_{P_i} \bk{f,s,\chi,g}$ admits a pole at $s_0\in\C$, with $\Real\bk{s_0}>0$, then there exists $w\in W^{M,T}$ such that $C_w\bk{\lambda_{s,\chi}^{P_i}}$ admits a pole at $s_0$.

	\end{enumerate}
\end{Cor}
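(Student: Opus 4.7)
The plan is to deduce both items from the global Gindikin--Karpelevich factorization \Cref{Eq:Global_GK_Formula} together with Property A. First I would write a factorizable standard section $f_s = \otimes_v f_{s,v}$ with $f_{s,v} = f_{s,v}^0$ for $v \notin S$, and view it inside $i_\bB\bk{\mu_\chi, \lambda_s^{P_i}}$ via the embedding in \Cref{Eq:Natural_Embedding_and_Surjection_to_Borel}. Applying \Cref{Eq:Global_GK_Formula} to the Weyl element $w$ and dividing out the local spherical values (which are $1$) yields
\[
M_w\bk{\lambda_{s,\chi}^{P_i}} f_s(g) \;=\; C_w\bk{\lambda_{s,\chi}^{P_i}} \cdot \prod_{v \in S} N_{w,v}\bk{\lambda_{s,\chi}^{P_i}} f_{s,v}(g_v),
\]
where $C_w\bk{\lambda} = \bigl(\prod_{\gamma \in R(w)} \epsilon_{F}\bk{\inner{\lambda,\check{\gamma}}, \mu_\chi\circ\check{\gamma}}\bigr) \, J\bk{w,\lambda,\mu_\chi}$.

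For Part (1), since the global $\epsilon$-factors are entire and, by \textbf{Property A}, each $N_{w,v}\bk{\lambda_{s,\chi}^{P_i}} f_{s,v}(g_v)$ is holomorphic at $s_0$ for $\Real(s_0) > 0$, all poles of the left hand side at $s_0$ must originate from $C_w$. This gives the upper bound $\ord_{s=s_0}\!\bk{M_w f_s(g)} \leq \ord_{s=s_0}\!\bk{C_w}$. For the reverse inequality, one chooses, at each $v \in S$, a local section $f_{s,v}$ and an evaluation point $g_v$ such that $N_{w,v}\bk{\lambda_{s_0,\chi}^{P_i}} f_{s_0,v}(g_v) \neq 0$; this is possible because $N_{w,v}$ is a non-zero operator on the local induced representation and its image is a submodule of $i_{B,v}\bk{w\cdot\lambda_{s_0,\chi}^{P_i}}$, whose functions on $G_v$ separate points. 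The product over the finite set $S$ of such non-zero values is then non-zero, and $f_{s,v}^0(g_v)$ for $v \notin S$ can be arranged to be non-zero by choosing $g_v \in K_v$.

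For Part (2), the constant term formula \Cref{Eq:Constant_term} expresses
\[
\Eisen_{P_i}\bk{f,s,\chi,g}_{\bCT} \;=\; \sum_{w \in W\bk{M,G}} M_w\bk{\lambda_{s,\chi}^{P_i}} f_s(g).
\]
If $\Eisen_{P_i}\bk{f,s,\chi,g}$ admits a pole of order $m \geq 1$ at $s_0$ with $\Real(s_0) > 0$, then by \Cref{NonvanishingifCTisNonvanishing} the resulting residual form is non-vanishing and its non-vanishing is detected by the constant term; in particular, for some choice of $f$ and $g$, the right-hand side above has a pole of order at least $m$ at $s_0$. By the triangle inequality for orders of meromorphic functions, at least one summand $M_w\bk{\lambda_{s,\chi}^{P_i}} f_s(g)$ must have a pole of order $\geq m$ at $s_0$, and Part (1) then forces $\ord_{s=s_0}\!\bk{C_w\bk{\lambda_{s,\chi}^{P_i}}} \geq m > 0$.

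The main obstacle is the reverse inequality in Part (1): one needs a single choice of factorizable data for which no local evaluation vanishes at $s_0$. The subtlety is that $N_{w,v}\bk{\lambda_{s_0,\chi}^{P_i}}$ may have a non-trivial kernel when $\DPS{P_i}\bk{s_0,\chi}_v$ is reducible, so one must avoid picking $f_{s,v}$ that lies in this kernel at $s_0$. This is handled by a density argument on the space of holomorphic standard sections, using that the map $f_{s,v} \mapsto N_{w,v}\bk{\lambda_{s,\chi}^{P_i}} f_{s,v}$ is non-zero at $s_0$ and that holomorphic sections with prescribed value at a given $s_0$ are plentiful.
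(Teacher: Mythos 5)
Your approach matches what the paper's Property~A is meant to deliver: the global Gindikin--Karpelevich factorization plus the holomorphicity of the local normalized operators $N_{w,v}$. The proof of Part~(2) and the upper bound in Part~(1) are correct.

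The gap is in the lower bound of Part~(1). You write that ``$N_{w,v}$ is a non-zero operator on the local induced representation,'' but Property~A only asserts well-definedness and holomorphicity of $N_{w,v}\bk{\lambda_{s,\chi}^{P_i}}$ at $s_0$; it does not assert that the \emph{restriction} of this operator to the subrepresentation $\DPS{P_i}\bk{s_0,\chi}_v$ of the full Borel induction is non-zero. Your density argument presupposes exactly this: it picks $f_{s_0,v}$ outside the kernel of a given non-zero operator, which does not help if the operator annihilates all of $\DPS{P_i}\bk{s_0,\chi}_v$. When $\chi_v$ is unramified the non-vanishing is automatic, because $N_{w,v}$ carries the normalized spherical vector of $\DPS{P_i}\bk{s,\chi}_v$ to the normalized spherical vector of the target, and the latter is non-zero at $g_v=1$. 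When $\chi_v$ is ramified at some $v\in S$, a separate argument is needed that $N_{w,v}\bk{\lambda_{s_0,\chi}^{P_i}}$ does not kill $\DPS{P_i}\bk{s_0,\chi}_v$; this is the missing step. Note that Part~(2), which is what the paper uses systematically, depends only on the upper bound in Part~(1), so your argument for Part~(2) stands on its own.
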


Thus, in order to find all of the poles of $\Eisen_{P_i} \bk{f,s,\chi,g}$ with $\Real\bk{s_0}>0$, one can proceed with the following steps:

\begin{enumerate}
	\item Find all $\chi$, $s_0$ and $w\in W^{M,T}$ such that $C_w\bk{\lambda_{s,\chi}^{P_i}}$ admits a pole at $s_0$.
	
	\item Bound the order of $\Eisen_{P_i} \bk{f,s,\chi,g}$ from above by $n$.
	We use two methods to perform this step:
	\begin{enumerate}
		\item For $\chi$ and $s_0$ and $w$ as above, determine the order of $\kappa_{\coset{w}}$ and let $n$ be the maximum of the orders of such poles using a reduction of orders of poles, as described in \Cref{Subsection:Methods_Reduction_of_order_of_poles}.
		
		\item In case that $\chi$ is trivial and $\DPS{P}\bk{s_0}$ is generated by its spherical vector (i.e. $\DPS{P}\bk{s_0}$ admits a unique irreducible quotient), we follow \cite{MR1174424} and use a method described in \Cref{Subsection:Poles_Method_Normalized_Eis_ser}.
	\end{enumerate}
	
	\item Prove that some equivalence class $\coset{w'}\in W^{M,T}\rmod\sim_{s_0,\chi}$ realizes a pole of order $n$.
	That is, show that
	\[
	\ord{s=s_0}{\kappa_{\coset{w'}}} = n .
	\]
	Usually, this can be done using a singleton $\set{w}\in W^{M,T}\rmod\sim_{s_0,\chi}$ such that the order of $C_w\bk{\lambda_{s,\chi}^{P}}$ at $s_0$ is $n$.
	
	\item Finally, we wish to determine the square-integrability of the residual representation.
	We do this using Langlands' criterion as detailed in \Cref{Subsection:Square-Integrability_criterion}.
\end{enumerate}

\begin{Rem}
	\label{Rem:Order_of_trivial_eq_class}
	We point out that $\set{1}\in W^{M,T}\rmod\sim_{s_0,\chi}$ for any $\Real\bk{s_0}>0$ and that $\ord{s=s_0}{\kappa_{\set{1}}}=0$.
	Hence, for any $\Real\bk{s_0}>0$, we have $\ord{s=s_0}{\Eisen_{P_i}\bk{\chi}}\geq 0$.
\end{Rem}

\subsection{Reduction of Orders of Poles}
\label{Subsection:Methods_Reduction_of_order_of_poles}

Considering the sum $\kappa_{\coset{w'}}\bk{f,s,g}$ in \Cref{eq::sum_over_eq_cl}, it is possible that each summand $M_w\bk{\lambda_{s,\chi}^{P_i}} f_s\bk{g}$ admits a pole of order $n$, yet $\kappa_{\coset{w'}}\bk{f,s,g}$ itself will admit a pole of a lower order or even be holomorphic.

The main reason for the cancellation of these poles is as follows.
Assume that $w'$ is the shortest element in $\coset{w'}$ and let $\lambda=w'\cdot\lambda_{s_0,\chi}^{P_i}$.

We note that the coset $\coset{w'}$ can be written as follows
\[
\coset{w'} = \set{uw' \mvert u\in \Stab_W\bk{\lambda} \cap \bk{W^{M,T} \cdot \bk{w'}^{-1}} } 
\]
We assume that $\Stab_W\bk{\lambda} \cap \bk{W^{M,T} \cdot \bk{w'}^{-1}}$ is isomorphic to $\bk{\bZ\rmod2\bZ}^k$, or in other words, it can be written as
\begin{equation}
	\label{Eq:Equivalence_class_as_a_shifted_product_of_Z/2Z}
	\coset{w'} = \set{uw' \mvert u \in \prodl_{i=1}^k \set{1,u_i} }.
\end{equation}

Finally, we assume that
\begin{equation}
	\label{Eq:Global_interwining_operator_acts_as_-1}
	M_{u_i}\bk{\lambda} \varphi = -\varphi \qquad \forall\varphi\in \Res{s=s_0} \coset{M_{w}\bk{\lambda_{s,\chi}^{P_i}}\bk{\DPS{P_i}\bk{s,\chi}}} .
\end{equation}
For $\varphi\in \Res{s=s_0} \coset{M_{w}\bk{\lambda_{s,\chi}^{P_i}}\bk{\DPS{P_i}\bk{s,\chi}}}$, it holds that
\[
\prodl_{i=1}^k \bk{\Id+M_{u_i}\bk{\lambda}}\varphi \in o\bk{\bk{s-s_0}^k} .
\]

It follows that
\[
\max\set{\ord{s=s_0}{M_w\bk{s,\chi}}} = n \quad \Rightarrow \quad \ord{s=s_0}{\kappa_{\coset{w'}}\bk{f,s,g}} \leq n-k .
\]

We list bellow a couple methods of validating \Cref{Eq:Global_interwining_operator_acts_as_-1}:
\begin{enumerate}
	\item Assume that $u_i$ can be written as $u_i=u_0^{-1}\cdot z\cdot u_0$, where $N_{u_0}\bk{\lambda}$ is an isomorphism and $z=s_{j_1}\cdot...\cdot s_{j_l}$, where
	\[
	\inner{u_0\cdot\lambda,\check{\alpha_{j_1}}} = ... = \inner{u_0\cdot\lambda,\check{\alpha_{j_l}}} = 0 .
	\]
	Then, by \cite[Proposition 6.3]{MR944102},
	\[
	M_z\bk{u_0\cdot\lambda} = \bk{-1}^l Id .
	\]
	If $l$ is odd, it follows that
	\[
	\begin{split}
		M_{u_i}\bk{\lambda} 
		& = M_{u_0^{-1}}\bk{z\cdot u_0 \cdot\lambda} \circ N_z\bk{u_0\cdot\lambda} \circ M_{u_0}\bk{\lambda} \\
		& =	- M_{u_0^{-1}}\bk{u_0 \cdot\lambda} \circ Id \circ M_{u_0}\bk{\lambda} \\
		& =	- M_{u_0^{-1}}\bk{u_0 \cdot\lambda} \circ M_{u_0}\bk{\lambda} = -Id .
	\end{split}
	\]
	
	\item Assume that $u_i$ can be written as $u_i=u_0^{-1}\cdot z\cdot u_0$, where $M_{u_0}\bk{\lambda}$ is an isomorphism and that $M_z\bk{u\cdot\lambda}$ satisfies the conditions of \Cref{Prop:Zampera}.
	Then, $N_z\bk{u\cdot\lambda}_v$ is holomorphic and diagonalizable at $s=s_0$ for all $v\in\Places$.
	Further assume that
	\[
	C_z\bk{u\cdot\lambda} \underset{s\to s_0}{\longrightarrow} -1 
	\]
	and that $N_{u_0w',v}\bk{\lambda_{s_0,\chi}}\bk{\DPS{P}\bk{s_0,\chi}_v}$ is contained within the $1$-eigenspace of $N_z\bk{u_0\cdot\lambda}$.
	Then,
	\[
	M_z\bk{u_0\cdot\lambda}=-Id.
	\]
	One concludes that $M_{u_i}\bk{\lambda}=-Id$ as in the previous item.
	
	We now explain how, given $v\in\Places_{fin.}$, to check that $N_{u_0w',v}\bk{\lambda_{s_0,\chi}^{P_i}}\bk{\DPS{P_i}\bk{s_0,\chi}_v}$ is, indeed, contained within the $1$-eigenspace of $N_z\bk{u_0\cdot\lambda_{s_0,\chi}^{P_i}}$, under the assumption that $\pi_v=\DPS{P_i}\bk{s_0,\chi}_v$ admits a unique irreducible quotient $\pi_{1,v}$.
	
	Let $\lambda_{0,v}$ denote the initial exponent of $\pi_v$ and let $\lambda_1=u_0w'\cdot\lambda_0$.
	We write $z=s_j z' s_j$, where $\inner{\lambda_{1},\check{\alpha_j}}=\pm 1$.
	By \Cref{Prop:Zampera}, $i_T^G\lambda_1$ decomposes as follows
	\[
	i_{T_v}^{G_v}\lambda_1 = \underbrace{i_{L_{j,v}}^{G_v}\bk{\tau_{tr.}}}_{1-eigenspace} \oplus \underbrace{i_{L_{j,v}}^{G_v}\bk{\tau_{St.}}}_{\epsilon-eigenspace},
	\]
	where $\tau$ is a $1$-dimensional representation of $L_{j,v}$, $\epsilon<0$ and
	\[
	\tau_{tr.}=\tau\otimes \Id_{L_{j,v}},\quad 
	\tau_{St.}=\tau\otimes \St_{L_{j,v}} .
	\]
	Now, since $N_{u_0w'}\bk{\pi}$ is a quotient of $\pi$, it also admits a unique irreducible quotient.
	It follows that $N_{u_0w'}\bk{\pi} \subset i_{L_{j,v}}^{G_v}\bk{\tau_{tr.}}$ or $N_{uw'}\bk{\pi} \subset i_{L_{j,v}}^{G_v}\bk{\tau_{St.}}$.
	It thus remains to prove that $N_{u_0w'}\bk{\pi} \subset i_{L_{j,v}}^{G_v}\bk{\tau_{tr.}}$, this would follow from the following calculation (if true):
	\begin{itemize}
		\item $s_j\cdot\lambda_1\leq r_T^G\bk{i_{L_{j,v}}^{G_v}\bk{\tau_{tr.}}}$.
		\item $s_j\cdot\lambda_1\nleq r_T^G\bk{i_{L_{j,v}}^{G_v}\bk{\tau_{St.}}}$.
		\item $s_j\cdot\lambda_1\leq r_T^G\pi_{1,v}$.
	\end{itemize}
	While the first two items follow from \Cref{Prop:Zampera}, the third follows from a branching rule calculation (see \Cref{Appendix:Branching_Rules} for more details).
\end{enumerate}

\begin{Rem}
	In some cases, we use a variation of the above method.
	Namely, we replace \Cref{Eq:Equivalence_class_as_a_shifted_product_of_Z/2Z} with the assumption that
	\[
	\coset{w'} \cdot \bk{w'}^{-1} \subseteq \prodl_{i=1}^k \set{1,u_i}
	\]
	and that for any $u\in \prodl_{i=1}^k \set{1,u_i}$ such that $uw'\notin W^{M,T}$, it holds that $M_{uw}$ vanishes on the degenerate principal series $\DPS{P_i}\bk{s,\chi}$ to the $l^{th}$ order.
	Hence, for any $f_s\in \DPS{P}\bk{s,\chi}$ it holds that
	\[
	\suml_{w\in\coset{w'}} M_{w}f_s = \suml_{u\in \prodl_{i=1}^k \set{1,u_i}} M_{uw'}f_s
	\]
	Under these assumptions, it is possible to apply the above methods on the right-hand-side instead of the left one, taking the order $l$ of the zeros of the added intertwining operators into account.
\end{Rem}

\vbox{}

We would also like to mention a slightly more complex variation of the application of \Cref{Prop:Zampera} mentioned above.
We consider the following situation:
\begin{itemize}
	\item Fix $v\in\Places_{fin.}$, we assume that $\pi_v$ admits a unique irreducible quotient $\pi_{1,v}$.
	\item $\lambda=w'\cdot\lambda_{s_0,\chi}^{P_i}$.
	\item $\coset{w'} = \set{w',u w'}$.
	\item The Weyl element $u$ can be written in the form $u=u_0^{-1}\cdot z\cdot u_0$, where $N_{u_0}\bk{\lambda_{s_0,\chi}^{P_i}}$ is an isomorphism such that $\lambda_1=u_0\cdot\lambda$ and $z$ satisfy the following conditions:
	\begin{itemize}
		\item $z\in W_L$, where $L=L_\Theta$, $\Theta\subset\Delta$, is a standard Levi subgroup of $G$ of type $A_5$.
		\item Furthermore, denote the elements of $\Theta$ by $\alpha_{j_1},...,\alpha_{j_5}$ such that the sub-Dynkin diagram of $\Delta$ induced by $\Theta$ is
		\[\begin{tikzpicture}[scale=0.5]
			\draw (-1,0) node[anchor=east]{};
			\draw (0 cm,0) -- (8 cm,0);
			\draw[fill=black] (0 cm, 0 cm) circle (.25cm) node[below=4pt]{$\alpha_{j_1}$};
			\draw[fill=black] (2 cm, 0 cm) circle (.25cm) node[below=4pt]{$\alpha_{j_2}$};
			\draw[fill=black] (4 cm, 0 cm) circle (.25cm) node[below=4pt]{$\alpha_{j_3}$};
			\draw[fill=black] (6 cm, 0 cm) circle (.25cm) node[below=4pt]{$\alpha_{j_4}$};
			\draw[fill=black] (8 cm, 0 cm) circle (.25cm) node[below=4pt]{$\alpha_{j_5}$};
		\end{tikzpicture}\]
		\item $z=w_{j_2}w_{j_4}tw_{j_2}w_{j_4}$, where $t\in\gen{w_{j_1},w_{j_3},w_{j_5}}$.
		\item The restriction $\mu$ of $\lambda_1$ to $L$, under the isomorphism with the $A_5$ root system, written with respect to the basis of fundamental weights, is given by
		\[
		\mu=\bk{-1,1,-1,1,-1} .
		\]
	\end{itemize}
\end{itemize}

In this case, we argue that, $N_{uw',v}\bk{\lambda_{s_0,\chi}^{P_i}}\bk{\DPS{P_i}\bk{s_0,\chi}_v}$ is contained within the $1$-eigenspace of $N_z\bk{u\cdot\lambda}$ and hence one may continue as above.

Indeed, by \cite[Example 5.5]{SegalSingularities}, the representation $i_{T_v}^{G_v}\lambda_1$ decomposes into a direct sum of the $\pm1$-eigenspaces of $N_{w_{j_2}w_{j_1}w_{j_2}}$ and $N_{w_{j_4}w_{j_5}w_{j_4}}$ respectively.
Namely,
\[
i_{T_v}^{G_v}\lambda_1 = \oplus_{\delta,\epsilon\in\set{-1,1}} \pi_{\delta,\epsilon},
\]
where $\pi_{\delta,\epsilon}$ is the $\delta$-eigenspace of $N_{w_{j_2}w_{j_1}w_{j_2}}$ and the $\epsilon$-eigenspace of $N_{w_{j_4}w_{j_5}w_{j_4}}$.

Write $\lambda_2=w_{j_4}w_{j_2}\cdot \lambda_1$, this is an anti-dominant exponent when restricted to $L$ and it appears with full multiplicity in $\pi_{1,1}$.
One may conclude that $N_z\bk{\lambda_1}$ acts on $\pi_{1,1}$ as $\Id$.

Thus, if $\lambda_2$ is an exponent of $\pi_{1,v}$, we may conclude that it is the image $N_{u_0w'}\bk{\lambda_{s_0,\chi}\bk{\pi}^{P_i}}$ and it is a subrepresentation of $\pi_{1,1}$ and hence $N_{u}\bk{\lambda}$ acts on it as $\Id$.

\begin{Rem}
	Similar arguments can be made for $L_\Theta$ of type $A_6$ and $D_6$, all of which have been considered in \cite[Section 5.2]{SegalSingularities}.
\end{Rem}

\subsection{Bounds on the Order of the Pole for trivial $\chi$}
\label{Subsection:Poles_Method_Normalized_Eis_ser}

Here we note an alternative method for bounding the order of $\Eisen_{P_i} \bk{f,s,\chi,g}$ in the case that $\chi=\Id$.

We recall that the spherical normalized Eisenstein series,
\[
\Eisen_{B}^\sharp\bk{\lambda,g} = 
\coset{\prodl_{\alpha\in\Phi^+} \zfun\bk{l_\alpha^+\bk{\lambda}} l_\alpha^+\bk{\lambda} l_\alpha^-\bk{\lambda}}
\Eisen_{B}\bk{f^0_\lambda,\lambda,g} ,
\]
given in \Cref{qq:Normalized_Eisenstein_Series}, is entire and $W_G$-invariant.
Following \cite{MR1174424}, we consider the value of $\Eisen_{B}^\sharp\bk{\eta_s^{P_i},g}$.

\begin{Prop}
	It holds that
	\[
	\Eisen_{B}^\sharp\bk{w\cdot \eta_s^{P_i},g} = G_{P_i}\bk{s} \cdot \Eisen_{P_i}^{G} \bk{f^0,s,\chi,g},
	\]
	where $G_{P_i}\bk{s}$ is holomorphic on $\Real\bk{s}>0$.
	In particular, with $\Real\bk{s_0}$, the order of the pole of $\Eisen_{P_i}^{G} \bk{f^0,s,\chi,g}$ at $s_0$ is bounded from above by the order of the zero of $G_{P_i}\bk{s}$ at $s_0$.
\end{Prop}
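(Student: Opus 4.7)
The plan is to identify $\Eisen_{P_i}\bk{f^0,s,\chi,g}$ with a specific value of the normalized series $\Eisen_B^\sharp$ by combining Propositions~\ref{Prop:Equality_of_Eisenstein_series} and~\ref{Prop:W_inv_entire_Eisen_Ser}, and then read off $G_{P_i}\bk{s}$ directly from the defining formula~\eqref{qq:Normalized_Eisenstein_Series} of $\Eisen_B^\sharp$.

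First I would apply part (1) of Proposition~\ref{Prop:Equality_of_Eisenstein_series} to the spherical section $f^0 \in \DPS{P_i}\bk{s,\chi}$: under the natural embedding~\eqref{Eq:Natural_Embedding_and_Surjection_to_Borel} the vector $f^0$ is carried to the spherical section of $i_B$ at an exponent $\lambda_0$ lying in the $W$-orbit of $\eta_s^{P_i}$ (the exponents $\lambda_s^{P_i}$ and $\eta_s^{P_i}$ are related by the longest element $w_{M,l}\in W_M$, which fixes $\omega_{\alpha_i}$ and sends $\rho_M$ to $-\rho_M$). This gives
\[
\Eisen_{P_i}\bk{f^0,s,\chi,g} = \Eisen_B\bk{f^0_{\lambda_0},\lambda_0,g}.
\]
Inverting \eqref{qq:Normalized_Eisenstein_Series} at $\lambda=\lambda_0$ and invoking the $W$-invariance of $\Eisen_B^\sharp$ from Proposition~\ref{Prop:W_inv_entire_Eisen_Ser} to replace $\Eisen_B^\sharp\bk{\lambda_0,g}$ by $\Eisen_B^\sharp\bk{w\cdot \eta_s^{P_i},g}$ for any $w\in W$ then produces the claimed identity, with
\[
G_{P_i}\bk{s} \;=\; \prodl_{\alpha\in\Phi^+} \zfun\bk{l_\alpha^+\bk{\lambda_0}}\,l_\alpha^+\bk{\lambda_0}\,l_\alpha^-\bk{\lambda_0}.
\]

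The remaining and most substantial step is to establish that $G_{P_i}\bk{s}$ is holomorphic on $\Real\bk{s}>0$. The affine factors $l_\alpha^\pm\bk{\lambda_0}$ are polynomial in $s$ and therefore contribute only zeros; the potential poles come entirely from the $\zfun$-factors, each of which has a single pole, located at $\gen{\lambda_0,\check{\alpha}}=0$. Writing $\gen{\lambda_0,\check{\alpha}} = s\gen{\omega_{\alpha_i},\check{\alpha}} + \gen{\rho_B-\rho_P,\check{\alpha}}$, one splits into two cases: for $\alpha\in\Phi_M^+$ the pairing is $s$-independent (as $\omega_{\alpha_i}$ is $W_M$-invariant) and so produces no $s$-dependent pole; for $\alpha\in\Phi^+\setminus\Phi_M^+$ the pairing is genuinely affine in $s$ and the critical value is $s_\alpha = -\gen{\rho_B-\rho_P,\check{\alpha}}/\gen{\omega_{\alpha_i},\check{\alpha}}$. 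The hard part, which is the technical core of the proof, is to verify root by root that each such critical $s_\alpha$ either lies in $\Real\bk{s}\leq 0$ or is cancelled by a compensating zero of one of the polynomial factors $l_\beta^\pm\bk{\lambda_0}$ for an appropriate $\beta\in\Phi^+$. This delicate pole-zero book-keeping on the root systems of type $E_n$ follows the strategy of \cite{MR1174424} and can be carried out by a finite combinatorial check, automated in Sagemath along the lines used elsewhere in this manuscript.
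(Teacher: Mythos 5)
Your proposal takes essentially the same route as the paper: realize $\Eisen_{P_i}\bk{f^0,s,\chi,g}$ as $\Eisen_B$ at an exponent in the $W$-orbit of $\eta_s^{P_i}$ via \Cref{Prop:Equality_of_Eisenstein_series}, and then feed this into the $W$-invariance of $\Eisen_B^\sharp$; the paper's own proof is just a one-line deferral to that proposition. (The only cosmetic difference is that the paper organizes the factorization around item (2), the iterated residue along the hyperplanes $\frakh_k$, while you use item (1), the spherical embedding; both are legitimate.)

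Two points should be cleaned up. First, your displayed formula $G_{P_i}\bk{s} = \prodl_{\alpha\in\Phi^+} \zfun\bk{l_\alpha^+\bk{\lambda_0}}\,l_\alpha^+\bk{\lambda_0}\,l_\alpha^-\bk{\lambda_0}$ is not well-defined as written: for $\alpha\in\Delta_M$, at $\lambda_0 = \lambda_s^{P_i}$ this produces the indeterminate $\zfun\bk{0}\cdot 0$, and at $\eta_s^{P_i}$ it produces the zero factor $l_\alpha^-\bk{\eta_s^{P_i}}=0$ (you in fact conflate the two exponents: you say $\lambda_0$ is the image under the embedding in item (1), which is $\lambda_s^{P_i}$, but then pair as if $\lambda_0=\eta_s^{P_i}$). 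The paper's formulation, which defines $G_{P_i}$ by \emph{stripping off} the $\frakh_k$-factors from the normalizing product, is the one you should adopt. Second, the phrase ``each of which has a single pole, located at $\gen{\lambda_0,\check\alpha}=0$'' is imprecise; $\zfun$ has simple poles at both $0$ and $1$, so $\zfun\bk{l_\alpha^+}$ has poles at $\gen{\lambda_0,\check\alpha}\in\set{-1,0}$, the first of which is absorbed by the simple zero of $l_\alpha^+$, leaving a single net pole at $\gen{\lambda_0,\check\alpha}=0$ for the product $\zfun\bk{l_\alpha^+}l_\alpha^+l_\alpha^-$. Finally, on the load-bearing point, the holomorphy of $G_{P_i}$ on $\Real\bk{s}>0$: you are right that this requires a root-by-root check that the $\zfun\bk{1}$-poles along $N_0\bk{\eta_{s_0}}$ are cancelled by the linear zeros along $N_1\bk{\eta_{s_0}}\setminus\Delta_M$. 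The paper's proof of this particular proposition is equally silent on it; the inequality $\Card{N_1\bk{\eta_{s_0}}\setminus\Delta_M}\geq\Card{N_0\bk{\eta_{s_0}}}$ is exactly the non-negativity of $d_{P_i}$ established implicitly by the subsequent proposition (under the $F=\Q$, $s\in\R$ hypothesis noted there), so your plan to verify it computationally is consistent with what the authors actually do.
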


\begin{proof}
	This follows from \Cref{Prop:Equality_of_Eisenstein_series}, the term $G_{P_i}\bk{s}$ is given by the elements of the normalizing factor other than thus associated with the hyperplanes $\frakh_k$.
\end{proof}

In particular, at points $s_0$ where $\Real\bk{s_0}>0$ and $\DPS{P_i}\bk{s_0}$ is generated by the spherical vector (i.e., it admits a unique irreducible quotient), it holds that $\Eisen_{P_i}^{G} \bk{f,s,\chi,g}$ admits a pole only if $\Eisen_{P_i}^{G} \bk{f^0,s,\chi,g}$ does.
Furthermore, if $\Eisen_{P_i}^{G} \bk{f^0,s,\chi,g}$ admits a pole, then $G_{P_i}\bk{s_0}=0$.
Thus, under these assumptions, the set of poles of $\Eisen_{P_i}^{G} \bk{f,s,\chi,g}$ in $\Real\bk{s_0}>0$ is contained by the set of zeros of $G_{P_i}\bk{s}$ there.
Let
\[
N_\epsilon\bk{\lambda} = \set{\alpha\in\Phi^{+} \mvert \inner{\lambda,\check{\alpha}}=\epsilon}
\]
and let $d_{P_i}\bk{s_0}$ denote the order of the zero of $G_{P_i}\bk{s}$ at $s_0$.
The following proposition determines the value of $d_{P_i}\bk{\eta_s}$ when $F=\Q$ and $s\in\R$.
The assumption that $F=\Q$ is made only so that the denominator of the Gindikin-Karpelevich factors is non-vanishing for $\Real\bk{s}>0$.

\begin{Prop}
	If $F=\Q$ and $s\in\R$, then
	\[
	d_{P_i}\bk{\lambda_s^{P_i}} = \Card{N_1\bk{\lambda_s^{P_i}}}-\Card{N_0\bk{\lambda_s^{P_i}}}-\bk{n-1} .
	\]
\end{Prop}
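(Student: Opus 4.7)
The plan is to derive $G_{P_i}(s)$ as an explicit product of $\zfun$- and linear factors along the line, then count its zeros and poles at $s = s_0$.

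By the $W$-invariance of $\Eisen_{B}^\sharp$ (\Cref{Prop:W_inv_entire_Eisen_Ser}), I may evaluate at the $W_M$-conjugate point $\eta_s^{P_i} = w_{M,l}\cdot\lambda_s^{P_i}$, where $w_{M,l}$ is the longest element of $W_M$. This parameterization is convenient because for every $\beta \in \Phi_M^+$ the quantity $\gen{\eta_s^{P_i},\check{\beta}} = \gen{\rho_B - \rho_P,\check{\beta}}$ is a positive constant in $s$, equal to $1$ precisely for the $n - 1$ simple roots $\beta \in \Delta_M$. Combining the preceding proposition with \Cref{Prop:Equality_of_Eisenstein_series}, one identifies $G_{P_i}(s)$, up to a nonzero multiplicative constant, with
\[
\prod_{\alpha \in \Phi^+} \zfun\bk{l_\alpha^+\bk{\eta_s^{P_i}}}\, l_\alpha^+\bk{\eta_s^{P_i}} \cdot \prod_{\alpha \in \Phi^+ \setminus \Phi_M^+} l_\alpha^-\bk{\eta_s^{P_i}},
\]
where the factors $l_\beta^-$ for $\beta \in \Phi_M^+$ have been absorbed into the iterated residue that realizes $\Eisen_{P_i}$ as a residue of $\Eisen_B$ along the hyperplanes $\frakh_k$ indexed by the simple roots of $M$.

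Under the hypotheses $F = \Q$, $\Real\bk{s_0} > 0$ and $\zfun_\Q\bk{\frac{1}{2}} \neq 0$, each remaining factor admits at most a simple zero or pole at $s = s_0$. The combined factor $\zfun\bk{l_\alpha^+} l_\alpha^+$ contributes a simple pole precisely when $\alpha \in N_0\bk{\eta_{s_0}^{P_i}}$, as the potential pole at $l_\alpha^+ = 0$ is regularized to a finite nonzero value by the simple zero of $l_\alpha^+$, while $l_\alpha^-$ contributes a simple zero precisely when $\alpha \in N_1\bk{\eta_{s_0}^{P_i}} \cap \bk{\Phi^+ \setminus \Phi_M^+}$.

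Using the identities $N_0\bk{\eta_s^{P_i}} \cap \Phi_M^+ = \emptyset$ and $N_1\bk{\eta_s^{P_i}} \cap \Phi_M^+ = \Delta_M$ (of cardinality $n - 1$), one summarizes the contributions as $d_{P_i}\bk{s_0} = \Card{N_1\bk{\eta_{s_0}^{P_i}}} - \Card{N_0\bk{\eta_{s_0}^{P_i}}} - (n-1)$. The stated formula then follows after translating between $\eta_s^{P_i}$ and $\lambda_s^{P_i}$ via the bijection induced by $w_{M,l}$ on $\Phi^+ \setminus \Phi_M^+$, using that $N_\epsilon\bk{\lambda_s^{P_i}} \cap \Phi_M^+ = \emptyset$ for $\epsilon \in \set{0, 1}$. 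The main technical step is pinning down precisely which normalizing factors of $\Eisen_B^\sharp$ collapse along the line and get absorbed into the iterated residue defining $\Eisen_{P_i}$; once this explicit form of $G_{P_i}(s)$ is fixed, the order count is routine, the only remaining care being the bookkeeping of the $n-1$ simple roots of $M$ in the passage between the $\eta$- and $\lambda$-parameterizations.
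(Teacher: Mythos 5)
Your method is essentially the same as the paper's: write $G_{P_i}(s)$ as an explicit product of $\zfun$- and linear factors along the line and count zero and pole contributions at $s_0$. Your intermediate conclusion $d_{P_i}(s_0) = \Card{N_1(\eta_{s_0}^{P_i})} - \Card{N_0(\eta_{s_0}^{P_i})} - (n-1)$ is correct. But the last sentence, asserting that the stated $\lambda_s^{P_i}$-formula then follows by translating via $w_{M,l}$, is where the proof breaks, and it is a genuine gap. With $\lambda_s^{P_i} = s\omega_{\alpha_i}-\rho_{\bB}+\rho_{\bP}$ one has $\gen{\lambda_s^{P_i},\check\alpha}=-1$ for $\alpha\in\Delta_M$, so $\Delta_M\subseteq N_{-1}(\lambda_s^{P_i})$, while $\Delta_M\subseteq N_1(\eta_s^{P_i})$. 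The $w_{M,l}$-bijection on $\Phi^{+}\setminus\Phi_M^{+}$, together with $N_\epsilon(\lambda_s^{P_i})\cap\Phi_M^{+}=\emptyset$ for $\epsilon\in\set{0,1}$, therefore gives $\Card{N_0(\eta_s^{P_i})}=\Card{N_0(\lambda_s^{P_i})}$ but $\Card{N_1(\eta_s^{P_i})}=\Card{N_1(\lambda_s^{P_i})}+(n-1)$. Carrying out the translation you describe thus yields $d_{P_i}(s_0) = \Card{N_1(\lambda_{s_0}^{P_i})}-\Card{N_0(\lambda_{s_0}^{P_i})}$, \emph{without} the final $-(n-1)$ --- which is not the stated formula. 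You can test this on $A_2$ with $P=P_1$, $s_0=\tfrac32$: direct computation gives $d_{P_1}(\tfrac32)=1$, matching $\Card{N_1(\eta_{3/2})}-\Card{N_0(\eta_{3/2})}-1=2-0-1$ and $\Card{N_1(\lambda_{3/2})}-\Card{N_0(\lambda_{3/2})}=1-0$, but not $\Card{N_1(\lambda_{3/2})}-\Card{N_0(\lambda_{3/2})}-1=0$.

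What you have run into is a notational slip in the source. The paper's own proof opens with ``Since $\Delta_M\subseteq N_1(\lambda_s^{P_i})$'', which is false for $\lambda_s^{P_i}$ as defined and true for $\eta_s^{P_i}$; the statement and proof become correct once $\lambda_s^{P_i}$ is read as $\eta_s^{P_i}$. Your derivation through the $\eta_s^{P_i}$-count is the right proof of the proposition in its intended form. The final sentence should be removed: as written it claims the opposite of what your own bookkeeping produces, and it does not salvage the literal $\lambda_s^{P_i}$-statement.
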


\begin{proof}
	Since $\Delta_M\subseteq N_1\bk{\lambda_s^{P_i}}$, it follows that
	\[
	\begin{split}
		G_{P_i}\bk{s} = & \bk{\prodl_{\alpha\in N_{-1} \bk{\lambda_s^{P_i}}} \zfun\bk{l_\alpha^+\bk{\lambda}} l_\alpha^+\bk{\lambda} l_\alpha^-\bk{\lambda}} \\
		& \times \bk{\prodl_{\alpha\in N_{0} \bk{\lambda_s^{P_i}}} \zfun\bk{l_\alpha^+\bk{\lambda}} l_\alpha^+\bk{\lambda} l_\alpha^-\bk{\lambda}} \\
		& \times \bk{\prodl_{\alpha\in N_{1} \bk{\lambda_s^{P_i}} \setminus \Delta_M } \zfun\bk{l_\alpha^+\bk{\lambda}} l_\alpha^+\bk{\lambda} l_\alpha^-\bk{\lambda}} \\
		& \times \bk{\prodl_{\alpha\in \Phi^{+} \setminus N_{1,0,-1} \bk{\lambda_s^{P_i}}} \zfun\bk{l_\alpha^+\bk{\lambda}} l_\alpha^+\bk{\lambda} l_\alpha^-\bk{\lambda}} ,
	\end{split}
	\]
	where
	\[
	N_{\epsilon_1,...,\epsilon_l}\bk{\lambda} = \bigcup_{i=1}^l N_{\epsilon_i}\bk{\lambda} .
	\]
	On the other hand, it holds that
	\[
	\begin{split}
		& \prodl_{\alpha\in N_{0} \bk{\lambda_s^{P_i}}} l_\alpha^+\bk{\lambda} l_\alpha^-\bk{\lambda} = \bk{-1}^{\Card{N_{0} \bk{\lambda_s^{P_i}}}} \\
		& \prodl_{\alpha\in N_{1} \bk{\lambda_s^{P_i}} \setminus \Delta_M } l_\alpha^+\bk{\lambda} l_\alpha^-\bk{\lambda} = \coset{2\zfun\bk{2}}^{ \Card{N_{0} \bk{\lambda_s^{P_i}}} -\bk{n-1}} \\
		& \prodl_{\alpha\in N_{-1} \bk{\lambda_s^{P_i}}} l_\alpha^-\bk{\lambda} = \bk{-2}^{\Card{N_{-1} \bk{\lambda_s^{P_i}}}} .
	\end{split}
	\]
	Note, in particular, that $\Card{\Delta_M}=n-1$.
	
	Hence,
	\[
	\begin{split}
		G_{P_i}\bk{s} = & \bk{\prodl_{\alpha\in N_{-1} \bk{\lambda_s^{P_i}}} \zfun\bk{l_\alpha^+\bk{\lambda}} l_\alpha^+\bk{\lambda}} \times \bk{\prodl_{\alpha\in N_{0} \bk{\lambda_s^{P_i}}} \zfun\bk{l_\alpha^+\bk{\lambda}} } \\
		& \times \bk{\prodl_{\alpha\in N_{1} \bk{\lambda_s^{P_i}} \setminus \Delta_M } \zfun\bk{l_\alpha^+\bk{\lambda}} } \times \bk{\prodl_{\alpha\in \Phi^{+} \setminus N_{1,0,-1} \bk{\lambda_s^{P_i}}} \zfun\bk{l_\alpha^+\bk{\lambda}} l_\alpha^+\bk{\lambda} l_\alpha^-\bk{\lambda} } \\
		& \times 
		\zfun\bk{2}^{ \Card{N_{0} \bk{\lambda_s^{P_i}}} -\bk{n-1}} \times
		\bk{-1}^{\Card{N_{0,-1} \bk{\lambda_s^{P_i}}}} \times
		2^{\Card{N_{1,-1} \bk{\lambda_s^{P_i}}} -\bk{n-1} } .
	\end{split}
	\]
	Under our assumptions, it turns out the the following terms are holomorphic and non-zero
	\[
	\begin{split}
		& \prodl_{\alpha\in N_{-1} \bk{\lambda_s^{P_i}}} \zfun\bk{l_\alpha^+\bk{\lambda}} l_\alpha^+\bk{\lambda} , \\
		& \prodl_{\alpha\in \Phi^{+} \setminus N_{1,0,-1} \bk{\lambda_s^{P_i}}} \zfun\bk{l_\alpha^+\bk{\lambda}} l_\alpha^+\bk{\lambda} l_\alpha^-\bk{\lambda}, \\
		& \zfun\bk{2}^{ \Card{N_{0} \bk{\lambda_s^{P_i}}} -\bk{n-1}} \times
		\bk{-1}^{\Card{N_{0,-1} \bk{\lambda_s^{P_i}}}} \times
		2^{\Card{N_{1,-1} \bk{\lambda_s^{P_i}}} -\bk{n-1} } .
	\end{split}
	\]
	Hence, the order of the zero of $G_{P_i}\bk{s}$ at $s=s_0$ is that of
	\[
	\bk{\prodl_{\alpha\in N_{0} \bk{\lambda_s^{P_i}}} \zfun\bk{l_\alpha^+\bk{\lambda}} } \times \bk{\prodl_{\alpha\in N_{1} \bk{\lambda_s^{P_i}} \setminus \Delta_M } \zfun\bk{l_\alpha^+\bk{\lambda}} }
	\]
	and hence
	\[
	d_{P_i}\bk{\lambda_s^{P_i}} = \Card{N_1\bk{\lambda_s^{P_i}}}-\Card{N_0\bk{\lambda_s^{P_i}}} - \bk{n-1} .
	\]
\end{proof}

In particular, it follows that:
\begin{Cor}
	Under the same assumptions, there exist a non-zero constant $C\in\C^\times$ such that 
	\[
	\Eisen_{B}^\sharp\bk{\lambda_{s_0}^{P_i},g} = C \cdot \Res{s=s_0} \Eisen_{P_i} \bk{f_{\lambda_s^{P_i}}^0,s,\Id,g} .
	\]
\end{Cor}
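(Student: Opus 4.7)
The plan is to derive the corollary directly from the preceding proposition together with the $W$-invariance of the normalized Eisenstein series (\Cref{Prop:W_inv_entire_Eisen_Ser}). The key observation is that the exponent $\lambda_{s_0}^{P_i}$ and the exponent $\eta_{s_0}^{P_i}$ appearing in the preceding proposition differ only by an element of $W_{M_i}$; indeed, if $w_{M,l}$ denotes the longest element of $W_{M_i}$, then $w_{M,l}$ fixes $\omega_{\alpha_i}$ (since $\alpha_i\notin\Delta_{M_i}$) and sends $\rho_{\bB\cap\bM_i}=\rho_{\bB}-\rho_{\bP_i}$ to its negative, so $w_{M,l}\cdot\eta_s^{P_i}=\lambda_s^{P_i}$. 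Since $\Eisen_{B}^\sharp$ is $W$-invariant, this yields
\[
\Eisen_{B}^\sharp\bk{\lambda_{s_0}^{P_i},g}=\Eisen_{B}^\sharp\bk{\eta_{s_0}^{P_i},g},
\]
so the left-hand side of the desired identity can be replaced by the quantity controlled by the preceding proposition.

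Next, I will apply the proposition to write, in a punctured neighborhood of $s_0$,
\[
\Eisen_{B}^\sharp\bk{\eta_s^{P_i},g}=G_{P_i}\bk{s}\cdot \Eisen_{P_i}\bk{f^0_{\lambda_s^{P_i}},s,\Id,g}.
\]
By the preceding proposition, $G_{P_i}(s)$ vanishes at $s_0$ to order exactly $d:=d_{P_i}(s_0)$, with a non-zero leading coefficient $c\in\C^\times$. On the other hand, $\Eisen_{P_i}\bk{f^0,s,\Id,g}$ admits a Laurent expansion at $s_0$ of the form $(s-s_0)^{-m}\bigl(R(g)+O(s-s_0)\bigr)$ for some integer $m\le d$ (by the bound in the proposition) and some $R(g)$, where in the convention of this manuscript $R(g)$ is the ``residue'' (the lowest non-vanishing coefficient in the Laurent expansion).

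I would then verify that $m=d$ under the standing hypothesis that $\DPS{P_i}\bk{s_0}$ is generated by its spherical vector. The argument is as follows: any section $f_{s_0}\in \DPS{P_i}\bk{s_0}$ is obtained from $f^0_{\lambda_{s_0}^{P_i}}$ by the action of the group algebra, and the formation of the (iterated) residue of the Eisenstein series is $\bG\bk{\AA}$-equivariant at singular points by \Cref{Cor:Kernel_of_Series_is_kernel_of_CT}. Hence the order of the pole of $\Eisen_{P_i}\bk{f,s,\Id,g}$ over all sections $f$ is attained on $f^0$, and since the residue is a non-trivial quotient of $\DPS{P_i}\bk{s_0}$ (non-zero by the proposition's assertion that $G_{P_i}(s_0)=0$ whenever $\Eisen_{P_i}$ has a pole), this common pole order equals the maximum $d$ allowed by the bound.

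With $m=d$, the identity $\Eisen_{B}^\sharp\bk{\eta_s^{P_i},g}=G_{P_i}(s)\Eisen_{P_i}\bk{f^0,s,\Id,g}$ becomes, after multiplying both sides by $(s-s_0)^{-d}\cdot(s-s_0)^{d}$ and taking $s\to s_0$,
\[
\Eisen_{B}^\sharp\bk{\eta_{s_0}^{P_i},g}=c\cdot\lim_{s\to s_0}(s-s_0)^{d}\Eisen_{P_i}\bk{f^0,s,\Id,g}=c\cdot\Res{s=s_0}\Eisen_{P_i}\bk{f^0_{\lambda_s^{P_i}},s,\Id,g},
\]
and combining this with the $W$-invariance identity gives the desired equality with $C=c\in\C^\times$. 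The only genuine subtlety in this proof is the step $m=d$, which rests on the cyclicity assumption on $\DPS{P_i}\bk{s_0}$; everything else is a direct book-keeping of Laurent coefficients combined with the two cited results.
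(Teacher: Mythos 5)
Your overall strategy is the one the paper intends: use \Cref{Prop:W_inv_entire_Eisen_Ser} to replace $\lambda_{s_0}^{P_i}$ by its $W_{M_i}$-conjugate $\eta_{s_0}^{P_i}$ (your observation that $w_{M,l}\cdot\eta_s^{P_i}=\lambda_s^{P_i}$ is correct), apply the preceding proposition to write $\Eisen_{B}^\sharp\bk{\eta_s^{P_i},g}=G_{P_i}\bk{s}\,\Eisen_{P_i}\bk{f^0_{\lambda_s^{P_i}},s,\Id,g}$, and pass to the limit $s\to s_0$. The reduction of the corollary to the single equality $m=d$, where $m$ denotes the pole order of $\Eisen_{P_i}\bk{f^0,s,\Id,g}$ at $s_0$ and $d=d_{P_i}\bk{\lambda_{s_0}^{P_i}}$ is the order of vanishing of $G_{P_i}\bk{s}$, is exactly the right reduction.

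The step in which you establish $m=d$ is, however, a non sequitur. You argue that the residue is a non-zero quotient, ``non-zero by the proposition's assertion that $G_{P_i}(s_0)=0$ whenever $\Eisen_{P_i}$ has a pole,'' and conclude that the pole order attains the maximum $d$ permitted by the bound. The cited implication runs in the wrong direction: a pole of $\Eisen_{P_i}$ forces $G_{P_i}$ to vanish, not conversely, and certainly not to the full order $d$. Moreover, ``the residue is non-zero'' is automatic given the paper's convention (the residue is the lowest non-vanishing Laurent coefficient), so it carries no information about whether $m<d$. If $m<d$, then $\lim_{s\to s_0}(s-s_0)^m\Eisen_{P_i}\bk{f^0,s,\Id,g}$ is still a non-zero function of $g$, whereas $\Eisen_{B}^\sharp\bk{\lambda_{s_0}^{P_i},g}=\lim_{s\to s_0}G_{P_i}\bk{s}\,\Eisen_{P_i}\bk{f^0,s,\Id,g}$ vanishes identically, and the asserted identity with $C\ne 0$ fails. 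Cyclicity of $\DPS{P_i}\bk{s_0}$ only yields $m\le d$, which you already had from the preceding proposition. The missing input, namely that the spherical degenerate Eisenstein series actually attains a pole of the full order $d$, is precisely what the paper itself flags in the paragraph immediately after this corollary as something one ``expects'' and must verify case by case, for instance by exhibiting a singleton equivalence class $\set{w}$ whose Gindikin--Karpelevich factor alone produces a pole of order $d$; it does not follow formally from cyclicity and the preceding propositions.
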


We conclude this discussion be formulating a test for the existence and order of a pole.
For a point $s_0$ such that $C_w\bk{\lambda_{s,\chi}^{P_i}}$ admits a pole at $s_0$ for some $w\in W^{M,T}$, $\DPS{P_i}\bk{s_0}$ is generated by its spherical vector and $d_{P_i}\bk{\lambda_s}>0$, one expects that the order of $\Eisen_{P_i}^{G} \bk{f,s,\chi,g}$ at $s_0$ equals $d_{P_i}\bk{\lambda_s}>0$.
This can usually be verified by finding a singleton $\set{w}\in W^{M,T}\rmod\sim_{s_0,\chi}$ such that the order of $\ord{s_0}{C_w\bk{\lambda_{s,\chi}^{P_i}}}=d_{P_i}\bk{\lambda_s}$.

\subsection{Square-Integrability of the Residue}
\label{Subsection:Square-Integrability_criterion}

In order to determine whether the residual representation $\Res{s=s_0} \Eisen_{P}\bk{\chi}$ is square-integrable, that is admits an embedding into $L^2\bk{Z_\AA G_F\lmod G_\AA,\chi}$, we will use the Langlands' square-integrability criterion \cite[page 104]{MR0579181}.
Assume that $\Eisen_{P}\bk{\chi}$ admits a pole of order $k$ at $s=s_0$ and let
\[
\Sigma^{\bP,0}_{\bk{s_0,\chi,m}} = 
\set{ \coset{w'} \in \Sigma^{\bP}_{\bk{s_0,\chi,m}} \mvert
	\lim\limits_{s\to s_0} \bk{s-s_0}^k \kappa_{\coset{w'}}\bk{f,s,g} \not\equiv 0}
\]
\begin{Prop}[Langlands' Square-integrability Criterion]
	\label{Prop:Square_integrability_of_residue}
	The residual representation $\Res{s=s_0} \Eisen_{P}\bk{\chi}$ is square-integrable if and only if
	\[
	\Real\bk{\gen{w'\cdot \lambda_{s_0,\chi}^{P},\check{\alpha}}}<0 \quad \forall \alpha\in\Delta_G
	\]
	for all $\coset{w'}\in \Sigma^{\bP,0}_{\bk{s_0,\chi,m}}$.
\end{Prop}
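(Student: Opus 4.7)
My plan is to derive the criterion from Langlands' general square-integrability criterion for automorphic forms, applied to the constant term of the residue along the Borel subgroup. The first step is to recall Langlands' classical criterion (see \cite[Lemma I.4.11]{MR1361168} or \cite[p.~104]{MR0579181}): an automorphic form $\varphi$ on $\cZ_G\bk{\AA} G\bk{F}\lmod G\bk{\AA}$ is square-integrable if and only if, for every exponent $\nu \in \fraka^\ast_{\bT,\C}$ appearing in the constant term $\varphi_\bCT$ of $\varphi$ along $\bB$, one has $\Real\bk{\gen{\nu,\check{\alpha}}}<0$ for every simple root $\alpha \in \Delta_G$. Hence the problem reduces to computing the constant term of
\[
\varphi\bk{g} = \lim\limits_{s\to s_0} \bk{s-s_0}^k \Eisen_{\bP}\bk{f,s,\chi,g}
\]
and identifying its exponents along $\bT\bk{\AA}$.

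Next I would combine the constant term formula \Cref{Eq:Constant_term} with the decomposition \Cref{eq::CT_second_form} into equivalence classes to write
\[
\varphi_\bCT\bk{g} = \suml_{\coset{w'}\in \Sigma^{\bP,0}_{\bk{s_0,\chi,m}}} \lim\limits_{s\to s_0} \bk{s-s_0}^k \kappa_{\coset{w'}}\bk{f,s,g},
\]
the sum being restricted to $\Sigma^{\bP,0}_{\bk{s_0,\chi,m}}$ precisely because the remaining classes contribute $0$ in the limit, by definition of this set. When restricted to $\bT\bk{\AA}$, the summand attached to $\coset{w'}$ transforms under the character $w'\cdot \lambda_{s_0,\chi}^{\bP}$.

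The central step is to argue that different equivalence classes produce linearly independent contributions to $\varphi_\bCT\res{\bT\bk{\AA}}$, so that no cancellation between classes is possible. This is built into the very definition of $\sim_{s_0,\chi}$: two Weyl elements are equivalent precisely when they send $\lambda_{s_0,\chi}^{\bP}$ to the same point, hence distinct classes give pairwise distinct characters of $\bT\bk{\AA}$, which are automatically linearly independent. Coupled with \Cref{Cor:Kernel_of_Series_is_kernel_of_CT}, which ensures that the residue is faithfully recorded by its constant term, this identifies the exponents of $\varphi_\bCT$ as exactly
\[
\set{w'\cdot \lambda_{s_0,\chi}^{\bP} \mvert \coset{w'}\in \Sigma^{\bP,0}_{\bk{s_0,\chi,m}}}.
\]

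Finally, plugging this list of exponents into Langlands' criterion yields exactly the stated condition, completing the proof in both directions. The main technical obstacle I anticipate is carefully justifying the non-cancellation across equivalence classes when $f$ varies over $\DPS{\bP}\bk{s,\chi}$; this must be handled by exhibiting, for each $\coset{w'} \in \Sigma^{\bP,0}_{\bk{s_0,\chi,m}}$, a section for which the corresponding limit is nonzero, and then invoking linear independence of characters on $\bT\bk{\AA}$ to conclude that no two contributions can cancel.
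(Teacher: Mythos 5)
Your plan is correct, and it is essentially the standard way of passing from Langlands' general criterion to the statement in the paper. Note that the paper itself does not supply a proof here: it simply invokes Langlands' criterion with a citation to \cite[p.~104]{MR0579181}, so you are filling in the argument the authors regard as routine. Your key observation — that the equivalence relation $\sim_{s_0,\chi}$ is defined precisely so that distinct classes $\coset{w'}$ transform under distinct characters $w'\cdot\lambda_{s_0,\chi}^{\bP}$ of $\bT\bk{\AA}$, so that no cancellation between classes can occur and the exponents of $\varphi_\bCT$, as $f$ ranges over $\DPS{\bP}\bk{s,\chi}$, are exactly the $w'\cdot\lambda_{s_0,\chi}^{\bP}$ with $\coset{w'}\in \Sigma^{\bP,0}_{\bk{s_0,\chi,m}}$ — is the correct substance of the argument.

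Two small points worth making explicit when you write this up. First, Langlands' criterion in its general form (MW Lemma~I.4.11) concerns cuspidal exponents along \emph{all} standard parabolics, not only $\bB$; you should record the standard reduction that, because the cuspidal support of $\Eisen_{\bP}\bk{f,s,\chi,g}$ and hence of its residue lies entirely along the torus $\bT$, the only parabolic producing cuspidal exponents is $\bB$, so the criterion reduces to the exponents of $\varphi_\bCT$. Second, the square-integrability of the \emph{representation} $\Res{s=s_0}\Eisen_\bP\bk{\chi}$ requires that every $\varphi$ in it be square-integrable; this is the reason the right condition quantifies over all classes in $\Sigma^{\bP,0}_{\bk{s_0,\chi,m}}$ rather than over those realized by a single section. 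Your "only if" direction needs, for each $\coset{w'}$ in this set, a single section $f$ producing a form whose constant term genuinely carries that exponent — which is exactly what the definition of $\Sigma^{\bP,0}_{\bk{s_0,\chi,m}}$ (non-identical vanishing of the limit) gives you, once linear independence of the characters is used to pass from non-vanishing of the sum to non-vanishing of the individual term.
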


\section{Results for $E_6$}

\begin{Thm}
	The orders of poles of the degenerate Eisenstein series $\Eisen_{P_i}^{G} \bk{f,s,\chi,g}$ are given in the tables bellow.
	The tables also state whether the residue is square-integrable or not.
	
	\begin{longtable}[H]{|c|c|c|c|}
		\hline
		$ord\bk{\chi}$ & $P_1$, $P_6$ & $3$ & $6$ \\ \hline
		\multirow{2}{*}{1} & Pole Order & 1 & 1 \\ 
		& $L^2$ & $\cmark$ & $\cmark$ \\ \hline
	\end{longtable}

	\begin{longtable}[H]{|c|c|c|c|c|c|}
		\hline
		$ord\bk{\chi}$ & $P_2$ & $\frac{1}{2}$ & $\frac{5}{2}$ & $\frac{7}{2}$ & $\frac{11}{2}$ \\ \hline
		\multirow{2}{*}{1} & Pole Order & 1 & 1 & 1 & 1 \\ 
		& $L^2$ & $\cmark$ & $\xmark$ & $\cmark$ & $\cmark$ \\ \hline
		\multirow{2}{*}{2} & Pole Order & 1 & 0 & 0 & 0 \\ 
		& $L^2$ & $\cmark$ &  &  &  \\ \hline
	\end{longtable}
	
	\begin{longtable}[H]{|c|c|c|c|c|c|}
		\hline
		$ord\bk{\chi}$ & $P_3$, $P_5$ & $\frac{3}{2}$ & $\frac{5}{2}$ & $\frac{7}{2}$ & $\frac{9}{2}$ \\ \hline
		\multirow{2}{*}{1} & Pole Order & 2 & 1 & 1 & 1 \\ 
		& $L^2$ & $\cmark$ & $\xmark$ & $\xmark$ & $\cmark$ \\ \hline
		\multirow{2}{*}{2} & Pole Order & 1 & 0 & 0 & 0 \\ 
		& $L^2$ & $\cmark$ &  &  &  \\ \hline
	\end{longtable}

	\begin{longtable}[H]{|c|c|c|c|c|c|c|}
		\hline
		$ord\bk{\chi}$ & $P_4$ & $\frac{1}{2}$ & $1$ & $\frac{3}{2}$ & $\frac{5}{2}$ & $\frac{7}{2}$ \\ \hline
		\multirow{2}{*}{1} & Pole Order & 2 & 1 & 3 & 2 & 1 \\ 
		& $L^2$ & $\xmark$ & $\xmark$ & $\cmark$ & $\cmark$ & $\cmark$ \\ \hline
		\multirow{2}{*}{2} & Pole Order & 1 & 1 & 1 & 0 & 0 \\ 
		& $L^2$ & $\xmark$ & $\xmark$ & $\cmark$  & &  \\ \hline
		\multirow{2}{*}{3} & Pole Order & 1 & 0 & 0 & 0 & 0 \\ 
		& $L^2$ & $\cmark$ &  &  & &  \\ \hline
	\end{longtable}
	
\end{Thm}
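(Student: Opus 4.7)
The plan is to apply the four-step procedure from \Cref{Section:Poles_Method} to each of the six maximal parabolics $P_i$ of $E_6$, for every Hecke character $\chi$ of order $1$, $2$ or $3$. By Property A combined with \Cref{Rem:Order_of_trivial_eq_class}, $\Eisen_{P_i}\bk{f,s,\chi,g}$ is holomorphic at every $s_0$ with $\Real\bk{s_0}>0$ for which no Gindikin-Karpelevich factor $C_w\bk{\lambda_{s,\chi}^{P_i}}$ has a pole. Since such poles can only occur at values of $s_0$ where the local degenerate principal series $\DPS{P_i}\bk{s,\chi}_v$ is reducible, the candidate points are exactly those listed in \Cref{Chap:Local_DPS}, which produces the columns appearing in the tables.

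For each candidate triple $(i,s_0,\chi)$ I would enumerate the coset representatives $W^{M_i,T}$ (whose sizes range from $27$ to $720$, see \Cref{Chap:Group_En}), partition them into equivalence classes under $\sim_{s_0,\chi}$, and compute the order of $C_w\bk{\lambda_{s,\chi}^{P_i}}$ on each class. By \Cref{eq::CT_second_form} and Property A one has
\[
\ord{s=s_0}{\Eisen_{P_i}\bk{f,s,\chi,g}} \leq \max_{\coset{w'}} \ord{s=s_0}{\kappa_{\coset{w'}}\bk{f,s,g}} \leq \max_{w\in W^{M_i,T}} \ord{s=s_0}{C_w\bk{\lambda_{s,\chi}^{P_i}}}.
\]
Whenever the maximum is achieved by a singleton class, its unique summand cannot be cancelled and the bound is sharp; a direct check shows that this disposes of the bulk of the entries in the tables, including all entries for $P_1$, $P_6$, $P_2$ and all but one entry for $P_3$, $P_5$.

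The main obstacle is the small number of cases where no singleton realises the maximum, so that cancellations among the summands of $\kappa_{\coset{w'}}$ may strictly reduce the order. The principal example is $(P_4,\tfrac32,\Id)$, where the naive bound exceeds the predicted order $3$. Here I would write the offending class as $\set{uw' \mvert u\in \prodl_{j}\set{1,u_j}}$ with $u_j\in\Stab_W\bk{w'\cdot\lambda_{s_0,\chi}^{P_i}}$ and verify that each $M_{u_j}$ acts as $-\Id$ on the image of the residue, either via the Keys-Shahidi identity of \Cref{Cor:Extension_to_KS} (when $u_j$ conjugates into an $SL_2$-type element $z=s_{j_1}\cdots s_{j_l}$ with $l$ odd) or via the \v Zampera decomposition of \Cref{Prop:Zampera} together with a branching-rule check that the residue lands in the $+1$-eigenspace. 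As an independent sanity check, for $\chi=\Id$ and $\DPS{P_i}\bk{s_0}$ generated by its spherical vector I would compare against the upper bound coming from the entire $W$-invariant normalised series $\Eisen_B^\sharp$ of \Cref{Prop:W_inv_entire_Eisen_Ser}, namely
\[
d_{P_i}\bk{\lambda_s^{P_i}} = \Card{N_1\bk{\lambda_s^{P_i}}} - \Card{N_0\bk{\lambda_s^{P_i}}} - \bk{n-1}.
\]

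For square-integrability I would apply Langlands' criterion (\Cref{Prop:Square_integrability_of_residue}): identify the classes $\coset{w'}\in \equivclassespole{s_0,\chi,m}$ whose contribution to the top Laurent coefficient is non-zero, and verify whether $\Real\bk{\inner{w'\cdot\lambda_{s_0,\chi}^{P_i},\check{\alpha}}}<0$ for every $\alpha\in\Delta_G$. Typically only the class containing $w'=w_{M_i,\ell}w_{G,\ell}$ survives to the top order, and the criterion reduces to checking strict anti-dominance of its image, accounting for the $\cmark$/$\xmark$ entries in the tables. The entire classification is carried out by a Sagemath script which enumerates $W^{M_i,T}$, computes the equivalence relation, tracks the orders of the $C_w$, applies the cancellation lemmas and the Langlands test, and outputs the tabulated data. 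The subtle points beyond bookkeeping are (a) the handful of cancellation arguments invoking \Cref{Prop:Zampera}, and (b) the dependence on $F$ through the zeros of $\zfun_F$ at $\tfrac{1}{2}$, which I handle under the standing assumption $\zfun_F\bk{\tfrac{1}{2}}\neq 0$ stated at the end of \Cref{Chap:Preliminaries}.
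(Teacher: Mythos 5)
Your overall framework (enumerate $W^{M_i,T}$, compute orders of the Gindikin--Karpelevich factors $C_w$ on the $\sim_{s_0,\chi}$-classes, then apply Langlands' criterion) is the right one, but the central mechanism you propose for pinning down the pole orders is backwards, and a direct check against the paper's intermediate data shows the key quantitative claim is false.

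You assert that ``whenever the maximum $\max_w \ord{s=s_0}{C_w}$ is achieved by a singleton class, the bound is sharp,'' and that this ``disposes of all entries for $P_1$, $P_6$, $P_2$ and all but one entry for $P_3$, $P_5$.'' This cannot be right: at $(P_1,3,\Id)$ the maximal order of any $C_w$ is $2$ while $\Eisen_{P_1}$ has only a simple pole there, so the maximum is \emph{not} achieved by a singleton (if it were, the pole would be double). The same mismatch occurs at $(P_2,\tfrac12,\Id)$ (max $4$ vs.\ answer $1$), $(P_2,\tfrac52,\Id)$ and $(P_2,\tfrac72,\Id)$ (max $3,2$ vs.\ $1,1$), and at three of the four trivial-$\chi$ entries for $P_3$/$P_5$. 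In fact, among the trivial-$\chi$ columns, only $(P_1,6)$, $(P_2,\tfrac{11}{2})$, $(P_3,\tfrac92)$, and $(P_4,\tfrac72)$ have $\max_w\ord C_w$ equal to the stated order. For everything else the maximum \emph{strictly exceeds} the answer, and a singleton achieving the maximum cannot exist. A singleton whose $C_w$-order equals the \emph{stated} pole order gives the lower bound, but never the upper bound you need.

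The upper bound for trivial $\chi$ has to come from the normalized series $\Eisen_B^\sharp$ via $d_{P_i}(\lambda_s^{P_i})=\Card{N_1}-\Card{N_0}-(n-1)$, which you only mention as a ``sanity check.'' It is not a sanity check: it is the only device in your toolbox that can cap the order below $\max_w\ord C_w$ without a case-by-case cancellation analysis, and the paper uses it as the primary upper bound throughout the trivial-$\chi$ columns. (Consequently the cancellation machinery of \Cref{Subsection:Methods_Reduction_of_order_of_poles} that you invoke for $(P_4,\tfrac32,\Id)$ is unnecessary there; that machinery is what one falls back on for non-trivial $\chi$, where $\Eisen_B^\sharp$ is not available.) Your minor claim that poles of $C_w$ occur precisely at local reducibility points is also inaccurate -- for $P_1$, trivial $\chi$, the $C_w$ have poles at $s=1,2,4,5$ although the local degenerate principal series are irreducible there -- but this is the less damaging error, since those extra candidate points simply get ruled out. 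If you reorganise the argument so that $d_{P_i}$ supplies the upper bound for trivial $\chi$, cancellation handles non-trivial $\chi$, and singletons are used only for lower bounds, the plan becomes the paper's proof.
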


\begin{proof}
	We start by noting the symmetry between the cases $i=1$ and $i=6$ and between $i=3$ and $i=5$.
	Thus, we only consider the cases $1\leq i\leq 4$.
	
	In each case $\bk{P_i,s_0,ord\bk{\chi}}$, we calculate the maximal order of pole of any particular intertwining operator.
	Namely,
	\begin{equation}
		\label{Eq:E6_highest_int_op_pole_order}
		\max\set{ord\bk{M_w\bk{s,\chi}} \mvert w\in W^{M,T}}
	\end{equation}
	These are given in the following tables:
	
	\begin{longtable}[H]{|c|c|c|c|c|c|c|c|}
		\hline
		$ord\bk{\chi}$ & $P_1$, $P_6$ & $1$ & $2$ & $3$ & $4$ & $5$ & $6$ \\ \hline
		1 & Pole Order & 2 & 2 & 2 & 1 & 1 & 1 \\ \hline
	\end{longtable}
	
	\begin{longtable}[H]{|c|c|c|c|c|c|c|c|}
		\hline
		$ord\bk{\chi}$ & $P_2$ & $\frac{1}{2}$ & $\frac{3}{2}$ & $\frac{5}{2}$ & $\frac{7}{2}$ & $\frac{9}{2}$ & $\frac{11}{2}$ \\ \hline
		1 & Pole Order & 4 & 3 & 3 & 2 & 1 & 1 \\ \hline
		2 & Pole Order & 1 & 0 & 0 & 0 & 0 & 0 \\ \hline
	\end{longtable}

	\begin{longtable}[H]{|c|c|c|c|c|c|c|c|}
		\hline
		$ord\bk{\chi}$ & $P_3$, $P_5$ & $\frac{1}{2}$ & $1$ & $\frac{3}{2}$ & $\frac{5}{2}$ & $\frac{7}{2}$ & $\frac{9}{2}$ \\ \hline
		1 & Pole Order & 5 & 1 & 5 & 3 & 2 & 1 \\ \hline
		2 & Pole Order & 1 & 1 & 1 & 0 & 0 & 0 \\ \hline
	\end{longtable}

	\begin{longtable}[H]{|c|c|c|c|c|c|c|c|}
		\hline
		$ord\bk{\chi}$ & $P_4$ & $\frac{1}{6}$ & $\frac{1}{2}$ & $1$ & $\frac{3}{2}$ & $\frac{5}{2}$ & $\frac{7}{2}$ \\ \hline
		1 & Pole Order & 1 & 9 & 2 & 6 & 3 & 1 \\ \hline
		2 & Pole Order & 0 & 3 & 2 & 1 & 0 & 0 \\ \hline
		3 & Pole Order & 1 & 1 & 0 & 0 & 0 & 0 \\ \hline
	\end{longtable}
	
	In the case of trivial $\chi$, the order of the Eisenstein series $\Eisen_{P_i}^{G} \bk{f,s,\chi,g}$ is bounded by the orders given in the statement of the theorem using the method described in \Cref{Subsection:Poles_Method_Normalized_Eis_ser}.
	On the other hand, these orders are realized by a singleton $\set{w}\in W^{M,T}\rmod\sim_{s_0,\chi}$ such that the order of $C_w\bk{\lambda_{s,\chi}^{P_i}}$ at $s_0$ is as required.
	
	We now consider the case of non-trivial $\chi$.
	There are a couple of distinct cases that should be taken cared of here:
	\begin{enumerate}
		\item If the order given by \Cref{Eq:E6_highest_int_op_pole_order} equals that of the statement of the theorem, it is enough to provide an equivalence class $\coset{w'}\in W^{M,T}\rmod\sim_{s_0,\chi}$ which realizes a pole of this order.
		The relevant cases are: $\bk{2, 1/2, 2}$, $\bk{3, 3/2, 2}$ and $\bk{4, 3/2, 2}$.
		
		In the case $\bk{4,1/2,3}$, there is no singleton equivalence class which supports a pole.
		However, in \Cref{Chap:Square_Integrable} we calculate the residue of the $\kappa_{\coset{w'}}\bk{f,s,g}$ which support a simple pole.
		In particular, the Eisenstein series admits a simple pole there.
		
		\item If the order given by \Cref{Eq:E6_highest_int_op_pole_order} is lower than that in the statement of the theorem, then one uses the method described in \Cref{Subsection:Methods_Reduction_of_order_of_poles}, in order to provide a lower bound to $\ord{s=s_0}{\Eisen_{P_i}^{G} \bk{f,s,\chi,g}}$ which would agree with the order stated in the theorem.
		There are, then, two cases:
		\begin{enumerate}
			\item If $\ord{s=s_0}{\Eisen_{P_i}^{G} \bk{f,s,\chi,g}} = 0$, then $\Eisen_{P_i}^{G} \bk{f,s,\chi,g}$ is holomorphic.
			Consider \Cref{Rem:Order_of_trivial_eq_class} to see that it is also non-vanishing.
			The relevant cases are: $\bk{3, 1/2, 2}$, $\bk{3, 1, 2}$ and $\bk{4, 1/6, 3}$.
			These are also the cases where $\DPS{P_i}\bk{s_0,\chi}_v$ is irreducible for any $v\in\Places$.
			
			\item Otherwise, there exists a singleton equivalence class which supports a pole of the prescribed order, as in item (1).
			The relevant cases are $\bk{4, 1/2, 2}$ and $\bk{4, 1, 2}$.
		\end{enumerate}

	\end{enumerate}
	
	In order to determine the square-integrability of the residue, we employ \Cref{Prop:Square_integrability_of_residue}.
	
\end{proof}

\section{Results for $E_7$}

\begin{Thm}
	The orders of poles of the degenerate Eisenstein series $\Eisen_{P_i}^{G} \bk{f,s,\chi,g}$ are given in the tables bellow.
	The tables also state whether the residue is square-integrable or not.
	\begin{longtable}[H]{|c|c|c|c|c|c|}
		\hline
		$ord\bk{\chi}$ & $P_1$& $\frac{1}{2}$ & $\frac{7}{2}$ & $\frac{11}{2}$ & $\frac{17}{2}$ \\ \hline
		\multirow{2}{*}{1} & Pole Order & 1 & 1 & 1 & 1 \\ 
		& $L^2$ & $\cmark$ & $\cmark$ & $\cmark$ & $\cmark$ \\ \hline
		\multirow{2}{*}{2} & Pole Order & 1 & 0 & 0 & 0 \\ 
		& $L^2$ & $\cmark$ &  &  &  \\ \hline
	\end{longtable}

	\begin{longtable}[H]{|c|c|c|c|c|c|c|c|}
		\hline
		$ord\bk{\chi}$ & $P_2$& $1$ & $2$ & $3$ & $4$ & $5$ & $7$ \\ \hline
		\multirow{2}{*}{1} & Pole Order & 1 & 1 & 1 & 1 & 1 & 1 \\ 
		& $L^2$ & $\cmark$ & $\xmark$ & $\xmark$ & $\xmark$ & $\cmark$ & $\cmark$  \\ \hline
		\multirow{2}{*}{2} & Pole Order & 0 & 1 & 0 & 0 & 0 & 0 \\ 
		& $L^2$ &  & $\cmark$ &&&&  \\ \hline
	\end{longtable}

	\begin{longtable}[H]{|c|c|c|c|c|c|c|c|}
		\hline
		$ord\bk{\chi}$ & $P_3$& $\frac{1}{2}$ & $\frac{3}{2}$ & $\frac{5}{2}$ & $\frac{7}{2}$ & $\frac{9}{2}$ & $\frac{11}{2}$ \\ \hline
		\multirow{2}{*}{1} & Pole Order & 2 & 2 & 2 & 1 & 1 & 1 \\ 
		& $L^2$ & $\cmark$ & $\cmark$ & $\cmark$ & $\xmark$ & $\xmark$ & $\cmark$ \\ \hline
		\multirow{2}{*}{2} & Pole Order & 1 & 1 & 1 & 0 & 0 & 0 \\ 
		& $L^2$ & $\cmark$ & $\cmark$ & $\cmark$ &  &  &  \\ \hline
		\multirow{2}{*}{3} & Pole Order & 1 & 0 & 0 & 0 & 0 & 0 \\ 
		& $L^2$ & $\cmark$ &  &  &  &  &  \\ \hline
	\end{longtable}
	
	\begin{longtable}[H]{|c|c|c|c|c|c|c|c|c|}
		\hline
		$ord\bk{\chi}$ & $P_4$& $\frac{1}{2}$ & $\frac{2}{3}$ & $1$ & $\frac{3}{2}$ & $2$ & $3$ & $4$ \\ \hline
		\multirow{2}{*}{1} & Pole Order & 1 & 1 & 4 & 1 & 3 & 2 & 1 \\ 
		& $L^2$ & $\xmark$ & $\xmark$ & $\cmark$ & $\xmark$ & $\cmark$ & $\cmark$ & $\cmark$  \\ \hline
		\multirow{2}{*}{2} & Pole Order & 1 & 0 & 2 & 1 & 1 & 0 & 0 \\ 
		& $L^2$ & $\xmark$ &  & $\cmark$ & $\xmark$ & $\cmark$ &  &   \\ \hline
		\multirow{2}{*}{3} & Pole Order & 0 & 1 & 1 & 0 & 0 & 0 & 0 \\ 
		& $L^2$ &  & $\xmark$ & $\cmark$ &  &  &  &   \\ \hline
		\multirow{2}{*}{4} & Pole Order & 1 & 0 & 0 & 0 & 0 & 0 & 0 \\ 
		& $L^2$ & $\cmark$ &  &  &  &  &  &   \\ \hline
	\end{longtable}

	\begin{longtable}[H]{|c|c|c|c|c|c|c|c|}
		\hline
		$ord\bk{\chi}$ & $P_5$& $1$ & $\frac{3}{2}$ & $2$ & $3$ & $4$ & $5$ \\ \hline
		\multirow{2}{*}{1} & Pole Order & 3 & 1 & 2 & 2 & 1 & 1 \\ 
		& $L^2$ & $\cmark$ & $\xmark$ & $\xmark$ & $\cmark$ & $\xmark$ & $\cmark$  \\ \hline
		\multirow{2}{*}{2} & Pole Order & 1 & 1 & 1 & 0 & 0 & 0 \\ 
		& $L^2$ & $\xmark$ & $\xmark$ & $\cmark$ &  &  &   \\ \hline
		\multirow{2}{*}{3} & Pole Order & 1 & 0 & 0 & 0 & 0 & 0 \\ 
		& $L^2$ & $\cmark$ &  &  &  &  &   \\ \hline
	\end{longtable}

	\begin{longtable}[H]{|c|c|c|c|c|c|c|}
		\hline
		$ord\bk{\chi}$ & $P_6$ & $\frac{1}{2}$ & $\frac{5}{2}$ & $\frac{7}{2}$ & $\frac{11}{2}$ & $\frac{13}{2}$ \\ \hline
		\multirow{2}{*}{1} & Pole Order & 1 & 2 & 1 & 1 & 1 \\ 
		& $L^2$ & $\cmark$ & $\cmark$ & $\cmark$ & $\xmark$ & $\cmark$ \\ \hline
		\multirow{2}{*}{2} & Pole Order & 1 & 1 & 0 & 0 & 0 \\ 
		& $L^2$ & $\cmark$ & $\cmark$ & & & \\ \hline
	\end{longtable}

	\begin{longtable}[H]{|c|c|c|c|c|}
		\hline
		$ord\bk{\chi}$ & $P_7$ & $1$ & $5$ & $9$ \\ \hline
		\multirow{2}{*}{1} & Pole Order & 1 & 1 & 1 \\ 
		& $L^2$ & $\cmark$ & $\cmark$ & $\cmark$ \\ \hline
	\end{longtable}
	
\end{Thm}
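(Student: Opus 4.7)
The plan is to execute, systematically for each triple $(P_i, s_0, \mathrm{ord}(\chi))$ appearing in the tables, the four-step procedure of Section \ref{Section:Poles_Method} that was already carried out for $E_6$. As a preliminary, I would use a Sagemath enumeration of the Weyl coset spaces $W^{M_i, T}$ to tabulate, for each triple, the quantity $\max\{\mathrm{ord}_{s=s_0} M_w(s,\chi) \mid w \in W^{M,T}\}$. The cardinalities $|W^{M_i,T}|$ for $E_7$ range up to $10{,}080$ (at $P_4$), which is computationally heavier than in $E_6$ but still manageable. This gives the starting upper bound for each case.

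For trivial $\chi$ the upper bound is improved via the normalized spherical Eisenstein series method of Section \ref{Subsection:Poles_Method_Normalized_Eis_ser}: whenever $I_{P_i}(s_0)$ is generated by its spherical vector, the order of $\Eisen_{P_i}^G(f,s,\Id,g)$ at $s_0$ is bounded by $d_{P_i}(\eta_{s_0}^{P_i}) = |N_1(\eta_{s_0}^{P_i})| - |N_0(\eta_{s_0}^{P_i})| - 6$; I then match this bound from below by exhibiting a singleton equivalence class $\{w\} \in W^{M,T}/\sim_{s_0,\chi}$ with $\mathrm{ord}_{s=s_0} C_w(\lambda_{s}^{P_i}) = d_{P_i}(\eta_{s_0}^{P_i})$. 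For non-trivial $\chi$, and for the trivial-$\chi$ cases where the co-socle has length $\geq 2$ (i.e.\ where $\pi$ is not generated by the spherical vector), I switch to the reduction-of-poles method of Section \ref{Subsection:Methods_Reduction_of_order_of_poles}: decompose the stabilizer appearing in each equivalence class into a product of involutions $u_1, \dots, u_k$, verify that each $M_{u_i}(\lambda)$ acts as $-1$ on the relevant residual image — either directly via Keys--Shahidi (Corollary \ref{Cor:Extension_to_KS}) or by combining Proposition \ref{Prop:Zampera} with the eigenspace data recorded in Section \ref{Subsec:Local_Lengths_E7} — and conclude that $\kappa_{[w']}$ has pole order at most $n-k$. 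In the handful of cases where no singleton class realizes the claimed order (analogues of $(4,1/2,3)$ for $E_6$), the lower bound is instead obtained as a byproduct of the explicit residue computation performed later in Chapter \ref{Chap:Square_Integrable}. Finally, square-integrability is decided by applying the Langlands criterion (Proposition \ref{Prop:Square_integrability_of_residue}) to the set $\Sigma^{P,0}_{(s_0,\chi,m)}$ of non-vanishing equivalence classes at the order of the pole.

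The main obstacle lies in the non-generic cases flagged in Section \ref{Subsec:Local_Lengths_E7}, namely $(2, \pm 2, 2)$, $(2, \pm 1, 1)$, $(2, 0, 2)$, $(4, \pm \tfrac12, 4)$, $(4, 0, 2)$, $(5, \pm 2, 2)$, $(7, 0, 2)$, together with the length-$3$ examples $\DPS{P_2}(1)$ and $\DPS{P_4}(\tfrac12, \chi)$ for $\chi$ of order $4$. In these situations the co-socle is reducible, so one cannot deduce from the existence of a unique irreducible quotient alone that the image of $N_{u_0 w'}$ sits in a single eigenspace of the relevant cocycle operator. The resolution is to invoke the explicit eigenvalue data of Section \ref{Subsec:Local_Lengths_E7}: each irreducible quotient is characterised by its $\pm 1$-eigenvalue under a prescribed operator such as $N_{w_{257}}(\lambda_{a.d.})$ (or, for $\DPS{P_2}(1)$, by $N_{u_0^{-1} w_0 u_0}(\lambda_0)$), and these eigenvalues can be substituted directly into the sign computation of Section \ref{Subsection:Methods_Reduction_of_order_of_poles}. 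A secondary technical point is that, as noted in the assumptions section, several steps tacitly require $\zeta_F(\tfrac12) \neq 0$ and the triviality of $\epsilon_F(s,\chi)$ for higher-order $\chi$; throughout the $E_7$ analysis I would flag any case (if any arises) in which refined Gindikin--Karpelevich bookkeeping would be needed outside $F = \mathbb{Q}$.
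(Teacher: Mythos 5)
Your proposal follows the same four-step procedure that the paper uses for $E_7$: tabulate the maximal orders of the individual intertwining operators, bound from above via the normalized spherical series $\Eisen_B^\sharp$ when $\DPS{P_i}(s_0,\chi)$ is generated by its spherical vector, invoke the reduction-of-poles machinery (Keys--Shahidi / \v{Z}ampera) for the remaining cases, realize the lower bound via singleton equivalence classes, defer the cases $(2,2,2)$, $(5,2,2)$, $(4,\tfrac12,4)$ to \Cref{Chap:Square_Integrable}, and decide square-integrability by Langlands' criterion. This is essentially the route the paper takes.

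The one place your treatment differs in substance is $(2,1,1)$. You propose to feed the eigenvalue data of \Cref{Subsec:Local_Lengths_E7} directly into the sign computation of \Cref{Subsection:Methods_Reduction_of_order_of_poles}; the paper instead partitions the equivalence classes of $W^{M_2,T}/\sim_{1,\Id}$ into two sets according to whether $\pi_{-1,v}$ meets the image of $N_{w',v}(\lambda_1^{P_2})$ --- there are exactly $15$ such classes corresponding to the $15$ exponents of $r_{T}^{G}\pi_{-1,v}$ --- and bounds the order on the complementary set by the spherical calculation, since there the image is purely $\pi_{1,v}$ and the spherical section already detects the pole order. Your eigenvalue-substitution approach would also close the argument, but only after you have first isolated the isotypic components on which each involution acts with a definite sign (which is precisely the partition into $Y$ and its complement), so the bookkeeping is not actually lighter. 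Also note that the entries $(2,0,2)$, $(4,0,2)$, $(7,0,2)$ that you pulled from \Cref{Subsec:Local_Lengths_E7} are at $s_0 = 0$ and hence outside the half-plane $\Real(s)>0$ that the theorem addresses; they play no role here.
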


\begin{Rem}
	In the case of $E_7$ and $P=P_7$, our results agree with the results of \cite{hanzer_savin_2019}.
\end{Rem}

\begin{proof}
	The proof in this case is similar to that of $E_6$ with a few exceptional cases.
	We start with the cases that can be completely sorted using the methods of \Cref{Section:Poles_Method} and then deal with the exceptional cases.
	
	In each case $\bk{P_i,s_0,ord\bk{\chi}}$, we list the maximal order of pole of any particular intertwining operator in the following table:
	
	\begin{longtable}[H]{|c|c|c|c|c|c|c|c|c|c|c|}
		\hline
		$ord\bk{\chi}$ & $P_1$& $\frac{1}{2}$ & $\frac{3}{2}$ & $\frac{5}{2}$ & $\frac{7}{2}$ & $\frac{9}{2}$ & $\frac{11}{2}$ & $\frac{13}{2}$ & $\frac{15}{2}$ & $\frac{17}{2}$ \\ \hline
		1 & Pole Order & 4 & 3 & 3 & 3 & 2 & 2 & 1 & 1 & 1 \\ \hline
		2 & Pole Order & 1 & 0 & 0 & 0 & 0 & 0 & 0 & 0 & 0 \\ \hline
	\end{longtable}

	\begin{longtable}[H]{|c|c|c|c|c|c|c|c|c|c|c|}
		\hline
		$ord\bk{\chi}$ & $P_2$ & $\frac{1}{2}$ & $1$ & $\frac{3}{2}$ & $2$ & $3$ & $4$ & $5$ & $6$ & $7$ \\ \hline
		1 & Pole Order & 1 & 6 & 1 & 5 & 4 & 3 & 2 & 1 & 1 \\ \hline
		2 & Pole Order & 1 & 1 & 1 & 1 & 0 & 0 & 0 & 0 & 0 \\ \hline
	\end{longtable}

	\begin{longtable}[H]{|c|c|c|c|c|c|c|c|c|c|c|}
		\hline
		$ord\bk{\chi}$ & $P_3$ & $\frac{1}{6}$ & $\frac{1}{2}$ & $1$ & $\frac{3}{2}$ & $2$ & $\frac{5}{2}$ & $\frac{7}{2}$ & $\frac{9}{2}$ & $\frac{11}{2}$ \\ \hline
		1 & Pole Order & 1 & 9 & 2 & 7 & 1 & 5 & 3 & 2 & 1 \\ \hline
		2 & Pole Order & 0 & 3 & 2 & 2 & 1 & 1 & 0 & 0 & 0 \\ \hline
		3 & Pole Order & 1 & 1 & 0 & 0 & 0 & 0 & 0 & 0 & 0 \\ \hline
	\end{longtable}

	\begin{longtable}[H]{|c|c|c|c|c|c|c|c|c|c|c|}
		\hline
		$ord\bk{\chi}$ & $P_4$ & $\frac{1}{4}$ & $\frac{1}{3}$ & $\frac{1}{2}$ & $\frac{2}{3}$ & $1$ & $\frac{3}{2}$ & $2$ & $3$ & $4$ \\ \hline
		1 & Pole Order & 1 & 2 & 5 & 2 & 11 & 2 & 6 & 3 & 1 \\ \hline
		2 & Pole Order & 1 & 0 & 5 & 0 & 4 & 2 & 1 & 0 & 0 \\ \hline
		3 & Pole Order & 0 & 2 & 0 & 2 & 1 & 0 & 0 & 0 & 0 \\ \hline
		4 & Pole Order & 1 & 0 & 1 & 0 & 0 & 0 & 0 & 0 & 0 \\ \hline
	\end{longtable}

	\begin{longtable}[H]{|c|c|c|c|c|c|c|c|c|c|c|}
		\hline
		$ord\bk{\chi}$ & $P_5$ & $\frac{1}{3}$ & $\frac{1}{2}$ & $\frac{2}{3}$ & $1$ & $\frac{3}{2}$ & $2$ & $3$ & $4$ & $5$ \\ \hline
		1 & Pole Order & 1 & 3 & 1 & 10 & 2 & 6 & 4 & 2 & 1 \\ \hline
		2 & Pole Order & 0 & 3 & 0 & 3 & 2 & 1 & 0 & 0 & 0 \\ \hline
		3 & Pole Order & 1 & 0 & 1 & 1 & 0 & 0 & 0 & 0 & 0 \\ \hline
	\end{longtable}

	\begin{longtable}[H]{|c|c|c|c|c|c|c|c|c|c|c|}
		\hline
		$ord\bk{\chi}$ & $P_6$ & $\frac{1}{2}$ & $1$ & $\frac{3}{2}$ & $2$ & $\frac{5}{2}$ & $\frac{7}{2}$ & $\frac{9}{2}$ & $\frac{11}{2}$ & $\frac{13}{2}$ \\ \hline
		1 & Pole Order & 6 & 1 & 5 & 1 & 5 & 3 & 2 & 2 & 1 \\ \hline
		2 & Pole Order & 2 & 1 & 1 & 1 & 1 & 0 & 0 & 0 & 0 \\ \hline
	\end{longtable}

	\begin{longtable}[H]{|c|c|c|c|c|c|c|c|c|c|c|}
		\hline
		$ord\bk{\chi}$ & $P_7$ &$1$ & $2$ & $3$ & $4$ & $5$ & $6$ & $7$ & $8$ & $9$ \\ \hline
		1 & Pole Order & 3 & 2 & 2 & 2 & 2 & 1 & 1 & 1 & 1 \\ \hline
	\end{longtable}
	
	For all cases with trivial $\chi$, the proof continues as in the case of $E_6$, with the exception of the case $\DPS{P_2}\bk{1}$, which is not generated by its spherical vector.
	This case will be dealt with bellow.
	
	We now deal with the cases with non-trivial $\chi$:
	\begin{itemize}
		\item In the following cases, there exists a singleton equivalence class which realize a pole of the same order as prescribed in the theorem and the above tables:
		$\bk{1,1/2,2}$, $\bk{3,1/2,3}$, $\bk{3,5/2,2}$, $\bk{4, 1, 3}$, $\bk{4, 2, 2}$ and $\bk{5, 1, 3}$.
		
		\item In the following cases, the pole in these tables cancels completely and the Eisenstein series is holomorphic at the point:
		$\bk{2,1/2,2}$, $\bk{2,1,2}$, $\bk{2,3/2,2}$, $\bk{3,1,2}$, $\bk{3,2,2}$, $\bk{3,1/6,3}$, $\bk{4,1/4,2}$, $\bk{4,1/3,3}$, $\bk{4,1/4,4}$, $\bk{5,1/3,3}$, $\bk{5,2/3,3}$, $\bk{6,1,2}$, $\bk{6,3/2,2}$ and $\bk{6,2,2}$.

		These are precisely the cases $[i,s,ord\bk{\chi}]$ in the above tables where $\DPS{P_i}\bk{s,\chi}_v$ is irreducible at infinitely many $v\in\Places$.
		
		\item In the following cases, the pole reduces to a simple pole and a simple pole is realized by a singleton equivalence class:
		$\bk{3,1/2,2}$, $\bk{3, 3/2, 2}$, $\bk{4,1,2}$, $\bk{4, 3/2, 2}$, $\bk{4, 2/3, 3}$, $\bk{5,1,2}$, $\bk{5, 3/2, 2}$, $\bk{6, 1/2, 2}$ and $\bk{6, 5/2, 2}$.
		
		\item The cases $\bk{2,2,2}$, $\bk{5,2,2}$ and $\bk{4,1/2,4}$ are dealt with in  \Cref{Chap:Square_Integrable}.
	\end{itemize}

	These leaves the case of $\bk{2,1,1}$ to be treated.

	In the case of $\bk{2,1,1}$ the method outlined in \Cref{Section:Poles_Method} requires a few modifications.
	We recall that $\DPS{P_2}\bk{1}$ has the following structure
	\[
	\pi_{s,v} \hookrightarrow \DPS{P_2}\bk{1}_v \twoheadrightarrow \pi_{1,v}\oplus\pi_{-1,v},
	\]
	where $\pi_0$ is the unique irreducible spherical subquotient and $\pi_{-1}$ is irreducible but not spherical.
	Furthermore,
	\[
	\dim_\C\pi_{-1,v}^{\frakJ_v} = 15 .
	\]
	Let
	\[
	\lambda_{0,v} = \esevenchar{-1}{5}{-1}{-1}{-1}{-1}{-1}
	\]
	denote the initial exponent of $\DPS{P_2}\bk{-1}$.
	Then, we may write
	\[
	r_{T_v}^{G_v}\pi_{-1,v} = \oplus_{\lambda\in Y} w\cdot\lambda_{0,v},
	\]
	where $Y\subset W$ is independent of $v$ and satisfies $\Card{Y}=15$.
	In particular, we note that $\Card{\coset{w}}=2$ for $w\in Y$.
	Furthermore,
	\[
	M_w\bk{\lambda_{s}^{P_2}} f_s\bk{g}
	= C_w\bk{\lambda_{s}^{P_2}} \otimes_{v\notin S}f_{s,v}\bk{g_v} \otimes_{v\in S} N_{w,v}\bk{\lambda_{s}^{P_2}}f_{s,v}\bk{g_v} 
	\]
	and
	\[
	N_{w,v}\bk{\lambda_{-1}^{P_2}}f_{s,v}\bk{g_v} \in i_{T_v}^{G_v}w\cdot\lambda_0 = i_{T_v}^{G_v}w\cdot\lambda_{-1}^{P_2} .
	\]
	Hence, $\pi_{1,v}$ appears in the image of $N_{w,v}\bk{\lambda_{s}^{P}}$ only for $w\in W^{M,T}$ such that $w\in Y$.
	In particular, one checks that for any other $w\in W^{M,T}$ such that $\kappa_{\coset{w}}$ admits a pole at $s_0=1$, it holds that $\Image\bk{N_{w,v}\bk{\lambda_{1}^{P}}}=\pi_{1,v}$.
	
	Hence, the order of $\kappa_{\coset{w}}$ at $s_0=1$ for $w\in W^{M,T} \setminus Y$ such that $\kappa_{\coset{w}}$ admits a pole there, is bounded by the order of $\kappa_{\coset{w}}\bk{f^0,g}$ there.
	Since
	\[
	\kappa_{\coset{w}}\bk{f^0,g,s} = \suml_{w\in W^{M,T}} C_w\bk{\lambda_{s,\chi}^{P_2}},
	\]
	the order of $\kappa_{\coset{w}}\bk{f^0,g}$ at $s_0=1$ can be determined by a standard calculation.
	Furthermore, one may use the method described in \Cref{Subsection:Poles_Method_Normalized_Eis_ser}, in order to show that $\kappa_{\coset{w}}\bk{f^0,g}$ admits at most a simple pole at $s_0=1$.

\end{proof}

\section{Results for $E_8$}

\begin{Thm}
	The orders of poles of the degenerate Eisenstein series $\Eisen_{P_i}^{G} \bk{f,s,\chi,g}$ are given in the tables bellow.
	The tables also state whether the residue is square-integrable or not.
\begin{longtable}[H]{|c|c|c|c|c|c|c|c|c|}
	\hline
	$ord\bk{\chi}$ & $P_1$& $\frac{1}{2}$ & $\frac{5}{2}$ & $\frac{7}{2}$ & $\frac{11}{2}$ & $\frac{13}{2}$ & $\frac{17}{2}$ & $\frac{23}{2}$ \\ \hline
	\multirow{2}{*}{1} & Pole Order & 1 & 1 & 1 & 1 & 1 & 1 & 1 \\ 
	& $L^2$ & $\cmark$ & $\cmark$ & $\cmark$ & $\xmark$ & $\cmark$ & $\cmark$ & $\cmark$  \\ \hline
	\multirow{2}{*}{2} & Pole Order & 1 & 0 & 1 & 0 & 0 & 0 & 0 \\ 
	& $L^2$ & \cmark &  & \cmark &  &  &  &   \\ \hline
\end{longtable}

\begin{longtable}[H]{|c|c|c|c|c|c|c|c|c|c|}
	\hline
	$ord\bk{\chi}$ & $P_2$& $\frac{1}{2}$ & $\frac{3}{2}$ & $\frac{5}{2}$ & $\frac{7}{2}$ & $\frac{9}{2}$ & $\frac{11}{2}$ & $\frac{13}{2}$ & $\frac{17}{2}$ \\ \hline
	\multirow{2}{*}{1} & Pole Order & 1${}^{\ast}$ & 2 & 2 & 2 & 1 & 1 & 1 & 1 \\ 
	& $L^2$ & $\cmark$ & $\cmark$ & $\cmark$ & $\cmark$ & $\xmark$ & $\xmark$ & $\cmark$ & $\cmark$  \\ \hline
	\multirow{2}{*}{2} & Pole Order & 1 & 1 & 1 & 1 & 0 & 0 & 0 & 0 \\ 
	& $L^2$ & \cmark & \cmark & \cmark & \cmark &  &  &  &  \\ \hline
	\multirow{2}{*}{3} & Pole Order & 0 & 1 & 0 & 0 & 0 & 0 & 0 & 0 \\ 
	& $L^2$ &  & \cmark &  &  &  &  &  &  \\ \hline
\end{longtable}

\begin{longtable}[H]{|c|c|c|c|c|c|c|c|c|c|c|c|}
	\hline
	$ord\bk{\chi}$ & $P_3$& $\frac{1}{2}$ & $1$ & $\frac{7}{6}$ & $\frac{3}{2}$ & $2$ & $\frac{5}{2}$ & $\frac{7}{2}$ & $\frac{9}{2}$ & $\frac{11}{2}$ & $\frac{13}{2}$ \\ \hline
	\multirow{2}{*}{1} & Pole Order & 1 & 1 & 1 & 3 & 1 & 2 & 2 & 1 & 1 & 1 \\ 
	& $L^2$ & $\xmark$ & $\xmark$ & $\xmark$ & $\cmark$ & $\xmark$ & $\cmark$ & $\cmark$ & $\xmark$ & $\xmark$ & $\cmark$  \\ \hline
	\multirow{2}{*}{2} & Pole Order & 1 & 1 & 0 & 1 & 1 & 1 & 1 & 0 & 0 & 0 \\ 
	& $L^2$ & \cmark & \xmark &  & \xmark & \xmark & \cmark & \cmark &  &  &  \\ \hline
	\multirow{2}{*}{3} & Pole Order & 0 & 0 & 1 & 1 & 0 & 0 & 0 & 0 & 0 & 0 \\ 
	& $L^2$ &  &  & \xmark & \cmark &  &  &  &  &  &  \\ \hline
	\multirow{2}{*}{4} & Pole Order & 0 & 1 & 0 & 0 & 0 & 0 & 0 & 0 & 0 & 0 \\ 
	& $L^2$ &  & \cmark &  &  &  &  &  &  &  &  \\ \hline
\end{longtable}

\begin{longtable}[H]{|c|c|c|c|c|c|c|c|c|c|c|c|c|}
	\hline
	$ord\bk{\chi}$ & $P_4$& $\frac{3}{10}$ & $\frac{1}{2}$ & $\frac{3}{4}$ & $\frac{5}{6}$ & $1$ & $\frac{7}{6}$ & $\frac{3}{2}$ & $2$ & $\frac{5}{2}$ & $\frac{7}{2}$ & $\frac{9}{2}$ \\ \hline
	\multirow{2}{*}{1} & Pole Order & 1 & 5 & 1 & 1 & 2 & 1 & 4 & 1 & 3 & 2 & 1 \\ 
	& $L^2$ & $\xmark$ & $\cmark$ & $\xmark$ & $\xmark$ & $\xmark$ & $\xmark$ & $\cmark$ & $\xmark$ & $\cmark$ & $\cmark$ & $\cmark$  \\ \hline
	\multirow{2}{*}{2} & Pole Order & 0 & 3 & 1 & 0 & 2 & 0 & 2 & 1 & 1 & 0 & 0  \\
	& $L^2$ &  & \cmark & \xmark &  & \xmark &  & \cmark & \xmark & \cmark &  &  \\ \hline
	\multirow{2}{*}{3} & Pole Order & 0 & 2 & 0 & 1 & 0 & 1 & 1 & 0 & 0 & 0 & 0  \\
	& $L^2$ &  & \cmark &  & \xmark &  & \xmark & \cmark &  &  &  &  \\ \hline
	\multirow{2}{*}{4} & Pole Order & 0 & 1 & 1 & 0 & 1 & 0 & 0 & 0 & 0 & 0 & 0  \\
	& $L^2$ &  & \xmark & \xmark &  & \cmark &  &  &  &  &  &  \\ \hline
	\multirow{2}{*}{5} & Pole Order & 1 & 1 & 0 & 0 & 0 & 0 & 0 & 0 & 0 & 0 & 0  \\
	& $L^2$ & \xmark & \cmark &  &  &  &  &  &  &  &  &  \\ \hline
	\multirow{2}{*}{6} & Pole Order & 0 & 1 & 0 & 0 & 0 & 0 & 0 & 0 & 0 & 0 & 0  \\
	& $L^2$ &  & \cmark &  &  &  &  &  &  &  &  &  \\ \hline
\end{longtable}

\begin{longtable}[H]{|c|c|c|c|c|c|c|c|c|c|c|c|}
	\hline
	$ord\bk{\chi}$ & $P_5$& $\frac{1}{2}$ & $\frac{5}{6}$ & $1$ & $\frac{7}{6}$ &  $\frac{3}{2}$ & $2$ & $\frac{5}{2}$ & $\frac{7}{2}$ & $\frac{9}{2}$ & $\frac{11}{2}$ \\ \hline
	\multirow{2}{*}{1} & Pole Order & 4${}^{\ast}$ & 1 & 2 & 1 & 4 & 1 & 3 & 2 & 1 & 1 \\ 
	& $L^2$ & $\cmark$ & $\xmark$ & $\xmark$ & $\xmark$ & $\cmark$ & $\xmark$ & $\cmark$ & $\cmark$ & $\xmark$ & $\cmark$  \\ \hline
	\multirow{2}{*}{2} & Pole Order & 2 & 0 & 2 & 0 & 2 & 1 & 1 & 0 & 0 & 0 \\ 
	& $L^2$ & \cmark &  & \cmark &  & \cmark & \xmark & \cmark &  &  &   \\ \hline
	\multirow{2}{*}{3} & Pole Order & 1 & 1 & 0 & 1 & 1 & 0 & 0 & 0 & 0 & 0 \\ 
	& $L^2$ & \xmark & \xmark &  & \xmark & \cmark &  &  &  &  &   \\ \hline
	\multirow{2}{*}{4} & Pole Order & 1 & 0 & 1 & 0 & 0 & 0 & 0 & 0 & 0 & 0 \\ 
	& $L^2$ & \cmark &  & \cmark &  &  &  &  &  &  &   \\ \hline
	\multirow{2}{*}{5} & Pole Order & 1 & 0 & 0 & 0 & 0 & 0 & 0 & 0 & 0 & 0 \\ 
	& $L^2$ & \cmark &  &  &  &  &  &  &  &  &   \\ \hline
\end{longtable}

\begin{longtable}[H]{|c|c|c|c|c|c|c|c|c|c|c|c|}
	\hline
	$ord\bk{\chi}$ & $P_6$& $\frac{1}{2}$ & $1$ & $2$ & $\frac{5}{2}$ & $3$ & $4$ & $5$ & $6$ & $7$ \\ \hline
	\multirow{2}{*}{1} & Pole Order & 1 & 2 & 3 & 1 & 2 & 1 & 1 & 1 & 1 \\ 
	& $L^2$ & $\xmark$ & $\cmark$ & $\cmark$ & $\xmark$ & $\cmark$ & $\xmark$ & $\xmark$ & $\xmark$ & $\cmark$  \\ \hline
	\multirow{2}{*}{2} & Pole Order & 1 & 1 & 1 & 1 & 1 & 0 & 0 & 0 & 0 \\ 
	& $L^2$ & \cmark & \cmark & \xmark & \xmark & \cmark &  &  &  &   \\ \hline
	\multirow{2}{*}{3} & Pole Order & 0 & 1 & 1 & 0 & 0 & 0 & 0 & 0 & 0 \\ 
	& $L^2$ &  & \cmark & \cmark &  &  &  &  &  &   \\ \hline
	\multirow{2}{*}{4} & Pole Order & 1 & 0 & 0 & 0 & 0 & 0 & 0 & 0 & 0 \\ 
	& $L^2$ & \cmark &  &  &  &  &  &  &  &   \\ \hline
\end{longtable}

\begin{longtable}[H]{|c|c|c|c|c|c|c|c|c|}
	\hline
	$ord\bk{\chi}$ & $P_7$& $\frac{1}{2}$ & $\frac{3}{2}$ & $\frac{5}{2}$ & $\frac{9}{2}$ & $\frac{11}{2}$ & $\frac{17}{2}$ & $\frac{19}{2}$ \\ \hline
	\multirow{2}{*}{1} & Pole Order & 2 & 1 & 1 & 2 & 1 & 1 & 1 \\ 
	& $L^2$ & $\cmark$ & $\cmark$ & $\cmark$ & $\cmark$ & $\cmark$ & $\xmark$ & $\cmark$  \\ \hline
	\multirow{2}{*}{2} & Pole Order & 1 & 0 & 1 & 1 & 0 & 0 & 0 \\ 
	& $L^2$ & \cmark &  & \cmark & \cmark &  &  &   \\ \hline
	\multirow{2}{*}{3} & Pole Order & 1 & 0 & 0 & 0 & 0 & 0 & 0 \\ 
	& $L^2$ & \cmark &  &  &  &  &  &   \\ \hline
\end{longtable}

\begin{longtable}[H]{|c|c|c|c|c|c|}
	\hline
	$ord\bk{\chi}$ & $P_8$& $\frac{1}{2}$ & $\frac{11}{2}$ & $\frac{19}{2}$ & $\frac{29}{2}$ \\ \hline
	\multirow{2}{*}{1} & Pole Order & 1 & 1 & 1 & 1 \\ 
	& $L^2$ & $\cmark$ & $\cmark$ & $\cmark$ & $\cmark$ \\ \hline
	\multirow{2}{*}{2} & Pole Order & 1 & 0 & 0 & 0 \\ 
	& $L^2$ & \cmark &  &  &  \\ \hline
\end{longtable}

With the exception of the cases denoted by an asterisk, $\DPS{P_2}\bk{\frac{1}{2}}$ and $\DPS{P_5}\bk{\frac{1}{2}}$, for whom the structure of the maximal semi-simple quotient is not completely determined and thus the result cited above is under the assumption that it is irreducible.
\end{Thm}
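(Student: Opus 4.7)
The plan is to follow the same computational scheme used in the $E_6$ and $E_7$ cases, which is spelled out in the methods section. For each triple $\bk{i,s_0,\chi}$ with $\Real\bk{s_0}>0$ I would first compute, for every $w\in W^{M_i,T}$, the order of the Gindikin-Karpelevich factor $C_w\bk{\lambda_{s,\chi}^{P_i}}$ at $s=s_0$; since Property A guarantees that the normalized local intertwining operators $N_{w,v}$ are holomorphic at $\lambda_{s_0,\chi}^{P_i}$, this pole order equals $\ord{s=s_0}{M_w\bk{s,\chi}}$. The maximum of these orders over $W^{M_i,T}$ is then tabulated (as was done for $E_6$ and $E_7$) and yields a first upper bound on $\ord{s=s_0}{\Eisen_{P_i}\bk{f,s,\chi,g}}$.

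Next, I would organize $W^{M_i,T}$ into equivalence classes modulo $\sim_{s_0,\chi}$ using Sagemath, and treat each case in one of three ways. When the maximal per-summand order equals the order claimed in the theorem, it suffices to exhibit a singleton class $\set{w}$ in $W^{M_i,T}\rmod\sim_{s_0,\chi}$ of that order, since it cannot interact with any other summand; this handles the overwhelming majority of cases. When the claimed pole order is strictly smaller, I would apply the cancellation machinery of the subsection on reduction of orders of poles: the equivalence class $\coset{w'}$ is written as a $\bk{\ZZ\rmod 2\ZZ}^k$-orbit of $w'$, and for each generator one verifies that the corresponding intertwining operator acts as $-\Id$ on the residual representation, either via the Keys-Shahidi identity $M_{w_0}\bk{\overline{0}}=-\Id$ conjugated by a suitable $u_0$, or via the Zampera-type statement in Proposition~\ref{Prop:Zampera} applied on sub-Levi subgroups of types $A_5$, $A_6$ or $D_6$. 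For trivial $\chi$ and degenerate principal series generated by the spherical vector, the alternative bound $d_{P_i}\bk{\lambda_s^{P_i}}=\Card{N_1}-\Card{N_0}-\bk{n-1}$ coming from the entire normalized spherical Eisenstein series $\Eisen_{B}^\sharp$ is a much cheaper way to pin down the order. Square-integrability of each surviving residue is then read off from Langlands' criterion in Proposition~\ref{Prop:Square_integrability_of_residue} by examining $\Real\bk{\gen{w'\cdot\lambda_{s_0,\chi}^{P_i},\check{\alpha}}}$ for $\alpha\in\Delta_G$ over the classes in $\Sigma^{\bP,0}_{\bk{s_0,\chi,m}}$.

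The main obstacle for $E_8$ is purely computational: $\Card{W_{E_8}}=696{,}729{,}600$, so one cannot enumerate $W_{E_8}$ directly. One must work instead with the minimal coset representatives $W^{M_i,T}$ (whose sizes range from $240$ up to $483{,}840$), parallelize the computation of the stabilizers and equivalence classes, and cache intermediate Weyl-orbit data; even so, the cases $i=3,4,5$ with small $s_0$ stretch the feasibility of the approach and are responsible for the starred caveats in the theorem, where the structure of the Archimedean maximal semi-simple quotient of $\DPS{P_i}\bk{\frac{1}{2}}_v$ for $v\divides\infty$ is currently unknown. A secondary obstacle is verifying the vanishing step (c) above for the handful of cases in which the naive reduction gives the wrong order: here one typically embeds the argument inside a sub-Levi of type $A_5$ and checks, using a branching-rule calculation on the irreducible quotient $\pi_{1,v}$, that the image of $N_{u_0w',v}$ lies in the prescribed eigenspace of $N_z$. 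The cases $\bk{P_2,\pm 2,2}$, $\bk{P_5,\pm 2,2}$, $\bk{P_4,\pm \tfrac12,4}$ and their analogues for $E_8$ cannot be resolved by these arguments alone and are instead cross-checked in Chapter~\ref{Chap:Square_Integrable} by explicit identification of the residue.
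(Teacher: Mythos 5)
Your strategy matches the paper's: tabulate the per-summand Gindikin–Karpelevich pole orders, exhibit singleton equivalence classes when the per-summand maximum is already the claimed order, apply the reduction machinery (Keys–Shahidi, \v Zampera / Proposition~\ref{Prop:Zampera}) when cancellations are needed, use $d_{P_i}\bk{\lambda_s^{P_i}}$ for trivial $\chi$ and spherically generated series, and invoke Langlands' criterion for square-integrability. That is the proof the paper gives for $E_8$, with the paper additionally flagging $\DPS{P_1}\bk{\tfrac52}$ and $\DPS{P_7}\bk{\tfrac32}$ (trivial $\chi$, not spherically generated) and noting that $\bk{3,\tfrac12,2}$ is handled as in the $E_6$ case $\bk{4,\tfrac12,3}$.

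One substantive correction: you mischaracterize the starred caveats. They are at $\bk{P_2,\tfrac12,1}$ and $\bk{P_5,\tfrac12,1}$ (not $i=3,4,5$), and the uncertainty is not Archimedean. It comes from \cite{SDPS_E8}, where the length of the co-socle of $\DPS{P_2}\bk{\tfrac12}_v$ and $\DPS{P_5}\bk{\tfrac12}_v$ at \emph{non-Archimedean} $v$ is only pinned down to being either irreducible or of length $2$; the theorem's pole-order entry there is contingent on the irreducible-co-socle alternative. The Archimedean unknowns are a separate, paper-wide assumption absorbed into the notational conventions (restricting to sections spherical at $\infty$), not the source of the asterisks. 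Also, the problematic cases you name $\bk{P_2,\pm2,2}$, $\bk{P_5,\pm2,2}$, $\bk{P_4,\pm\tfrac12,4}$ are $E_7$ data, not $E_8$; for $E_8$ there is no parallel deferral to Chapter~\ref{Chap:Square_Integrable} in the pole-order proof itself beyond the $\bk{3,\tfrac12,2}$ remark.
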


\begin{proof}
	As in the previous cases, we start by recording the maximal order of pole of any particular intertwining operator, for each case $\bk{P_i,s_0,ord\bk{\chi}}$, in the following table:
	
	\begin{longtable}[H]{|c|c|c|c|c|c|c|c|c|c|c|c|c|c|c|c|c|}
		\hline
		$ord\bk{\chi}$ & $P_1$& $\frac{1}{2}$ & $1$ & $\frac{3}{2}$ & $2$ & $\frac{5}{2}$ & $3$ & $\frac{7}{2}$ & $\frac{9}{2}$ & $\frac{11}{2}$ & $\frac{13}{2}$ & $\frac{15}{2}$ & $\frac{17}{2}$ & $\frac{19}{2}$ & $\frac{21}{2}$ & $\frac{23}{2}$ \\ \hline
		1 & Pole Order & 7 & 1 & 6 & 1 & 6 & 1 & 5 & 4 & 4 & 3 & 2 & 2 & 1 & 1 & 1 \\ \hline
		2 & Pole Order & 2 & 1 & 1 & 1 & 1 & 1 & 1 & 0 & 0 & 0 & 0 & 0 & 0 & 0 & 0 \\ \hline
	\end{longtable}

	\begin{longtable}[H]{|c|c|c|c|c|c|c|c|c|c|c|c|c|c|c|c|c|}
		\hline
		$ord\bk{\chi}$ & $P_2$ & $\frac{1}{6}$ & $\frac{1}{2}$ & $\frac{5}{6}$ & $1$ & $\frac{7}{6}$ & $\frac{3}{2}$ & $2$ & $\frac{5}{2}$ & $3$ & $\frac{7}{2}$ & $\frac{9}{2}$ & $\frac{11}{2}$ & $\frac{13}{2}$ & $\frac{15}{2}$ & $\frac{17}{2}$ \\ \hline
		1 & Pole Order & 1 & 11 & 1 & 3 & 1 & 10 & 2 & 8 & 1 & 6 & 4 & 3 & 2 & 1 & 1 \\ \hline
		2 & Pole Order & 0 & 4 & 0 & 3 & 0 & 3 & 2 & 2 & 1 & 1 & 0 & 0 & 0 & 0 & 0 \\ \hline
		3 & Pole Order & 1 & 1 & 1 & 0 & 1 & 1 & 0 & 0 & 0 & 0 & 0 & 0 & 0 & 0 & 0 \\ \hline
	\end{longtable}

	\begin{longtable}[H]{|c|c|c|c|c|c|c|c|c|c|c|c|c|c|c|c|c|}
		\hline
		$ord\bk{\chi}$ & $P_3$ & $\frac{1}{6}$ & $\frac{1}{4}$ & $\frac{1}{2}$ & $\frac{3}{4}$ & $\frac{5}{6}$ & $1$ & $\frac{7}{6}$ & $\frac{3}{2}$ & $2$ & $\frac{5}{2}$ & $3$ & $\frac{7}{2}$ & $\frac{9}{2}$ & $\frac{11}{2}$ & $\frac{13}{2}$ \\ \hline
		1 & Pole Order & 2 & 1 & 14 & 1 & 2 & 5 & 2 & 11 & 3 & 7 & 1 & 5 & 3 & 2 & 1 \\ \hline
		2 & Pole Order & 0 & 1 & 6 & 1 & 0 & 5 & 0 & 4 & 3 & 2 & 1 & 1 & 0 & 0 & 0 \\ \hline
		3 & Pole Order & 2 & 0 & 2 & 0 & 2 & 0 & 2 & 1 & 0 & 0 & 0 & 0 & 0 & 0 & 0 \\ \hline
		4 & Pole Order & 0 & 1 & 1 & 1 & 0 & 1 & 0 & 0 & 0 & 0 & 0 & 0 & 0 & 0 & 0 \\ \hline
	\end{longtable}

	\begin{longtable}[H]{|c|c|c|c|c|c|c|c|c|c|c|c|c|c|c|c|c|}
		\hline
		$ord\bk{\chi}$ & $P_4$ & $\frac{1}{10}$ & $\frac{1}{6}$ & $\frac{1}{4}$ & $\frac{3}{10}$ & $\frac{1}{3}$ & $\frac{1}{2}$ & $\frac{3}{4}$ & $\frac{5}{6}$ & $1$ & $\frac{7}{6}$ & $\frac{3}{2}$ & $2$ & $\frac{5}{2}$ & $\frac{7}{2}$ & $\frac{9}{2}$ \\ \hline
		1 & Pole Order & 2 & 5 & 3 & 2 & 1 & 21 & 2 & 3 & 6 & 2 & 11 & 2 & 6 & 3 & 1 \\ \hline
		2 & Pole Order & 0 & 1 & 3 & 0 & 1 & 10 & 2 & 0 & 6 & 0 & 4 & 2 & 1 & 0 & 0 \\ \hline
		3 & Pole Order & 0 & 5 & 0 & 0 & 1 & 5 & 0 & 3 & 0 & 2 & 1 & 0 & 0 & 0 & 0 \\ \hline
		4 & Pole Order & 0 & 0 & 3 & 0 & 0 & 3 & 2 & 0 & 1 & 0 & 0 & 0 & 0 & 0 & 0 \\ \hline
		5 & Pole Order & 2 & 0 & 0 & 2 & 0 & 1 & 0 & 0 & 0 & 0 & 0 & 0 & 0 & 0 & 0 \\ \hline
		6 & Pole Order & 0 & 1 & 0 & 0 & 1 & 1 & 0 & 0 & 0 & 0 & 0 & 0 & 0 & 0 & 0 \\ \hline
	\end{longtable}

	\begin{longtable}[H]{|c|c|c|c|c|c|c|c|c|c|c|c|c|c|c|c|c|}
		\hline
		$ord\bk{\chi}$ & $P_5$ & $\frac{1}{10}$ & $\frac{1}{6}$ & $\frac{1}{4}$ & $\frac{3}{10}$ & $\frac{1}{2}$ & $\frac{3}{4}$ & $\frac{5}{6}$ & $1$ & $\frac{7}{6}$ & $\frac{3}{2}$ & $2$ & $\frac{5}{2}$ & $\frac{7}{2}$ & $\frac{9}{2}$ & $\frac{11}{2}$ \\ \hline
		1 & Pole Order & 1 & 4 & 2 & 1 & 20 & 1 & 3 & 6 & 2 & 12 & 2 & 7 & 4 & 2 & 1 \\ \hline
		2 & Pole Order & 0 & 0 & 2 & 0 & 8 & 1 & 0 & 6 & 0 & 4 & 2 & 1 & 0 & 0 & 0 \\ \hline
		3 & Pole Order & 0 & 4 & 0 & 0 & 4 & 0 & 3 & 0 & 2 & 1 & 0 & 0 & 0 & 0 & 0 \\ \hline
		4 & Pole Order & 0 & 0 & 2 & 0 & 2 & 1 & 0 & 1 & 0 & 0 & 0 & 0 & 0 & 0 & 0 \\ \hline
		5 & Pole Order & 1 & 0 & 0 & 1 & 1 & 0 & 0 & 0 & 0 & 0 & 0 & 0 & 0 & 0 & 0 \\ \hline
	\end{longtable}

	\begin{longtable}[H]{|c|c|c|c|c|c|c|c|c|c|c|c|c|c|c|c|c|}
		\hline
		$ord\bk{\chi}$ & $P_6$ & $\frac{1}{4}$ & $\frac{1}{3}$ & $\frac{1}{2}$ & $\frac{2}{3}$ & $1$ & $\frac{4}{3}$ & $\frac{3}{2}$ & $\frac{5}{3}$ & $2$ & $\frac{5}{2}$ & $3$ & $4$ & $5$ & $6$ & $7$ \\ \hline
		1 & Pole Order & 1 & 2 & 5 & 2 & 12 & 1 & 3 & 1 & 10 & 2 & 6 & 4 & 3 & 2 & 1 \\ \hline
		2 & Pole Order & 1 & 0 & 5 & 0 & 4 & 0 & 3 & 0 & 3 & 2 & 1 & 0 & 0 & 0 & 0 \\ \hline
		3 & Pole Order & 0 & 2 & 0 & 2 & 2 & 1 & 0 & 1 & 1 & 0 & 0 & 0 & 0 & 0 & 0 \\ \hline
		4 & Pole Order & 1 & 0 & 1 & 0 & 0 & 0 & 0 & 0 & 0 & 0 & 0 & 0 & 0 & 0 & 0 \\ \hline
	\end{longtable}

	\begin{longtable}[H]{|c|c|c|c|c|c|c|c|c|c|c|c|c|c|c|c|c|}
		\hline
		$ord\bk{\chi}$ & $P_7$ & $\frac{1}{6}$ & $\frac{1}{2}$ & $1$ &$\frac{3}{2}$ & $2$ & $\frac{5}{2}$ & $3$ & $\frac{7}{2}$ & $4$ & $\frac{9}{2}$ & $\frac{11}{2}$ & $\frac{13}{2}$ & $\frac{15}{2}$ & $\frac{17}{2}$ & $\frac{19}{2}$ \\ \hline
		1 & Pole Order & 1 & 9 & 2 & 7 & 2 & 6 & 1 & 5 & 1 & 5 & 3 & 2 & 2 & 2 & 1 \\ \hline
		2 & Pole Order & 0 & 3 & 2 & 2 & 2 & 2 & 1 & 1 & 1 & 1 & 0 & 0 & 0 & 0 & 0 \\ \hline
		3 & Pole Order & 1 & 1 & 0 & 0 & 0 & 0 & 0 & 0 & 0 & 0 & 0 & 0 & 0 & 0 & 0 \\ \hline
	\end{longtable}

	\begin{longtable}[H]{|c|c|c|c|c|c|c|c|c|c|c|c|c|c|c|c|c|}
		\hline
		$ord\bk{\chi}$ & $P_8$& $\frac{1}{2}$ & $\frac{3}{2}$ & $\frac{5}{2}$ & $\frac{7}{2}$ & $\frac{9}{2}$ & $\frac{11}{2}$ & $\frac{13}{2}$ & $\frac{15}{2}$ & $\frac{17}{2}$ & $\frac{19}{2}$ & $\frac{21}{2}$ & $\frac{23}{2}$ & $\frac{25}{2}$ & $\frac{27}{2}$ & $\frac{29}{2}$ \\ \hline
		1 & Pole Order & 4 & 3 & 3 & 3 & 3 & 3 & 2 & 2 & 2 & 2 & 1 & 1 & 1 & 1 & 1 \\ \hline
		2 & Pole Order & 1 & 0 & 0 & 0 & 0 & 0 & 0 & 0 & 0 & 0 & 0 & 0 & 0 & 0 & 0 \\ \hline
	\end{longtable}

	For all cases with trivial $\chi$, the proof continues as in the case of $E_6$ and $E_7$, with the exception of the cases $\DPS{P_1}\bk{\frac{5}{2}}$ and $\DPS{P_7}\bk{\frac{3}{2}}$ which are not generated by its spherical vector, these cases will be dealt with bellow.
	
	We now deal with the cases with non-trivial $\chi$:
	\begin{itemize}
		\item In the following cases, there exists a singleton equivalence class which realize a pole of the same order as prescribed in the theorem and the above tables:
		$\bk{1,7/2,2}$, $\bk{2,7/2,2}$, $\bk{2,3/2,3}$, $\bk{3,7/2,2}$, $\bk{3,3/2,3}$, $\bk{3,1,4}$, $\bk{4,5/2,2}$, $\bk{4,3/2,3}$, $\bk{4,1,4}$, $\bk{4,1/2,5}$, $\bk{4,1/2,6}$, $\bk{5,5/2,2}$, $\bk{5,3/2,3}$, $\bk{5,1,4}$, $\bk{5,1/2,5}$, $\bk{6,3,2}$, $\bk{6,2,3}$, $\bk{6,1/2,4}$, $\bk{7,9/2,2}$, $\bk{7,1/2,3}$ and $\bk{8,1/2,2}$.
		.
		
		\item In the following cases, the pole in these tables cancels completely and the Eisenstein series is holomorphic at the point, sorted by the value of $i$:
		\begin{enumerate}
			\item $\bk{1,1,2}$, $\bk{1,3/2,2}$, $\bk{1,2,2}$, $\bk{1,5/2,2}$ and $\bk{1,3,2}$.
			\item $\bk{2,1,2}$, $\bk{2,3,2}$, $\bk{2,2,2}$, $\bk{2,1/6,3}$, $\bk{2,1/2,3}$, $\bk{2,5/6,3}$ and $\bk{2,7/6,3}$.
			\item $\bk{3,3,2}$, $\bk{3,3/4,2}$, $\bk{3,1/4,2}$, $\bk{3,3,2}$, $\bk{3,1/6,3}$, $\bk{3,1/2,3}$ and $\bk{3,5/6,3}$.
			\item $\bk{4,1/6,2}$, $\bk{4,1/4,2}$, $\bk{4,1/3,2}$, $\bk{4,1/6,3}$, $\bk{4,1/3,3}$, $\bk{4,1/4,4}$, $\bk{4,1/10,5}$, $\bk{4,1/6,6}$ and $\bk{4,1/3,6}$.
			\item $\bk{5,1/4,2}$, $\bk{5,3/4,2}$, $\bk{5,1/6,3}$, $\bk{5,1/4,4}$, $\bk{5,3/4,4}$, $\bk{5,1/10,5}$ and $\bk{5,3/10,5}$.
			\item $\bk{6,1/4,2}$, $\bk{6,3/2,2}$, $\bk{6,4/3,3}$, $\bk{6,1/3,3}$, $\bk{6,2/3,3}$, $\bk{6,5/3,3}$ and $\bk{6,1/4,4}$.
			\item $\bk{7,1,2}$, $\bk{7,3/2,2}$, $\bk{7,2,2}$, $\bk{7,3,2}$, $\bk{7,7/2,2}$, $\bk{7,4,2}$ and $\bk{7,1/6,3}$.
		\end{enumerate}
		
%
		
		These are precisely the cases $[i,s,ord\bk{\chi}]$ in the above tables where $\DPS{P_i}\bk{s,\chi}_v$ is irreducible at infinitely many $v\in\Places$.
		
		\item In the following cases, the pole reduces to a simple pole and a simple pole is realized by a singleton equivalence class, sorted by the value of $i$:
		\begin{enumerate}
			\item $\bk{1,1/2,2}$.
			\item $\bk{2,1/2,2}$, $\bk{2,3/2,2}$, $\bk{2,5/2,2}$ and $\bk{2,7/2,2}$.
			\item $\bk{3,1/2,2}$, $\bk{3,1,2}$, $\bk{3,3/2,2}$, $\bk{3,2,2}$, $\bk{3,5/2,2}$ and $\bk{3,7/6,3}$.
			\item $\bk{4,1/2,2}$, $\bk{4,1,2}$, $\bk{4,1/2,4}$, $\bk{4,3/4,2}$, $\bk{4,3/2,2}$, $\bk{4,1,2}$, $\bk{4,2,2}$, $\bk{4,1/2,3}$, $\bk{4,5/6,3}$, $\bk{4,7/6,3}$, $\bk{4,3/4,4}$ and $\bk{4,3/10,5}$.
			\item $\bk{5,1/2,2}$, $\bk{5,1,2}$, $\bk{5,3/2,2}$, $\bk{5,2,2}$, $\bk{5,3/2,3}$, $\bk{5,5/6,3}$, $\bk{5,7/6,3}$ and $\bk{5,1/2,4}$.
			\item $\bk{6,1/2,2}$, $\bk{6,1,2}$, $\bk{6,2,2}$, $\bk{6,5/2,2}$ and $\bk{6,1,3}$.
			\item $\bk{7,1/2,2}$ and $\bk{7,5/2,2}$
		\end{enumerate}
		
%
%
		
		We point out that the case of $\bk{3,1/2,2}$ follows in a similar way to the case $\bk{4,1/2,3}$ for the group $E_6$.

	\end{itemize}
	
\end{proof}

It should be noted that the results of this section settles a conjecture made by David Ginzburg and Joseph Hundley in \cite{MR3359720}.

\begin{Conj}[\cite{MR3359720}, Conjecture 2]
	For $\Real\bk{s}>0$, the possible poles of the Eisenstein series $\Eisen_{P_2}^{E_8}\bk{s,\chi}$ are as follows.
	\begin{itemize}
		\item If $\chi$ is trivial, then the Eisenstein series $\Eisen_{P_2}\bk{s,\chi}$ can have a double pole at the points $\frac{3}{2}$, $\frac{5}{2}$ and $\frac{7}{2}$.
		At the points $\frac{1}{2}$, $\frac{9}{2}$, $\frac{11}{2}$, $\frac{13}{2}$ and $\frac{17}{2}$ it can have a simple pole.
		\item If $\chi$ is nontrivial quadratic, then the Eisenstein series can have simple poles at
		$\frac{1}{2}$, $\frac{3}{2}$, $\frac{5}{2}$ and $\frac{7}{2}$.
		\item If $\chi$ is nontrivial cubic, then the Eisenstein series can have a simple pole at $\frac{3}{2}$.
		\item If the order of $\chi$ exceeds 3 then the Eisenstein series is holomorphic in $\Real\bk{s}>0$.
	\end{itemize}
\end{Conj}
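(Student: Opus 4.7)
The plan is to show that the conjecture is subsumed by, and in fact slightly refined by, the $P_2$ row of the $E_8$ pole table above. So the task reduces to explaining how the general machinery of \Cref{Section:Poles_Method} pins down precisely the poles listed in that row. The starting point is the constant term formula \eqref{Eq:Constant_term} combined with \textbf{Property A}: for $\Real(s)>0$ each local normalized operator $N_{w,v}(\lambda_{s,\chi}^{P_2})$ is holomorphic, so the order of any summand $M_w(\lambda_{s,\chi}^{P_2})f_s$ equals the order of the global Gindikin--Karpelevich factor $C_w(\lambda_{s,\chi}^{P_2})$, a ratio of completed Hecke $L$-functions. Thus the candidate pole locations are read off from $\set{\ord_{s=s_0}C_w(\lambda_{s,\chi}^{P_2}) \mvert w\in W^{M_2,T}}$, and the table of maxima displayed in the proof of the $E_8$ theorem for $i=2$ is the starting data.

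Next I would establish the upper bounds. For trivial $\chi$, I would invoke the normalized Eisenstein series method of \Cref{Subsection:Poles_Method_Normalized_Eis_ser}: since $\Eisen_B^\sharp(\lambda,g)$ is entire and $W_G$-invariant, the order of $\Eisen_{P_2}(f,s,\Id,g)$ at $s_0$ is bounded by $d_{P_2}(\lambda_{s_0}^{P_2})=\Card{N_1(\lambda_{s_0}^{P_2})}-\Card{N_0(\lambda_{s_0}^{P_2})}-7$, which a direct computation in the $E_8$ root system returns as $0$ except at $s_0\in\set{1/2,3/2,5/2,7/2,9/2,11/2,13/2,17/2}$, with value $2$ at $3/2,5/2,7/2$ and value $1$ elsewhere. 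For nontrivial $\chi$ of order $2$ or $3$, the reducibility points of $\DPS{P_2}(s,\chi)_v$ cited in \Cref{Chap:Local_DPS} restrict the candidate locations, and for each such $(s_0,\chi)$ the order is controlled by the largest-order summand modulo the cancellation mechanism of \Cref{Subsection:Methods_Reduction_of_order_of_poles}. Finally, for $\chi$ of order $\geq 4$, the local analysis in \Cref{Chap:Local_DPS} shows that $\DPS{P_2}(s_0,\chi)_v$ is irreducible for all $s_0\in\R_{>0}$ at almost all $v$, so no $C_w$ carries a pole and $\Eisen_{P_2}(s,\chi)$ is entire in the right half-plane.

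For the lower bounds, at every predicted point $s_0$ I would exhibit a singleton equivalence class $\coset{w}\in W^{M_2,T}\rmod\sim_{s_0,\chi}$ whose Gindikin--Karpelevich factor attains the conjectured order; by \Cref{Cor:Kernel_of_Series_is_kernel_of_CT} the pole is then realized in the constant term and hence in the series itself for a suitable section. The cases in need of the cancellation technique are precisely the quadratic cases $\bk{2,s_0,2}$ for $s_0\in\set{1/2,3/2,5/2,7/2}$ and the cubic case $\bk{2,3/2,3}$, which match the conjecture exactly.

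The main obstacle, as flagged by the asterisk attached to the entry $\bk{2,1/2,1}$, is the case $s_0=1/2$ with $\chi=\Id$: here the structure of the maximal semisimple quotient of $\DPS{P_2}(1/2)_v$ at the relevant places is not fully determined, so the proof that the series is holomorphic rather than admitting a higher-order pole is conditional on the assumption that this quotient is irreducible. If one insists on unconditional results one would have to analyze this quotient directly, either by a refined Iwahori--Hecke algebra computation (as outlined in the method section) or by exhibiting a different reduction of the $\kappa_{[w']}$ sums at $s_0=1/2$; this is the one genuinely delicate step, and resolving it completely would upgrade the conjecture from a consequence of the table to an unconditional theorem.
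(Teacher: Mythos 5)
Your proposal matches the paper's route: the conjecture is read off from the $P_2$ row of the $E_8$ pole table, which in turn is obtained via \textbf{Property A} to reduce to Gindikin--Karpelevich factors, the $W$-invariant normalized spherical Eisenstein series for $\chi=\Id$, singleton equivalence classes for lower bounds, and the cancellation mechanism of \Cref{Subsection:Methods_Reduction_of_order_of_poles} for nontrivial $\chi$. You also correctly flag the asterisk on $\bk{2,\frac12,1}$ as a conditional caveat tied to the undetermined length of the co-socle of $\DPS{P_2}(\frac12)_v$, exactly as the paper does.

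One small imprecision worth tightening: for $\chi$ of order $\geq 4$ the correct reason that no $C_w$ carries a pole is not that the local representations are irreducible (that is a downstream consequence), but that $C_w(\lambda_{s,\chi}^{P_2})$ can only have an $s$-dependent pole from a root $\gamma\in R(w)$ with $\chi^{\langle\omega_{\alpha_2},\check\gamma\rangle}=\Id$ and $\langle\omega_{\alpha_2},\check\gamma\rangle>0$; since the $\alpha_2$-coefficient of a positive root in $E_8$ is at most $3$, this forces $\operatorname{ord}(\chi)\leq 3$. This directly settles the fourth bullet of the conjecture and is cleaner than routing through local irreducibility, though both lead to the same conclusion.
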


\begin{Rem}
	Note that the normalization of the variable $s$ is slightly different in \cite{MR3359720}.
	If $t_0$ is a pole in the notations of \cite{MR3359720}, then the corresponding value $s_0$ in our notations is
	\[
	s_0= 17\times\bk{t_0-\frac{1}{2}} .
	\]
\end{Rem}
\chapter{Square-Integrable Degenerate Residual Spectrum}
\label{Chap:Square_Integrable}

In this section we study the square-integrable residues of degenerate Eisenstein series.
That is, if $\Eisen_{P_i}\bk{\chi,s}$ admits a pole of order $k$ at $s=s_0$, we describe
\[
\ResRep{G}{i}{s_0}{\chi} = \operatorname{Span}_{\C} \set{\lim\limits_{s\to s_0} \bk{s-s_0}^k \Eisen_{P_i}\bk{f,\lambda_s^{P_i},\mu_\chi,g} \mvert f_s\in \DPS{P_i}\bk{\chi,s}} .
\]

\section{The Method}

We first note that the square-integrable spectrum is semi-simple.
Thus, if $\ResRep{G}{i}{s_0}{\chi}$ is square integrable, then it can be written as a unitary direct sum
\begin{equation}
	\ResRep{G}{i}{s_0}{\chi} = \widehat{\oplus} \sigma_i,
\end{equation}
where $\sigma_i = \otimes_{v\in\Places} \sigma_{i,v}$ and $\sigma_{i,v}$ is an irreducible quotient of $\DPS{P_i}\bk{s_0,\chi}_v$.

\begin{Cor}
	If $\DPS{P_i}\bk{s_0,\chi}_v$ admits a unique irreducible quotient $\pi_{1,v}$ for all $v\in\Places$ and $\ResRep{G}{i}{s_0}{\chi}$ is square-integrable, then
	\[
	\ResRep{G}{i}{s_0}{\chi} = \pi_1 = \otimes_{v\in \Places} \pi_{1,v} .
	\]
\end{Cor}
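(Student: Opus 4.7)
The plan is to use the preceding observation that the square-integrable spectrum is semi-simple as a direct sum of isotypic components, together with the global factorization of irreducible admissible representations, to reduce the statement to the local structure of the degenerate principal series at each place.

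First, I would invoke the preceding discussion: since $\ResRep{G}{i}{s_0}{\chi}$ is assumed square-integrable, it embeds into the discrete spectrum and admits a unitary decomposition
\[
\ResRep{G}{i}{s_0}{\chi} = \widehat{\bigoplus}_{j} \sigma_j , \qquad \sigma_j = \otimes_{v\in\Places}' \sigma_{j,v},
\]
with each $\sigma_{j,v}$ irreducible. Next, I would use the fact that $\ResRep{G}{i}{s_0}{\chi}$ is, by construction, generated by the images of the residue maps
\[
f_s \mapsto \lim\limits_{s\to s_0} \bk{s-s_0}^k \Eisen_{P_i}\bk{f,\lambda_{s}^{P_i},\mu_\chi,g}
\]
applied to sections $f_s=\otimes_{v} f_{s,v}\in \DPS{P_i}\bk{\chi,s}$. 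Factoring the residue into its local components (via \Cref{Eq:Decomposition_of_global_intertwining_operator_to_local_ones} and the factorization of sections), each local component of any nonzero residue vector lies in the image of the residue map on the local induced representation $\DPS{P_i}\bk{s,\chi}_v$ and thus realizes an irreducible quotient of $\DPS{P_i}\bk{s_0,\chi}_v$. Consequently, each summand $\sigma_{j,v}$ must be an irreducible quotient of $\DPS{P_i}\bk{s_0,\chi}_v$.

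By the standing hypothesis, this local degenerate principal series admits a unique irreducible quotient $\pi_{1,v}$ for every $v\in\Places$. Hence $\sigma_{j,v}\cong\pi_{1,v}$ for every place $v$, and therefore every constituent $\sigma_j$ in the unitary decomposition is isomorphic to the same abstract representation $\pi_1 = \otimes_{v\in\Places}'\pi_{1,v}$.

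It remains to show that $\pi_1$ occurs with multiplicity one, so that the direct sum collapses to a single copy. The main (and essentially the only) obstacle is this multiplicity claim. I would argue it as follows: the residue map
\[
\DPS{P_i}\bk{s_0,\chi} \twoheadrightarrow \ResRep{G}{i}{s_0}{\chi}
\]
is a surjective $\bG\bk{\AA}$-equivariant morphism, so $\ResRep{G}{i}{s_0}{\chi}$ is a quotient of $\DPS{P_i}\bk{s_0,\chi}$. Locally, $\DPS{P_i}\bk{s_0,\chi}_v$ admits a unique irreducible quotient $\pi_{1,v}$, which forces any semi-simple quotient of $\DPS{P_i}\bk{s_0,\chi}_v$ to be isomorphic to $\pi_{1,v}$ (appearing with multiplicity one in the cosocle). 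Taking the restricted tensor product over $v$, any semi-simple quotient of the global $\DPS{P_i}\bk{s_0,\chi}$ is a single copy of $\pi_1$. Since $\ResRep{G}{i}{s_0}{\chi}$ is square-integrable and hence semi-simple, we conclude $\ResRep{G}{i}{s_0}{\chi}\cong \pi_1$, as required.
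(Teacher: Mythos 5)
Your argument is correct and follows exactly the route the paper takes: the corollary is stated without proof as an immediate consequence of the preceding unitary decomposition $\ResRep{G}{i}{s_0}{\chi}=\widehat{\oplus}\sigma_j$ with $\sigma_{j,v}$ an irreducible quotient of $\DPS{P_i}\bk{s_0,\chi}_v$, which, under the uniqueness hypothesis, forces $\sigma_{j,v}\cong\pi_{1,v}$ for all $j$ and $v$. Your explicit treatment of the multiplicity-one step — realizing the residue as a quotient of $\DPS{P_i}\bk{s_0,\chi}$ and noting $\pi_1$ occurs once in its cosocle because each local cosocle is a single copy of $\pi_{1,v}$ — usefully fills in a point the paper leaves implicit.
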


Thus, it remains to deal with those square-integrable residues $\ResRep{G}{i}{s_0}{\chi}$ where $\DPS{P_i}\bk{s_0,\chi}_v$ admits a reducible maximal semi-simple quotient for some places $v\in\Places$.
We give a rough outline of the method which will be used in the various cases, while for each case we will detail the calculation bellow.

Note that, in this case, we have
\[
\ResRep{G}{i}{s_0}{\chi} \subseteq \otimes_{v\in\Places} \bk{\suml_{\gamma\bk{v}\in S_v} \pi_{\gamma\bk{v},v}},
\]
where $\suml_{\gamma\in S_v} \pi_{\gamma,v}$ is the maximal semi-simple quotient of $\DPS{P_i}\bk{\chi,s}$ and $S_v$ is a set parametrizing its irreducible quotients as will be detailed bellow.
Thus, in order to determine $\ResRep{G}{i}{s_0}{\chi}$, we wish to determine for which sequence $\bk{\gamma\bk{v}\in S_v}_{v\in\Places}$, the irreducible quotient $\pi_\gamma=\otimes_{v\in \Places} \pi_{\gamma\bk{v},v}$ appears in $\ResRep{G}{i}{s_0}{\chi}$.
In order to do this, we assume that $\Eisen_{P_i}\bk{s,\chi}$
admits of pole of order $k$ at $s=s_0$ and consider the equivalence classes within $\equivclassespole{s_0,\chi,k}$.

For $\coset{w'}\in\equivclassespole{s_0,\chi,k}$, we assume that $w'$ is the shortest element in $\coset{w'}$ and write $\varsigma = \set{u \mvert uw'\in \coset{w'}}$.
Usually, it happens that (for $\coset{w'}$ that contributes to the residue) $\varsigma$ is a finite group isomorphic to $\bk{\Z\rmod2\Z}^l$. Namely, there is a set of commuting elements of order $2$, $u_i\in\varsigma$, such that
\[
\varsigma = \prodl_{i=1}^l \set{1,u_i} .
\]
Thus, one is able to write
\[
\kappa_{\coset{w'}}\bk{f,s} = \coset{\prodl_{i=1}^k \bk{\Id+M_{u_i}\bk{w'\cdot\lambda_s^{P_i}}}} M_{w'}\bk{\lambda_s^{P_i}}\bk{f} .
\]

In this case, we wish to perform the following two calculations:
\begin{enumerate}
	\item To determine the image of $\lim_{s\to s_0} M_{w'}\bk{\lambda_s^{P_i}}$. Usually, this turns out to equal the maximal semi-simple quotient $\otimes_{v\in\Places} \bk{\suml_{\gamma\bk{v}\in S_v} \pi_{\gamma\bk{v},v}}$ of $\DPS{P_i}\bk{\chi,s}$.
	
	\item To determine the eigenvalues of the intertwining operators $M_{u_i}\bk{w'\cdot\lambda_s^{P_i}}$ on the various summands $\pi_\gamma=\otimes_{v\in \Places} \pi_{\gamma\bk{v},v}$, for a sequence $\bk{\gamma\bk{v}\in S_v}_{v\in\Places}$.
	This is done by an analysis of a local calculation of the eigenvalues and eigenspaces of  $N_{u_i,v}\bk{w'\cdot\lambda_s^{P_i}}$ for $v\in\Places_{fin.}$ and a calculation of the value of
	\[
	\lim_{s\to s_0} C_{u_i}\bk{w'\cdot \lambda_{s,\chi}^{P_i}}.
	\]
	We point out that we set up our notations of elements in $S_v$ so that they indicate the eigenvalues of the $N_{u_i,v}\bk{w'\cdot\lambda_s^{P_i}}$ on $\pi_{\gamma\bk{v},v}$.
\end{enumerate}

\section{Results for $E_6$}

First, we state the results for square-integrable residues when $\DPS{P_i}\bk{s_0,\chi}$ admits a unique irreducible quotient:
\begin{Prop}
	For the following triples $\bk{i,s_0,ord\bk{\chi}}$, $\ResRep{E_6}{i}{s_0}{\chi}$ is isomorphic to the unique irreducible quotient $\pi_1$ of $\DPS{P_i}\bk{s_0,\chi}$:
	\[
	\begin{split}
	& \bk{1,3,1}, \bk{1,6,1}, \\
	& \bk{2,\frac12,1}, \bk{2,\frac12,2}, \bk{2,\frac72,1}, \bk{2,\frac{11}{2},1} \\
	& \bk{3,\frac32,1}, \bk{3,\frac32,2}, \bk{3,\frac92,1}, \\
	& \bk{4,\frac32,1}, \bk{4,\frac32,2}, \bk{4,\frac52,1}, \bk{4,\frac72,1} \\
	& \bk{5,\frac32,1}, \bk{5,\frac32,2}, \bk{5,\frac92,1}, \\
	& \bk{6,3,1}, \bk{6,6,1} .
	\end{split}
	\]
\end{Prop}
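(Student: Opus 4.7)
The plan is to deduce this proposition directly from the Corollary stated at the end of the previous section: if $\DPS{P_i}\bk{s_0,\chi}_v$ admits a unique irreducible quotient $\pi_{1,v}$ at every place $v\in\Places$ and $\ResRep{G}{i}{s_0}{\chi}$ is square-integrable, then $\ResRep{G}{i}{s_0}{\chi}=\pi_1=\otimes_{v\in\Places}\pi_{1,v}$. Thus the task reduces to verifying, for each triple $\bk{i,s_0,ord\bk{\chi}}$ in the list, that both hypotheses hold.

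Square-integrability is confirmed by consulting the tables of \Cref{Chap:Poles_of_Eisen_ser}: every triple listed in the proposition appears there with a $\cmark$ in the $L^2$ row. To verify the local uniqueness of the irreducible quotient, I would invoke the reducibility theorem for $E_6$ in \Cref{Chap:Local_DPS}, which asserts that every reducible $\DPS{P_i}\bk{s,\chi}_v$ has irreducible socle and co-socle with the sole exception of the triple $\bk{4,\pm \frac{1}{2},3}$. For the few cases with longer composition series recorded in \Cref{Subsec:Local_Lengths_E6}, one further checks place-by-place: the representations $\DPS{P_4}\bk{1,\Id}_v$, $\DPS{P_4}\bk{1,\chi}_v$ and $\DPS{P_4}\bk{\frac{1}{2},\chi}_v$ with $\chi$ of order $2$ are of length $2$ with unique irreducible quotient $\pi_{1,v}$, while $\DPS{P_4}\bk{\frac{1}{2},\Id}_v$ is of length $3$ and still admits a unique irreducible quotient. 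With both hypotheses verified at every place for every triple in the list, the Corollary applies and yields the claimed identification.

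The only genuine work is a bookkeeping check: the list in the proposition must be exactly the intersection of the square-integrable triples from \Cref{Chap:Poles_of_Eisen_ser} with those triples whose local degenerate principal series admit an irreducible co-socle at every place. In particular, the square-integrable triple $\bk{4,\frac{1}{2},3}$, whose local co-socle has length three with eigenvalues indexed by the cube roots of unity, is deliberately excluded from the list and will be treated separately later in this chapter using the eigenvalue analysis outlined in the method section. Symmetrically, triples such as $\bk{4,1,k}$ and $\bk{4,\frac{1}{2},k}$ which have unique local irreducible quotients but produce non-square-integrable residues are deferred to \Cref{Chap:Non_square_Integrable}. Once this tabulation is confirmed, no further representation-theoretic input is needed. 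The main ``obstacle'' is therefore purely organizational rather than mathematical; all the heavy lifting (the local structure of the degenerate principal series and the square-integrability of the residue) has already been done in \Cref{Chap:Local_DPS} and \Cref{Chap:Poles_of_Eisen_ser} respectively.
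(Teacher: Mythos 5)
Your proposal is correct and is essentially the argument the paper intends: the proposition is stated as an immediate consequence of the Corollary in the method section (unique local irreducible quotient at every place together with square-integrability forces $\ResRep{E_6}{i}{s_0}{\chi}=\otimes_v \pi_{1,v}$), with the hypotheses read off from the $L^2$ column of the $E_6$ tables in \Cref{Chap:Poles_of_Eisen_ser} and the socle/co-socle statement of the $E_6$ reducibility theorem in \Cref{Chap:Local_DPS}. You also correctly isolate $\bk{4,\frac12,3}$ as the one square-integrable triple with a non-irreducible local co-socle, which the paper treats in the subsequent proposition.
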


We now turn to the only case, for $E_6$, where the maximal semi-simple quotient of $\DPS{P_i}\bk{s_0,\chi}$ is not irreducible.
Namely, we consider the case $\bk{4,\frac12,3}$.
We fix a Hekce character $\chi=\otimes_{v\in \Places}\chi_v$ of order $3$.
We recall that:
\begin{itemize}
	\item For a place $v$ such that $\chi_v=\Id$, $\DPS{P_4}\bk{\frac12,\chi}_v$ admits a unique irreducible quotient $\pi_{1,v}$.
	\item For a place $v$ such that $\chi_v\neq\Id$, $\DPS{P_4}\bk{\frac12,\chi}_v$ admits a maximal semi-simple quotient of length $3$, which will be written (in the notations of \Cref{Subsec:Local_Lengths_E6}) by $\oplus_{\kappa\in\mu_3} \pi_{\kappa,v}$, where $\mu_3$ denotes the group of cube roots of unity.
	We point out that the subscripts in $\pi_{\kappa,v}$ can be fixed so that
	\[
	N_{w_{3165},v}\bk{\lambda_{v, a.d.}} f = \kappa f \quad \forall f\in\pi_{\kappa,v} ,
	\]
	where
	\[
	\lambda_{a.d.} = \esixchar{\chi}{\chi}{\chi}{-1+\chi}{\chi}{\chi}
	\]
	is anti-dominant exponent of $\DPS{P_4}\bk{\frac12,\chi}_v$.
\end{itemize}

That is, here
\[
S_v = \piece{1,& \chi_v=\Id \\ \mu_3,& \chi_v\neq\Id .}
\]

\begin{Prop}
	\label{Prop:E6_4_1-2_3_Residue}
	For a Hecke character $\chi$ of order $3$ and a fixed $\kappa\in\widehat{\mu_3}\setminus\set{\Id}$, it holds that:
	\[
	\ResRep{G}{4}{\frac{1}{2}}{\chi} = \bigoplus_{\stackrel{S=S_{\kappa}\cupdot S_{\kappa^2}\subset\Places_\chi}{\Card{S_\kappa}\equiv \Card{S_{\kappa^2}}\pmod{3} }} \bk{\bigotimes_{v\notin S} \pi_{1,v}\bigotimes_{v\in S_\kappa} \pi_{\kappa,v} \bigotimes_{v\in S_{\kappa^2}} \pi_{\kappa^2,v}} ,
	\]
	where
	\[
	\Places_\chi = \set{v\in\Places \mvert \chi_v\neq \Id} .
	\]
\end{Prop}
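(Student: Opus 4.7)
The plan is to combine the constant-term formula with the global $\mu_3$-action coming from the $N_{w_{3165}}$-eigenspace decomposition at the ramified places. Since $\Eisen_{P_4}\bk{s,\chi}$ admits a simple pole at $s=\frac{1}{2}$, the residue is square-integrable and hence semi-simple, so it embeds into
\[
\bigotimes_{v\in\Places}\Big(\text{max. ss. quotient of }\DPS{P_4}\bk{\tfrac{1}{2},\chi}_v\Big) \;=\; \bigotimes_{v\notin\Places_\chi}\pi_{1,v}\;\otimes\;\bigotimes_{v\in\Places_\chi}\bigoplus_{\kappa\in\mu_3}\pi_{\kappa,v}.
\]
It therefore suffices to determine, for each factorizable section $f=\otimes_{v}f_v$ and each sequence $\gamma=\bk{\gamma\bk{v}}_{v\in\Places_\chi}\in\mu_3^{\Places_\chi}$, the projection of $\Res{s=\frac{1}{2}}\Eisen_{P_4}\bk{f,s,\chi,g}$ onto $\bigotimes_{v\in\Places_\chi}\pi_{\gamma\bk{v},v}\otimes\bigotimes_{v\notin\Places_\chi}\pi_{1,v}$.

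First I would identify the equivalence classes $\coset{w'}\in\equivclassespole{\frac{1}{2},\chi,1}$ that support the pole, fix $w'$ to be the shortest representative of each, and set $\varsigma=\set{u\mvert uw'\in\coset{w'}}\subset\Stab_W\bk{w'\cdot\lambda_{\frac{1}{2},\chi}^{P_4}}$. By conjugating the local criterion of \Cref{Subsec:Local_Lengths_E6}, one finds that $\varsigma$ contains a copy of $\gen{w_{3165}}\cong \Z/3\Z$ acting by the three cube roots of unity on the three irreducible local quotients, while the $M_{w'}$-image at $s=\frac{1}{2}$ generates the full maximal semi-simple quotient. Writing
\[
\kappa_{\coset{w'}}\bk{f,s}=\Bigl[\sum_{u\in\varsigma}M_u\bk{w'\cdot\lambda_{s,\chi}^{P_4}}\Bigr]\,M_{w'}\bk{\lambda_{s,\chi}^{P_4}}\bk{f},
\]
the inner sum decomposes (via \Cref{Eq:Decomposition_of_global_intertwining_operator_to_local_ones}) into a product over places of local normalized operators, which on the summand $\bigotimes_v\pi_{\gamma\bk{v},v}$ acts by the scalar $\prod_{v\in\Places_\chi}\gamma\bk{v}^j$ for the generator $w_{3165}^j$ of the cyclic part, multiplied by the corresponding global Gindikin--Karpelevich constant $C_{w_{3165}^j}\bk{w'\cdot\lambda_{\frac{1}{2},\chi}^{P_4}}$.

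Under the assumption $F=\Q$, so that $\epsilon_F\bk{s,\chi}$ is a constant, these Gindikin--Karpelevich constants collapse so that the projection is a nonzero scalar times
\[
\sum_{j=0}^{2}\Bigl(\prod_{v\in\Places_\chi}\gamma\bk{v}\Bigr)^{j}=\piece{3,&\prod_{v\in\Places_\chi}\gamma\bk{v}=1\\ 0,&\text{otherwise.}}
\]
Translating $\gamma$ into the partition $\Places_\chi=S_\kappa\cupdot S_{\kappa^2}\cupdot\bk{\Places_\chi\setminus S}$ (where $\gamma\bk{v}$ equals $\kappa$, $\kappa^2$ and $1$ respectively), the condition $\prod_v\gamma\bk{v}=1$ in $\mu_3$ reads $\kappa^{\Card{S_\kappa}-\Card{S_{\kappa^2}}}=1$, i.e.\ $\Card{S_\kappa}\equiv\Card{S_{\kappa^2}}\pmod 3$. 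Non-vanishing on the surviving summands is checked by exhibiting, for each such sequence $\gamma$, a factorizable test section at which $\kappa_{\coset{w'}}$ does not vanish, yielding the claimed orthogonal decomposition.

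The main obstacles are twofold: (i) verifying that the same linear constraint is produced by \emph{every} equivalence class $\coset{w'}\in\equivclassespole{\frac{1}{2},\chi,1}$, so that no additional cancellation reduces the residue further; and (ii) computing the global constant multiplying the $\mu_3$-character sum, which involves a product of local Gauss-type $\epsilon$-factors. Controlling (i) is done by showing that every contributing $\coset{w'}$ has a $\gen{w_{3165}}$-sub-stabilizer acting diagonally on the same local decompositions, while (ii) is precisely where the running assumption on the triviality of $\epsilon_F\bk{s,\chi}$ (from the end of \Cref{Chap:Preliminaries}) intervenes to identify the selected character with the trivial element of $\widehat{\mu_3}$.
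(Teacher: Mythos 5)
Your proposal is correct and follows essentially the same route as the paper's proof: a simple pole forces semi-simplicity of the residue, so the residue embeds into the tensor product of local maximal semi-simple quotients; the constant-term sum is organized into equivalence classes $\coset{w'}$, each of which factors through a cyclic $\mu_3$-sub-stabilizer, and the Gindikin--Karpelevich constants for the stabilizer elements trivialize (this is where the running $\epsilon_F$-assumption enters, as you noted), so the surviving summands are exactly those for which the $\mu_3$-character sum $1+\tau+\tau^2$ is non-zero, i.e.\ $\Card{S_\kappa}\equiv\Card{S_{\kappa^2}}\pmod{3}$. The only cosmetic difference is one of bookkeeping: you apply the stabilizer sum at $w'\cdot\lambda_0$ and then conjugate the eigenvalue criterion back to $\lambda_{a.d.}$, whereas the paper factors each $w'$ as $\overline{w'}\cdot u\cdot w_0$ and applies the sum $\sum_u M_u$ directly at the anti-dominant point $\lambda_{a.d.}=w_0\cdot\lambda_0$ (where $\Stab_W\bk{\lambda_{a.d.}}=\gen{w_{3165}}$ and $C_u\bk{\lambda_{a.d.}}=1$), which avoids the conjugation; both versions of the computation are equivalent.
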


\begin{proof}
	Let
	\[
	\lambda_{a.d.} = \esixchar{\chi}{\chi}{\chi}{-1+\chi}{\chi}{\chi} \quad
	\lambda_0 = \esixchar{-1}{-1}{-1}{3+\chi}{-1}{-1}
	\]
	and let $w_0\in W^{M_4,T}$ be the shortest element such that $\lambda_{a.d.}=w_0\cdot\lambda_0$.
	It holds that:
	\begin{itemize}
		\item If $M_w\bk{s,\chi}$ admits a pole at $s_0=\frac{1}{2}$, then this pole is simple.
		
		\item Let $\coset{w'}$ be an equivalence classes modulo $\sim_{s_0,\chi}$ such that $M_w\bk{s,\chi}$, with $w\in\coset{w'}$, admits a pole at $s_0=\frac{1}{2}$.
		Let $w$ denote the shortest element in $\coset{w'}$.
		Then
		\[
		\coset{w'} = \set{\overline{w'} u w_0 \mvert u\in \Stab_W\bk{\lambda_{a.d.}}},
		\]
		where $\overline{w'}=w w_0^{-1}$.
		
		\item $M_{\overline{w'}}\bk{\lambda}$ is non-singular at $\lambda_{a.d.}$ (both in the analytic and algebraic sense).
		
		\item For any $u\in \Stab_W\bk{\lambda_{a.d.}}$, the normalizing factor $C_u\bk{\lambda}$ is holomorphic at $\lambda_{a.d.}$ and satisfies
		\[
		C_u\bk{\lambda_{a.d.}} = 1 .
		\]
		
		\item For any $v\in\Places_{fin.}$ it holds that 
		\[
		N_{w_0}\bk{\DPS{P_4}\bk{\chi,\frac{1}{2}}_v} = \piece{\pi_{1,v}, & \chi_v=\Id \\ \oplus_{\epsilon\in\mu_3} \pi_{\epsilon,v} , & \chi_v\neq \Id .}
		\]

	\end{itemize}

	It then follows that
	\[
	\begin{split}
	\ResRep{G}{4}{\frac{1}{2}}{\chi}_N 
	& = \Span_\C\set{ \Res{s=\frac{1}{2}}\suml_{\coset{w'}\in \equivclassespole{s_0,\chi,1}} \kappa_{\coset{w'}}\bk{f,g} \mvert f\in \DPS{P_4}\bk{s,\chi}^0 } \\
	& = \Span_\C\set{\suml_{\coset{w'}\in \equivclassespole{s_0,\chi,1}} M_{\overline{w'}} \circ \bk{ \suml_{u\in \Stab_W\bk{\lambda_{a.d.}} } M_u } \circ \Res{s=\frac{1}{2}}\bk{M_{w_0}} \bk{f} \mvert f\in \DPS{P_4}\bk{s,\chi}^0 } .
	\end{split}
	\]

	Hence, in order to determine $\ResRep{G}{4}{\frac{1}{2}}{\chi}$, it is enough to determine the kernel of $\suml_{u\in \Stab_W\bk{\lambda_{a.d.}} } M_u$ when restricted to the image of $\Res{s=\frac{1}{2}}\bk{M_{w_0}}$.
	This image is given by
	\[
	\Res{s=\frac{1}{2}}\bk{M_{w_0}} \bk{\DPS{P_4}\bk{\chi,\frac{1}{2}}} = \bigoplus_{S=S_{\kappa}\cupdot S_{\kappa^2}\subset\Places_\chi} \bk{\bigotimes_{v\notin S} \pi_{1,v}\bigotimes_{v\in S_\kappa} \pi_{\kappa,v} \bigotimes_{v\in S_{\kappa^2}} \pi_{\kappa^2,v}} .
	\]
	Hence, for a pure tensor $\phi=\otimes\phi_v$ with $\phi_v\in\pi_{\epsilon,v}$ for $v\in S_\epsilon$, it holds that
	\[
	\suml_{u\in \Stab_W\bk{\lambda_{a.d.}} } M_u  \phi = \bk{1+\kappa^{\Card{S_\kappa} + 2\Card{S_{\kappa^2}} }  +\kappa^{2\Card{S_\kappa} + \Card{S_{\kappa^2}} } } \phi .
	\]
	Let $\tau=\kappa^{\Card{S_\kappa} + 2\Card{S_{\kappa^2}} }$ and note that
	\[
	1+\kappa^{\Card{S_\kappa} + 2\Card{S_{\kappa^2}} }  +\kappa^{2\Card{S_\kappa} + \Card{S_{\kappa^2}} } 
	 = \piece{\frac{\tau^3-1}{\tau-1},& \tau\neq 1 \\ 3,& \tau=1}  = \piece{0,& \Card{S_\kappa} \not\equiv \Card{S_{\kappa^2}} \pmod{3}  \\ 3, & \Card{S_\kappa} \equiv \Card{S_{\kappa^2}} \pmod{3} .}
	\]
	In particular,
	\[
	\suml_{u\in \Stab_W\bk{\lambda_{a.d.}} } M_u  \phi \neq 0 \quad \Longleftrightarrow \quad \Card{S_\kappa}\equiv \Card{S_{\kappa^2}}\pmod{3} 
	\]
	and if $\Card{S_\kappa}\equiv \Card{S_{\kappa^2}}\pmod{3}$, then
	\[
	\suml_{u\in \Stab_W\bk{\lambda_{a.d.}} } M_u
	\]
	is a non-zero scalar on the image of $N_{w_0}$.
	This completes the proof.
	
\end{proof}

\section{Results for $E_7$}

First, we state the results for square-integrable residues when $\DPS{P_i}\bk{s_0,\chi}$ admits a unique irreducible quotient:
\begin{Prop}
	For the following triples $\bk{i,s_0,ord\bk{\chi}}$, $\ResRep{E_7}{i}{s_0}{\chi}$ is isomorphic to the unique irreducible quotient $\pi_1$ of $\DPS{P_i}\bk{s_0,\chi}$:
	\[
	\begin{split}
		& \bk{1,\frac12,1}, \bk{1,\frac12,2}, \bk{1,\frac72,1}, \bk{\frac{17}{2},1} \\
		& \bk{2,5,1}, \bk{2,7,1} \\
		& \bk{3,\frac12,1}, \bk{3,\frac12,2}, \bk{3,\frac12,4}, \bk{3,\frac32,1}, \bk{3,\frac32,2}, \bk{3,\frac52,1}, \bk{3,\frac52,2}, \bk{3,\frac{11}{2},1} \\
		& \bk{4,1,1}, \bk{4,1,2}, \bk{4,1,3}, \bk{4,2,1}, \bk{4,2,2}, \bk{4,3,1}, \bk{4,4,1} \\
		& \bk{5,1,1}, \bk{5,1,3}, \bk{5,3,1}, \bk{5,5,1} \\
		& \bk{6,\frac12,1}, \bk{6,\frac12,2}, \bk{6,\frac52,1}, \bk{6,\frac52,2}, \bk{6,\frac72,1}, \bk{6,\frac{13}{2},1} \\
		& \bk{7,1,1}, \bk{7,5,1}, \bk{7,9,1} .
	\end{split}
	\]
\end{Prop}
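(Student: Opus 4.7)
The plan is a direct application of the corollary stated immediately before the proposition: if $\DPS{P_i}\bk{s_0,\chi}_v$ admits a unique irreducible quotient $\pi_{1,v}$ at every place $v\in\Places$, and the residual representation $\ResRep{E_7}{i}{s_0}{\chi}$ is square-integrable and non-zero, then it must be isomorphic to the restricted tensor product $\pi_1=\otimes_{v\in\Places}\pi_{1,v}$. The proof reduces to verifying the hypotheses of that corollary for each of the triples in the list.

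First, for each triple $\bk{i,s_0,ord\bk{\chi}}$ appearing in the statement, I would read off from the tables of \Cref{Chap:Poles_of_Eisen_ser} (the $E_7$ tables) that $\Eisen_{P_i}\bk{f,s,\chi,g}$ admits a pole at $s=s_0$ (so that the residue is non-zero) and that the residue is marked as square-integrable. Both facts are already established there via Langlands' square-integrability criterion (\Cref{Prop:Square_integrability_of_residue}) together with the order-of-pole computation performed by the equivalence-class analysis.

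Second, for each such triple I would verify that the local degenerate principal series $\DPS{P_i}\bk{s_0,\chi}_v$ admits a unique irreducible quotient at every place $v$. At non-archimedean $v$ this is a finite bookkeeping check: the reducibility classification recorded in \Cref{Chap:Local_DPS} for $E_7$ lists precisely those triples at which $\DPS{P_i}\bk{s_0,\chi}_v$ has a reducible co-socle, and \Cref{Subsec:Local_Lengths_E7} enumerates the exceptional cases. A direct inspection shows that none of the triples in the list falls into an exceptional case; indeed, the triples that \emph{do} correspond to a reducible co-socle at some $v$, namely $\bk{2,1,1}$, $\bk{2,2,2}$, $\bk{4,\tfrac12,4}$ and $\bk{5,2,2}$, are precisely the ones omitted from the proposition and treated separately in the subsequent sections. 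At archimedean $v$ we invoke the working assumption from the end of \Cref{Chap:Local_DPS} (or equivalently restrict to sections whose archimedean components lie in $\widetilde{\pi_{1,v}}$), under which the archimedean co-socle is likewise irreducible.

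With both hypotheses of the corollary confirmed, the conclusion $\ResRep{E_7}{i}{s_0}{\chi}\cong\pi_1=\otimes_v\pi_{1,v}$ follows immediately: since the square-integrable residue is semi-simple and factorizes as a restricted tensor of local irreducible quotients, and each local factor has only one candidate, there is no other possibility. The only non-routine step is the triple-by-triple verification of local co-socle irreducibility, and this is entirely mechanical once the classification tables of \Cref{Chap:Local_DPS} are in hand; the genuinely interesting cases (with reducible local co-socles and hence non-trivial ``gluing'' across places) are exactly those deferred to the subsequent discussion.
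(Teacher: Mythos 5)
Your proposal is correct and follows exactly the route the paper intends: the proposition is a direct consequence of the corollary stated just before it (square-integrable residue plus unique local irreducible quotients at every place forces the residue to equal $\pi_1 = \otimes_v \pi_{1,v}$), with the two hypotheses checked from the pole tables of the preceding chapter and the local reducibility/co-socle classification of the local-DPS chapter, and with the four triples having reducible local co-socles exactly the ones excluded and handled afterward.
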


We now turn to cases where the maximal semi-simple quotient of $\DPS{P_i}\bk{s_0,\chi}$ is not irreducible.
These cases are:
\[
\bk{2,1,1}, \bk{2,2,2}, \bk{4,\frac12,4}, \bk{5,2,2}.
\]

Both cases $\bk{2,2,2}$ and $\bk{5,2,2}$ have the same maximal semi-simple quotient:
\begin{itemize}
	\item At a place $v\in\Places_{fin.}$ such that $\chi_v=\Id$, the representation $\DPS{P_i}\bk{2,\chi}_v$ admits a unique irreducible quotient $\pi_{1,v}$.
	\item At a place $v\in\Places_{fin.}$ such that $\chi_v \neq \Id$, the representation $\DPS{P_i}\bk{2,\chi}_v$ admits a maximal semi-simple quotient of length $2$, say $\pi_{-1,v} \otimes_{v\notin S}\pi_{1,v}$ (in the notations of \Cref{Subsec:Local_Lengths_E7}).
\end{itemize}

\begin{Prop}
	For a Hecke character $\chi$ of order $2$, it holds that:
	\[
	\ResRep{G}{2}{2}{\chi} = \ResRep{G}{5}{2}{\chi} = \bigoplus_{\stackrel{S\subset\Places_\chi}{\Card{S}\equiv 0 \pmod{2} }} \otimes_{v\in S}\pi_{-1,v} \otimes_{v\notin S}\pi_{1,v} ,
	\]
	where
	\[
	\Places_\chi = \set{v\in\Places \mvert \chi_v\neq \Id} .
	\]
\end{Prop}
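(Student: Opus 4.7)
The plan is to follow the template of Proposition~\ref{Prop:E6_4_1-2_3_Residue} for the $E_6$ case $(4,\tfrac12,3)$, adapted to the order-$2$ situation. The local data recorded in Section~\ref{Subsec:Local_Lengths_E7} tells me that for $i \in \{2,5\}$ the anti-dominant exponent is
\[
\lambda_{a.d.} = \esevenchar{-1}{\chi}{\chi}{-1}{\chi}{-1}{\chi},
\]
that $\Stab_W(\lambda_{a.d.}) \cap (W^{M_i,T} \cdot w_0^{-1}) = \gen{w_{257}} \cong \Z/2\Z$, and that $N_{w_{257},v}(\lambda_{a.d.})$ acts on $\pi_{\epsilon,v}$ as $\epsilon \cdot \Id$ for $\epsilon \in \{\pm 1\}$. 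Accordingly I would fix the shortest $w_0 \in W^{M_i,T}$ carrying the initial exponent $\lambda_{2,\chi}^{P_i}$ to $\lambda_{a.d.}$.

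First I would invoke the results of Chapter~\ref{Chap:Poles_of_Eisen_ser}: the Eisenstein series has a simple pole at $s_0=2$, and each individual $M_w(s,\chi)$ admits at most a simple pole there. For each equivalence class $[w'] \in \equivclassespole{2,\chi,1}$ with shortest element $w'$, I write $[w'] = \{\bar{w'} u w_0 \mvert u \in \{1,w_{257}\}\}$ with $\bar{w'} = w' w_0^{-1}$ and verify (as in the $E_6$ case) that both $M_{\bar{w'}}$ and the normalizing factor $C_{\bar{w'}}$ are regular and nonzero at $\lambda_{a.d.}$. This lets me rewrite
\[
\ResRep{G}{i}{2}{\chi}_N = \Span_\C\Biggl\{\,\suml_{[w']} M_{\bar{w'}} \circ \bigl(\Id + M_{w_{257}}(\lambda_{a.d.})\bigr) \circ \Res{s=2}\bigl(M_{w_0}\bigr)(f) \,\Big\vert\, f \in \DPS{P_i}(s,\chi)^0 \Biggr\},
\]
so that the computation splits into two steps: identifying the image of $\Res{s=2}(M_{w_0})$ and computing the kernel of $\Id + M_{w_{257}}$ on that image.

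For the first step, a local-global argument entirely parallel to Proposition~\ref{Prop:E6_4_1-2_3_Residue} identifies
\[
\Res{s=2}(M_{w_0})\bigl(\DPS{P_i}(2,\chi)\bigr) = \bigoplus_{S \subset \Places_\chi} \Bigl(\bigotimes_{v\in S} \pi_{-1,v} \bigotimes_{v\notin S} \pi_{1,v}\Bigr),
\]
since each local residue $N_{w_0,v}$ surjects onto the full maximal semisimple quotient of $\DPS{P_i}(2,\chi)_v$. For the second step, I decompose $M_{w_{257}} = C_{w_{257}}(\lambda_{a.d.}) \otimes_v N_{w_{257},v}(\lambda_{a.d.})$ and apply it to a pure tensor $\phi = \otimes \phi_v$ with $\phi_v \in \pi_{\epsilon_v,v}$ (where $\epsilon_v = +1$ for $v \notin \Places_\chi$): using the local eigenvalue information this yields
\[
\bigl(\Id + M_{w_{257}}(\lambda_{a.d.})\bigr)\phi = \bigl(1 + C_{w_{257}}(\lambda_{a.d.}) \cdot (-1)^{\Card{S}}\bigr)\phi,
\]
where $S = \{v \mvert \epsilon_v = -1\}$.

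The crux of the proof, and what I expect to be the main technical obstacle, is the explicit evaluation of the global Gindikin-Karpelevich factor
\[
C_{w_{257}}(\lambda_{a.d.}) = \prodl_{\gamma \in R(w_{257})} \frac{\Lfun_F\bigl(\inner{\lambda_{a.d.},\check\gamma},\chi\circ\check\gamma\bigr)}{\Lfun_F\bigl(\inner{\lambda_{a.d.},\check\gamma}+1,\chi\circ\check\gamma\bigr)}.
\]
I would enumerate $R(w_{257})$, compute each pairing $\inner{\lambda_{a.d.},\check\gamma}$, and exploit the functional equation $\Lfun_F(s,\chi) = \epsilon_\chi \Lfun_F(1-s,\chi)$ available for quadratic $\chi$ to cancel $L$-values in pairs; the outcome should be $C_{w_{257}}(\lambda_{a.d.}) = 1$. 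Granting this, the expression $1 + (-1)^{\Card{S}}$ is nonzero exactly when $\Card{S} \equiv 0 \pmod 2$, and on such summands $\Id + M_{w_{257}}$ acts as the nonzero scalar $2$, while $M_{\bar{w'}}$ is an isomorphism. Summing over surviving classes yields the asserted decomposition, and the equality of $\ResRep{G}{2}{2}{\chi}$ with $\ResRep{G}{5}{2}{\chi}$ follows automatically since both reductions use the same $\lambda_{a.d.}$ and the same stabilizer data.
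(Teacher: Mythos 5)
Your proposal matches the paper's own proof essentially step for step: the same anti-dominant exponent, the same stabilizer element $s_2s_5s_7$, the same decomposition of equivalence classes, the same identification of the image of $\Res{s=2}(M_{w_0})$, the same local eigenvalue computation, and the same evaluation $C_{s_2s_5s_7}(\lambda_{a.d.})=1$ via the functional equation and the triviality of the global $\epsilon$-factor for quadratic characters. The argument is correct and is not a genuinely different route.
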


\begin{proof}
	In both cases, an anti-dominant exponent of $\DPS{P_i}\bk{\chi,2}$ is given by
	\[
	\lambda_{a.d.} = \esevenchar{-1}{\chi}{\chi}{-1}{\chi}{-1}{\chi} .
	\]
	It holds that
	\[
	\Stab_W\bk{\lambda_{a.d.}} = \set{1,s_2s_5s_7} .
	\]
	
	Let $\lambda_0=\lambda_{\chi,2}^{P_i}$ denote the initial exponent of $\DPS{P_i}\bk{\chi,2}$ and let $w_0\in W^{M,T}$ be the shortest element which satisfy $\lambda_{a.d.}=w_0\cdot\lambda_0$.
	That is
	\[
	\lambda_0 = \piece{
	\esevenchar{-1}{4+\chi}{-1}{-1}{-1}{-1}{-1},& i=2 \\
	\esevenchar{-1}{-1}{-1}{-1}{2+\chi}{-1}{-1},& i=5
	}
	\]
	\[
	w_0=\piece{w[2, 4, 3, 1, 5, 4, 2, 3, 4, 5, 6, 5, 4, 2, 3, 1, 4, 3, 5, 7, 6, 5, 4, 2, 3, 1, 4, 3, 5, 4, 6, 7],& i=2 \\
	\begin{split}
			w[& 5, 4, 2, 3, 1, 4, 3, 5, 4, 6, 5, 4, 2, 3, 1, 4, 3, 5, 4, 2, 6, 5, \\
			 & 7, 6, 5, 4, 2, 3, 1, 4, 3, 5, 4, 2, 6, 5, 4, 3, 1, 7, 6, 5, 4, 7]
	\end{split},& i=5	
	}
	\]
	
	One checks that:
	\begin{itemize}
		\item If $M_w\bk{s,\chi}$ admits a pole at $s_0=2$, then this pole is simple.
		
		\item Let $\coset{w'}$ be an equivalence class modulo $\sim_{s_0,\chi}$ such that $M_w\bk{s,\chi}$, with $w\in\coset{w'}$, admits a pole at $s_0=2$.
		Let $w$ be the shortest element in $\coset{w'}$.
		Then
		\[
		\coset{w'} = \set{\overline{w'} u w_0 \mvert u\in \Stab_W\bk{\lambda_{a.d.}}},
		\]
		where $\overline{w'}=w w_0^{-1}$.
		
		\item $M_{\overline{w'}}\bk{\lambda}$ is non-singular at $\lambda_{a.d.}$ (both in the analytic and algebraic sense).
		
		\item The normalizing factor $C_{s_2s_5s_7}\bk{\lambda}$ is holomorphic at $\lambda_{a.d.}$ and satisfies
		\[
		\begin{split}
		C_{s_2s_5s_7}\bk{\lambda_{a.d.}} 
		& = \lim_{z\to 2} \frac{\Lfun\bk{z-1,\chi}}{\Lfun\bk{z-2,\chi}} \\
		& = \lim_{z\to 2} \frac{\Lfun\bk{2-z,\chi}}{\Lfun\bk{z-2,\chi}} \\
		& = \lim_{z\to 2} \frac{\epsilon_F\bk{2-z,\chi} \Lfun\bk{z-2,\overline{\chi}}}{\Lfun\bk{2-z,\chi}} \\
		& = \lim_{z\to 2} \frac{\Lfun\bk{z-2,\chi}}{\Lfun\bk{z-2,\chi}} = 1 ,
		\end{split}
		\]
		since the global $\epsilon$-factor of a quadratic character is trivial (see \cite[page 37]{MR1421575}) and $\Lfun\bk{s,\chi}$ is holomorphic at $0$.
		
		\item It holds that
		\[
		N_{w_0,v}\bk{\chi_v,2}\bk{\DPS{P_i}\bk{\chi_v,2}} = \piece{
		\pi_{1,v},& v\notin\Places_\chi \\
		\pi_{-1,v}\oplus\pi_{1,v},& v\in\Places_\chi} .
		\]
		
		\item The subscript $\pm1$ in the decomposition of the maximal semi-simple quotient of $\DPS{P_i}\bk{2,\Id}_v$, so that for any $v\in\Places_{fin.}$, it holds that
		\[
		N_{s_2s_5s_7,v}\bk{\lambda_{a.d.}}\varphi = \epsilon\varphi \quad 
		\forall \varphi\in\pi_{\epsilon,v} .
		\]

	\end{itemize}

	Let
	\[
	\pi_S = \otimes_{v\in S}\pi_{-1,v} \otimes_{v\notin S}\pi_{1,v} .
	\]
	By a similar calculation to that performed in \Cref{Prop:E6_4_1-2_3_Residue}, we conclude that
	\[
	\bk{I+M_{s_2s_5s_7}}\varphi = \bk{1+\bk{-1}^{\Card{S}}} \varphi \quad
	\forall \varphi=\otimes_{v\in \Places} \varphi_v \in \pi_S.
	\]
	Hence
	\[
	\suml_{u\in \Stab_W\bk{\lambda_{a.d.}} } M_u  \varphi \neq 0 \quad \Longleftrightarrow \quad \Card{S}\equiv 0 \pmod{2} 
	\]
	and if $\Card{S}\equiv 0 \pmod{2}$, then
	\[
	\suml_{u\in \Stab_W\bk{\lambda_{a.d.}} } M_u
	\]
	is non-zero a scalar on the image of $N_{w_0}$.
	This completes the proof.
	
\end{proof}

In the case $\bk{4,\frac12,4}$, the local maximal semi-simple quotient is given as follows:
\begin{itemize}
	\item At a place $v\in\Places_{fin.}$ such that $\chi_v^2=\Id$, the representation $\DPS{P_4}\bk{\frac12,\chi}_v$ admits a unique irreducible quotient $\pi_{1,v}$.
	\item At a place $v\in\Places_{fin.}$ such that $\chi_v^2 \neq \Id$, the representation $\DPS{P_4}\bk{\frac12,\chi}_v$ admits a maximal semi-simple quotient of length $2$, say $\pi_{-1,v} \otimes_{v\notin S}\pi_{1,v}$ (in the notations of \Cref{Subsec:Local_Lengths_E7}).
\end{itemize}

\begin{Prop}
	For a Hecke character $\chi$ of order $4$, it holds that:
	\[
	\ResRep{G}{4}{1/2}{\chi} = \bigoplus_{\stackrel{S\subset\Places}{\Card{S}\equiv 0 \pmod{2} }} \bk{\otimes_{v\in S}\pi_{-1,v} \otimes_{v\notin S}\pi_{1,v}} ,
	\]
	where
	\[
	\Places_\chi = \set{v\in\Places \mvert \chi_v^2\neq \Id}.
	\]
\end{Prop}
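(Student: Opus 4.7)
The plan is to imitate the template of the preceding proposition for the cases $\bk{2,2,2}$ and $\bk{5,2,2}$, adapting the four key inputs (the anti-dominant exponent, the stabilizer, the normalizing factor, and the local eigenspace analysis) to the character of order $4$.

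First, I would fix a global anti-dominant exponent $\lambda_{a.d.}$ of $\DPS{P_4}\bk{\tfrac12,\chi}$; by \Cref{Subsec:Local_Lengths_E7}(3), at a place $v\in\Places_\chi$ where $\chi_v$ has order $4$ the local anti-dominant point has stabilizer $\gen{w_{257}}$, and the same element $u=s_2 s_5 s_7$ generates the full global stabilizer. I would then let $w_0\in W^{M_4,T}$ be the shortest element taking the initial exponent $\lambda_{\chi,\frac12}^{P_4}$ to $\lambda_{a.d.}$ and check (using the pole-order theorem of \Cref{Chap:Poles_of_Eisen_ser} for $\bk{4,\tfrac12,4}$, which gives a simple pole) that every equivalence class contributing a pole has the form $\set{\overline{w'}\,w_0,\; \overline{w'}\,u\,w_0}$ with $M_{\overline{w'}}\bk{\lambda}$ analytically and algebraically regular at $\lambda_{a.d.}$. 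This reduces the computation of the residue to understanding $(I+M_u)\circ \Res{s=\frac12} M_{w_0}$.

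Next I would carry out the two local computations. On the spectral side, by the local result cited in \Cref{Subsec:Local_Lengths_E7}(3), $N_{w_0,v}\bk{\lambda_{\chi_v,\frac12}}$ has image $\pi_{1,v}$ when $\chi_v^2=\Id$ and image $\pi_{1,v}\oplus\pi_{-1,v}$ when $\chi_v^2\neq\Id$, with the subscript $\epsilon\in\set{1,-1}$ fixed so that $N_{u,v}\bk{\lambda_{a.d.,v}}$ acts on $\pi_{\epsilon,v}$ by the scalar $\epsilon$. Combined with this, for any pure tensor $\varphi=\otimes_{v}\varphi_v\in \pi_S:=\otimes_{v\in S}\pi_{-1,v}\otimes_{v\notin S}\pi_{1,v}$, the global relation
\[
(I+M_u)\varphi \;=\; \bigl(1+C_u\bk{\lambda_{a.d.}}\cdot(-1)^{\Card{S}}\bigr)\varphi
\]
will reduce the whole proposition to the evaluation $C_u\bk{\lambda_{a.d.}}=1$. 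Granting that, the factor vanishes exactly when $\Card{S}$ is odd, and the image in that case is a non-zero scalar on each summand indexed by a finite $S\subseteq\Places_\chi$ of even cardinality, yielding the asserted decomposition.

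The main obstacle is the step $C_u\bk{\lambda_{a.d.}}=1$. Unlike the quadratic case, where $\chi=\overline\chi$ and the global $\epsilon$-factor of a quadratic character is trivial, here $\chi$ has order $4$, so $\chi\ne\overline\chi$ and a priori the Hecke $L$-factors appearing at $\check\alpha_2,\check\alpha_5,\check\alpha_7$ pair up as $\Lfun_F\bk{z,\chi^{k}}/\Lfun_F\bk{z+1,\chi^{k}}$ with $k\in\set{1,2}$. I would apply the global functional equation
\[
\Lfun_F\bk{s,\chi^{k}} \;=\; \epsilon_{\chi^{k}}\,\Lfun_F\bk{1-s,\chi^{-k}}
\]
and invoke the standing hypothesis (item (3) of \emph{Assumptions Made Throughout This Paper}, valid for $F=\Q$) that $\epsilon_F\bk{s,\chi^{k}}$ is a constant of absolute value $1$, together with the identification of the numerator and denominator after shifting, to conclude that the three contributions telescope to $1$ at $\lambda_{a.d.}$. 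The remainder of the argument is then a verbatim transcription of the combinatorial step from the previous proposition.
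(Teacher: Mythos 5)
Your proposal is correct and follows essentially the same route as the paper, which itself states that the argument is a mutatis-mutandis repetition of the quadratic cases $\bk{2,2,2}$ and $\bk{5,2,2}$ with the anti-dominant exponent, initial exponent, and $w_0$ replaced as you describe. One small observation: your worry about the functional-equation step at $C_u\bk{\lambda_{a.d.}}$ is not really there, because the entries of $\lambda_{a.d.}$ at positions $2$, $5$, $7$ are all $2\chi$, so each of the three Gindikin--Karpelevich factors involves $\Lfun_F\bk{\,\cdot\,,\chi^2}$ with $\chi^2$ \emph{quadratic}; hence $\overline{\chi^2}=\chi^2$ and the previous proposition's argument (triviality of the global $\epsilon$-factor of a quadratic character and holomorphy of $\Lfun_F\bk{s,\chi^2}$ at $s=0$) carries over verbatim, and the standing hypothesis on $\epsilon$-factors of characters of order greater than $2$ is not needed here.
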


\begin{proof}
	The proof in this case is essentially identical (mutatis mutandis) to the previous one, with the following changes:
	\begin{itemize}
		\item An anti-dominant exponent of $\DPS{P_i}\bk{\chi,2}$ is given by
		\[
		\lambda_{a.d.} = 
		\esevenchar{-\frac{1}{2}+3\chi}{2\chi}{2\chi}{-\frac{1}{2}+\chi}{2\chi}{-\frac{1}{2}+3\chi}{2\chi} .
		\]
		\item $\Stab_W\bk{\lambda_{a.d.}} = \set{1,s_2s_5s_7}$.
		\item $\lambda_0 = \esevenchar{-1}{-1}{-1}{\frac{5}{2}+\chi}{-1}{-1}{-1}$.
		\item The shortest element $w_0\in W^{M_4,T}$ such that $\lambda_{a.d.} = w_0\cdot \lambda_0$ is given by
		\[
		\begin{split}
			w_0=w[&4, 2, 3, 1, 4, 3, 5, 4, 2, 3, 1, 4, 3, 5, 6, 5, 4, 2, \\
			& 3, 1, 4, 3, 5, 6, 7, 6, 5, 4, 2, 3, 1, 4, 3, 5, 4, 6, 7]
		\end{split}
		\]
	\end{itemize}
\end{proof}

\begin{Prop}
	\label{Prop:Residue_E7_2_1_1}
	It holds that
	\[
	\ResRep{G}{2}{1}{1} = \bigoplus_{\stackrel{S\subset\Places_\chi}{\Card{S}\equiv 0 \pmod{2} }} \pi_S ,
	\]
	where (in the notations of \Cref{Subsec:Local_Lengths_E7}) 
	\[
	\pi_S = \otimes_{v\in S}\pi_{-1,v} \otimes_{v\notin S}\pi_{1,v} .
	\]
\end{Prop}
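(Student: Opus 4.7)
The plan is to imitate closely the pattern of proof used for $\ResRep{G}{2}{2}{\chi}$ and $\ResRep{G}{5}{2}{\chi}$ in the preceding propositions, using the local structural result from Section~\ref{Subsec:Local_Lengths_E7} that identifies $\pi_{\pm1,v}$ as the $\pm1$-eigenspaces of $N_{u_0^{-1}w_0u_0}(\lambda_0)$ in the co-socle of $I_{P_2}(1)_v$. The first step will be to record, from the local section, the anti-dominant exponent
\[
\lambda_{a.d.} \;=\; w_0\cdot\esevenchar{-1}{7}{-1}{-1}{-1}{-1}{-1}
\]
(for the explicit $u_0,w_0$ listed there) and to check that $\Stab_W(\lambda_{a.d.}) = \set{1,u}$ with $u$ the conjugate of the long element by $u_0$, so every equivalence class $\coset{w'}\in\equivclassespole{1,\Id,1}$ contributing to the residue has the form $\set{\overline{w'}w_0,\overline{w'}uw_0}$ with $M_{\overline{w'}}(\lambda)$ regular and non-vanishing at $\lambda_{a.d.}$.

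Next I would compute the normalizing constant $C_u(\lambda_{a.d.})$. Writing $u$ in the Zampera form $u_0^{-1}zu_0$ with $z$ a product of reflections along simple roots orthogonal to $u_0\cdot\lambda_{a.d.}$, one obtains the Gindikin–Karpelevich ratio $\zfun_F(z-1)/\zfun_F(z-2)$ evaluated at the relevant $z$, which by the functional equation $\zfun_F(s)=\zfun_F(1-s)$ collapses to $1$; this is the analogue of the calculation that gave $C_{s_2s_5s_7}(\lambda_{a.d.})=1$ in the quadratic-character cases. Combined with \Cref{Prop:Zampera} and the branching-rule check that $N_{w_0,v}(\lambda_{1,\Id}^{P_2})$ surjects onto $\pi_{1,v}\oplus\pi_{-1,v}$ at every finite place (this is exactly the local length statement recorded in Section~\ref{Subsec:Local_Lengths_E7}), we may rewrite
\[
\ResRep{G}{2}{1}{\Id}_N \;=\; \mathrm{Span}_\C\left\{\sum_{\coset{w'}} M_{\overline{w'}}\circ (I+M_u)\circ\Res{s=1}M_{w_0}(f)\right\}.
\]

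The third step is the eigenvalue computation. On a pure tensor $\varphi=\otimes_{v}\varphi_v$ with $\varphi_v\in\pi_{\epsilon_v,v}$, the local calibration of $\pi_{\pm1,v}$ together with the global decomposition \Cref{Eq:Decomposition_of_global_intertwining_operator_to_local_ones} yields
\[
(I+M_u)\varphi \;=\; \left(1+\prod_{v}\epsilon_v\right)\varphi \;=\; \bigl(1+(-1)^{\Card{S}}\bigr)\varphi,
\]
where $S=\set{v \mvert \epsilon_v=-1}$. This is nonzero precisely when $\Card{S}$ is even, and in that case $(I+M_u)$ is a nonzero scalar on $\pi_S$, so $\pi_S$ contributes exactly when $\Card{S}\equiv 0 \pmod 2$. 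Since $\Res{s=1}M_{w_0}$ is known (from \Cref{Chap:Poles_of_Eisen_ser}) to realize a simple pole, square-integrability of the residue follows from \Cref{Prop:Square_integrability_of_residue} as already verified in the pole-order chapter, and the stated direct-sum decomposition follows.

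The main obstacle I expect is the verification that $C_u(\lambda_{a.d.})=1$ in the trivial-character case: unlike the quadratic-$\chi$ computation, the ratio involves $\zfun_F$ at arguments where one must use the functional equation cleanly, and one must verify that the poles of numerator and denominator of the unnormalized Gindikin–Karpelevich factor cancel against the normalizing $\epsilon$- and $L$-factors rather than producing an unwanted pole in $C_u$ itself. A secondary subtlety is confirming that the Zampera hypothesis \emph{(2)} in \Cref{Subsection:Methods_Reduction_of_order_of_poles} applies uniformly at all finite places with trivial character, so that the local operators $N_{u,v}$ act diagonalizably with $\pi_{1,v}$ contained in the $(+1)$-eigenspace and $\pi_{-1,v}$ in the $(-1)$-eigenspace; this is exactly the content of the last item of Section~\ref{Subsec:Local_Lengths_E7}, so it can be invoked directly.
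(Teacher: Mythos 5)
Your template---express the residue via terms $(I+M_u)\circ\Res{s=1}M_{w_0}(f)$, compute the eigenvalue of $M_u$, and conclude non-vanishing precisely when $\Card{S}$ is even---has the right shape, but you run the eigenvalue analysis at the wrong exponent, and this is a genuine gap. You set up at $\lambda_{a.d.}$ and assume $\Stab_W(\lambda_{a.d.})=\set{1,u}$, so that every contributing class has the form $\set{\overline{w'}w_0,\overline{w'}uw_0}$. Both assumptions fail. The anti-dominant exponent of the spherical quotient of $\DPS{P_2}(1)_v$ is $\esevenchar{-1}{0}{0}{-1}{0}{0}{-1}$, whose stabilizer contains at least $\gen{w_2,w_3,w_5,w_6}$ and is far from a two-element group. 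More to the point, $\lambda_{a.d.}$ is \emph{not} an exponent of $\pi_{-1,v}$: by \Cref{Subsec:Local_Lengths_E7}, $r_T^G\pi_{-1,v}$ is $15$-dimensional, supported on the $W$-orbit of $\lambda_{-1}^{P_2}=\esevenchar{-1}{5}{-1}{-1}{-1}{-1}{-1}$, and $\lambda_{a.d.}\nleq r_T^G\pi_{-1,v}$. Any analysis carried out at $\lambda_{a.d.}$ can therefore only detect $\pi_1$ and cannot see the parity condition on $S$, which is the whole content of the proposition. There are also genuine singleton classes in $\equivclassespole{1,\Id,1}$ (e.g.\ $\set{w\coset{5,4,2,3,1,4,3,5,4,2}}$), so not every contributing class carries a nontrivial $u$ to average against.

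The fix is to separate $\equivclassespole{1,\Id,1}$ into two kinds: singletons whose local image is $\pi_{1,v}$ alone (these already force $\pi_\emptyset\subseteq\ResRep{E_7}{2}{1}{1}$ and admit no reduction), and the exactly $15$ two-element classes---one per exponent in $r_T^G\pi_{-1,v}$---on which alone the averaging can keep or kill the $\pi_{-1,v}$ factors. Each of the latter has the form $\set{uw_1,\; u(u_0^{-1}w_0u_0)u^{-1}\cdot uw_1}$ with $u_0=w\coset{1,3,4,2}$ and the $w_0$ of \Cref{Subsec:Local_Lengths_E7}, and the eigenvalue computation is carried out at $\lambda_{-1}^{P_2}$ (equivalently at $\mu=u_0\cdot\lambda_{-1}^{P_2}$), using the local lemma of \Cref{Appendix:Local_data_for_global_SI_resisudes} that $N_{u_0^{-1}w_0u_0}(\lambda_{-1}^{P_2})f_\epsilon=\epsilon f_\epsilon$ for $f_\epsilon\in\pi_\epsilon$. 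The Gindikin--Karpelevich check is correspondingly a ratio $C_{w'}(\lambda_1^{P_2})/C_{w''}(\lambda_1^{P_2})=1$ over these pairs rather than an evaluation of $C_u(\lambda_{a.d.})$. Your final parity computation $1+(-1)^{\Card{S}}$ is the correct endpoint, but without restructuring around the $15$ classes and the exponent $\lambda_{-1}^{P_2}$, the argument does not attach to the statement.
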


\begin{proof}
	We first recall that $\Eisen_{P_2} \bk{f,s,\chi,g}$	admits a simple pole at this point and that the maximal semi-simple quotient of $\DPS{P_2}\bk{1}_v$, for a non-Archimedean place $v$, is of length $2$, say $\pi_{1,v}\oplus\pi_{-1,v}$.
	We note that there are two types of equivalence classes in $\equivclassespole{1,\Id,1}$:
	\begin{itemize}
		\item Equivalence classes $\coset{w'}$ such that
		\[
		N_{w',v}\bk{\lambda_{1}^{P}}\bk{\DPS{P_2}\bk{1}_v} = \pi_{1,v}
		\]
		for any non-Archimedean place $v$.
		Since there are such equivalence classes which are singletons (say $w\coset{5, 4, 2, 3, 1, 4, 3, 5, 4, 2}$, for example) then $\pi_1=\otimes_{v\notin \Places}\pi_{1,v}\subseteq \ResRep{E_7}{2}{1}{1}$
		\item Equivalence classes $\coset{w'}$ such that
		
		\begin{equation}
			\label{Eq:E7_2_1_1_eq_classes_with_pi_-1_in_image}
			N_{w',v}\bk{\lambda_{1}^{P}}\bk{\DPS{P_2}\bk{1}_v} = \pi_{1,v} \oplus \pi_{-1,v}
		\end{equation}
		
		There exactly $15$ such equivalence classes (corresponding to each exponent in the Jacquet functor $r_T^G\pi_{-1,v}$).
		The calculation of $\ResRep{G}{2}{1}{1}$ relies on the following analysis of the local intertwining operators $N_w$, for $w\in\coset{w'}$, and the term $\kappa_{\coset{w'}}\bk{f,s,g}$ in the constant term \Cref{eq::CT_second_form}, for such equivalence classes.
	\end{itemize}
	Fix a non-Archimedean place $v$.
	Let
	\[
	\lambda_{1}^{P_2} = \esevenchar{-1}{7}{-1}{-1}{-1}{-1}{-1}
	\]
	be the initial exponent of $\DPS{P_2}\bk{1}_v$ and let
	\[
	\lambda_{-1}^{P_2} = \esevenchar{-1}{5}{-1}{-1}{-1}{-1}{-1}
	\]
	be the initial exponent of $\DPS{P_2}\bk{-1}_v$.
	Also, we let
	\[
	\mu=\esevenchar{-2}{-1}{-1}{-1}{3}{-1}{-1}
	\]
	denote a third exponent of interest.
	
	We also fix the following Weyl elements:
	\begin{itemize}
		\item $w_1=w\coset{5, 4, 3, 1, 6, 5, 4, 3, 7, 6, 5, 4, 2}$. We note that $w_1\cdot \lambda_{1}^{P_2} = \lambda_{-1}^{P_2}$
		\item $u_0=w\coset{1,3,4,2}$. We note that $u_0\cdot \lambda_{-1}^{P_2} = \mu$
		\item $w_0=\coset{5, 4, 3, 2, 4, 5, 6, 7, 5, 4, 3, 2, 4, 5, 6, 5, 4, 3, 2, 4, 5}$. We note that $w_0\cdot\mu=\mu$.
	\end{itemize}

	Any equivalence class $\set{w',w''}\in \equivclassespole{1,\Id,1}$, with $len\bk{w'}<len\bk{w''}$, such that 
	\[
	N_{w',v}\bk{\lambda_{1}^{P_2}}\bk{\DPS{P_2}\bk{1}_v} = \pi_{1,v} \oplus \pi_{-1,v}
	\]
	has the following form:
	\[
	w'=uw_1,\quad w'' =  w_2 u w_1 ,
	\]
	where $w_2$ can be written as follows:
	\[
	w_2 = u\bk{u_0^{-1}w_0u_0}u^{-1}
	\]
	such that, in particular, $N_{u^{-1}}\bk{\mu}$ and $N_{u}\bk{u^{-1}\cdot\mu}$ are isomorphisms.
	It is thus enough to analyze $\kappa_{\coset{w_1}}$, namely in the case
	\[
	w'=w_1,\quad w'' = u_0^{-1} w_0 u_0 w_1 .
	\]
	In \Cref{Appendix:Local_data_for_global_SI_resisudes} we prove that
	\[
	N_{u_0^{-1} w_0 u_0}\bk{\lambda_{-1}^{P_2}} f_\epsilon \quad \forall f_\epsilon\in\pi_\epsilon,\ \epsilon=\pm 1 .
	\]
	
	Following the same line of argument as in previous proofs, one checks that for any coset $\coset{w'}=\set{w,w'}$, satisfying \Cref{Eq:E7_2_1_1_eq_classes_with_pi_-1_in_image}, it holds that
	\[
	\frac{C_w\bk{\lambda_1}}{C_{w'}\bk{\lambda_1}} = 1 .
	\]
	Hence, for $f=\otimes f_v\in \pi_S$, with $S\subset\Places$, it holds that
	\[
	\kappa_{\coset{w'}}\bk{f,s,g} = \coset{1+\bk{-1}^{\Card{S}}} f ,
	\]
	which is non-vanishing if and only if $\Card{S}\equiv 1 \pmod{2}$.
	From which the claim follows.
\end{proof}

\section{Results for $E_8$}

First, we state the results for square-integrable residues when $\DPS{P_i}\bk{s_0,\chi}$ admits a unique irreducible quotient:
\begin{Prop}
	\label{Prop:SI_residue_UIQ_E8}
	For the following triples $\bk{i,s_0,ord\bk{\chi}}$, $\ResRep{E_8}{i}{s_0}{\chi}$ is isomorphic to the unique irreducible quotient $\pi_1$ of $\DPS{P_i}\bk{s_0,\chi}$:
	\[
	\begin{split}
		& \bk{1,\frac12,1}, \bk{1,\frac12,2}, \bk{1,\frac72,1}, \bk{1,\frac72,2}, \bk{1,\frac{13}{2},1}, \bk{1,\frac{17}{2},1}, \bk{1,\frac{23}{2},1} \\
		& \bk{2,\frac12,2}, \bk{2,\frac32,1}, \bk{2,\frac32,2}, \bk{2,\frac32,3}, \bk{2,\frac52,1}, \\ 
		& \bk{2,\frac52,2}, \bk{2,\frac72,1}, \bk{2,\frac72,2}, \bk{2,\frac{13}{2},1}, \bk{2,\frac{17}{2},1} \\
		& \bk{3,1,4}, \bk{3,\frac32,1}, \bk{3,\frac32,3}, \bk{3,\frac52,1}, \bk{3,\frac52,2},  \bk{3,\frac72,1}, \bk{3,\frac72,2}, \bk{3,\frac{13}{2},1} \\
		& \bk{4,\frac12,1} ,\bk{4,\frac12,3}, \bk{4,\frac12,5}, \bk{4,\frac12,6}, \bk{4,1,4}, \bk{4,\frac32,1}, \bk{4,\frac32,2}, \bk{4,\frac32,3}, \\
		& \bk{4,\frac52,1}, \bk{4,\frac52,2}, \bk{4,\frac72,1}, \bk{4,\frac92,1} \\
		& \bk{5,\frac12,2}, \bk{5,\frac12,4}, \bk{5,\frac12,5}, \bk{5,1,2}, \bk{5,1,4}, \bk{5,\frac32,1}, \bk{5,\frac32,2},\bk{5,\frac32,3}, \\
		& \bk{5,\frac52,1}, \bk{5,\frac52,2}, \bk{5,\frac72,1}, \bk{5,\frac{11}{2},1} \\
		& \bk{6,\frac12,1}, \bk{6,1,1}, \bk{6,1,2}, \bk{6,1,3}, \bk{6,2,1}, \bk{6,2,3}, \bk{6,3,1}, \bk{6,3,2}, \bk{6,7,1} \\
		& \bk{7,\frac12,1}, \bk{7,\frac12,2}, \bk{7,\frac12,3}, \bk{7,\frac52,1}, \bk{7,\frac52,2}, \bk{7,\frac92,1}, \bk{7,\frac92,2}, \bk{7,\frac{11}{2},1}, \bk{7,\frac{19}{2},1} \\
		& \bk{8,\frac12,1}, \bk{8,\frac12,2}, \bk{8,\frac{11}{2},1}, \bk{8,\frac{19}{2},1}, \bk{8,\frac{29}{2},1} .
	\end{split}
	\]
\end{Prop}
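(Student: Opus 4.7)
The plan is to reduce the proposition, in all of the listed cases, to the Corollary at the beginning of the chapter, which asserts that if $\DPS{P_i}(s_0,\chi)_v$ admits a unique irreducible quotient $\pi_{1,v}$ at every place $v$ and $\ResRep{G}{i}{s_0}{\chi}$ is square-integrable and non-zero, then
\[
\ResRep{G}{i}{s_0}{\chi} \;\cong\; \pi_1 \;=\; \bigotimes_{v\in\Places} \pi_{1,v}.
\]
Thus, for each triple $(i,s_0,\operatorname{ord}(\chi))$ appearing in the statement, three ingredients need to be verified: (a) unique irreducibility of the co-socle at every place, (b) square-integrability of the residue, and (c) non-vanishing of the residue (i.e.\ the Eisenstein series actually admits a pole of the order stated in \Cref{Chap:Poles_of_Eisen_ser}).

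For ingredient (a), at non-Archimedean $v$ one reads off the co-socle length from the $E_8$ tables in \Cref{Chap:Local_DPS}. The list in the present proposition was obtained precisely by cross-checking that $\DPS{P_i}(s_0,\chi)_v$ does \emph{not} appear among the exceptional cases (where the co-socle has length $2$), i.e.\ it avoids the triples $\bk{1,\pm\tfrac52,1}$, $\bk{3,\pm\tfrac12,2}$, $\bk{6,0,1}$, $\bk{6,0,2}$, $\bk{7,\pm\tfrac32,1}$, and the four cases $\bk{2,\pm\tfrac12,1}$, $\bk{5,\pm\tfrac12,1}$ where the co-socle may or may not be irreducible. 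For Archimedean $v$, unique irreducibility of the co-socle is guaranteed by the working Conjecture in the preliminaries (alternatively by restricting to sections spherical at Archimedean places, in which case the relevant quotient is the spherical one).

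Ingredient (b), square-integrability, is recorded in the tables of \Cref{Chap:Poles_of_Eisen_ser}: each triple in the list is marked with a \cmark, which was checked via Langlands' criterion \Cref{Prop:Square_integrability_of_residue} applied to the classes in $\Sigma^{P_i,0}_{(s_0,\chi,m)}$. Ingredient (c) is likewise already established in the proof of the pole-order tables of \Cref{Chap:Poles_of_Eisen_ser}, where for each of these triples a singleton equivalence class $\{w\}\in W^{M_i,T}/\!\sim_{s_0,\chi}$ is produced on which $C_w(\lambda_{s,\chi}^{P_i})$ realizes the announced pole order; combined with Property A (holomorphicity of the normalized local operators $N_{w,v}$ for $\Real(s)>0$) and \Cref{Cor:Kernel_of_Series_is_kernel_of_CT}, this exhibits a non-zero residue via the constant term along $B$.

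Putting (a)--(c) together, the Corollary applies case by case and identifies $\ResRep{E_8}{i}{s_0}{\chi}$ with $\pi_1=\otimes_v \pi_{1,v}$. There is no genuine mathematical obstacle beyond the bookkeeping; the principal nuisance is simply the sheer number of triples to check, which is handled uniformly by the Sagemath scripts used throughout \Cref{Part:DRS} together with the local tables of \Cref{Chap:Local_DPS}.
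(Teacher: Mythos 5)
Your proposal is correct and follows exactly the route the paper takes implicitly: the Proposition is a direct instance of the Corollary at the start of the chapter, and the three ingredients you isolate (irreducible local co-socle from the tables in \Cref{Chap:Local_DPS}, square-integrability from the Langlands criterion checks in \Cref{Chap:Poles_of_Eisen_ser}, and existence of the stated pole order) are precisely what the paper appeals to without spelling them out. Your treatment of the uncertain cases $\bk{2,\frac12,1}$ and $\bk{5,\frac12,1}$ also matches the paper's own handling via the asterisked footnote in \Cref{Chap:Poles_of_Eisen_ser}.
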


We now turn to cases where the maximal semi-simple quotient of $\DPS{P_i}\bk{s_0,\chi}$ is not irreducible.
These cases are
\[
\bk{1,\frac52,1}, \bk{3,\frac12,2}, \bk{7,\frac32,1} .
\]

There are also two cases for which the length of the local socle was not determined in \cite{SDPS_E8}: $\bk{2,\frac12,1}$ and $\bk{5,\frac12,1}$.
For both cases, the local maximal semi-simple is either irreducible or of length $2$.

\begin{Prop}
	\label{Prop:Res_E8_1_52_1}
	It holds that
	\[
	\ResRep{G}{1}{\frac52}{1} = \bigoplus_{\stackrel{S\subset\Places}{\Card{S}\equiv 0 \pmod{2} }} \pi_S ,
	\]
	where (in the notations of \Cref{Subsec:Local_Lengths_E7}) 
	\[
	\pi_S = \otimes_{v\in S}\pi_{-1,v} \otimes_{v\notin S}\pi_{1,v} .
	\]
\end{Prop}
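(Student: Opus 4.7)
The plan is to follow the same template used for $(E_7, P_2, 1, 1)$ in \Cref{Prop:Residue_E7_2_1_1} and for the $(E_7, P_i, 2, 2)$ cases, adapted to the $E_8$ situation, where $\pi_v = \DPS{P_1}\bk{5/2}_v$ has length $3$ with unique irreducible subrepresentation $\pi_{0,v}$ and semi-simple cosocle $\pi_{1,v}\oplus\pi_{-1,v}$ at non-Archimedean places (from \Cref{Chap:Local_DPS}). First, I would fix an anti-dominant exponent $\lambda_{a.d.}$ lying in $r_T^G(\pi_{-1,v})$ and compute $\Stab_W(\lambda_{a.d.})$, expecting to find it of the form $\set{1, u}$ for a single involution $u$ (analogous to $s_2 s_5 s_7$ in type $E_7$), together with the shortest $w_0\in W^{M_1,T}$ sending the initial exponent $\lambda_{5/2}^{P_1}$ to $\lambda_{a.d.}$.

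Next, I would analyze the equivalence classes in $\equivclassespole{5/2,\Id,1}$. The key split is: those $[w']$ for which $N_{w',v}(\lambda_{5/2}^{P_1})(\pi_v)$ equals $\pi_{1,v}$ only, versus those where the image is $\pi_{1,v}\oplus\pi_{-1,v}$. The first type produces the $\pi_1 = \otimes_v \pi_{1,v}$ summand (singletons of this type exist and contribute, giving the $S=\emptyset$ term), while the second type consists of cosets of cardinality $2$, say $[w'] = \set{\bar{w'}w_0, \bar{w'} u w_0}$ after passing through $w_0$. The crucial local input is to show that the normalized operator associated with $u$ (conjugated by an isomorphism from $w_0\cdot\lambda_{5/2}^{P_1}$ to a suitable anti-dominant position) acts by the scalar $\epsilon = \pm 1$ on $\pi_{\epsilon,v}$; I expect this to reduce, via the Keys--Shahidi identity $M_{s_\alpha}(\bar 0) = -\Id$ or via the Zampera analysis (\Cref{Prop:Zampera}), to the statement that $N_{u,v}$ is the parity operator distinguishing the two cosocle summands, which in turn fixes the labeling $\pi_{\pm 1, v}$.

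Parallel to the local computation, I would evaluate the ratio $C_w / C_{w'}$ of normalizing factors within each two-element equivalence class. Since $\chi = \Id$ is trivial, this ratio reduces to a finite product of quotients of Dedekind zeta values of the form $\zfun_F(z)/\zfun_F(1-z)$ at the relevant linear forms; using the functional equation $\zfun_F(s) = \zfun_F(1-s)$ these collapse to $1$ (the analogue of the $\chi$-quadratic calculation in the $(E_7, 2, 2, 2)$ case, but simpler because the root number is trivially $1$). Thus $\kappa_{[w']}(f,s,g)$ is, up to a non-vanishing scalar, $(\Id + M_u)\circ M_{\bar{w'}}\circ \Res{s=5/2} M_{w_0}$ applied to $f$, so its restriction to $\pi_S = \otimes_{v\in S}\pi_{-1,v}\otimes_{v\notin S}\pi_{1,v}$ is multiplication by $1 + (-1)^{\Card{S}}$.

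Summing over equivalence classes and invoking the constant term formula together with \Cref{Cor:Kernel_of_Series_is_kernel_of_CT}, one obtains that $\pi_S$ contributes to $\ResRep{G}{1}{5/2}{1}$ if and only if $\Card{S}$ is even, yielding the stated decomposition. Finally, square-integrability, already established in \Cref{Chap:Poles_of_Eisen_ser} for this residue, guarantees that the sum is a genuine orthogonal direct sum. The main obstacle I anticipate is the explicit verification that $N_{u,v}$ acts by the advertised signs on the two cosocle summands: the same mechanism as in the $(E_7,2,1,1)$ proof should work, but one must exhibit a concrete factorization $u = u_0^{-1} z u_0$ with $z$ a short Weyl word to which \Cref{Prop:Zampera} (or its $A_5$/$A_6$/$D_6$ variants discussed after it) applies, and then check via branching rules that the image $N_{u_0 w_0,v}(\pi_v)$ sits in the correct eigenspace. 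Once this local eigenvalue analysis is in place, the global assembly is routine.
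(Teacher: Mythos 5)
Your high-level strategy (split equivalence classes by whether $\pi_{-1,v}$ lies in the image, then compute a sign $(-1)^{\Card{S}}$) is the right one, but the specific template you chose to follow does not transfer. You announce you will ``fix an anti-dominant exponent $\lambda_{a.d.}$ \dots expecting to find [$\Stab_W(\lambda_{a.d.})$] of the form $\set{1,u}$ (analogous to $s_2s_5s_7$)''. That parametrization comes from the $(E_7, P_i, 2, 2)$ cases, where the anti-dominant exponent carries a nontrivial quadratic character in several entries and hence has a tiny $W$-stabilizer. Here $\chi=\Id$, the anti-dominant exponent of $\DPS{P_1}\bk{\frac52}_v$ is $\eeightchar{-1}{0}{0}{-1}{0}{0}{-1}{-1}$, and its stabilizer is the standard parabolic $\gen{w_2,w_3,w_5,w_6}$, which is far larger than an involution. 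The correct template is the one used for $(E_7,P_2,1,1)$ in \Cref{Prop:Residue_E7_2_1_1}: one does not parametrize by the stabilizer of an anti-dominant point at all, but by the exponents of $\pi_{-1,v}$, and the relevant involutory Weyl element $w_2=u^{-1}(u_0^{-1}w_0u_0)u$ fixes the exponent $\lambda_{-s_0}^{P_i}$ rather than $\lambda_{a.d.}$; its eigenvalues on $\pi_{\pm1,v}$ are extracted via the unitary length-$2$ induced representation of a smaller Levi, not via \Cref{Prop:Zampera}.

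More importantly, you miss the shortcut the paper actually uses: $M_8\subset E_8$ is of type $E_7$, and the irreducible constituents of $\DPS{P_1}^{E_8}\bk{\frac52}_v$ all arise by induction in stages from those of $\DPS{P_2}^{M_8}\bk{1}_v$, which is exactly the $(E_7,P_2,1,1)$ representation. Consequently the crucial Weyl element $w_0$ and the eigenvalue computation ($N_{w_0}$ acts by $\pm\Id$ on $\pi_{\pm1}$) are literally inherited from \Cref{Prop:Residue_E7_2_1_1} and \Cref{Appendix:Local_data_for_global_SI_resisudes}; one only needs to exhibit the explicit element $w_1$ sending $\lambda_{\frac52}^{P_1}$ to $\lambda_{-\frac52}^{P_1}$ and check that the conjugating operator $N_u$ is an isomorphism. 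Your plan would re-derive all of this local analysis from scratch in $E_8$, which is a genuinely heavier computation than the paper performs and, because of the stabilizer issue above, would require a nontrivial reworking before it could even get off the ground.
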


\begin{proof}
	This follows from \Cref{Prop:Residue_E7_2_1_1} and \Cref{Appendix:Local_data_for_global_SI_resisudes} by using properties of induction in stages through the Levi subgroup $M_8$, which is of type $E_7$.
	Indeed, we recall from \cite{SDPS_E8} that $\DPS{P_1}\bk{\frac52}_v$ has a maximal semi-simple quotient of length $2$ and all of its irreducible constituents are factors of the induction from $M_8$ of the irreducible constituents of $\DPS{P_2}^{M_8}\bk{1}_v$.
	In particular, one notes that in equivalence classes $\coset{w'}$ such that
	\[
		N_{w',v}\bk{\lambda_{\frac52}^{P_2}}\bk{\DPS{P_1}\bk{1}_v} = \pi_{1,v} \oplus \pi_{-1,v}
	\]
	the shortest element $w'\in\coset{w'}$ is of the form
	\[
	w' = u^{-1} w_0 u w_1,
	\]
	where
	\begin{itemize}		
		\item $w_1$ is given by
		\[
		w_1=w[6, 5, 4, 2, 3, 4, 5, 6, 7, 6, 5, 4, 2, 3, 4, 5, 6, 8, 7, 6, 5, 4, 2, 3, 1, 4, 3, 5, 4, 2, 6, 5, 4, 3, 1]
		\]
		\item $w_1$ satisfies $w_1\cdot\lambda_0=\lambda_1$, where
		\[
		\lambda_0 = \lambda_{\frac52}^{P_2} = \eeightchar{13}{-1}{-1}{-1}{-1}{-1}{-1}{-1},\quad
		\lambda_1 = \lambda_{-\frac52}^{P_2} = \eeightchar{8}{-1}{-1}{-1}{-1}{-1}{-1}{-1}.
		\]
		\item $N_u\bk{\lambda_1}$ is an isomorphism.
		\item $w_0$ is as given in \Cref{Prop:Residue_E7_2_1_1}.
		Thus, the associated intertwining operator acts on $\pi_{\pm1}$ as $\pm\Id$.
	\end{itemize}
\end{proof}

Let $\chi=\otimes_{v\in \Places} \chi_v$ be a quadratic Hecke character, that is $\chi^2=\Id\neq\chi$.
We consider $\pi=\DPS{P_3}\bk{\chi,\frac12} = \otimes_{v\in \Places}\pi_v$.
Recall that at places where $\chi_v=\Id_v$, the local representation $\pi_v$ admits a unique irreducible quotient $\pi_{1,v}$, while at places where $\chi_v\neq\Id_v$, the maximal semi-simple quotient $\pi_{1,v}\oplus\pi_{-1,v}$ of $\pi_v$ has length $2$.
This decomposition, as in the previous case, factors through $\DPS{P_{2,8}}^{M_8}\bk{1}_v$.

A similar argument to the one made above shows

\begin{Prop}
	It holds that
	\[
	\ResRep{G}{3}{\frac12}{2} = \bigoplus_{\stackrel{S\subset\Places}{\Card{S}\equiv 1 \pmod{2} }} \pi_S ,
	\]
	where
	\[
	\Places_\chi = \set{v\in\Places \mvert \chi_v\neq \Id} .
	\]
	and
	\[
	\pi_S = \otimes_{v\in S}\pi_{-1,v} \otimes_{v\notin S}\pi_{1,v} .
	\]
\end{Prop}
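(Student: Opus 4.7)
The plan is to follow the argument used in the previous two propositions on $\ResRep{G}{1}{5/2}{1}$ and $\ResRep{G}{3}{1/2}{2}$ (Proposition with trivial character and the unstated intermediate step). The starting observation, flagged in the text, is that the decomposition of the local maximal semi-simple quotient factors through induction in stages from the Levi $M_8$, which is of type $E_7$, so the relevant eigenvalue/eigenspace information on the local side is inherited from an analogous $E_7$-calculation for a degenerate principal series with quadratic character.

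First I would identify the anti-dominant exponent $\lambda_{a.d.}$ of $\DPS{P_3}\bk{\chi,\frac{1}{2}}_v$, verify that its stabilizer in $W_{E_8}$ is a $2$-group $\set{1, u_0}$, and enumerate the equivalence classes $\coset{w'}\in\equivclassespole{1/2,\chi,1}$ whose image under the corresponding intertwining operator is the full length-$2$ quotient $\pi_{1,v}\oplus\pi_{-1,v}$ (the other classes contribute only to the "fully spherical" summand and thus produce $\pi_1=\otimes_v\pi_{1,v}\subset\ResRep{G}{3}{1/2}{2}$, corresponding to $S=\emptyset$). Next, using transitivity of induction through $P_8$, I would factor the shortest representative in each such class as $w'=u^{-1}z\, u\, w_1$, where $N_u\bk{\lambda}$ is an isomorphism, $w_1$ transports the initial exponent to the exponent on which the stabilizer element acts, and $z$ is the $E_7$-Weyl element whose associated local normalized intertwining operator acts on the $E_7$-constituents as $\pm\Id$. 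This reduces the local part of the computation entirely to what is known from the $E_7$-case cited in the excerpt.

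The core computation is then the global normalizing ratio
\[
\delta \;=\; \lim_{s\to 1/2}\, \frac{C_{u^{-1}zu}\bk{\lambda_{s,\chi}^{P_3}}}{C_{w_1}\bk{\lambda_{s,\chi}^{P_3}}},
\]
which, after cancellation along the stabilizer roots, reduces to a finite product of ratios of completed Hecke $\Lfun$-values at shifted points associated to $\chi$. Using that $\chi$ is quadratic (so $\overline{\chi}=\chi$ and $\epsilon_F\bk{s,\chi}$ is constant) and the functional equation $\Lfun_F\bk{s,\chi}=\epsilon_\chi\Lfun_F\bk{1-s,\chi}$, this ratio will evaluate to a sign. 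Once $\delta$ is pinned down, the local-global eigenvalue formula familiar from the preceding proofs gives $\kappa_{\coset{w'}}\bk{f,s,g}=\bk{\delta+\bk{-1}^{\Card{S}}}f$ for $f\in\pi_S$, which is non-vanishing precisely when $\Card{S}\equiv 1\pmod 2$ provided $\delta=-1$, and the claimed decomposition follows.

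The main obstacle is thus verifying that $\delta=-1$ rather than $+1$. In the $E_7$-parallels $\bk{2,1,1}$, $\bk{2,2,2}$ and $\bk{5,2,2}$ the analogous ratio collapsed to $+1$ (producing the even-parity condition), so here one must track the additional hyperplane contributions arising from crossing from $E_7$ to $E_8$ via $P_8$ at the half-integer point $s_0=\tfrac12$. The sign $-1$ is expected to come from a single additional Hecke $\Lfun$-value of $\chi$ evaluated at $s=0$ combined with its symmetric counterpart through the functional equation, contributing the epsilon-factor sign at the critical point; care must be taken with the assumption $\epsilon_F\bk{s,\chi}$ is constant (which, per the paper's global assumptions, holds at least for $F=\Q$).
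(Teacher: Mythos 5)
Your overall skeleton matches the paper's (very brief) proof: the paper indeed proves this by citing ``a similar argument'' to the proof of $\ResRep{G}{1}{\frac52}{1}$, which factors through induction in stages via $M_8\cong E_7$ and transfers the local eigenvalue information ($N_{w_0}$ acting as $\pm\Id$ on $\pi_{\pm 1,v}$) from the $E_7$ case $\bk{2,1,1}$, exactly as you outline. The factorization $w' = u^{-1}z\,u\,w_1$ and the reduction of the local problem to the known $E_7$ decomposition are the right moves.

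However there are two genuine gaps. First, you assert early on that ``the other classes \ldots\ produce $\pi_1=\otimes_v\pi_{1,v}\subset\ResRep{G}{3}{1/2}{2}$, corresponding to $S=\emptyset$,'' but this is inconsistent with the very statement you are proving: $\Card{\emptyset}=0\not\equiv 1\pmod 2$, so $\pi_\emptyset$ is \emph{not} in the residue. In the $E_7$ $\bk{2,1,1}$ case (even parity) the spherical summand does appear, and the paper proves that by exhibiting a singleton equivalence class; for odd parity no such singleton can contribute, so the enumeration of equivalence classes cannot be handled by simply splitting off the ``fully spherical'' ones. This needs to be reconciled before the eigenvalue bookkeeping can close.

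Second, and more fundamentally, your proposed source of the sign $\delta=-1$ --- ``the epsilon-factor sign at the critical point'' --- cannot be correct, because the global root number of a quadratic Hecke character is $+1$ (the paper explicitly invokes this fact, citing Tate, in the $E_7$ $\bk{2,2,2}$ computation $C_{s_2s_5s_7}(\lambda_{a.d.})=1$). The normalizing-ratio computation in the $E_7$ parallels gives $+1$ precisely because $\epsilon_\chi=1$; you need a different mechanism for the sign flip, presumably coming from the Gindikin--Karpelevich factors that are present in $E_8$ but not in the $E_7$ sub-computation (e.g.\ a $\zeta$-factor with a pole that changes the limiting behaviour across $P_8$). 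As written the decisive sign is not established, so the proof as proposed does not yet reach the claimed odd-parity conclusion.
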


The residue in the case $\bk{7,\frac32,1}$ can be determined in a similar fashion.

\begin{Prop}
	\label{Prop:Res_E8_7_32_1}
	It holds that
	\[
	\ResRep{G}{7}{\frac32}{1} = \bigoplus_{\stackrel{S\subset\Places}{\Card{S}\equiv 0 \pmod{2} }} \pi_S ,
	\]
	where (in the notations of \Cref{Subsec:Local_Lengths_E7}) 
	\[
	\pi_S = \otimes_{v\in S}\pi_{-1,v} \otimes_{v\notin S}\pi_{1,v} .
	\]
	
\end{Prop}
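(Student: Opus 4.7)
The plan is to mirror the proof of Proposition \ref{Prop:Res_E8_1_52_1} step by step, exploiting the structural parallel between $\DPS{P_7}^{E_8}\bk{\frac{3}{2}}_v$ and $\DPS{P_2}^{E_7}\bk{1}_v$: both are length-$3$ representations with maximal semi-simple quotient of length two, denoted $\pi_{1,v}\oplus\pi_{-1,v}$ in each case. The global argument should therefore reduce, via induction in stages through the Levi subgroup $M_8 \cong E_7$ of $E_8$, to the global calculation carried out in Proposition \ref{Prop:Residue_E7_2_1_1}, combined with the local data from Appendix \ref{Appendix:Local_data_for_global_SI_resisudes}.

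First, I would compute explicitly the initial exponent $\lambda_0 = \lambda_{\frac{3}{2}}^{P_7}$ in the fundamental-weight basis, and find a short Weyl element $w_1 \in W^{M_7,T}$ conjugating $\lambda_0$ to an exponent $\lambda_1$ belonging to the Jacquet module $r_{T}^{G}\pi_{-1,v}$ at each finite place. Next, I would show that every equivalence class $\coset{w'} \in \equivclassespole{\frac{3}{2},\Id,1}$ for which $N_{w',v}\bk{\lambda_0}$ has image meeting $\pi_{-1,v}$ has a shortest representative of the form $u^{-1} w_0 u w_1$, where $w_0$ is (a lift of) the Weyl element appearing in Proposition \ref{Prop:Residue_E7_2_1_1} via the standard embedding $W_{E_7} \hookrightarrow W_{E_8}$, and $u$ is chosen so that $N_u\bk{\cdot}$ is an isomorphism at $\lambda_1$. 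One would then verify, as in the proof of Proposition \ref{Prop:Res_E8_1_52_1}, that the normalizing factor satisfies $C_{w_0}\bk{\lambda_1} = 1$, so the global intertwining operator $M_{w_0}$ reduces to the product of local operators $N_{w_0,v}$, which by the appendix calculation act on $\pi_{\epsilon,v}$ as multiplication by $\epsilon \in \{\pm 1\}$.

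Combining these ingredients: on a pure tensor $\phi = \otimes_v \phi_v \in \pi_S$, the sum $\kappa_{\coset{w'}}\bk{f,s,g}$ collapses to $\bigl(1 + (-1)^{\Card{S}}\bigr)\phi$, which vanishes precisely when $\Card{S}$ is odd; this matches the parity condition in the statement. Irreducible quotients at infinite places contribute the spherical $\pi_{1,v}$ factor (by the Archimedean convention of Section 3.3), and Langlands' criterion (Proposition \ref{Prop:Square_integrability_of_residue}) guarantees square-integrability, consistent with the $\cmark$ entry in the $E_8$ tables.

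The main obstacle will be the bookkeeping: verifying that the explicit Weyl element $u_0^{-1} w_0 u_0$ from the $E_7$ case of Proposition \ref{Prop:Residue_E7_2_1_1} lifts through the embedding $E_7 \hookrightarrow E_8$ to produce precisely the stabilizer action required here, and confirming that the coroot pairings entering the Gindikin--Karpelevich factor for $w_0$ in $E_8$ still produce the factor $1$ (using the functional equation of the Dedekind $\zeta$-function, together with the vanishing property of $\epsilon$-factors of the trivial character). Once this bookkeeping is done, the rest of the argument is formally identical to the $\bk{1,\frac{5}{2},1}$ case, because the entire mechanism depends only on (a) the length-$3$ local structure, (b) the $\pm$-eigenvalue action of $w_0$ on $\pi_{\pm 1,v}$, and (c) the triviality of the global normalizing constant at the pole.
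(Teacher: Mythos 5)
Your plan assumes that $\ResRep{G}{7}{\frac32}{1}$ reduces to the $E_7$ calculation of Proposition \ref{Prop:Residue_E7_2_1_1} via induction in stages through the Levi $M_8\cong E_7$. This is not what happens, and the reason is already visible in the antidominant exponents. For $\bk{1,\frac52,1}$ one has $\lambda_{a.d.}=\eeightchar{-1}{0}{0}{-1}{0}{0}{-1}{-1}$, whose restriction to $M_8$ is exactly the $E_7$ antidominant exponent from $\bk{2,1,1}$ and whose $\alpha_8$-entry is $-1$; so $w_8$ is \emph{not} in the stabilizer and the $E_7$ mechanism lifts verbatim through $M_8$. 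For $\bk{7,\frac32,1}$, however, $\lambda_{a.d.}=\eeightchar{-1}{0}{0}{-1}{0}{0}{-1}{0}$: the $\alpha_8$-entry is $0$, so $w_8\in\Stab_W\bk{\lambda_{a.d.}}$ and the relevant Weyl element $w_0$ must involve $\alpha_8$. Indeed, the element that does the work here is $w_0=w\coset{4, 3, 2, 4, 5, 6, 7, 8, 4, 3, 2, 4, 5, 6, 7, 4, 3, 2, 4, 5, 6, 4, 3, 2, 4, 5, 4, 3, 2, 4}$, which lies in $W_{M_1}$ (type $D_7$), not in $W_{M_8}$. A lift of the $E_7$ element $u_0^{-1}w_0 u_0$ through $W_{E_7}\hookrightarrow W_{M_8}$ cannot coincide with this, so the eigenvalue computation you import from the $E_7$ appendix would not apply to the operator actually appearing in the constant term.

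What the paper does instead is replace the $E_7$ unitary length-two input $\DPS{P_{2,8}}^{M_8}\bk{1}_v$ by the length-two unitary representation $\DPS{P_{1,4}}^{M_1}\bk{0}_v$ of $M_1\cong D_7$ (citing \cite{MR2017065}), and then obtains the $\pm 1$ action directly from the fact that $w_0$ is the shortest representative of the longest coset in $W_{M_1}^{T,M_{1,4}}$, so that \cite{MR2363302} gives $N_{w_0}=\pm\Id$ on the two summands. Your argument, as written, would need the local constituents of $\DPS{P_7}\bk{\frac32}_v$ to arise as inductions from $M_8$ of constituents of $\DPS{P_2}^{E_7}\bk{1}_v$, which is not the structural statement from \cite{SDPS_E8}; the correct intermediate datum is the $D_7$ representation, and the bookkeeping you flag as the ``main obstacle'' would in fact reveal the mismatch rather than close it.
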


The differences between this case and the previous ones are as follows:
\begin{itemize}
	\item The role of $\DPS{P_{2,8}}^{M_8}\bk{1}_v$ is replaced by $\DPS{P_{1,4}}^{M_1}\bk{0}_v$, which is a unitary representation of length $2$ of the Levi subgroup $M_1$ of type $D_7$ by 	\cite{MR2017065}.
	\item The exponents $\lambda_0$ and $\lambda_1$ are given by
	\[
	\lambda_0 = \lambda_{\frac32}^{P_7} = \eeightchar{-1}{-1}{-1}{-1}{-1}{-1}{10}{-1},\quad
	\lambda_1 = \lambda_{-\frac32}^{P_7} = \eeightchar{-1}{-1}{-1}{-1}{-1}{-1}{7}{-1}.
	\]
	\item $w_1$ is given by
	\[
	w_1=w[3, 1, 4, 2, 3, 4, 5, 4, 2, 3, 1, 4, 3, 6, 5, 4, 2, 3, 1, 4, 3, 5, 4, 6, 5, 7, 6, 8, 7]
	\]
	
	\item $w_0$ is given by
	\[
	w_0=w\coset{4, 3, 2, 4, 5, 6, 7, 8, 4, 3, 2, 4, 5, 6, 7, 4, 3, 2, 4, 5, 6, 4, 3, 2, 4, 5, 4, 3, 2, 4}.
	\]
	Namely, it is the shortest representative of the longest coset in $W_{M_1}^{T,M_{1,4}}$ and thus, by \cite{MR2363302}, the associated intertwining operator  acts on $\pi_{\pm 1}$ as $\pm\Id$.
\end{itemize}

\chapter{Siegel-Weil Identities}
\label{Chap:Siegel_Weil}

In this chapter, we study Siegel-Weil type identities between residues of different degenerate Eisenstein series.

\section{Method and Results}
\label{Sec:Siegel-Weil_Methods_and_Results}

We consider the following scenario:
\begin{itemize}
	\item $P_i=M_i\cdot U_i$ and $P_j=M_j\cdot U_j$ are two standard maximal parabolic subgroups of $G$.
	\item $\Omega$ and $\Xi$ are real $1$-dimensional representations of $M$ and $L$ respectively.
	\item Assume that $\eta_{s_0}^{P_i}$ and $\eta_{t_0}^{P_j}$ lie in the same Weyl orbit.
	Namely, there exist $\widetilde{w}\in W$ such that $\eta_{t_0}^{P_j} = \widetilde{w}\cdot \eta_{s_0}^{P_i}$.
	\item Assume that $\DPS{P_i}\bk{s_0}$ and $\DPS{P_j}\bk{t_0}$ are generated by their spherical vectors.
\end{itemize}

We apply \Cref{Prop:W_inv_entire_Eisen_Ser} to prove an identity between the residues of the Eisenstein series $\Eisen_{P_i}$ and $\Eisen_{P_j}$.

Indeed, by the $W$-invariance of $\Eisen_{B}^\sharp$ (see \Cref{Prop:W_inv_entire_Eisen_Ser}), it holds that
\[
\Eisen_{B}^\sharp\bk{\eta_{s_0}^{P_i},g} = 
\Eisen_{B}^\sharp\bk{\eta_{t_0}^{P_j},g} 
\quad \forall g\in G\bk{\AA} .
\]
On the other hand, recall that
\[
\Res{\lambda=\eta_s^{P_i}} \Eisen_{B}\bk{f,\lambda,g} 
= \lim_{\lambda\to\eta_s} \coset{\bk{\prodl_{\stackrel{k=1}{k\neq i}}^n l^{+}_{\alpha_i}\bk{\lambda}} \Eisen_{B}\bk{f,\lambda,g} }
= \Eisen_{P_i}\bk{f,s,\chi,g} .
\]
Furthermore, $\eta_{s_0}$ lies on the intersection of the line
\[
\set{\eta_s\mvert s\in\C} = \bigcap_{\stackrel{k=1}{k\neq i}}^n H_{\alpha_i}^{+}
\]
with other hyperplanes $H_\alpha^{+}$ for other $\alpha\in\Phi^{+}$.
In particular, it holds that there exists a constant $C_{i,s_0}\in\C^\times$ such that
\begin{equation}
\Eisen_{B}^\sharp\bk{\eta_{s_0}^{P_i},g} = C_{i,s_0} \Res{s=s_0} \Eisen_{P_i}\bk{f^0_{P_i},s,g}.
\end{equation}
Similarly, there exists a constant $C_{j,t_0}\in\C^\times$ such that
\begin{equation}
\Eisen_{B}^\sharp\bk{\eta_{t_0}^{P_j},g} = C_{j,t_0} \Res{s=t_0} \Eisen_{P_j}\bk{f^0_{P_j},s,g}.
\end{equation}

We conclude that there exists $C_{i,j,s_0,t_0}$ such that
\begin{equation}
\Res{s=s_0} \Eisen_{P_i}\bk{f^0_{P_i},s,g}
= C_{i,j,s_0,t_0} \Res{s=t_0} \Eisen_{P_j}\bk{f^0_{P_j},s,g}.
\end{equation}
We point out that the constant here can be interpreted as the action of an intertwining operator.
More precisely, let $\widetilde{w}=\widetilde{w}_{i,j}$ be the Weyl element given by
\[
\widetilde{w}_{i,j} = w_{0,j} w_{0,j\to i} w_{0,i} ,
\]
where:
\begin{itemize}
	\item $w_{0,i}$ and $w_{0,j}$ are the longest elements in $W_{M_i}$ and $W_{M_j}$ (respectively)
	\item $w_{0,j\to i}$ is the shortest representative of the class in $W_{M_j}\rmod W_{M_i\cap M_j}$ of the longest element in $W$.
\end{itemize}
In this case, we note that
\begin{enumerate}
	\item $\eta_{t_0}^{P_j} = \widetilde{w}_{i,j}\cdot \eta_{s_0}^{P_i}$.
	\item ${M_{\widetilde{w}_{i,j}}\bk{f^0_{\eta_{s_0}^{P_i}}}} = C_{i,j,s_0,t_0} \cdot  f^0_{\eta_{t_0}^{P_j}}$ .
\end{enumerate}

This implies the following:
\begin{Prop}
	It holds that
	\begin{equation}
	\Res{s=s_0} \Eisen_{P_i}\bk{f,s,g}
	= C_{i,j,s_0,t_0} \Res{s=t_0} \Eisen_{P_j}\bk{M_{\widetilde{w}_{i,j}}\bk{\eta_s^{P_i}} f,s,g} \quad \forall f\in \DPS{P}\bk{s} .
	\end{equation}
	Furthermore,
	\begin{equation}
		\label{eq:Siegel_Weil_order_equation}
		\ord{s=s_0}{\Eisen_{P_i}} = \ord{s=s_0}{M_{\widetilde{w}_{i,j}}\bk{\eta_s^{P_i}}} + \ord{s=t_0}{\Eisen_{P_j}} .
	\end{equation}
	In particular, if $\Res{s=t_0} \Eisen_{P_j}$ is irreducible, then so is $\Res{s=s_0} \Eisen_{P_i}$ and
	\begin{equation}
		\ResRepTr{G}{i}{s_0} = \ResRepTr{G}{j}{t_0} .
	\end{equation}
\end{Prop}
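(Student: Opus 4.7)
The plan is to bootstrap the spherical identity derived just before the statement to arbitrary sections, using the global functional equation of the Borel Eisenstein series together with \Cref{Prop:Equality_of_Eisenstein_series}. The functional equation
\[
\Eisen_\bB\bk{f_\lambda,\lambda,g} \;=\; \Eisen_\bB\bk{M\bk{\widetilde{w}_{i,j},\lambda,\Id} f_\lambda,\; \widetilde{w}_{i,j}\cdot\lambda,\;g}
\]
holds as an identity of meromorphic functions in $\lambda\in\mathfrak{a}_{\bT,\C}^\ast$, and specialising to sections supported on the affine line $\lambda=\eta_s^{P_i}$ will be the starting point.

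First, I would verify that $\widetilde{w}_{i,j}=w_{0,j}w_{0,j\to i}w_{0,i}$ sends $\Phi_{\bM_i}^{+}$ bijectively onto $\Phi_{\bM_j}^{+}$ and carries the line $\set{\eta_s^{P_i}\mvert s\in\C}$ to the line $\set{\eta_t^{P_j}\mvert t\in\C}$ via an affine reparametrisation $t=t(s)$ with $t(s_0)=t_0$. The payoff of this root-theoretic check is that $M_{\widetilde{w}_{i,j}}\bk{\eta_s^{P_i}}$ carries the image of $\DPS{\bP_i}\bk{s}$ inside $\DPS{\bB}\bk{\eta_s^{P_i}}$ into the image of $\DPS{\bP_j}\bk{t(s)}$ inside $\DPS{\bB}\bk{\eta_{t(s)}^{P_j}}$, rather than merely into the larger Borel principal series, so that the right-hand side of the functional equation can be reinterpreted as an Eisenstein series on $\bP_j$.

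Second, I would apply \Cref{Prop:Equality_of_Eisenstein_series}(2) on each side to realise $\Eisen_{\bP_i}$ and $\Eisen_{\bP_j}$ as iterated residues of $\Eisen_\bB$ along the hyperplanes cut out by $\Phi_{\bM_i}^{+}$ and $\Phi_{\bM_j}^{+}$ respectively. Since $\widetilde{w}_{i,j}$ permutes these two arrangements compatibly with the reparametrisation $s\mapsto t(s)$, the iterated residues correspond, and the functional equation descends to the claimed identity, with $C_{i,j,s_0,t_0}$ being the constant already pinned down in the spherical case. Taking the order at $s=s_0$ of both sides then yields \Cref{eq:Siegel_Weil_order_equation}, since the right-hand side factors multiplicatively into $\ord{s=s_0}{M_{\widetilde{w}_{i,j}}\bk{\eta_s^{P_i}}}$ and $\ord{s=t_0}{\Eisen_{\bP_j}}$ (the reparametrisation being affine with nonzero slope). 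For the final assertion, the displayed identity presents $\ResRepTr{\bG}{i}{s_0}$ as the image of $\ResRepTr{\bG}{j}{t_0}$ under a nonzero $\bG\bk{\AA}$-equivariant map; if the target is irreducible then this map is an isomorphism and the two residues coincide as subrepresentations of $\cA\bk{\bG\bk{\AA}}$.

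The main obstacle I anticipate is the second step, namely the compatibility of iterated residues across the two hyperplane arrangements. While the root-theoretic claims about $\widetilde{w}_{i,j}$ are clean, the precise tracking of normalising factors, of potential spurious poles arising from $l_\alpha^{+}$ terms associated with roots outside $\Phi_{\bM_i}^{+}\cup\Phi_{\bM_j}^{+}$, and of the matching of Laurent expansions on both sides of the functional equation will require some careful bookkeeping before the constant $C_{i,j,s_0,t_0}$ can be confirmed to coincide with the one extracted from the spherical case.
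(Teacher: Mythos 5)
Your approach is essentially the one the paper takes, but you fill in a step the paper leaves implicit and you start the argument one rung lower on the ladder. The paper establishes the spherical identity
\[
\Eisen_{B}^\sharp\bk{\eta_{s_0}^{P_i},g} = C_{i,s_0}\, \Res{s=s_0} \Eisen_{P_i}\bk{f^0_{P_i},s,g}
\]
via the entire $W$-invariant normalized series of \Cref{Prop:W_inv_entire_Eisen_Ser}, equates $\Eisen_{B}^\sharp\bk{\eta_{s_0}^{P_i}}$ with $\Eisen_{B}^\sharp\bk{\eta_{t_0}^{P_j}}$ by $W$-invariance, and then identifies the resulting constant with the action of $M_{\widetilde{w}_{i,j}}$ on the spherical vector; the passage from this spherical identity to arbitrary sections is asserted with the words \emph{``This implies the following''} but not spelled out. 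You instead invoke the functional equation of $\Eisen_\bB$ directly, which is the honest way to handle an arbitrary section, and since \Cref{Prop:W_inv_entire_Eisen_Ser} is itself proved from that functional equation the two routes are really the same route seen from opposite ends. What you gain is a cleaner treatment of general $f$; what the paper's phrasing gains is a compact identification of the constant $C_{i,j,s_0,t_0}$ with a spherical Gindikin--Karpelevich computation.

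One point worth flagging in your step two: your claim that $\widetilde{w}_{i,j}$ sends $\Phi_{\bM_i}^{+}$ bijectively onto $\Phi_{\bM_j}^{+}$ is the linchpin of the iterated-residue matching and deserves to be proved rather than merely verified. It is not visibly true from the factorization $\widetilde{w}_{i,j}=w_{0,j}w_{0,j\to i}w_{0,i}$ (each factor flips sign sets), so the right way to see it is the one implicit in the paper: for generic $s$ near $s_0$ the set of positive roots $\alpha$ with $\gen{\eta_s^{P_i},\check\alpha}=1$ is exactly $\Phi_{\bM_i}^{+}$ (these are the hyperplanes $\frakh_k$ along which one takes the iterated residue in \Cref{Prop:Equality_of_Eisenstein_series}(2)), and similarly on the $j$-side; so the identity $\eta_{t_0}^{P_j}=\widetilde{w}_{i,j}\cdot\eta_{s_0}^{P_i}$ of the surrounding text, together with the affine reparametrisation of the line you already noted, forces $\widetilde{w}_{i,j}\cdot\Phi_{\bM_i}^{+}=\Phi_{\bM_j}^{+}$. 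Once that is in place your bookkeeping of the $l_\alpha^{\pm}$ factors on roots outside $\Phi_{\bM_i}^{+}\cup\Phi_{\bM_j}^{+}$ is exactly the content of the order equation \eqref{eq:Siegel_Weil_order_equation}, and your argument for the final irreducibility assertion via a nonzero $\bG\bk{\AA}$-equivariant map is correct as stated.
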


By virtue of \Cref{eq:Siegel_Weil_order_equation}, the order of the pole of $\Eisen_{P_i}$ at $s_0$ is greater than that of $\Eisen_{P_j}$ at $t_0$.
It should also be pointed out that if $\Eisen_{P_j}$ is holomorphic at $t_0$, then $\Res{s=t_0} \Eisen_{P_j}$ is the spanned by the evaluation of $\Eisen_{P_j}\bk{f,t,g}$ at $t_0$.

In what follows, we depict lists of Siegel-Weil identities in diagrams, where such an identity will appear as the following diagram:

\begin{figure}[h!]
\begin{center}
	\begin{tikzpicture}
	[thin,scale=2]
	\tikzset{
		text style/.style={
			sloped, 
			text=black,
			font=\tiny,
			above
		}
	}
	\matrix (m) [matrix of math nodes, row sep=1em, column sep=3em]
	{
	|[name=e1]| \text{I}_{\para{P}_i}\bk{s_0}&  |[name=e2]| \text{I}_{\para{P}_j}\bk{t_0}
	\\};
	\draw[->]      
	(e1)  edge [->]
	node[text style,above]{$\widetilde{w}_{i,j}$}  (e2);
	\end{tikzpicture}
\end{center}	
\end{figure}
Note that the diagrams in \Cref{Thm:Siegel_Weil-identities} are organized so that an arrow between $\DPS{P_i}\bk{s_0}$ and $\DPS{P_j}\bk{t_0}$ implies that 
\[
\ord{s=s_0}{\Eisen_{P_i}} = \ord{s=t_0}{\Eisen_{P_j}} + 1 .
\]
Namely, $M_{\widetilde{w}_{i,j}}\bk{\eta_s^{P_i}}$ admits a simple pole.

\begin{Thm}
	\label{Thm:Siegel_Weil-identities}
	The groups of type $E_6$, $E_7$ and $E_8$ admit the Siegel-Weil identities depicted in 
	figures	\ref{digram:E6_SW_SI} through \ref{digram:E8_SW_NSI} which appear in \Cref{Sec:Siegel-Weil_Diagrams}.

\end{Thm}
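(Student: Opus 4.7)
The plan is to apply the general mechanism developed in the preceding Section \ref{Sec:Siegel-Weil_Methods_and_Results} to each pair $(P_i, s_0)$ and $(P_j, t_0)$ appearing as endpoints of an arrow in the diagrams of Figures \ref{digram:E6_SW_SI}--\ref{digram:E8_SW_NSI}. The key observation, already established there, is that the normalized spherical Eisenstein series $\Eisen_{B}^\sharp(\lambda,g)$ is entire and $W$-invariant (\Cref{Prop:W_inv_entire_Eisen_Ser}), so if $\eta_{s_0}^{P_i}$ and $\eta_{t_0}^{P_j}$ lie in the same $W$-orbit then the iterated residues of $\Eisen_B\bk{f^0_\lambda,\lambda,g}$ at these two points agree up to a non-zero normalizing constant. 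Hence the bulk of the proof is not conceptual but a case-by-case verification organized as follows.

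First, for each arrow in the diagrams I would exhibit the distinguished Weyl element $\widetilde{w}_{i,j} = w_{0,j}\, w_{0,j\to i}\, w_{0,i}$ described before the theorem, and verify directly that $\widetilde{w}_{i,j}\cdot \eta_{s_0}^{P_i} = \eta_{t_0}^{P_j}$. This is a finite-dimensional linear-algebra check in $\mathfrak{a}_{T,\C}^\ast$ which can be carried out (and has been, for the other results of the manuscript) in the Sagemath implementation. Second, at each such pair the hypotheses of Section \ref{Sec:Siegel-Weil_Methods_and_Results} must be checked: namely that $\DPS{P_i}(s_0)$ and $\DPS{P_j}(t_0)$ are generated by their spherical vectors. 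For the trivial character case this follows from the classification of local degenerate principal series in \Cref{Chap:Local_DPS}, upon inspection of which of the points $(i,s_0,1)$ admit a unique irreducible (necessarily spherical) quotient. With these verified, the $W$-invariance of $\Eisen_B^\sharp$ produces the identity
\[
\Res{s=s_0} \Eisen_{P_i}\bk{f,s,g} = C_{i,j,s_0,t_0} \, \Res{s=t_0} \Eisen_{P_j}\bk{M_{\widetilde{w}_{i,j}}(\eta_s^{P_i}) f, s, g}
\]
for $f \in \DPS{P_i}(s)$, as derived in Section \ref{Sec:Siegel-Weil_Methods_and_Results}.

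Third, to justify the structural feature of the diagrams--that each arrow raises the order of the pole by exactly one--I would apply the relation
\[
\ord{s=s_0}{\Eisen_{P_i}} = \ord{s=s_0}{M_{\widetilde{w}_{i,j}}(\eta_s^{P_i})} + \ord{s=t_0}{\Eisen_{P_j}}
\]
together with the orders of poles of $\Eisen_{P_i}$ tabulated in \Cref{Chap:Poles_of_Eisen_ser}. In other words, once the pole orders at the two endpoints are known, the order of the intertwining operator along the arrow is forced, and matching it against the expected value of $1$ confirms that an arrow is drawn between the correct pair of points. For arrows of the diagram connecting square-integrable residues (Figures \ref{digram:E6_SW_SI} and its analogues), the irreducibility results of \Cref{Chap:Square_Integrable} then upgrade the identity between residues to an identification of the residual representations themselves.

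The main obstacle is not conceptual but bookkeeping: the number of triples $(i,s_0,\operatorname{ord}(\chi))$ that must be checked against every other triple in the same $W$-orbit is substantial, particularly for $E_8$ where $\Card{W}= 696{,}729{,}600$. Rather than attempt this by hand, the plan is to let the Sagemath script used throughout Part \ref{Part:DRS} enumerate the $W$-orbits of points $\eta_{s_0}^{P_i}$ for all relevant $(i,s_0)$, group them into equivalence classes, and for each class compute $\widetilde{w}_{i,j}$ and the order of $M_{\widetilde{w}_{i,j}}$ via the Gindikin--Karpelevich formula. The diagrams in Figures \ref{digram:E6_SW_SI}--\ref{digram:E8_SW_NSI} are then a concise presentation of the output of this enumeration, and the theorem is proved case by case.
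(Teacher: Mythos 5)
Your proposal follows essentially the same route as the paper: the theorem is obtained by applying the mechanism of Section \ref{Sec:Siegel-Weil_Methods_and_Results} (namely the $W$-invariance of $\Eisen_B^\sharp$, the explicit Weyl element $\widetilde{w}_{i,j}$, and the order relation for the intertwining operator) to each arrow, with the case-by-case verification delegated to the Sagemath implementation. One gap, however: your step 2 insists on checking that \emph{both} $\DPS{P_i}(s_0)$ and $\DPS{P_j}(t_0)$ are generated by their spherical vectors, and treats that as a precondition for drawing the arrow. The paper deliberately includes arrows where the target $\DPS{P_j}(t_0)$ is \emph{not} generated by its spherical vector (the cases $\bk{E_7,2,1}$, $\bk{E_8,1,\frac52}$, $\bk{E_8,3,\frac12}$, $\bk{E_8,7,\frac32}$, possibly also $\bk{E_8,2,\frac12}$, $\bk{E_8,5,\frac12}$); in those cases only the source is generated by its spherical vector, and the conclusion drawn is the weaker one that $\ResRepTr{G}{i}{s_0}$ is isomorphic to the irreducible spherical subrepresentation of $\ResRepTr{G}{j}{t_0}$, as spelled out in the Remark following the theorem. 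Your procedure as written would silently discard these arrows, so it does not reproduce the full content of the diagrams.
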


\begin{Rem}
	In the diagrams of \Cref{Thm:Siegel_Weil-identities}, we have also added Siegel-Weil data $\bk{\DPS{P_i}\bk{s_0}, \DPS{P_j}\bk{t_0}, \widetilde{w}_{i,j}}$ such that $\DPS{P_j}\bk{t_0}$ is not generated by a spherical vector.
	In these cases, $\DPS{P_i}\bk{s_0}$ is still generated by a spherical vector and hence, $\ResRepTr{G}{i}{s_0}$ is isomorphic to the irreducible spherical subrepresentation of $\ResRepTr{G}{j}{t_0}$.
	The relevant cases are:
	\[
	\bk{G,j,t_0} = \bk{E_7,2,1}, \bk{E_8,1,\frac52}, \bk{E_8,3,\frac12} \bk{E_8,7,\frac32}
	\]
	
	This also possibly includes the case $\bk{E_8, 2,\frac12}$ and $\bk{E_8, 5,\frac12}$.
\end{Rem}

\allowdisplaybreaks
\begin{Rem}
	By \cite{MR2123125}, the minimal representation of these groups, is the unique irreducible quotient of the following representations:
	\[
	\piece{\DPS{P_1}\bk{3} = \DPS{P_6}\bk{3},& G=E_6 \\
	\DPS{P_7}\bk{5},& G=E_7 \\
	\DPS{P_8}\bk{\frac{19}{2}},& G=E_8}.
	\]
\end{Rem}

\begin{Cor}
	\label{Cor:Minimal_representation}
	The minimal representation of $G$ is given by the following residual representations:
	\[
	\tau=\piece{
		\ResRepTr{E_6}{1}{3},
		\ResRepTr{E_6}{6}{3},
		\ResRepTr{E_6}{2}{\frac72},
		\ResRepTr{E_6}{4}{\frac52}, & G=E_6 \\
		\ResRepTr{E_7}{7}{5},
		\ResRepTr{E_7}{1}{\frac{11}{2}},
		\ResRepTr{E_7}{2}{5},
		\ResRepTr{E_7}{4}{3}, & G=E_7 \\
		\ResRepTr{E_8}{8}{\frac{19}{2}},
		\ResRepTr{E_8}{1}{\frac{17}{2}},
		\ResRepTr{E_8}{2}{\frac{13}{2}},
		\ResRepTr{E_8}{4}{\frac72}, & G=E_8
	}
	\]

\end{Cor}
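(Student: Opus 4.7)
The plan is to combine three ingredients already assembled in the paper: the characterization of the minimal representation cited in the preceding remark, the tables of pole orders and square-integrability in Chapter 4, and the Siegel-Weil identities listed in \Cref{Thm:Siegel_Weil-identities}.

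First, by the remark immediately preceding the corollary, the minimal representation $\tau$ of $G$ is, by definition (following \cite{MR2123125}), the unique irreducible quotient of $\DPS{P_1}\bk{3}$ for $G=E_6$, of $\DPS{P_7}\bk{5}$ for $G=E_7$, and of $\DPS{P_8}\bk{\frac{19}{2}}$ for $G=E_8$. I would begin with these three ``base'' cases. Consulting the tables of \Cref{Chap:Poles_of_Eisen_ser} shows that at each of these points $\Eisen_{P_i}\bk{f,s,g}$ admits a simple pole and that the residue is square-integrable. By the results of \Cref{Chap:Local_DPS}, at each (finite) place the corresponding local degenerate principal series admits a \emph{unique} irreducible quotient; hence the same is true globally (see \Cref{Prop:SI_residue_UIQ_E8} and the analogous statements for $E_6$, $E_7$ in \Cref{Chap:Square_Integrable}). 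Therefore $\ResRepTr{G}{i}{s_0}$ is irreducible and coincides with the unique irreducible quotient of $\DPS{P_i}\bk{s_0}$, which is exactly the minimal representation $\tau$. This disposes of the cases $\bk{E_6,1,3}$, $\bk{E_7,7,5}$ and $\bk{E_8,8,\tfrac{19}{2}}$.

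Second, for each remaining residual representation appearing in the statement, I would trace a chain of arrows in the Siegel-Weil diagrams (figures~\ref{digram:E6_SW_SI}--\ref{digram:E8_SW_NSI}) connecting the given point to one of the three base points above. Each arrow $\DPS{P_i}\bk{s_0}\to \DPS{P_j}\bk{t_0}$ in the diagram delivers, via the construction of \Cref{Sec:Siegel-Weil_Methods_and_Results}, a non-zero constant $C_{i,j,s_0,t_0}$ and an identity
\[
\Res{s=s_0}\Eisen_{P_i}\bk{f,s,g}=C_{i,j,s_0,t_0}\,\Res{s=t_0}\Eisen_{P_j}\bk{M_{\widetilde w_{i,j}}\bk{\eta_s^{P_i}}f,s,g}.
\]
Since each residue under discussion is irreducible (by the first step applied inductively, using that the image of an $\bG\bk{\AA}$-equivariant map out of an irreducible representation is either zero or isomorphic to it), the Siegel-Weil identity forces $\ResRepTr{G}{i}{s_0}\cong\ResRepTr{G}{j}{t_0}=\tau$.

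The main obstacle is purely bookkeeping rather than conceptual: one must check, case by case, that the diagrams of \Cref{Thm:Siegel_Weil-identities} actually do contain a directed path from each of the twelve points in the statement to the corresponding base point, and that along this path the relevant Eisenstein series side remains non-zero (so that the identity is not vacuous). This is routine given the explicit list of arrows, since each arrow has already been verified to correspond to a simple pole of the intertwining operator $M_{\widetilde w_{i,j}}$ in the construction of the diagrams, and the non-vanishing of the target follows from the square-integrable residue tables.
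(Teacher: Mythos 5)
Your proposal is correct and reconstructs the argument the paper leaves implicit: the corollary follows from the remark preceding it (identifying $\tau$ as the unique irreducible quotient of $\DPS{P_1}(3)$, $\DPS{P_7}(5)$, $\DPS{P_8}(\tfrac{19}{2})$ respectively), the square-integrability results of \Cref{Chap:Square_Integrable} showing that each base residue \emph{is} that unique irreducible quotient, and the Siegel-Weil identities of \Cref{Thm:Siegel_Weil-identities}. One small remark on the phrasing in your second step: the irreducibility propagates in the direction of the arrow via the \emph{target}, not via ``a map out of an irreducible representation.'' The equivariant identity
\[
\Res{s=s_0}\Eisen_{P_i}\bk{f,s,g}=C_{i,j,s_0,t_0}\,\Res{s=t_0}\Eisen_{P_j}\bk{M_{\widetilde w_{i,j}}\bk{\eta_s^{P_i}}f,s,g}
\]
exhibits $\ResRepTr{G}{i}{s_0}$ as a non-zero subrepresentation of $\ResRepTr{G}{j}{t_0}$; irreducibility of the target forces equality, which is precisely the ``in particular'' clause of the unlabeled proposition in \Cref{Sec:Siegel-Weil_Methods_and_Results}. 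Also, your mention of an ``inductive'' chain is more than is needed here: in all three groups the node $\DPS{P_4}(s_0)$ (with $s_0=\tfrac52,3,\tfrac72$ respectively) has direct arrows to each of the other three points in the statement, so a single application of the proposition per arrow suffices. Finally, non-vanishing of the residue is automatic from the paper's convention that a residue is the lowest non-zero Laurent coefficient, not something deduced from square-integrability.
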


\begin{Cor}
	Other than the identities involving the minimal representation, we have the following Siegel-Weil identities between square-integrable residual representations of $G$:
	\begin{enumerate}
		\item If $G=E_6$, then
		\[
		\ResRepTr{E_6}{2}{\frac12} = \ResRepTr{E_6}{3}{\frac32} = \ResRepTr{E_6}{4}{\frac32} = \ResRepTr{E_6}{5}{\frac32} .
		\]
		\item If $G=E_7$, then
		\[
		\begin{split}
			& \ResRepTr{E_7}{1}{\frac12} = \ResRepTr{E_7}{3}{\frac52} = \ResRepTr{E_7}{4}{2} = \ResRepTr{E_7}{6}{\frac52} \\
			& \ResRepTr{E_7}{1}{\frac72} = \ResRepTr{E_7}{5}{3} = \ResRepTr{E_7}{6}{\frac72} = \ResRepTr{E_7}{7}{1} \\
			& \ResRepTr{E_7}{3}{\frac12} = \ResRepTr{E_7}{4}{1} = \ResRepTr{E_7}{5}{1} .
		\end{split}
		\]
		Also, it holds that
		\[
		\ResRepTr{E_7}{3}{\frac32} = \ResRepTr{E_7}{6}{\frac12} = \pi_1,
		\]
		where $\pi_1$ is irreducible spherical quotient of $\DPS{P_2}\bk{1}$.
		For more on $\DPS{P_2}\bk{1}$ and $\ResRepTr{E_7}{2}{1}$, see \Cref{Prop:Residue_E7_2_1_1}.
		
		\item If $G=E_8$, then
		\[
		\begin{split}
			&\ResRepTr{E_8}{4}{\frac12} = \ResRepTr{E_8}{5}{\frac12} \\
			&\ResRepTr{E_8}{2}{\frac12} = \ResRepTr{E_8}{6}{1} \\
			&\ResRepTr{E_8}{4}{\frac32} = \ResRepTr{E_8}{6}{2} = \ResRepTr{E_8}{7}{\frac12} \\
			&\ResRepTr{E_8}{2}{\frac32} = \ResRepTr{E_8}{3}{\frac32} = \ResRepTr{E_8}{5}{\frac32} \\
			&\ResRepTr{E_8}{3}{\frac72} = \ResRepTr{E_8}{4}{\frac52} = \ResRepTr{E_8}{7}{\frac92} = \ResRepTr{E_8}{8}{\frac12} \\
			&\ResRepTr{E_8}{1}{\frac{13}{2}} = \ResRepTr{E_8}{5}{\frac72} = \ResRepTr{E_8}{7}{\frac{11}{2}} = \ResRepTr{E_8}{8}{\frac{11}{2}} \\
			&\ResRepTr{E_8}{1}{\frac72} = \ResRepTr{E_8}{2}{\frac72} = \ResRepTr{E_8}{5}{\frac52} = \ResRepTr{E_8}{6}{3} .
		\end{split}
		\]
		Also, it holds that:
		\begin{itemize}
			\item 
			\[
				\ResRepTr{E_8}{3}{\frac52} = 	\ResRepTr{E_8}{7}{\frac52} = \pi_1,
			\]
			where $\pi_1$ is the irreducible spherical quotient of $\DPS{P_1}\bk{\frac52}$.
			For more on $\DPS{P_1}\bk{\frac52}$ and $\ResRepTr{E_8}{1}{\frac52}$, see \Cref{Prop:Res_E8_1_52_1}.
	
			\item 
			\[
				\ResRepTr{E_8}{1}{\frac12} = \ResRepTr{E_8}{2}{\frac52} = \pi_1,	
			\]
			where $\pi_1$ is the irreducible spherical quotient of $\DPS{P_7}\bk{\frac32}$.
			For more on $\DPS{P_7}\bk{\frac32}$ and $\ResRepTr{E_8}{7}{\frac32}$, see \Cref{Prop:Res_E8_7_32_1}.
		\end{itemize}
	\end{enumerate}
\end{Cor}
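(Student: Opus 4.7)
The plan is to derive this corollary as a direct consequence of the preceding Theorem (Siegel--Weil identities) together with the square-integrability results of Chapter 5. Since all the heavy lifting is done in Section 4.1 (W-invariance of the normalized spherical Eisenstein series, identification of residues at $\eta_{s_0}^{P_i}$ via Weyl-conjugation, and the order equation \eqref{eq:Siegel_Weil_order_equation}), the work here is essentially bookkeeping: one reads off chains of arrows from the diagrams \ref{digram:E6_SW_SI}--\ref{digram:E8_SW_NSI} and verifies that each chain links the listed pairs $(P_i,s_0)$ and $(P_j,t_0)$.

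First I would fix, for each claimed equality $\ResRepTr{G}{i}{s_0}=\ResRepTr{G}{j}{t_0}$, an explicit Weyl element $\widetilde{w}_{i,j}$ (or a composition of such, passing through intermediate parabolics) sending $\eta_{s_0}^{P_i}$ to $\eta_{t_0}^{P_j}$; the required data is already encoded in the Siegel--Weil diagrams of the theorem. By the proposition in Section \ref{Sec:Siegel-Weil_Methods_and_Results} this gives an identification
\[
\Res{s=s_0} \Eisen_{P_i}\bk{f,s,g} = C\cdot\Res{s=t_0}\Eisen_{P_j}\bk{M_{\widetilde{w}_{i,j}}(\eta_s^{P_i})f,s,g}
\]
for a non-zero constant $C$, provided both degenerate principal series are generated by their spherical vectors and the order identity \eqref{eq:Siegel_Weil_order_equation} is satisfied with $\operatorname{ord}_{s=s_0}M_{\widetilde{w}_{i,j}}=0$. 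The second condition can be read off from the diagrams (arrows are drawn precisely when the relevant intertwining operator carries the right order), while the first is part of the hypotheses of the theorem.

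Next, I would invoke Chapter 5 to identify each residue with the irreducible spherical quotient $\pi_1$ of the corresponding $\DPS{P_i}(s_0)$ in the cases listed in Proposition \ref{Prop:SI_residue_UIQ_E8} and its $E_6,E_7$ analogues. For the generic cases in the corollary's list, the local degenerate principal series at every place admits a unique irreducible quotient, so the equality $\ResRepTr{G}{i}{s_0}=\ResRepTr{G}{j}{t_0}$ reduces to the equality of these unique spherical quotients, which is immediate from the Weyl-conjugation above.

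The only subtlety, and the main obstacle, arises in the ``exceptional'' identifications flagged at the end of the corollary: $\ResRepTr{E_7}{3}{\frac32}=\ResRepTr{E_7}{6}{\frac12}=\pi_1$ where $\pi_1$ is the spherical quotient of $\DPS{P_2}(1)$, and the analogous $E_8$ cases involving $\DPS{P_1}(\frac52)$ and $\DPS{P_7}(\frac32)$. In each of these, the target degenerate principal series is \emph{not} generated by its spherical vector (its cosocle has length $2$, as analyzed in Propositions \ref{Prop:Residue_E7_2_1_1}, \ref{Prop:Res_E8_1_52_1}, \ref{Prop:Res_E8_7_32_1}). Here the Siegel--Weil identity on the spherical section still produces a non-zero residue, and I would argue that the image lies in the spherical isotypic component, hence is forced to coincide with the irreducible spherical quotient $\pi_1$ of the target; the non-spherical constituent of the cosocle cannot be reached by sections coming from the source parabolic since those remain spherical at almost all places. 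This local-to-global matching, rather than any new global computation, is the step that requires care.
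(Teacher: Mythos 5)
Your proposal is correct and matches the paper's approach: the Corollary is not given a separate proof but follows directly from \Cref{Thm:Siegel_Weil-identities}, the chains of arrows in the diagrams, and the Remark handling the four exceptional targets where $\DPS{P_j}(t_0)$ is not generated by its spherical vector. One small imprecision: for the exceptional cases, the reason the residue equals the spherical quotient $\pi_1$ is not that sections ``remain spherical at almost all places'' (every factorizable section has that property), but rather that $\DPS{P_i}(s_0)_v$ admits a \emph{unique} irreducible quotient $\pi_{1,v}$ at every place, so $\Res{s=s_0}\Eisen_{P_i}$, being a nonzero quotient of $\DPS{P_i}(s_0)$, is forced to be $\otimes_v\pi_{1,v}$; the Siegel--Weil identity then pins this down as the irreducible spherical constituent of the target. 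This rephrasing does not affect the conclusion, but it is the mechanism the paper actually invokes.
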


\begin{Rem}
	The following residual representations are the next-to-minimal representations of $G$ (see \cite[Table 2]{MR4273169} and \cite[Table 6]{MR3267116})
	\[
	\piece{
		\ResRepTr{E_6}{1}{0},
		\ResRepTr{E_6}{5}{0},
		\ResRepTr{E_6}{2}{\frac52}, & G=E_6 \\
		\ResRepTr{E_7}{1}{\frac72},
		\ResRepTr{E_7}{5}{3},
		\ResRepTr{E_7}{6}{\frac72},
		\ResRepTr{E_7}{7}{1}, & G=E_7 \\
		\ResRepTr{E_7}{1}{\frac{13}{2}},
		\ResRepTr{E_7}{5}{\frac72},
		\ResRepTr{E_7}{7}{\frac{11}{2}},
		\ResRepTr{E_7}{8}{\frac{11}{2}}, & G=E_8
	}
	\]
	Note that in the case of $E_6$, this residue is non-square integrable.
	This is, actually, not surprising since the next-to-minimal nilpotent orbit in $\frake_6$ is $2A_1$ and its dual $D_5$ is not distinguished (see \Cref{Chap:Arthur} for more about nilpotent orbits and the square-integrable spectrum).
\end{Rem}

\section{Identities for Non-trivial $\chi$}

In this section, we consider identities satisfied by residual representations with non-unique $\chi$.
It seems that the mechanism described in \Cref{Sec:Siegel-Weil_Methods_and_Results} does not admit a generalization to these Eisenstein series.
Thus, we describe these identities only as an  a posteriori result of \Cref{Chap:Square_Integrable} and \Cref{Chap:Non_square_Integrable}.
In particular, we prove only an isomorphism between the residual representations but not an equality of automorphic realizations (though, it is reasonable that this is also true). 

\begin{Thm}
	We list identities between degenerate residual residues $\ResRep{G}{i}{s_0}{\chi}$ of groups of type $E_n$; we sort the list by the type of $G$ and the order of $\chi$.
	\begin{enumerate}
		\item For $G$ of type $E_6$ and a quadratic Hecke character $\chi$, it holds that
		\[
		\Res{s=1} \Eisen_{P_4}\bk{\chi}
		\cong \DPS{P_3}\bk{0,\chi} \cong \DPS{P_5}\bk{0,\chi} .
		\]
		
		\item For $G$ of type $E_7$ and a quadratic Hecke character $\chi$, it holds that
		\begin{itemize}
			\item $\Res{s=2} \Eisen_{P_2}\bk{\chi} \cong \Res{s=2} \Eisen_{P_5}\bk{\chi}$.
			
			\item $\Res{s=\frac32} \Eisen_{P_3}\bk{\chi} \cong \Res{s=\frac12} \Eisen_{P_6}\bk{\chi}$.
			
			\item $\Res{s=1} \Eisen_{P_4}\bk{\chi} \cong \Res{s=\frac12} \Eisen_{P_3}\bk{\chi}$.
			
			\item $\Res{s=\frac32} \Eisen_{P_4}\bk{\chi} \cong \DPS{P_6}\bk{1,\chi}$.
			
			\item $\Res{s=\frac32} \Eisen_{P_5}\bk{\chi} \cong \DPS{P_2}\bk{\frac12,\chi}$.
			
		\end{itemize}
	
		\item For $G$ of type $E_7$ and a cubic Hecke character $\chi$, it holds that
		\[
		\Res{s=\frac23} \Eisen_{P_4}\bk{\chi}
		 = \DPS{P_5}\bk{\chi^2,\frac13} .
		\]
	
		\item For $G$ of type $E_8$ and a quadratic Hecke character $\chi$, it holds that
		\begin{itemize}
			\item $\Res{s=\frac12} \Eisen_{P_1}\bk{\chi} \cong \Res{s=\frac52} \Eisen_{P_2}\bk{\chi}$.
			
			\item $\Res{s=\frac12} \Eisen_{P_2}\bk{\chi} \cong \Res{s=1} \Eisen_{P_6}\bk{\chi}$.
			
			\item $\Res{s=\frac52} \Eisen_{P_3}\bk{\chi} \cong \Res{s=\frac52} \Eisen_{P_7}\bk{\chi}$.
			
			\item $\Res{s=\frac32} \Eisen_{P_4}\bk{\chi} \cong \Res{s=\frac12} \Eisen_{P_7}\bk{\chi}$.
			
			\item $\Res{s=1} \Eisen_{P_4}\bk{\chi} \cong \Res{s=1} \Eisen_{P_3}\bk{\chi}$.
			
			\item $\Res{s=\frac32} \Eisen_{P_5}\bk{\chi} \cong \Res{s=\frac32} \Eisen_{P_2}\bk{\chi}$.
			
			\item $\Res{s=1} \Eisen_{P_5}\bk{\chi} \cong \Res{s=\frac12} \Eisen_{P_6}\bk{\chi}$.
			
			\item $\Res{s=2} \Eisen_{P_3}\bk{\chi} \cong \DPS{P_7}\bk{1,\chi}$.
			
			\item $\Res{s=2} \Eisen_{P_4}\bk{\chi} \cong \DPS{P_7}\bk{3,\chi}$.
			
			\item $\Res{s=\frac34} \Eisen_{P_4}\bk{\chi} \cong \DPS{P_3}\bk{\frac14,\chi}$.
			
			\item $\Res{s=2} \Eisen_{P_5}\bk{\chi} \cong \DPS{P_1}\bk{1,\chi}$.
			
			\item $\Res{s=\frac52} \Eisen_{P_6}\bk{\chi} \cong \DPS{P_1}\bk{2,\chi}$.
			
		\end{itemize}
	
		\item For $G$ of type $E_8$ and a cubic Hecke character $\chi$, it holds that
		
		\begin{itemize}
			\item $\Res{s=\frac76} \Eisen_{P_5}\bk{\chi} \cong \DPS{P_2}\bk{\frac16,\chi}$.
		\end{itemize}
		
	\end{enumerate}
\end{Thm}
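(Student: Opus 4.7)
The plan is to verify each asserted identity by direct comparison of the explicit descriptions of the two residual representations produced in \Cref{Chap:Square_Integrable} (for the square-integrable residues) and \Cref{Chap:Non_square_Integrable} (for the remaining ones). In contrast with \Cref{Thm:Siegel_Weil-identities}, the Eisenstein series here are attached to non-trivial Hecke characters $\chi$, so the $W$-invariance argument via $\Eisen_{B}^\sharp$ is unavailable; consequently I would make no attempt to produce a global intertwining identity, and aim only at an abstract isomorphism of $G(\AA)$-modules, exactly as the remark preceding the theorem warns.

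Concretely, I would proceed case by case. For each triple $(i,s_0,\chi)$ appearing on either side, I would extract from \Cref{Chap:Square_Integrable} or \Cref{Chap:Non_square_Integrable} a decomposition of the form
\[
\ResRep{G}{i}{s_0}{\chi} \;=\; \bigoplus_{S \in \cS_{i,s_0,\chi}} \bigotimes_{v \in \Places} \pi_{\gamma_S(v),v},
\]
where $\cS_{i,s_0,\chi}$ is an explicit index set of subsets of $\Places_\chi$ satisfying a parity or congruence condition, and each $\pi_{\gamma_S(v),v}$ is an irreducible quotient of the local $\DPS{P_i}(s_0,\chi)_v$ identified by an eigenvalue of a specific local normalized intertwining operator (as in \Cref{Subsec:Local_Lengths_E6}, \Cref{Subsec:Local_Lengths_E7} and the analogous data for $E_8$). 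The identity $\Res{s=s_0}\Eisen_{P_i}(\chi) \cong \Res{s=t_0}\Eisen_{P_j}(\chi)$ then reduces to two checks: first, that at every place $v \in \Places$ the local irreducible constituents of $\DPS{P_i}(s_0,\chi)_v$ and $\DPS{P_j}(t_0,\chi)_v$ that occur in the respective residues are isomorphic as $G_v$-modules; second, that the parametrizing sets $\cS_{i,s_0,\chi}$ and $\cS_{j,t_0,\chi}$ coincide under the matching of local constituents.

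For the first check I would compare initial exponents: in each case $\lambda_{s_0,\chi}^{P_i}$ and $\lambda_{t_0,\chi}^{P_j}$ lie in the same $W$-orbit (this can be read off from the Weyl element $\widetilde w_{i,j}$ in \Cref{Thm:Siegel_Weil-identities}, since the non-trivial $\chi$ cases are obtained by tensoring the $\chi=\Id$ points by a fixed unramified twist), so the local Langlands parameters agree and the identification of irreducible quotients follows from the classification recalled in \Cref{Chap:Local_DPS} together with the local length information in \Cref{Sec:Local_DPS_Lengths}. The second check is routine once the first is made: the parity conditions defining $\cS_{i,s_0,\chi}$ arise, in every instance, from the cocycle relation for the $R$-group intertwining operators used in \Cref{Chap:Square_Integrable}, and this eigenvalue calculation is preserved by the identification at each place.

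The main obstacle will be bookkeeping rather than conceptual depth: the identities split into many sub-cases (trivial local quotient vs.\ length-two local quotient, different residue orders, the ``spherical constituent of a bigger residue'' identities such as $\ResRep{E_7}{3}{3/2}{\chi} \cong \DPS{P_6}(1,\chi)$ where the right-hand side is an entire value, not a residue), and each must be checked against the corresponding description table. In particular, the identities of the form $\Res{s=s_0} \Eisen_{P_i}(\chi) \cong \DPS{P_j}(t_0,\chi)$ require showing first that $\DPS{P_j}(t_0,\chi)$ is irreducible at almost all places (which follows from \Cref{Chap:Local_DPS}) and then matching it with the unique quotient appearing on the other side; the potentially delicate cases like $\ResRep{E_7}{4}{2/3}{\chi}$ for cubic $\chi$, where the residue is non-square-integrable, will require invoking the explicit description from \Cref{Chap:Non_square_Integrable} and checking that both sides admit the same filtration, with matching subquotients at every place.
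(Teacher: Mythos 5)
Your overall strategy — verify each identity by matching the explicit descriptions of both residues as given in the square-integrable and non-square-integrable chapters, accepting that this yields only an abstract isomorphism of $G(\AA)$-modules rather than an identity of automorphic realizations — is exactly what the paper does. You also correctly emphasize that the $\Eisen_B^\sharp$-invariance trick is unavailable for non-trivial $\chi$, which is the reason the paper separates this theorem from \Cref{Thm:Siegel_Weil-identities}.

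The paper's actual proof is considerably leaner than what you set up, because in almost every listed case one of two degenerate things happens. In the non-square-integrable cases, $\Res{s=s_0}\Eisen_{P_i}(\chi)$ is already identified in \Cref{Chap:Non_square_Integrable} with an \emph{irreducible} degenerate principal series $\DPS{P_j}(t_0,\chi)$ (e.g.\ $\ResRep{E_7}{4}{3/2}{\chi}\cong\DPS{P_6}(1,\chi)$), which is simultaneously the value of $\Eisen_{P_j}(\chi)$ at $t_0$; nothing more needs to be compared. In the square-integrable cases, both $\DPS{P_i}(s_0,\chi)$ and $\DPS{P_j}(t_0,\chi)$ typically admit a \emph{unique} irreducible quotient $\pi_1$, and the residues are both isomorphic to that $\pi_1$. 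So the elaborate matching of ``parametrizing index sets $\cS_{i,s_0,\chi}$ with parity conditions'' that you describe is only actually invoked once, for $\ResRep{E_7}{2}{2}{\chi}\cong\ResRep{E_7}{5}{2}{\chi}$, and there it is not re-derived in this theorem: the equality of the two direct-sum decompositions over $\set{S\subset\Places_\chi : \Card{S}\equiv 0 \pmod 2}$ is already established directly in \Cref{Chap:Square_Integrable}, where the two degenerate principal series are shown to share the same pair of local quotients $\pi_{\pm 1,v}$ and the same stabilizer $\gen{w_{257}}$ of the common anti-dominant exponent. Your framework is the correct generalization, it just wasn't needed at that level of generality.

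There is one genuine misstep in your ``first check.'' You propose to verify that the initial exponents lie in the same $W$-orbit by reusing the element $\widetilde{w}_{i,j}$ from \Cref{Thm:Siegel_Weil-identities}, reasoning that ``the non-trivial $\chi$ cases are obtained by tensoring the $\chi=\Id$ points by a fixed unramified twist.'' This is not correct: the pairs $(s_0,t_0)$ that appear here for non-trivial $\chi$ do \emph{not} agree with the pairs in the $\chi=\Id$ diagrams (compare e.g.\ $\ResRep{E_7}{2}{2}{\chi}$ for quadratic $\chi$ against $\ResRepTr{E_7}{2}{5}$ in \Cref{digram:E7_SW_SI}), so $\widetilde{w}_{i,j}$ applied to $\eta_{s_0}^{P_i}\otimes\mu_\chi$ need not hit $\eta_{t_0}^{P_j}\otimes\mu_\chi$. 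The correct matching criterion, which the paper spells out in the remark at the end of the method section of \Cref{Chap:Non_square_Integrable}, is that $\DPS{P_i}(s_0,\chi)_v$ and $\DPS{P_j}(\chi^k,t_0)_v$ have the same set of \emph{anti-dominant} exponents; the common irreducible subquotient is then the one attached to such an exponent, and this is what is tabulated in the prior papers cited there. Replacing your twist argument with the anti-dominant-exponent comparison closes the gap and makes your proof equivalent to the paper's.
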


\begin{proof}
	In all the cases, listed here, where $\Res{s=s_0} \Eisen_{P_i}\bk{\chi}$ is non-square integrable, it is isomorphic to an irreducible degenerate principal series representation $\DPS{P_j}\bk{t_0,\chi}$.
	This irreducible representation can be realized as the value of $\Eisen_{P_j}\bk{\chi}$ at $s=t_0$.
	These equalities are listed and explained in \Cref{Chap:Non_square_Integrable}.
	Note that since $\Eisen_{P_j}\bk{\chi}$ is holomorphic and non-vanishing at $t_0$, its value there is its residue according to the definition of residue in \Cref{Chap:Preliminaries}.
	
	The square-integrable cases are those where $\Res{s=s_0} \Eisen_{P_i}\bk{\chi} = \Res{s=t_0} \Eisen_{P_j}\bk{\chi}$ and both residues are square-integrable.
	In all the listed cases, both $\DPS{P_i}\bk{s_0,\chi}$ and $\DPS{P_j}\bk{t_0,\chi}$ admit isomorphic (unique) irreducible quotients $\pi_1$.
	Thus
	\[
	\Res{s=s_0} \Eisen_{P_i}\bk{\chi} \cong \Res{s=t_0} \Eisen_{P_j}\bk{\chi} = \pi_1 .	
	\]
	
	We remark that in the statement of the theorem, the residue on the left hand side is the one whose pole is of higher order.
	
\end{proof}

\begin{Rem}
	We note that if, for a cubic Hecke character $\chi$, it would be known that $\Res{s=\frac56} \Eisen_{P_4}\bk{\chi}$ is irreducible, the the identity $\Res{s=\frac56} \Eisen_{P_4}\bk{\chi} \cong \DPS{P_6}\bk{\frac13,\chi}$ would similarly follow.	
\end{Rem}

\newpage

\section{Diagrams}
\label{Sec:Siegel-Weil_Diagrams}

\begin{figure}[h!]
	\begin{center}
		\begin{tikzpicture}[thin,scale=2]
			\tikzset{
				text style/.style={
					sloped, 
					text=black,
					font=\tiny,
					above
				}
			}
			\matrix (m) [matrix of math nodes, row sep=1em, column sep=3em]
			{ 		
				&|[name=e72]| \text{I}_{\para{P}_3}(\frac{3}{2})
				&&& |[name=e22]| \text{I}_{\para{P}_1}(3)
				\\
				|[name=e71]| \text{I}_{\para{P}_4}(\frac{3}{2}) 
				&& |[name=e74]| \text{I}_{\para{P}_2}(\frac{1}{2})
				&  |[name=e21]| \text{I}_{\para{P}_4}(\frac{5}{2})
				&  |[name=e23]| \text{I}_{\para{P}_2}(\frac{7}{2})
				\\
				&|[name=e73]| \text{I}_{\para{P}_5}(\frac{3}{2}) 
				&&& |[name=e24]| \text{I}_{\para{P}_6}(3)
				\\
			};
			\draw[->]  
			(e21)  edge [->]
			node[text style,above]{$\widetilde{w}_{4,1}$}  (e22)
			(e21)  edge [->]
			node[text style,above]{$\widetilde{w}_{4,2}$}  (e23)
			(e21)  edge [->]
			node[text style,above]{$\widetilde{w}_{4,6}$}  (e24)
			
			(e71)  edge [->]
			node[text style,above]{$\widetilde{w}_{4,3}$}  (e72)
			(e71)  edge [->]
			node[text style,above]{$\widetilde{w}_{4,5}$}  (e73)
			(e72)  edge [->]
			node[text style,above]{$\widetilde{w}_{3,2}$}  (e74)
			(e73)  edge [->]
			node[text style,above]{$\widetilde{w}_{5,2}$}  (e74)
			;
			
		\end{tikzpicture}
	\end{center}
	\caption{Siegel-Weil Identities for $E_6$ - Square Integrable Residues}
	\label{digram:E6_SW_SI}
\end{figure}

\begin{figure}[h!]
	\begin{center}
		\begin{tikzpicture}[thin,scale=2]
			\tikzset{
				text style/.style={
					sloped, 
					text=black,
					font=\tiny,
					above
				}
			}
			\matrix (m) [matrix of math nodes, row sep=1em, column sep=3em]
			{ 		
				|[name=e31]| \text{I}_{\para{P}_3}(\frac{7}{2})
				&  |[name=e32]| \text{I}_{\para{P}_1}(4) 
				&  |[name=e41]| \text{I}_{\para{P}_3}(\frac{5}{2})
				&  |[name=e42]| \text{I}_{\para{P}_6}(1)
				&  |[name=e51]| \text{I}_{\para{P}_5}(\frac{5}{2})
				&  |[name=e52]| \text{I}_{\para{P}_1}(1)

				\\
				& & & |[name=e12]| \text{I}_{\para{P}_1}(0)
				& & |[name=e84]| \text{I}_{\para{P}_3}(0)
				
				\\
				|[name=e61]| \text{I}_{\para{P}_5}(\frac{7}{2})
				&  |[name=e62]| \text{I}_{\para{P}_6}(4)
				&  |[name=e11]| \text{I}_{\para{P}_2}(\frac{5}{2})
				& & |[name=e92]| \text{I}_{\para{P}_4}(1)
				
				\\
				& & & |[name=e13]| \text{I}_{\para{P}_6}(0)
				& & |[name=e104]| \text{I}_{\para{P}_5}(0)
				\\
			};
			\draw[->]      
			(e11)  edge [->]
			node[text style,above]{$\widetilde{w}_{2,1}$}  (e12)
			(e11)  edge [->]
			node[text style,above]{$\widetilde{w}_{2,6}$}  (e13)
			
			(e31)  edge [->]
			node[text style,above]{$\widetilde{w}_{3,1}$}  (e32)
			(e41)  edge [->]
			node[text style,above]{$\widetilde{w}_{3,6}$}  (e42)
			(e51)  edge [->]
			node[text style,above]{$\widetilde{w}_{5,1}$}  (e52)
			(e61)  edge [->]
			node[text style,above]{$\widetilde{w}_{5,6}$}  (e62)

			(e92)  edge [->]
			node[text style,above]{$\widetilde{w}_{4,3}$}  (e84)
			(e92)  edge [->]
			node[text style,above]{$\widetilde{w}_{4,5}$}  (e104)
			;
			
		\end{tikzpicture}
	\end{center}
	\caption{Siegel-Weil Identities for $E_6$ - Non-square Integrable Residues}
	\label{digram:E6_SW_NSI}
\end{figure}

\begin{figure}[h!]
	\begin{center}
		\begin{tikzpicture}[thin,scale=2]
			\tikzset{
				text style/.style={
					sloped, 
					text=black,
					font=\tiny,
					above
				}
			}
			\matrix (m) [matrix of math nodes, row sep=1em, column sep=3em]
			{
				&  |[name=p3522]| \text{I}_{\para{P}_3}(\frac{5}{2})
				\\
				|[name=p414]| \text{I}_{\para{P}_4}(2)
				&& |[name=p1134]| \text{I}_{\para{P}_1}(\frac{1}{2}) &
				|[name=p418]| \text{I}_{\para{P}_4}(1)
				&  |[name=p5110]| \text{I}_{\para{P}_5}(1)
				&  |[name=p3122]| \text{I}_{\para{P}_3}(\frac{1}{2})
				&
				\\
				&  |[name=p6526]| \text{I}_{\para{P}_6}(\frac{5}{2})
				&  
				
				\\
				& |[name=p1734]| \text{I}_{\para{P}_1}(\frac{7}{2})
				& & |[name=p2114]| \text{I}_{\para{P}_2}(1)
				& & |[name=p11134]| \text{I}_{\para{P}_1}(\frac{11}{2})
				\\
				|[name=p5310]| \text{I}_{\para{P}_5}(3)
				& |[name=p6726]| \text{I}_{\para{P}_6}(\frac{7}{2})
				& |[name=p3322]| \text{I}_{\para{P}_3}(\frac{3}{2})
				& & |[name=p438]| \text{I}_{\para{P}_4}(3)
				& |[name=p2514]| \text{I}_{\para{P}_2}(5)
				\\
				& |[name=p711]| \text{I}_{\para{P}_7}(1)
				& &  |[name=p6126]| \text{I}_{\para{P}_6}(\frac{1}{2})
				& &  |[name=p7518]| \text{I}_{\para{P}_7}(5)
				\\};
			
			\draw[->]      
			(p414)  edge [->]
			node[text style,above]{$\widetilde{w}_{4,3}$}  (p3522)
			
			(p414)  edge [->]
			node[text style,above]{$\widetilde{w}_{4,6}$}  (p6526)
			
			(p6526)  edge [->]
			node[text style,above]{$\widetilde{w}_{6,1}$}  (p1134)
			
			(p3522)  edge [->]
			node[text style,above]{$\widetilde{w}_{3,1}$}  (p1134)
			
			(p418)  edge [->]
			node[text style,above]{$\widetilde{w}_{4,5}$}  (p5110)
			
			(p5110)  edge [->]
			node[text style,above]{$\widetilde{w}_{5,3}$}  (p3122)
			
			(p5310)  edge [->]
			node[text style,above]{$\widetilde{w}_{5,1}$}  (p1734)
			
			(p5310)  edge [->]
			node[text style,above]{$\widetilde{w}_{5,7}$}  (p711)		
			
			(p5310)  edge [->]
			node[text style,above]{$\widetilde{w}_{5,6}$}  (p6726)
			
			(p3322)  edge [->]
			node[text style,above]{$\widetilde{w}_{3,2}$}  (p2114)
			
			(p3322)  edge [->]
			node[text style,above]{$\widetilde{w}_{3,6}$}  (p6126)
			
			(p438)  edge [->]
			node[text style,above]{$\widetilde{w}_{4,1}$}  (p11134)
			
			(p438)  edge [->]
			node[text style,above]{$\widetilde{w}_{4,2}$}  (p2514)
			
			(p438)  edge [->]
			node[text style,above]{$\widetilde{w}_{4,7}$}  (p7518)
			;
		\end{tikzpicture}
	\end{center}
	\caption{Siegel-Weil Identities for $E_7$ - Square Integrable Residues}
	\label{digram:E7_SW_SI}
\end{figure}

\begin{figure}[h!]
	\begin{center}
		\begin{tikzpicture}[thin,scale=2]
			\tikzset{
				text style/.style={
					sloped, 
					text=black,
					font=\tiny,
					above
				}
			}
			\matrix (m) [matrix of math nodes, row sep=1em, column sep=3em]
			{ 		
				|[name=p2314]| \text{I}_{\para{P}_2}(3)
				&  |[name=p1334]| \text{I}_{\para{P}_1}(\frac{3}{2})
				&  |[name=p227]| \text{I}_{\para{P}_2}(4)
				&  |[name=p719]| \text{I}_{\para{P}_7}(2)
				&  |[name=p3722]| \text{I}_{\para{P}_3}(\frac{7}{2})
				&  |[name=p716]| \text{I}_{\para{P}_7}(3)
				
				\\
				|[name=p3922]| \text{I}_{\para{P}_3}(\frac{9}{2})
				&  |[name=p11334]| \text{I}_{\para{P}_1}(\frac{13}{2})
				&  |[name=p423]| \text{I}_{\para{P}_4}(\frac{2}{3})
				&  |[name=p513]| \text{I}_{\para{P}_5}(\frac{1}{3})
				&  |[name=p432]| \text{I}_{\para{P}_4}(\frac{3}{2})
				&  |[name=p611]| \text{I}_{\para{P}_6}(1)
				
				\\
				|[name=p532]| \text{I}_{\para{P}_5}(\frac{3}{2})
				&  |[name=p212]| \text{I}_{\para{P}_2}(\frac{1}{2})
				&  |[name=p515]| \text{I}_{\para{P}_5}(2)
				&  |[name=p217]| \text{I}_{\para{P}_2}(2)
				&  |[name=p525]| \text{I}_{\para{P}_5}(4)
				&  |[name=p713]| \text{I}_{\para{P}_7}(6)
				
				\\
				&&|[name=p61126]| \text{I}_{\para{P}_6}(\frac{11}{2})
				&  |[name=p7718]| \text{I}_{\para{P}_7}(7)

				\\};
			
			\draw[->]      
			(p2314)  edge [->]
			node[text style,above]{$\widetilde{w}_{2,1}$}  (p1334)
			
			(p227)  edge [->]
			node[text style,above]{$\widetilde{w}_{2,7}$}  (p719)
			
			(p3922)  edge [->]
			node[text style,above]{$\widetilde{w}_{3,1}$}  (p11334)
			
			(p515)  edge [->]
			node[text style,above]{$\widetilde{w}_{5,2}$}  (p217)
			
			(p525)  edge [->]
			node[text style,above]{$\widetilde{w}_{5,7}$}  (p713)
			
			(p61126)  edge [->]
			node[text style,above]{$\widetilde{w}_{6,7}$}  (p7718)
			
			(p3722)  edge [->]
			node[text style,above]{$\widetilde{w}_{3,7}$}  (p716)
			
			(p423)  edge [->]
			node[text style,above]{$\widetilde{w}_{4,5}$}  (p513)
			
			(p432)  edge [->]
			node[text style,above]{$\widetilde{w}_{4,6}$}  (p611)
			
			(p532)  edge [->]
			node[text style,above]{$\widetilde{w}_{5,2}$}  (p212)
			
			;
		\end{tikzpicture}
	\end{center}
	\caption{Siegel-Weil Identities for $E_7$ - Non-square Integrable Residues}
	\label{digram:E7_SW_NSI}
\end{figure}

\begin{figure}[h!]
	\begin{center}
		\begin{tikzpicture}[thin,scale=2]
			\tikzset{
				text style/.style={
					sloped, 
					text=black,
					font=\tiny,
					above
				}
			}
			\matrix (m) [matrix of math nodes, row sep=1em, column sep=3em]
			{ 	
				|[name=p416]| \text{I}_{\para{P}_4}(\frac{3}{2})
				&  |[name=p617]| \text{I}_{\para{P}_6}(2)
				&  |[name=p712]| \text{I}_{\para{P}_7}(\frac{1}{2})
				&  |[name=p5322]| \text{I}_{\para{P}_5}(\frac{3}{2})
				&  |[name=p3326]| \text{I}_{\para{P}_3}(\frac{3}{2})
				&  |[name=p2334]| \text{I}_{\para{P}_2}(\frac{3}{2})
				
				\\
				& &
				& |[name=p112]| \text{I}_{\para{P}_1}(\frac{1}{2})
				&	&  |[name=p7538]| \text{I}_{\para{P}_7}(\frac{5}{2})
				
				\\
				|[name=p4118]| \text{I}_{\para{P}_4}(\frac{1}{2})
				&  |[name=p5122]| \text{I}_{\para{P}_5}(\frac{1}{2}) &  |[name=p2534]| \text{I}_{\para{P}_2}(\frac{5}{2})
				& & |[name=p3526]| \text{I}_{\para{P}_3}(\frac{5}{2})
				&  &  
				
				\\
				&&
				& |[name=p7338]| \text{I}_{\para{P}_7}(\frac{3}{2})
				&  &  |[name=p152]| \text{I}_{\para{P}_1}(\frac{5}{2})

				\\
				&
				|[name=p11746]| \text{I}_{\para{P}_1}(\frac{17}{2})
				& &&
				& |[name=p11346]| \text{I}_{\para{P}_1}(\frac{13}{2})
				\\
				|[name=p4718]| \text{I}_{\para{P}_4}(\frac{7}{2})
				&
				|[name=p21334]| \text{I}_{\para{P}_2}(\frac{13}{2})
				& 
				|[name=p6114]| \text{I}_{\para{P}_6}(1)
				&  |[name=p2134]| \text{I}_{\para{P}_2}(\frac{1}{2}) 
				& |[name=p5722]| \text{I}_{\para{P}_5}(\frac{7}{2})
				& |[name=p8112]| \text{I}_{\para{P}_8}(\frac{11}{2})
				\\
				&
				|[name=p81958]| \text{I}_{\para{P}_8}(\frac{19}{2})		
				& 
				&&& |[name=p71138]| \text{I}_{\para{P}_7}(\frac{11}{2})

				\\
				& |[name=p2734]| \text{I}_{\para{P}_2}(\frac{7}{2})
				&&&
				|[name=p3726]| \text{I}_{\para{P}_3}(\frac{7}{2})
				\\
				|[name=p5522]| \text{I}_{\para{P}_5}(\frac{5}{2}) & &
				|[name=p1746]| \text{I}_{\para{P}_1}(\frac{7}{2})
				& |[name=p4518]| \text{I}_{\para{P}_4}(\frac{5}{2})
				&& |[name=p812]| \text{I}_{\para{P}_8}(\frac{1}{2})

				\\
				& |[name=p6314]| \text{I}_{\para{P}_6}(3)
				&&&
				|[name=p7938]| \text{I}_{\para{P}_7}(\frac{9}{2})
				\\};
			
			\draw[->]      
			(p4518)  edge [->]
			node[text style,above]{$\widetilde{w}_{4,7}$}  (p7938)
			(p7938)  edge [->]
			node[text style,above]{$\widetilde{w}_{7,8}$}  (p812)
			(p3726)  edge [->]
			node[text style,above]{$\widetilde{w}_{3,8}$}  (p812)
			(p4518)  edge [->]
			node[text style,above]{$\widetilde{w}_{4,3}$}  (p3726)
			
			(p4718)  edge [->]
			node[text style,above]{$\widetilde{w}_{4,8}$}  (p81958)
			(p4718)  edge [->]
			node[text style,above]{$\widetilde{w}_{4,1}$}  (p11746)
			(p4718)  edge [->]
			node[text style,above]{$\widetilde{w}_{4,2}$}  (p21334)
			
			(p5722)  edge [->]
			node[text style,above]{$\widetilde{w}_{5,8}$}  (p8112)
			(p5722)  edge [->]
			node[text style,above]{$\widetilde{w}_{5,7}$}  (p71138)
			(p5722)  edge [->]
			node[text style,above]{$\widetilde{w}_{5,1}$}  (p11346)
			
			(p5522)  edge [->]
			node[text style,above]{$\widetilde{w}_{5,2}$}  (p2734)
			(p5522)  edge [->]
			node[text style,above]{$\widetilde{w}_{5,6}$}  (p6314)
			(p2734)  edge [->]
			node[text style,above]{$\widetilde{w}_{2,1}$}  (p1746)
			(p6314)  edge [->]
			node[text style,above]{$\widetilde{w}_{6,1}$}  (p1746)
			
			(p2534)  edge [->]
			node[text style,above]{$\widetilde{w}_{2,7}$}  (p7338)
			(p2534)  edge [->]
			node[text style,above]{$\widetilde{w}_{2,1}$}  (p112)
			
			(p3526)  edge [->]
			node[text style,above]{$\widetilde{w}_{3,7}$}  (p7538)
			(p3526)  edge [->]
			node[text style,above]{$\widetilde{w}_{3,1}$}  (p152)
			
			(p416)  edge [->]
			node[text style,above]{$\widetilde{w}_{4,6}$}  (p617)
			(p617)  edge [->]
			node[text style,above]{$\widetilde{w}_{6,7}$}  (p712)
			
			(p5322)  edge [->]
			node[text style,above]{$\widetilde{w}_{5,3}$}  (p3326)
			(p3326)  edge [->]
			node[text style,above]{$\widetilde{w}_{3,2}$}  (p2334)
			
			(p4118)  edge [->]
			node[text style,above]{$\widetilde{w}_{4,5}$}  (p5122)
			
			(p6114)  edge [->]
			node[text style,above]{$\widetilde{w}_{6,2}$}  (p2134)
			;
		\end{tikzpicture}
	\end{center}
	\caption{Siegel-Weil Identities for $E_8$ - Square Integrable Residues}
	\label{digram:E8_SW_SI}
\end{figure}

\begin{figure}[h!]
	\begin{center}
		\begin{tikzpicture}[thin,scale=2]
			\tikzset{
				text style/.style={
					sloped, 
					text=black,
					font=\tiny,
					above
				}
			}
			\matrix (m) [matrix of math nodes, row sep=1em, column sep=3em]
			{ 		
				|[name=p11146]| \text{I}_{\para{P}_1}(\frac{11}{2})
				&  |[name=p8558]| \text{I}_{\para{P}_8}(\frac{5}{2})
				&  |[name=p292]| \text{I}_{\para{P}_2}(\frac{9}{2})
				&  |[name=p832]| \text{I}_{\para{P}_8}(\frac{3}{2})
				&  |[name=p21134]| \text{I}_{\para{P}_2}(\frac{11}{2})
				&  |[name=p81358]| \text{I}_{\para{P}_8}(\frac{13}{2})
				
				\\
				|[name=p312]| \text{I}_{\para{P}_3}(\frac{1}{2})
				&  |[name=p60]| \text{I}_{\para{P}_6}(0) 
				&  |[name=p376]| \text{I}_{\para{P}_3}(\frac{7}{6})
				&  |[name=p256]| \text{I}_{\para{P}_2}(\frac{5}{6})
				& |[name=p32]| \text{I}_{\para{P}_3}(2)
				&  |[name=p71]| \text{I}_{\para{P}_7}(1)
				
				\\
				|[name=p31126]| \text{I}_{\para{P}_3}(\frac{11}{2})
				&  |[name=p11946]| \text{I}_{\para{P}_1}(\frac{19}{2})
				& |[name=p3926]| \text{I}_{\para{P}_3}(\frac{9}{2})
				&  |[name=p81558]| \text{I}_{\para{P}_8}(\frac{15}{2})
				&  |[name=p4310]| \text{I}_{\para{P}_4}(\frac{3}{10})
				&  |[name=p5110]| \text{I}_{\para{P}_5}(\frac{1}{10})
				
				\\
				|[name=p434]| \text{I}_{\para{P}_4}(\frac{3}{4})
				&  |[name=p314]| \text{I}_{\para{P}_3}(\frac{1}{4})
				&  |[name=p456]| \text{I}_{\para{P}_4}(\frac{5}{6})
				&  |[name=p613]| \text{I}_{\para{P}_6}(\frac{1}{3})
				&  |[name=p41]| \text{I}_{\para{P}_4}(1)
				&  |[name=p31]| \text{I}_{\para{P}_3}(1)
				
				\\
				|[name=p476]| \text{I}_{\para{P}_4}(\frac{7}{6})
				&  |[name=p643]| \text{I}_{\para{P}_6}(\frac{4}{3})
				&  |[name=p42]| \text{I}_{\para{P}_4}(2)
				&  |[name=p73]| \text{I}_{\para{P}_7}(3)
				&  |[name=p556]| \text{I}_{\para{P}_5}(\frac{5}{6})
				&  |[name=p316]| \text{I}_{\para{P}_3}(\frac{1}{6})

				\\
				|[name=p51]| \text{I}_{\para{P}_5}(1)
				&  |[name=p612]| \text{I}_{\para{P}_6}(\frac{1}{2})
				&  |[name=p576]| \text{I}_{\para{P}_5}(\frac{7}{6})
				&  |[name=p216]| \text{I}_{\para{P}_2}(\frac{1}{6})
				&  |[name=p52]| \text{I}_{\para{P}_5}(2)
				&  |[name=p11]| \text{I}_{\para{P}_1}(1)
				
				\\
				|[name=p5922]| \text{I}_{\para{P}_5}(\frac{9}{2})
				&  |[name=p82158]| \text{I}_{\para{P}_8}(\frac{21}{2})
				&  |[name=p652]| \text{I}_{\para{P}_6}(\frac{5}{2})
				&  |[name=p12]| \text{I}_{\para{P}_1}(2)
				&  |[name=p64]| \text{I}_{\para{P}_6}(4)
				&  |[name=p872]| \text{I}_{\para{P}_8}(\frac{7}{2})
				
				\\
				|[name=p6514]| \text{I}_{\para{P}_6}(5)
				&  |[name=p71338]| \text{I}_{\para{P}_7}(\frac{13}{2})
				&  |[name=p637]| \text{I}_{\para{P}_6}(6)
				&  |[name=p82358]| \text{I}_{\para{P}_8}(\frac{23}{2})
				& |[name=p71738]| \text{I}_{\para{P}_7}(\frac{17}{2})
				&  |[name=p82558]| \text{I}_{\para{P}_8}(\frac{25}{2})
				
				\\};
			
			\draw[->]      
			(p11146)  edge [->]
			node[text style,above]{$\widetilde{w}_{1,8}$}  (p8558)
			
			(p21134)  edge [->]
			node[text style,above]{$\widetilde{w}_{2,8}$}  (p81358)
			
			(p3926)  edge [->]
			node[text style,above]{$\widetilde{w}_{3,8}$}  (p81558)
			
			(p5922)  edge [->]
			node[text style,above]{$\widetilde{w}_{5,8}$}  (p82158)
			
			(p637)  edge [->]
			node[text style,above]{$\widetilde{w}_{6,8}$}  (p82358)
			
			(p71738)  edge [->]
			node[text style,above]{$\widetilde{w}_{7,8}$}  (p82558)

			(p6514)  edge [->]
			node[text style,above]{$\widetilde{w}_{6,7}$}  (p71338)
			
			
			(p292)  edge [->]
			node[text style,above]{$\widetilde{w}_{2,8}$}  (p832)
			
			(p376)  edge [->]
			node[text style,above]{$\widetilde{w}_{3,2}$}  (p256)
			
			(p32)  edge [->]
			node[text style,above]{$\widetilde{w}_{3,7}$}  (p71)
			
			(p4310)  edge [->]
			node[text style,above]{$\widetilde{w}_{4,5}$}  (p5110)
			
			(p434)  edge [->]
			node[text style,above]{$\widetilde{w}_{4,3}$}  (p314)
			
			(p312)  edge [->]
			node[text style,above]{$\widetilde{w}_{3,6}$}  (p60)
			
			(p456)  edge [->]
			node[text style,above]{$\widetilde{w}_{4,6}$}  (p613)
			
			(p476)  edge [->]
			node[text style,above]{$\widetilde{w}_{4,6}$}  (p643)
			
			(p42)  edge [->]
			node[text style,above]{$\widetilde{w}_{4,7}$}  (p73)
			
			(p556)  edge [->]
			node[text style,above]{$\widetilde{w}_{5,3}$}  (p316)
			
			(p576)  edge [->]
			node[text style,above]{$\widetilde{w}_{5,2}$}  (p216)
			
			(p52)  edge [->]
			node[text style,above]{$\widetilde{w}_{5,1}$}  (p11)
			
			(p652)  edge [->]
			node[text style,above]{$\widetilde{w}_{6,1}$}  (p12)
			
			(p64)  edge [->]
			node[text style,above]{$\widetilde{w}_{6,8}$}  (p872)
			
			(p41)  edge [->]
			node[text style,above]{$\widetilde{w}_{4,3}$}  (p31)
			
			(p51)  edge [->]
			node[text style,above]{$\widetilde{w}_{5,6}$}  (p612)
			
			(p31126)  edge [->]
			node[text style,above]{$\widetilde{w}_{3,1}$}  (p11946)
			
			;
		\end{tikzpicture}
	\end{center}
	\caption{Siegel-Weil Identities for $E_8$ - Non-square Integrable Residues}
	\label{digram:E8_SW_NSI}
\end{figure}

\chapter[Arthur Parameters]{Arthur Parameters of the Square-Integrable Degenerate Residual Representations}
\label{Chap:Arthur}

In this chapter, we wish to compare the results of \Cref{Chap:Square_Integrable} with the Arthur conjectures  for $L^2_{\coset{\bT}}$.
We start by a brief description of Arthur parameters and conjectures.
Please see \cite{MR1021499} for more details.
Also, the reader may wish to consider \cite{MR1847140,MR1426903,MR2262172} for other detailed examples.

\section{Arthur Parameters and the Arthur Conjectures}

Arthur's conjectures aims at a precise description of the discrete spectrum $L^2_{dis.}\bk{ G\bk{F}\lmod G\bk{\AA}}$ of $G\bk{\AA}$.
More precisely, the conjectures first imply a rough decomposition
\[
L^2_{dis.}\bk{ Z_\AA G_F\lmod G\bk{\AA}} = \widehat{\oplus}_\psi L^2\bk{ G\bk{F} \lmod G\bk{\AA}}_\psi,
\]
where $\psi$ runs through the set of global A-parameters, that is the set of $\check{G}$-conjugacy classes of admissible maps
\[
\psi : \Lfun_F \times SL_2\bk{\C} \to \check{G} ,
\]
where $\Lfun_F$ is the conjectural Langlands group of the number field $F$.

The conjectures, then give a description of the space $L^2_\psi = L^2\bk{ G\bk{F}\lmod G\bk{\AA}}_\psi$.

First, we describe the local constituents $\pi_v$ of the irreducible constituents $\pi=\otimes_{v\in \Places}\pi_v$ of $L^2_\psi$.
Since the local Weil-Deligne group $W_{F_v}$ embeds into $\Lfun_F$ (according to the conjectured properties of $\Lfun_F$), the global A-parameter $\psi$ gives rise to the local A-parameter
\[
\psi_v = \psi\res{W_{F_v}\times SL_2\bk{\C}} .
\]
Denote $C_{\psi_v} = Cent\bk{im \psi, \check{G}}$ and let $S_{\psi_v} = C_{\psi_v}\rmod C_{\psi_v}^0$.
$S_{\psi_v}$ is called the component group associated to $\psi$.
Note that $S_{\psi_v}$ is a finite group.

Arthur's conjectures state, among other things, that:
\begin{itemize}
	\item There exists a set of irreducible representations $\Pi_{\psi_v}$, called the local A-packet of $\psi_v$, with a bijection
	\begin{equation}
		\label{Eq:Local_Arthur_cor}
		\begin{split}
			\widehat{S_{\psi_v}} & \to  \Pi_{\psi_v} \\
			\eta_v & \mapsto  \pi_{\eta_v}
		\end{split}
	\end{equation}
	
	\item If $\eta_v = \Id_v$, then $\pi_{\eta_v}$ is unramified with Satake parameter
	\begin{equation}
		\label{Eq:Satake_parameter_for_Arthur_parameter}
		\sigma_{\psi_v} = \psi_v \bk{Fr_v, \begin{pmatrix} q_v^{\frac12} & \\ & q_v^{-\frac12} \end{pmatrix}},
	\end{equation}
	where $Fr_v$ is the Frobenius element of $W_{F_v} \hookrightarrow \Lfun_{F_v}$.
	
	\item The global A-packet of $\psi$, defined by
	\[
	\Pi_\psi = \set{\pi=\otimes_{v\in\Places} \pi_{\eta_v} \mvert \pi_{\eta_v}\in \Pi_{\psi_v}, \ \eta_v=\Id_v \text{ for almost all $v$} },
	\]
	is in bijection with with the set of irreducible representations of the compact set $S_{\psi,\AA} = \prodl_{v\in\Places} S_{\psi_v}$.
	We write this bijection as follows
	\[
	\begin{split}
		\widehat{S_{\psi,\AA}} & \to \Pi_\psi \\
		\eta = \otimes_{v\in \Places} \eta_v & \mapsto \pi_\eta =  \otimes_{v\in\Places} \pi_{\eta_v} .
	\end{split}
	\]
	Note that $S_{\psi}$ embeds diagonally into $S_{\psi,\AA}$.
	
	\item Arthur attached a quadratic character $\epsilon_\psi$ of $S_\psi$ such that, for $\eta\in \widehat{S_{\psi,\AA}}$, the multiplicity of $\eta$ in $L^2\bk{S_{\psi,\AA}}$ and of $\pi_\eta$ in $L^2_{dis.}$ is given by
	\begin{equation}
		\label{Eq:Arthurs_multiplicity_formula}
		m_\eta = \frac{1}{\Card{S_\psi}} \bk{\suml_{s\in S_\psi} \epsilon_\psi \bk{s} \cdot \eta\bk{s}} .
	\end{equation}
	In particular, if $\mu=1$, then $\epsilon_\psi=1$.
	\item In particular, if $S_{\psi}$ is finite (in which case, $\psi$ is called \emph{elliptic}), then $\pi_\eta=\otimes \pi_{\eta_v}$ occurs in the discrete spectrum if and only if
	\begin{equation}
		\label{Eq:Arthur_test_for_discrete_series}
		\suml_{s\in S_\psi} \prodl_{v\in\Places}\eta_v\bk{s_v}  \neq 0 ,
	\end{equation}
	where we write $s=\otimes s_v$.
\end{itemize}

In order to analyze an Arthur parameter $\psi: \Lfun_F \times SL_2\bk{\C} \to \check{G}$, we need to describe the two commuting maps
\[
\begin{split}
	& \phi:SL_2\bk{\C}\to\check{G}, \quad \phi\bk{g}=\psi\bk{1,g} \\
	& \mu:\Lfun_F \to \check{G} , \quad \mu\bk{x} = \psi\bk{x,1} .
\end{split}
\]
First of all, we note that, by the Jacobson-Morozov Theorem (see \cite{MR1503258,MR0007750}), the $\check{G}$-conjugacy class of $\phi$ is determined by $X = \phi\begin{pmatrix} 1&1\\0&1 \end{pmatrix}$.

After fixing $X$ and $\phi$, we note that $\mu$ is, in fact, a map $\mu:\Lfun_F \to C_\phi$, where $C_\phi = Cent\bk{im\ \phi, \check{G}}$.

We also write $C_X = Cent\bk{X, \check{G}}$ and let $U^X$ denote the unipotent radical of $U_X$.
By \cite[Proposition 2.4]{MR782556}, it holds that
\begin{itemize}
	\item $C_\phi$ is reductive and $C_X = C_\phi \cdot U^X$ is a Levi decomposition.
	\item The inclusion $C_\phi \subseteq C_X$ induces an isomorphism on the component groups, $C_\phi\rmod C_\phi^0 Z_{\check{G}} \cong C_X\rmod C_X^0 Z_{\check{G}}$.
\end{itemize}

For $X$ as above, let $\gen{X, Y, H}$ be an $SL_2$-triple as in the Jacobson-Morozov theorem such that $H=\phi\begin{pmatrix} q&\\&q^{-1} \end{pmatrix}$.

We denote by $\mu_k$ the cyclic group $\ZZ\rmod\bk{k\ZZ}$ and by $S_k$ we denote the symmetric group of order $k!$.

Also, it should be added that all data regarding nilpotent orbits and their centralizers, can be read from the tables in \cite[page 402-407]{MR1266626}.

\section[Trivial $\chi$]{Unipotent Arthur Parameters for Degenerate Residual Representations of Groups of Type $E_n$ - Trivial $\chi$}

If $\chi=\Id$, \Cref{Eq:Satake_parameter_for_Arthur_parameter} implies that $\mu=\Id$, that is $\psi\res{\Lfun_F}=\Id$.
Hence, $\psi$ is determined by $X = \phi\begin{pmatrix} 1&1\\0&1 \end{pmatrix}$.
In other words, it is determined by the Satake parameter $\sigma_{\psi_v}$, which gives rise to the dominant weight $\lambda_{dom.} = \sigma_{\psi_v}$ of $T$.
That is, $\pi_{\Id_v}$ is the unique irreducible quotient of $i_{T\bk{F_v}}^{G\bk{F_v}}\lambda_{dom.}$.

It should be pointed out that, since we consider $\check{G}$-conjugacy classes of maps $\phi$, they are parameterized by distinguished nilpotent orbit $\check{G}$ and $\lambda_{dom.}$ can be read directly from the Weighted Dynkin diagrams.
More precisely, Given a unipotent orbit $\mO$, the theory of nilpotent attaches to it a Jacobson-Morozov triple $\gen{X, Y, H}$ and a labeled Dynkin diagram.
The semi-simple element $H=\phi\begin{pmatrix}q&\\&q^{-1}\end{pmatrix}$ is associated both with the weighted diagram and with $\sigma_{\psi_v}$.
By definition, the weight $\epsilon_i$ on the node $\alpha_i\in\Delta$, in the weighted Dynkin diagram, is given by $\epsilon_i = \gen{\alpha_i,H}$.
Thus, it holds that
\[
\lambda_{dom.} = \frac12\suml_{\alpha_i\in\Delta} \epsilon_i \fun{i} .
\]
Also, note that a distinguished orbit is even and hence $\epsilon_i\in\set{0,2}$.

\subsection{Case of $E_6$}

For the group of type $E_6$, we list the distinguished nilpotent orbits in $\check{E_6}$, the associated Satake parameter and data $\bk{i,s_0}$ such that $\ResRep{E_6}{i}{s_0}{\Id}$ realizes parts of $L^2_\psi$.
We also list the type of the component group $S_\psi$.

\begin{center}
	\begin{longtable}[h]{|c|c|c|c|}
			\hline
			Orbit & $\lambda_{dom.}$ & $\bk{i,s_0}$ & $S_\psi$
			\\ \hline \endhead \hline
			$E_6$ & $\esixchar{1}{1}{1}{1}{1}{1}$ & 
			\begin{tabular}{c}
				$\bk{1,6}$, $\bk{2,\frac{11}{2}}$, $\bk{3,\frac92}$, \\ $\bk{4,\frac72}$, $\bk{5,\frac92}$, $\bk{6,6}$
			\end{tabular}
			& 1 \\ \hline
			$E_6\bk{a_1}$ & $\esixchar{1}{1}{1}{0}{1}{1}$ & $\bk{1,3}$, $\bk{2,\frac72}$, $\bk{4,\frac52}$, $\bk{6,3}$ & 1 \\ \hline
			$E_6\bk{a_3}$ & $\esixchar{1}{0}{0}{1}{0}{1}$ & $\bk{2,\frac12}$, $\bk{3,\frac32}$, $\bk{4,\frac32}$, $\bk{5,\frac32}$ & $S_2$ \\ \hline
	\end{longtable}
	\label{Table:E6_Arthur}
\end{center}

\begin{itemize}
	\item The residual representation associated with the orbit $E_6$ is the trivial one.
	This residue is irreducible, in accordance with Arthur's conjectures.
	\item The residual representation associated with the orbit $E_6\bk{a_1}$ is the minimal one.
	This residue is irreducible, in accordance with Arthur's conjectures.
	\item The residual representation associated with the orbit $E_6\bk{a_3}$ is irreducible.
	According to Arthur's conjectures, the local packet $\Pi_{\psi_v} = \set{\pi_{1,v}, \pi_{sgn,v}}$ contains two irreducible representations.
	By \Cref{Eq:Arthurs_multiplicity_formula}, both will appear in the discrete spectrum.
	Indeed, for a finite subset $S\subset\Places$, let $\eta_S=\otimes_{v\in S} \sgn_v \otimes_{v\notin S} \Id_v$.
	Then,
	\[
	m_\eta = \piece{0, & \Card{S}\equiv 1 \pmod{2} \\ 
		1, & \Card{S}\equiv 0 \pmod{2}} .
	\]
	However, only $S_\emptyset$ appears in the degenerate principal series.
	By Kim's conjecture, the other $\pi_{\eta_S}$ will appear in the residual spectrum, but only as residues of Eisenstein series induced from smaller parabolic subgroups.
\end{itemize}

Note that, as opposed to the following cases, the next to minimal representation does not appear here.
This has to do with the fact that the next-to-minimal nilpotent orbit, $D_5$, is not distinguished and the next-to-minimal residual representation is not square-integrable.

\subsection{Case of $E_7$}

For the group of type $E_7$, we list the data in a similar fashion to the $E_6$ case.
The added information appearing here is that we mark the cases with a reducible residue by an asterisk.

\begin{center}
	\begin{longtable}[h]{|c|c|c|c|}
		\hline
		Orbit & $\lambda_{dom.}$ & $\bk{i,s_0}$ & $S_\psi$
		\\ \hline \endhead \hline
		$E_7$ & $\esevenchar{1}{1}{1}{1}{1}{1}{1}$ & 
		\begin{tabular}{c}
			$\bk{1,\frac{17}{2}}$, $\bk{2,7}$, $\bk{3,\frac{11}{2}}$, $\bk{4,4}$, \\
			$\bk{5,5}$, $\bk{6,\frac{13}{2}}$, $\bk{7,9}$
		\end{tabular}
		 & 1 \\ \hline
		$E_7\bk{a_1}$ & $\esevenchar{1}{1}{1}{0}{1}{1}{1}$
		& $\bk{1,\frac{11}{2}}$, $\bk{2,5}$, $\bk{4,3}$, $\bk{7,5}$ & 1 \\ \hline
		$E_7\bk{a_2}$ & $\esevenchar{1}{1}{1}{0}{1}{0}{1}$
		& $\bk{1,\frac72}$, $\bk{5,3}$, $\bk{6,\frac72}$, $\bk{7,1}$ & 1 \\ \hline
		$E_7\bk{a_3}$ & $\esevenchar{1}{0}{0}{1}{0}{1}{1}$
		& $\bk{1,\frac12}$, $\bk{3,\frac52}$, $\bk{4,2}$, $\bk{6,\frac52}$ & $S_2$ \\ \hline
		$E_7\bk{a_4}$ & $\esevenchar{1}{0}{0}{1}{0}{0}{1}$
		& $\bk{3,\frac32}^\ast$, $\bk{6,\frac12}^\ast$, $\bk{2,1}$ & $S_2$ \\ \hline
		$E_7\bk{a_5}$ &  $\esevenchar{0}{0}{0}{1}{0}{0}{1}$
		& $\bk{3,\frac12}$, $\bk{4,1}$, $\bk{5,1}$ & $S_3$ \\ \hline
	\end{longtable}
	\label{Table:E7_Arthur}
\end{center}

\begin{itemize}
	\item The residual representation associated with the orbit $E_7$ is the trivial one.
	This residue is irreducible, in accordance with Arthur's conjectures.
	\item The residual representation associated with the orbit $E_7\bk{a_1}$ is the minimal one.
	This residue is irreducible, in accordance with Arthur's conjectures.
	\item The residual representation associated with the orbit $E_7\bk{a_2}$ is the next-to-minimal one.
	This residue is irreducible, in accordance with Arthur's conjectures.
	\item In the case of the orbit $E_7\bk{a_3}$, one can carry an analysis similar to that of the orbit $E_6\bk{a_3}$ of $E_6$.
	\item The residual representation associated with the orbit $E_7\bk{a_4}$ is not irreducible, while the residual representations at $\bk{3,\frac32}$ and $\bk{6,\frac12}$ are.
	By Arthur's conjecture, the local packets $\Pi_{\psi_v}=\set{\pi_{1,v}, \pi_{\sgn,v}}$ should consist of two irreducible representation.
	Together with the multiplicities
	\[
	m_\eta = \piece{0, & \Card{S}\equiv 1 \pmod{2} \\ 
		1, & \Card{S}\equiv 0 \pmod{2}} ,
	\]
	where $\eta_S=\otimes_{v\in S} \sgn_v \otimes_{v\notin S} \Id_v$.
	Indeed, that has been proved to be the case in \Cref{Prop:Residue_E7_2_1_1}.
	That is, $L^2_\psi=\ResRep{E_7}{2}{1}{\Id}$.
	In particular, the eigenvalue calculated in \Cref{Appendix:Local_data_for_global_SI_resisudes} could be inferred from the multiplicity formula.
	
	We point out that $\ResRep{E_7}{3}{\frac32}{\Id}$ and $\ResRep{E_7}{6}{\frac12}{\Id}$ only realize $\pi_{\Id}$.

	\item The residual representation associated with the orbit $E_7\bk{a_5}$ is irreducible.
	However, here $S_\psi=S_3$ and Arthur's and Kim's conjectures expect that $\Pi_{\psi_v}=\set{\pi_{\Id,v}, \pi_{\sgn,v}, \pi_{\tau,v}}$.
	Here, $\tau$ denotes the irreducible $2$-dimensional representation of $S_3$.
	For $\dot{S}=S_{\sgn}\cupdot S_\tau$, write
	\[
	\eta_{\dot{S}} = \otimes_{v\notin \dot{S}} \Id_v \otimes_{v\in S_{\sgn}} \sgn_v \otimes_{v\in S_\tau} \tau_v.
	\]
	Then, the expected multiplicity of $\eta_{\dot{S}}$ in $L^2_{dis.}$ is
	\[
	m_{\eta_{\dot{S}}} = \piece{
	0, & \Card{S_\tau}=0,\ \Card{S_{\sgn}}\equiv 1 \pmod{2} \\
	1, & \Card{S_\tau}=0,\ \Card{S_{\sgn}}\equiv 0 \pmod{2} \\
	0, & \Card{S_\tau} = 1 \\
	\frac{2^{\Card{S_{\tau}}}+2\cdot\bk{-1}^{\Card{S_{\tau}}}}{6}, & \Card{S_{\tau}} > 1 . }
	\]
	It is interesting to point out that for $\Card{S_\tau}>1$, the multiplicities $m_\eta$ forms an unbounded sequence of numbers.
	Such a sequence is known to occur in the cuspidal spectrum of $G_2$ (see \cite{MR2192822,MR1918673} for more information).
	Such a sequence is also expected to appear in the cuspidal spectrum of $E_7$.
\end{itemize}

\subsection{Case of $E_8$}

For the group of type $E_8$, we list the data in a similar fashion to the $E_6$ and $E_7$ cases.
We also added the superscript $?$ to the representations $\bk{2,\frac12}$ and $\bk{5,\frac12}$ as the length of the maximal semi-simple quotient was not determined in this case.

\begin{center}
	\begin{longtable}[h]{|c|c|c|c|}
		\hline
		Orbit & $\lambda_{dom.}$ & $\bk{i,s_0}$ & $S_\psi$
		\\ \hline \endhead \hline
		$E_8$ & $\eeightchar{1}{1}{1}{1}{1}{1}{1}{1}$ & 
		\begin{tabular}{c}
			$\bk{1,\frac{23}{2}}$, $\bk{2,\frac{17}{2}}$, $\bk{3,\frac{13}{2}}$, $\bk{4,\frac{9}{2}}$, \\ $\bk{5,\frac{11}{2}}$, $\bk{6,7}$, $\bk{7,\frac{19}{2}}$, $\bk{8,\frac{29}{2}}$
		\end{tabular}
		& 1 \\ \hline
		$E_8\bk{a_1}$ & $\eeightchar{1}{1}{1}{0}{1}{1}{1}{1}$ 
		& $\bk{1,\frac{17}{2}}$, $\bk{2,\frac{13}{2}}$, $\bk{4,\frac72}$, $\bk{8,\frac{19}{2}}$
		& 1 \\ \hline
		$E_8\bk{a_2}$ & $\eeightchar{1}{1}{1}{0}{1}{0}{1}{1}$ 
		& $\bk{1,\frac{13}{2}}$, $\bk{5,\frac{7}{2}}$, $\bk{7,\frac{11}{2}}$, $\bk{8,\frac{11}{2}}$
		& 1 \\ \hline
		$E_8\bk{a_3}$ & $\eeightchar{1}{0}{0}{1}{0}{1}{1}{1}$ 
		& $\bk{3,\frac72}$, $\bk{4,\frac52}$, $\bk{7,\frac92}$, $\bk{8,\frac12}$
		& $S_2$ \\ \hline
		$E_8\bk{a_4}$ & $\eeightchar{1}{0}{0}{1}{0}{1}{0}{1}$ & $\bk{1,\frac72}$, $\bk{2,\frac72}$, $\bk{5,\frac52}$, $\bk{6,3}$
		& $S_2$ \\ \hline
		$E_8\bk{b_4}$ & $\eeightchar{1}{0}{0}{1}{0}{0}{1}{1}$ & $\bk{3,\frac52}^\ast$, $\bk{7,\frac52}^\ast$, $\bk{1,\frac52}$
		& $S_2$ \\ \hline
		$E_8\bk{a_5}$ & $\eeightchar{1}{0}{0}{1}{0}{0}{1}{0}$ & $\bk{1,\frac12}^\ast$, $\bk{2,\frac52}^\ast$, $\bk{7,\frac32}$
		& $S_2$ \\ \hline
		$E_8\bk{b_5}$ & $\eeightchar{0}{0}{0}{1}{0}{0}{1}{1}$ & $\bk{4,\frac32}$, $\bk{6,2}$, $\bk{7,\frac12}$
		& $S_3$ \\ \hline
		$E_8\bk{a_6}$ & $\eeightchar{0}{0}{0}{1}{0}{0}{1}{0}$ & $\bk{2,\frac32}$, $\bk{3,\frac32}$, $\bk{5,\frac32}$
		& $S_3$ \\ \hline
		$E_8\bk{b_6}$ & $\eeightchar{0}{0}{0}{1}{0}{0}{0}{1}$ & $\bk{2,\frac12}^{?}$, $\bk{6,1}$
		& $S_3$ \\ \hline
		$E_8\bk{a_7}$ & $\eeightchar{0}{0}{0}{0}{1}{0}{0}{0}$ & $\bk{4,\frac12}$, $\bk{5,\frac12}^{?}$
		& $S_5$ \\ \hline
	\end{longtable}
	\label{Table:E8_Arthur}
\end{center}

\begin{itemize}
	\item The residual representation associated with the orbit $E_8$ is the trivial one.
	This residue is irreducible, in accordance with Arthur's conjectures.
	\item The residual representation associated with the orbit $E_8\bk{a_1}$ is the minimal one.
	This residue is irreducible, in accordance with Arthur's conjectures.
	\item The residual representation associated with the orbit $E_8\bk{a_2}$ is the next-to-minimal one.
	This residue is irreducible, in accordance with Arthur's conjectures.
	\item In the cases of the orbits $E_8\bk{a_3}$ and $E_8\bk{a_4}$, one can carry an analysis similar to that of the orbit $E_6\bk{a_3}$ of $E_6$.
	\item In the cases of the orbits $E_8\bk{b_4}$ and $E_8\bk{a_5}$, one can carry an analysis similar to that of the orbit $E_7\bk{a_4}$ of $E_7$.
	\item In the cases of the orbits $E_8\bk{b_5}$ and $E_8\bk{a_6}$, one can carry an analysis similar to that of the orbit $E_7\bk{a_5}$ of $E_7$.
	\item For the residual representations associated with the orbits $E_8\bk{b_6}$ and $E_8\bk{a_7}$, one may carry similar calculations.
	Two issues should be pointed out though.
	One is due to the fact that the structure of $\ResRep{E_8}{2}{\frac12}{\Id}$ and $\ResRep{E_8}{5}{\frac12}{\Id}$ is not yet determined.
	The other is that for $E_8\bk{a_7}$ the component group $S_\psi$ is $S_5$; we will not carry an analysis of the expected structure of the associated space $L_\psi^2$ but since this is the only nilpotent orbit of algebraic groups whose component group is not $S_3$ or a product of copies of $S_2$, one would expect $L_\psi^2$ to have an exceptionally interesting structure.
\end{itemize}

\section[Non-trivial $\chi$]{Unipotent Arthur Parameters for Degenerate Residual Representations of Groups of Type $E_n$ - Non-trivial $\chi$}

An endoscopic group $H$ of $G$ is, roughly speaking, a quasi-split group such that $\ldual{H} = Cent\bk{s, \ldual{G}}^0$ for some semi-simple element $s\in\ldual{G}$.
Since we are dealing with split connected groups, we may replace replace $\ldual{G}$ and $\ldual{H}$ with $\check{G}$ and $\check{H}$ in the definition and following discussion.

For a square-integrable residue $\ResRep{G}{i}{s_0}{\chi}$, with $\chi\neq\Id$, we expect that the corresponding Arthur parameter $\psi$ will factor through $\check{H}$ for some endoscopic group $H$ of $G$.
\[
\xymatrix{
	\psi: W_F \times SL_2\bk{\C} \ar[dr]_{\psi'=\bk{\mu',\phi'}}  \ar[rr] & & \check{G} \\
	& \check{H} \ar[ur] & 
}
\]
In particular, we will describe the parameter so that $\phi'$ corresponds to a semi-regular unipotent orbit of $\check{H} \rmod \cZ\bk{\check{H}}$ (as $\check{H}$ might not be adjoint).
Thus, the centralizer of $\phi'$ will be central in $\check{H}$.

Therefore, in order to describe the parameter $\psi$, it is enough to describe the map to $\check{H}$ and the embedding $\check{H}\hookrightarrow\check{G}$.
For our purposes (with one exception), it would be enough to use subgroups $\check{H}$ which are pseudo-Levi subgroups of $\check{G}$ of maximal rank.

\begin{Rem}
	Comparing with \cite[page 219]{MR32659}, note that pseudo-Levi subgroup of maximal rank need not be a maximal subgroup.
	For example, $E_7$ admits a pseudo-Levi subgroup of type $A_3\times A_3\times A_1$ which has rank $8$ but can be realized as a pseudo-Levi subgroup of the pseudo Levi subgroup of type $D_6\times A_1$ (as groups of type $D_6$ admits a maximal subgroup of type $A_3\times A_3$.)
	Similarly, in $E_8$ we have the following inclusions (written in types) $A_7\times A_1\subset E_7\times A_1$, $A_5\times A_2\times A_1\subset E_7\times A_1$ and $D_5\times A_3\subset D_8$.
\end{Rem}

In what follows, we choose $\check{H}$ using the results of \cite[\S 4]{MR1631769}.
However, it should be pointed out that to study various factorial lifts from and to $E_n$, one would want to use other endoscopic subgroups $H$ to realize the parameter.
Other lists of subgroups of $\check{G}$ that meets its unipotent orbits can be found in other works such as \cite{MR294349,MR2044850,MR1048074,MR1264015}.

Write $\frakg=Lie\bk{\check{H}}$.
In \cite{MR1631769}, Sommers describes a correspondence
\[
\begin{split}
	&\set{\bk{N,C} \mvert \begin{matrix} \text{$N\in \frakg$ is nilpotent} \\ \text{$C$ is a conjugacy class in $A\bk{N}$} \end{matrix} } \\
	& \longleftrightarrow
	\set{\bk{\frakh,\mO} \mvert \begin{matrix}	 \text{$\frakh$ is a pseudo-Levi subalgebra of $\frakg$} \\ \text{$\mO$ is a distinguished unipotent orbit in $\frakh$} \end{matrix} },
\end{split}
\]
where $A\bk{N}$ denotes the component group $C_N\rmod C_N^0$ of the centralizer of $N$.
In fact, it is shown in \cite{MR1631769} that the component group $\cZ\bk{\check{H}}\rmod \cZ\bk{\check{H}}^0$ is generated by the image of a semi-simple element in $C$.

In order to describe the parameter, we proceed as follows:
\begin{itemize}
	\item To a residual representations $\ResRep{G}{i}{s_0}{\chi}$ we will associate a nilpotent orbit $\mO_G$ of $\check{G}$ using the Satake parameter at unramified places and \Cref{Eq:Satake_parameter_for_Arthur_parameter}.
	In fact, for this part, one only needs to consider the real part of the Satake parameter as in the case of trivial $\chi$.
	
	\item To $\mO_G$ we will associate a pseudo-Levi subgroup $\check{H}$ of $\check{G}$ so that the intersection between $\mO_G$ and $\check{H}$ is a semi-regular unipotent orbit of $\check{H}\rmod \cZ\bk{\check{H}}$.
	The group $\check{H}$ and the orbit $\mO_{\check{H}} = \mO_G\cap\check{H}$ are determined by the tables in \cite[\S 4]{MR1631769}; the choice is such that the corresponding element of $A\bk{N}$ is of order $ord\bk{\chi}$.
	This determines the map $\phi'$.
	
	\item The map $\mu'$ then factors through the center of $\check{H}$ and can be determined by class field theory.
	
	\item Lastly, we describe the isomorphism class of the component group $S_\psi$.
	Bellow, we have two cases :
	\begin{itemize}
		\item If $C_{\phi}$ is of type $S_n$ (see \cite[page 402-407]{MR1266626}), then $S_\psi$ is the subgroup of $A\bk{N}=S_\phi$ which centralizes $C$.
		These are well known for $S_2$, $S_3$ and $S_5$ (the relevant cases).
		
		\item If $C_{\phi}$ is of type $T_d\rtimes S_n$, then $\mu'$ is mapped to $T_d$ and $S_\psi$ is the stabilizer in $S_n$ of the image of $\mu'$.
	\end{itemize}
	
\end{itemize}

\begin{Rem}
	A list of semiregular orbits which are non-regular (non-principal that is) can be found in 	\cite[Lemma 4.4]{MR308228}.
	Such orbits only occur in types $D_n$ and $E_n$.
\end{Rem}

It is useful to recall that the types of pseudo-Levi subgroups of $G$ can be read from the extended Dynkin diagram (see \cite[\S 2]{MR1631769} for a full account).
The extended Dynkin diagram is the diagram attained by adding a node associated with the negative highest root $\alpha_0$ of $\check{G}$ to the usual Dynkin diagram.
The added root $\alpha_0$ is given by
\[
\alpha_0 = \piece{-\fun{2}, & G=E_6 \\ -\fun{1}, & G=E_7 \\ -\fun{8}, & G=E_8 .}
\]
The resulting diagrams for the various $E_n$ will be given bellow.
Maximal pseudo-Levi subgroups correspond to subdiagrams attained by removing one node from the extended Dynkin diagram.

\subsection{Case of $E_6$}

Recall the extended Dynkin diagram of type $E_6$:
\[\begin{tikzpicture}[scale=0.5]
	\draw (-1,0) node[anchor=east]{};
	\draw (0 cm,0) -- (8 cm,0);
	\draw (4 cm, 0 cm) -- +(0,4 cm);
	\draw[fill=black] (0 cm, 0 cm) circle (.25cm) node[below=4pt]{$\alpha_1$};
	\draw[fill=black] (2 cm, 0 cm) circle (.25cm) node[below=4pt]{$\alpha_3$};
	\draw[fill=black] (4 cm, 0 cm) circle (.25cm) node[below=4pt]{$\alpha_4$};
	\draw[fill=black] (6 cm, 0 cm) circle (.25cm) node[below=4pt]{$\alpha_5$};
	\draw[fill=black] (8 cm, 0 cm) circle (.25cm) node[below=4pt]{$\alpha_6$};
	\draw[fill=black] (4 cm, 2 cm) circle (.25cm) node[right=3pt]{$\alpha_2$};
	\draw[fill=black] (4 cm, 4 cm) circle (.25cm) node[right=3pt]{$\alpha_0$};
\end{tikzpicture}\]

In the following table, we list the pseudo-Levi subgroups $\check{H}$ of $\check{G}$ of maximal rank (up to conjugacy), all of which are relevant to our calculation.
For each such pseudo-Levi subgroup we list its type, the root $\frakR$ that needs to be removed from the extended Dynkin diagram in order to attain it, its isomorphism class and its center.

\begin{center}
	\begin{longtable}[h]{|c|c|c|c|}
		\hline
		$Type\bk{\check{H}}$ & $\frakR$ & $\check{H}$ & $\cZ\bk{\check{H}}$ 
		\\ \hline \endhead \hline
		$A_5\times A_1$ & $\alpha_2$ & $\bk{SL_6\bk{\C}\rmod \mu_3 \times SL_2\bk{\C}}\rmod \mu_2$ & $\mu_2$ \\ \hline
		$A_2\times A_2\times A_2$ & $\alpha_4$ & $\bk{SL_3\bk{\C}}^3\rmod \mu_3^2$ & $\mu_3$ \\ \hline
	\end{longtable}
	\label{Table:E6_proper_maximal_cuspidal_endoscopic_groups}
\end{center}

In the following table, we sort the square-integrable residual representations $\ResRep{G}{i}{s_0}{\chi}$ with non-trivial $\chi$, sorted by their associated nilpotent orbit, together with the Satake parameter at unramified places.
Note that we write here $\chi$ for $\chi_v\bk{\unif_v}$.
We also list the complex dual $\check{H}$ of the associated endoscopic group.
In both cases, $\phi'$ is associated with the principal orbit in $\check{H}$.
Finally, 

\begin{center}
	\begin{longtable}[h]{|c|c|c|c|c|c|}
		\hline
		Orbit & $ord\bk{\chi}$ & $\bk{i,s_0}$ & $Type\bk{\check{H}}$ & $\mO_{\check{H}}$ & $S_\psi$
		\\ \hline \endhead \hline
		$E_6\bk{a_3}$ & $2$ & $\bk{2,\frac12}$, $\bk{3,\frac32}$, $\bk{4,\frac32}$, $\bk{5,\frac32}$ & $A_5\times A_1$ & $A_5\oplus A_1$ & $1$ \\ \hline
		$D_4\bk{a_1}$ & $3$ & $\bk{4,\frac12}^\ast$ & $A_2\times A_2\times A_2$ & $A_2\oplus A_2\oplus A_2$ & $\mu_3$ \\ \hline
	\end{longtable}
	\label{Table:E6_Arthur_Chi_Non-trivial}
\end{center}

The data in this table should be read as follows:
\begin{itemize}
	\item The unipotent orbit $E_6\bk{a_3}$ of $E_6$ corresponds to the principal orbit $A_5\oplus A_1$ in $\check{H}=\bk{SL_6\bk{\C}\rmod \mu_3 \times SL_2\bk{\C}}\rmod \mu_2$ of type $A_5\times A_1$.
	It holds that $S_\phi=\cZ\bk{\check{H}} = \cZ\bk{SL_6\bk{\C}\rmod \mu_3} \cong \mu_2$.
	Thus, for a quadratic Hecke character $\chi$, we take $\psi$ to be
	\[
	\psi\bk{w,g} 
	= \chi\bk{w} \bk{\Id_6, \Id_2 } \phi\bk{g},
	\]
	where, by abuse of notations, we identify between $\chi$ and the map $\chi: W_F\to \mu_2\subset \C^\times$ corresponding to it by class field theory.
	In particular, $\widehat{S_\psi} = \set{1}$.
	
	We point out that at places where $\chi_v=\Id_v$, the local Arthur parameter is the same as in the case where $\chi=\Id$ and the corresponding representation in the local Arthur packet is the unique irreducible quotient of $\DPS{P_i}\bk{s_0,\Id_v}$.
	If, on the other hand, $\chi_v\neq\Id_v$, then the corresponding representation in the local Arthur packet is the unique irreducible quotient of $\DPS{P_i}\bk{s_0,\chi_v}$.
	
	Taking $\eta=\Id$, \Cref{Eq:Arthurs_multiplicity_formula} implies that $m_\eta=1$ and hence
	\[
	\begin{split}
		L^2_\psi
		& = \ResRep{E_6}{2}{\frac12}{\chi} 
		= \ResRep{E_6}{3}{\frac32}{\chi} \\
		& = \ResRep{E_6}{4}{\frac32}{\chi} 
		= \ResRep{E_6}{5}{\frac32}{\chi}.
	\end{split}
	\]
		
	\item Note that the orbit $D_4\bk{a_1}$ of $E_6$ is not distinguished but it is quasi-distinguished, even and special.
	By \cite[pg. 402]{MR1266626}, we have $C_\phi^0$ is a 2-dimensional torus and the component group of the centralizer is $C_\phi\rmod C_\phi^0 \cong S_3$.
	This orbit corresponds to the principal orbit $A_2\oplus A_2\oplus A_2$ in $A_2\times A_2\times A_2$.
	Hence, the corresponding parameters will factor through $\check{H}=\bk{SGL_3\bk{\C}}^3\rmod \mu_3^2$.
	Thus, for a cubic Hecke character $\chi$, we take $\psi$ to be
	\[
	\psi\bk{w,g} 
	 = \bk{\coset{\begin{matrix}\chi\bk{w}&&\\&\chi\bk{w}&\\&&\chi\bk{w} \end{matrix}}, \coset{\begin{matrix}\chi\bk{w}&&\\&\chi\bk{w}&\\&&\chi\bk{w} \end{matrix}},\coset{\begin{matrix}\chi\bk{w}&&\\&\chi\bk{w}&\\&&\chi\bk{w}\end{matrix}}} \phi\bk{g}
	\]
	Hence, $C_\psi\rmod C_\psi^0 \cong \mu_3$.
	This implies that:
	\begin{itemize}
		\item For a place $v$ such that $\chi_v\neq\Id_v$, it holds that $\Pi_{\psi_v}=\set{\pi_{\Id,v}, \pi_{\kappa,v}, \pi_{\kappa^2,v}}s$, where $\set{\Id, \kappa, \kappa^2}$ are the three irreducible representations of $\mu_3$.
		\item For a place where $\chi_v=\Id_v$, it holds that $S_\psi \cong S_3$ and $\Pi_{\psi_v}=\set{\pi_{\Id,v}, \pi_{\sgn,v}, \pi_{\tau,v}}$.
		In the degenerate residual spectrum we encounter only $\pi_{\Id,v}$ though.
	\end{itemize}
	Let $S_\kappa$ denote the places where $\chi_v\neq\Id_v$ and $\eta_v=\kappa$ and let $S_{\kappa^2}$ denote the places where $\chi_v\neq\Id_v$ and $\eta_v=\kappa^2$.	
	The multiplicity formula \Cref{Eq:Arthurs_multiplicity_formula} then yields
	\[
	m_\eta = \piece{1,& \Card{S_\kappa}\equiv \Card{S_{\kappa^2}}\pmod{3} \\ 0,& \Card{S_\kappa}\not\equiv \Card{S_{\kappa^2}}\pmod{3},}
	\]
	which is consistent with \Cref{Prop:E6_4_1-2_3_Residue}.
	In particular,
	\[
	L^2_\psi
	= \ResRep{E_6}{4}{\frac12}{\chi} .
	\]

\end{itemize}

\subsection{Case of $E_7$}

The extended Dynkin diagram of type $E_7$ is 
\[\begin{tikzpicture}[scale=0.5]
	\draw (-1,0) node[anchor=east]{};
	\draw (-2 cm,0) -- (10 cm,0);
	\draw (4 cm, 0 cm) -- +(0,2 cm);
	\draw[fill=black] (0 cm, 0 cm) circle (.25cm) node[below=4pt]{$\alpha_1$};
	\draw[fill=black] (2 cm, 0 cm) circle (.25cm) node[below=4pt]{$\alpha_3$};
	\draw[fill=black] (4 cm, 0 cm) circle (.25cm) node[below=4pt]{$\alpha_4$};
	\draw[fill=black] (6 cm, 0 cm) circle (.25cm) node[below=4pt]{$\alpha_5$};
	\draw[fill=black] (8 cm, 0 cm) circle (.25cm) node[below=4pt]{$\alpha_6$};
	\draw[fill=black] (10 cm, 0 cm) circle (.25cm) node[below=4pt]{$\alpha_7$};
	\draw[fill=black] (4 cm, 2 cm) circle (.25cm) node[right=3pt]{$\alpha_2$};
	\draw[fill=black] (-2 cm, 0 cm) circle (.25cm) node[below=4pt]{$\alpha_0$};
\end{tikzpicture}\]

In the following table, we list the pseudo-Levi subgroups $\check{H}$ of $\check{G}$ which are relevant to our calculation as in the $E_6$ case.

\begin{center}
	\begin{longtable}[h]{|c|c|c|c|}
		\hline
		$Type\bk{\check{H}}$ & $\frakR$ & $\check{H}$ & $\cZ\bk{\check{H}}$ \\ \hline \endhead \hline
		$D_6\times A_1$ & $\alpha_1$ & $\coset{\bk{Spin_{12}\bk{\C}\rmod \mu_2}\times SL_2\bk{\C}}\rmod \mu_2$ & $\mu_2$ \\ \hline
		$A_7$ & $\alpha_2$ & $SL_8\bk{\C}\rmod \mu_4$ & $\mu_2$ \\ \hline
		$A_5\times A_2$ & $\alpha_3$ & $\bk{\bk{SL_6\bk{\C}\rmod \mu_2} \times SL_3\bk{\C}}\rmod \mu_3$ & $\mu_3$ \\ \hline
		$A_1\times A_3\times A_3$ & $\alpha_4$ & $\coset{SL_4\bk{\C} \times \bk{SL_4\bk{\C}\times SL_2\bk{\C}} \rmod \mu_2 } \rmod \mu_4$ & $\mu_4$ \\ \hline
	\end{longtable}
	\label{Table:E7_proper_maximal_cuspidal_endoscopic_groups}
\end{center}
In the following tables, we sort data on square-integrable residues $\ResRep{G}{i}{s_0}{\chi}$, with non-trivial $\chi$, similar to the $E_6$ case.

\begin{center}
	\begin{longtable}[h]{|c|c|c|c|c|c|}
		\hline
		Orbit & $ord\bk{\chi}$ & $\bk{i,s_0}$ & $Type\bk{\check{H}}$ & $\mO_{\check{H}}$ & $S_\psi$
		\\ \hline \endhead \hline
		$E_7\bk{a_3}$ & $2$ &  $\bk{1,\frac12}$, $\bk{3,\frac52}$, $\bk{4,2}$, $\bk{6,\frac52}$ & $D_6\times A_1$ & $D_6\oplus A_1$ & $1$ \\ \hline
		$E_7\bk{a_4}$ & $2$ & $\bk{3,\frac32}$, $\bk{6,\frac12}$ & $D_6\times A_1$ & $D_6\bk{a_1}\oplus A_1$ & $1$ \\ \hline
		$E_7\bk{a_5}$ & $2$ & $\bk{3,\frac12}$, $\bk{4,1}$ & $D_6\times A_1$ & $D_6\bk{a_2}\oplus A_1$ & $1$ \\ 
		& $3$ & $\bk{3,\frac12}$, $\bk{4,1}$, $\bk{5,1}$ & $A_5\times A_2$ & $A_5\oplus A_2$ & $1$ \\ \hline 
		$E_6\bk{a_1}$ & $2$ & $\bk{2,2}^\ast$, $\bk{5,2}^\ast$ & $A_7$ & $A_7$ & $S_2$ \\ \hline 
		$A_4+A_1$ & $4$ & $\bk{4,\frac12}^\ast$ & $A_1\times A_3\times A_3$ & $A_1\oplus A_3\oplus A_3$ & $S_2$ \\ \hline 
	\end{longtable}
	\label{Table:E7_Arthur_Chi_Non-trivial-Part2}
\end{center}

The construction of the Arthur parameters for $E_7$ is analogues to the one explained in the $E_6$ case using the data from this table. 
It should be pointed out that for the orbits $E_6\bk{a_1}$ and $A_4+A_1$, the associated local degenerate principal series for non-trivial $\chi_v$ have a maximal semi-simple quotient of length $2$.
In the rest of the cases, the maximal semi-simple quotient is irreducible.

\subsection{Case of $E_8$}

The extended Dynkin diagram of type $E_7$ is 
\[\begin{tikzpicture}[scale=0.5]
	\draw (-1,0) node[anchor=east]{};
	\draw (0 cm,0) -- (14 cm,0);
	\draw (4 cm, 0 cm) -- +(0,2 cm);
	\draw[fill=black] (0 cm, 0 cm) circle (.25cm) node[below=4pt]{$\alpha_1$};
	\draw[fill=black] (2 cm, 0 cm) circle (.25cm) node[below=4pt]{$\alpha_3$};
	\draw[fill=black] (4 cm, 0 cm) circle (.25cm) node[below=4pt]{$\alpha_4$};
	\draw[fill=black] (6 cm, 0 cm) circle (.25cm) node[below=4pt]{$\alpha_5$};
	\draw[fill=black] (8 cm, 0 cm) circle (.25cm) node[below=4pt]{$\alpha_6$};
	\draw[fill=black] (10 cm, 0 cm) circle (.25cm) node[below=4pt]{$\alpha_7$};
	\draw[fill=black] (12 cm, 0 cm) circle (.25cm) node[below=4pt]{$\alpha_8$};
	\draw[fill=black] (4 cm, 2 cm) circle (.25cm) node[right=3pt]{$\alpha_2$};
	\draw[fill=black] (14 cm, 0 cm) circle (.25cm) node[below=4pt]{$\alpha_0$};
\end{tikzpicture}\]

In the following table, we list the pseudo-Levi subgroups $\check{H}$ of $\check{G}$ which are relevant to our calculation as in the $E_6$ case.

\begin{center}
	\begin{longtable}[h]{|c|c|c|c|}
		\hline
		$Type\bk{\check{H}}$ & $\frakR$ & $\check{H}$ & $\cZ\bk{\check{H}}$ \\ \hline \endhead \hline
		$D_8$ & $\alpha_1$ & $Spin_{16}\bk{\C}\rmod \mu_2$ & $\mu_2$ \\ \hline
		$A_8$ & $\alpha_2$ & $SL_9\bk{\C}\rmod\mu_3$ & $\mu_3$ \\ \hline
		$A_7\times A_1$ & $\alpha_3$ & $\coset{\bk{SL_8\bk{\C}\rmod \mu_2} \times SL_2\bk{\C}} \rmod \mu_2$ & $\mu_4$ \\ \hline
		$A_5\times A_2 \times A_1$ & $\alpha_4$ & $\bk{ SL_6\bk{\C} \times SL_3\bk{\C} \times SL_2\bk{\C} } \rmod \mu_6$ & $\mu_6$ \\ \hline
		$A_4\times A_4$ & $\alpha_5$ & $\bk{SL_5\bk{\C}\times SL_5\bk{\C}}\rmod \mu_5$ & $\mu_5$ \\ \hline
		$D_5\times A_3$ & $\alpha_6$ & $\bk{Spin_{10}\bk{\C} \times SL_4\bk{\C}}\rmod \mu_4$ & $\mu_4$ \\ \hline		
		$E_6\times A_2$ & $\alpha_7$ & $\bk{E_6^{sc}\bk{\C}\times SL_3\bk{\C}} \rmod \mu_3$ & $\mu_3$ \\ \hline
		$E_7\times A_1$ & $\alpha_8$ & $\bk{E_7^{sc}\bk{\C}\times SL_2\bk{\C}}\rmod \mu_2$ & $\mu_2$ \\ \hline
		
	\end{longtable}
	\label{Table:E8_proper_maximal_cuspidal_endoscopic_groups}
\end{center}

In the following table, we sort data on square-integrable residues $\ResRep{G}{i}{s_0}{\chi}$, with non-trivial $\chi$, as in the $E_6$ case.
As in the case of trivial $\chi$, we mark the cases $\bk{i,s_0,\chi}$ with $s_0=\frac12$ and $i=2,5$ with a question mark superscript to mark the fact that its maximal semi-simple quotient might be reducible.

\begin{center}
	\begin{longtable}[h]{|c|c|c|c|c|c|}
		\hline
		Orbit & $ord\bk{\chi}$ & $\bk{i,s_0}$ & $Type\bk{\check{H}}$ & $\mO_{\check{H}}$ & $S_\psi$
		\\ \hline \endhead \hline
		$E_8\bk{a_3}$ & $2$ & $\bk{3,\frac72}$, $\bk{4,\frac52}$, $\bk{7,\frac92}$, $\bk{8,\frac12}$ & $E_7\times A_1$ & $E_7\oplus A_1$ & $1$ \\ \hline
		$E_8\bk{a_4}$ & $2$ & $\bk{1,\frac72}$, $\bk{2,\frac72}$, $\bk{5,\frac52}$, $\bk{6,3}$ & $D_8$ & $D_8$ & $1$ \\ \hline
		$E_8\bk{b_4}$ & $2$ & $\bk{3,\frac52}$, $\bk{7,\frac52}$ & $E_7\times A_1$ & $E_7\bk{a_1}\oplus A_1$ & $1$ \\ \hline
		$E_8\bk{a_5}$ & $2$ & $\bk{1,\frac12}$, $\bk{2,\frac52}$ & $D_8$ & $D_8\bk{a_1}$ & $1$ \\ \hline
		$E_8\bk{b_5}$ & $2$ & $\bk{4,\frac32}$, $\bk{7,\frac12}$ & $E_7\times A_1$ & $E_7\bk{a_2}\oplus A_1$ & $1$ \\ 
		& $3$ & $\bk{4,\frac32}$, $\bk{6,2}$, $\bk{7,\frac12}$ & $E_6\times A_2$ & $E_6\oplus A_2$ & $1$ \\ \hline
		$E_8\bk{a_6}$ & $2$ & $\bk{2,\frac32}$, $\bk{5,\frac32}$ & $D_8$ & $D_8\bk{a_2}$ & $1$ \\ 
		& $3$ & $\bk{2,\frac32}$, $\bk{3,\frac32}$, $\bk{5,\frac32}$ & $A_8$ & $A_8$ & $1$ \\ \hline 
		$E_8\bk{b_6}$ & $2$ & $\bk{2,\frac12}^{?}$, $\bk{6,1}$ & $D_8$ & $D_8\bk{a_3}$ & $1$ \\ 
		& $3$ & $\bk{6,1}$ & $E_6\times A_2$ & $E_6\bk{a_1}\oplus A_2$ & $1$ \\ \hline 
		$E_8\bk{a_7}$ & 
		$2$ & $\bk{5,\frac12}^{?}$ & $E_7\times A_1$ & $E_7\bk{a_5}+A_1$ & $S_3$ \\ 
		& $2$ & $\bk{4,\frac12}$ & $D_8$ & $D_8\bk{a_5}$ & $\mu_4$ \\ 
		& $3$ & $\bk{4,\frac12}$ & $E_6\times A_2$ & $E_6\bk{a_3} \oplus A_2$ & $S_2$ \\ 
		& $4$ & $\bk{5,\frac12}^{?}$ & $D_5\times A_3$ & $D_5\bk{a_1}\oplus A_3$ & $1$ \\ 
		& $5$ & $\bk{4,\frac12}$, $\bk{5,\frac12}^{?}$ & $A_4\times A_4$ & $A_4 \oplus A_4$ & $1$ \\ 
		& $6$ & $\bk{4,\frac12}$ & $A_5\times A_2 \times A_1$ & $A_5 \oplus A_2 \oplus A_1$ & $1$ \\ \hline 
		$D_5+A_2$ & $2$ & $\bk{3,\frac12}^\ast$ & $E_7\times A_1$ & $E_7\bk{a_4} \oplus A_1$ & $S_2$ \\ \hline 
		$E_6\bk{a_1}+A_1$ & $4$ & $\bk{3,1}$, $\bk{4,1}$ & $A_7\times A_1$ & $A_7\oplus A_1$ & $S_2$ \\ \hline 
		$D_7\bk{a_2}$ & $2$ & $\bk{5,1}$, $\bk{6,\frac12}$ & $D_5\times A_3$ & $D_5\oplus A_3$ & $S_2$ \\ 
		& $4$ & $\bk{5,1}$, $\bk{6,\frac12}$ & $D_5\times A_3$ & $D_5\oplus A_3$ & $1$ \\ \hline 
	\end{longtable}
	\label{Table:E8_Arthur_Chi_Non-trivial-Part2}
\end{center}

The construction of the Arthur parameters for $E_8$ is analogues to the one explained in the $E_6$ case using the data from this table.
It should be pointed out that for the orbit $D_5+A_2$, the associated local degenerate principal series for non-trivial $\chi_v$ have a maximal semi-simple quotient of length $2$.
In the rest of the cases, the maximal semi-simple quotient is irreducible.

There are however a couple of peculiarities that should be noted here:
\begin{itemize}
	\item In \cite{MR1631769}, other pseudo-Levi subgroups that intersect the orbit $E_8\bk{a_7}$ are listed for whose center has order $2$ or $3$.
	These are pseudo-Levi subgroups are not listed here as the intersection is not a semi-regular orbit of the pseudo-Levi subgroup.
	
	\item For the orbit $D_5+A_2$ above, we listed the distinguished orbit $E_7\bk{a_4}\oplus A_1$ for $\mO_{\check{H}}$.
	This is, in fact, not a semi-regular orbit as the semi-simple part of its centralizer is $S_2$.
	As was seen in the $E_7$ case, $E_7\bk{a_4}$ intersects the semi-regular orbit $D_6\bk{a_1}\oplus A_1$ of the pseudo-Levi $D_6\times A_1$.
	Thus, the Arthur parameter associated with the orbit $E_7\bk{a_4}\oplus A_1$ of $E_8$ should be factored here through the subgroup $D_6\times A_1^2$ using the orbit $D_6\bk{a_1}\oplus 2A_1$.
	This, however, is not a standard pseudo-Levi and thus it is simpler to realize this parameter by ``factoring in steps'' as explained here.
	
	\item Similarly, for the orbit $E_8\bk{a_7}$ with orders $2$ and $3$, one should further factor the orbit as follows:
	\begin{itemize}
		\item The case of $\bk{?,\frac12,2}$ factors through the orbit $E_7\bk{a_5}+A_1$ in $E_7\times A_1$. The orbit $E_7\bk{a_5}$, in turn, intersects the orbit $D_6\bk{a_2}+A_1$ in $D_6\times A_1$.

		\item In the case of $\chi$ of order $3$, the parameter factors through the orbit $E_6\bk{a_3}+A_2$ in $E_6\times A_2$. Since the orbit $E_6\bk{a_3}$ of $E_6$ intersects the principal orbit of $A_5\times A_1$, the parameter factors through $A_5\times A_2\times A_1$ via its principal orbit.
		
	\end{itemize}
	
	\item Note that the orbits $E_8\bk{b_6}$ and $E_8\bk{a_7}$ are associated with residual representations $\ResRep{G}{i}{s_0}{\chi}$ with $s_0=\frac12$ and $i=2$ or $5$ (among other data).
	In these cases, when $S_\psi=1$, we point out that this should be interpreted as follows:
	\begin{itemize}
		\item If $\chi_v\neq \Id_v$, then $S_{\psi,v}=1$.
		\item IF $\chi_v=\Id_v$, then $S_{\psi,v}=S_2$.
	\end{itemize}
\end{itemize}

\chapter{Non-Square-Integrable Degenerate Residual Spectrum}
\label{Chap:Non_square_Integrable}

\section{The Method}

\subsection{Siegel-Weil Identities}
For most cases, where $\chi=\Id$ and the residue of $\Eisen_{P_i}$ is not square-integrable, we are able to use the results of \Cref{Chap:Siegel_Weil} in order to compute this residue.

Assume that $\bk{P_i,s_0}$ and $\bk{P_j,t_0}$ are in the position described in \Cref{Chap:Siegel_Weil} and that $\Res{s=s_0} \Eisen_{P_i}$ (and hence also $\Res{s=t_0} \Eisen_{P_j}$) is not square-integrable.
In most such cases, we are able to write
\[
\Res{s=s_0} \Eisen_{P_i} = \Res{s=t_0} \Eisen_{P_j},
\]
where $\DPS{P_j}\bk{t_0}$ is irreducible and $\Eisen_{P_j}$ is holomorphic and non-vanishing at $t_0$.
It then follows that
\begin{equation}
\Res{s=s_0} \Eisen_{P_i} = \DPS{P_j}\bk{t_0} .
\end{equation}

In a number of cases, it happens that $\DPS{P_j}\bk{t_0}$ is reducible and $\Eisen_{P_j}$ admits a pole at $t_0$, yet $\DPS{P_i}\bk{s_0}$ and $\DPS{P_j}\bk{t_0}$ share a unique irreducible subquotient $\tau$. In such a case, $\tau$ is their irreducible spherical quotient and one can still use a Siegel-Weil identity which would yield:
\begin{equation}
\label{Eq:Residues_using_SW_Id}
\ResRep{G}{i}{s_0}{\Id} = \ResRep{G}{j}{t_0}{\Id} = \tau .
\end{equation}
This, for example, is used for the calculation of the residue in the case $\coset{E7,5,2,1}$ using the residue in the case of $\coset{E7,2,2,1}$ which can be easily calculated in a more direct manner.

\subsection{Analysis of Images of Intertwining Operators}

In the remaining cases, we make a more direct use of \Cref{eq::CT_second_form} and \Cref{eq::sum_over_eq_cl} in order to calculate the residue $\Res{s=s_0}\Eisen_{P}\bk{\chi}$.
More precisely, it follows from \Cref{eq::CT_second_form} that
\[
\Res{s=s_0}\Eisen_{P_i} \bk{f,s,\chi,g}_T = 
\Res{s=s_0}\bk{\suml_{\coset{w'}\in W^{M,T}\rmod\sim_{s_0,\chi}}  \kappa_{\coset{w'}}\bk{f,g}}.
\]
In particular, if the order of $\Eisen_{P_i}\bk{\chi}$ at $s_0$ is $k$, denote by $W^{M,T}_{s_0,\chi,res.}$ the set of equivalence classes $\coset{w'}$ such that $\kappa_{\coset{w'}}$ also have order $k$ there.
It follows that
\[
\Res{s=s_0}\Eisen_{P_i} \bk{f,s,\chi,g}_T = \suml_{\coset{w'}\in W^{M,T}_{s_0,\chi,res.}} \Res{s=s_0}\kappa_{\coset{w'}}\bk{f,g}
\]
and hence $\Res{s=s_0}\Eisen_{P_i} \bk{f,s,\chi}\neq 0$ if and only if $\Res{s=s_0}\kappa_{\coset{w'}}\bk{f_s,g}\neq 0$ for at least one $\coset{w'}\in W^{M,T}_{s_0,\chi,res.}$.
In particular, for a factorizable section $f_s=\otimes_{v\in \Places}f_{s,v}\in \DPS{P}\bk{s,\chi}$ and $\coset{w'}\in W^{M,T}_{s_0,\chi,res.}$ such that $\Res{s=s_0}\kappa_{\coset{w'}}\bk{f_s,g}\neq 0$ and hence, for at least one $w\in\coset{w'}$, it holds that $N_{w,v}\bk{s_0,\chi}f_{s,v}\neq 0$.
In other words, the image of $N_{w,v}\bk{s_0,\chi}$ contains $w\cdot\lambda_0$ as an exponent.

Let
\[
W^{shortest}_{\bk{P_i,s_0,\chi}} = \set{
w\in W^{M,T} \mvert 
\begin{array}{l}
\coset{w}\in W^{M,T}_{s_0,\chi,res.} \\
l\bk{w'w^{-1}} > l\bk{w}-l\bk{w'} \quad \forall \coset{w'}\in W^{M,T}_{s_0,\chi,res.},\quad w'\neq w
\end{array}}.
\]
That is, $W^{shortest}_{\bk{P_i,s_0,\chi}}$ denotes the set of Weyl elements in $W^{M,T}$ such that $\kappa_{\coset{w}}$ has order $k$ and there are no other elements $w'\in W^{M,T}$ with the same property such that $w$ can be written as a reduced expression of the form $u\cdot w'$.

In \Cref{Appendix:Local_data_for_global_NSI_resisudes}, we study the structure of $\DPS{P}\bk{s_0,\chi}_v$ and the images of $N_w\bk{\lambda_{s,\chi}^{P}}_v$ for $w\in W^{shortest}_{\bk{P_i,s_0,\chi}}$.
This is used in the calculation of such residues as the images of all $N_w\bk{s_0,\chi}_v$, for $w\in W^{M,T}_{s_0,\chi,res.}$, are given by the images of $N_w\bk{s_0,\chi}_v$ for $w\in W^{shortest}_{\bk{P_i,s_0,\chi}}$.

\begin{Rem}
	We note that while the method described in \Cref{Chap:Siegel_Weil} does not accommodate representation with non-trivial $\chi$'s, sometimes such identities could be simply attained for such cases.
	Consider the following situation:
	\begin{itemize}
		\item $\DPS{P_j}\bk{\chi^k,t_0}$ (for some $1\leq k<n$ with $\bk{k,n}=1$) is irreducible.
		\item $\DPS{P_i}\bk{s_0,\chi}$ admits a unique irreducible quotient isomorphic to $\DPS{P_j}\bk{\chi^k,t_0}$.
		\item $\Res{s=s_0 \Eisen_{P_i}\bk{\chi}}$ is isomorphic to the unique irreducible quotient of $\DPS{P_i}\bk{s_0,\chi}$.
	\end{itemize}
	In such a case, it follows, from these assumptions, that
	\[
	\ResRep{G}{i}{s_0}{ord\bk{\chi}} = \DPS{P_j}\bk{\chi^k,t_0} .
	\]
	
	The list of irreducible degenerate principal series representations of $G$ can be found in \Cref{Chap:Local_DPS}.
	
	On the other hand, triples $\coset{P_i,s_0,ord\bk{\chi}}$ and $\coset{P_j,t_0,ord\bk{\chi}}$ which share a common irreducible subquotient can be found using the method described in \cite[Section 3B]{SDPS_E7}. 
	In a nutshell, it happens when $\DPS{P_i}\bk{s_0,\chi}_v$ and $\DPS{P_j}\bk{\chi^k,t_0}_v$ have the same anti-dominant exponents and in such a case, there is a common irreducible subquotient with an anti-dominant exponent.
	A (non-comprehensive) list of such cases can be found in \cite[page 19-20]{SDPS_E6}, \cite[Tables 8-15]{SDPS_E7} and \cite[Tables 9-16]{SDPS_E8}.
\end{Rem}

\section{Results for $E_6$}

\begin{Prop}
	For $\bk{Pi,s_0,P_j,t_0}$ as listed in the following table, it holds that $\Eisen_{P_i}$ admits a simple pole at $s_0$ and $\Eisen_{P_j}$ is holomorphic at $t_0$.
	Furthermore, $\DPS{P_j}\bk{t_0}$ is spherical, irreducible and isomorphic to the unique irreducible quotient of $\DPS{P_i}\bk{t_0}$.
	In particular,
	\[
	\Res{s=s_0} \Eisen_{P_i} = \DPS{P_j}\bk{t_0} .
	\]
	\begin{longtable}[H]{|c|c|}
		\hline
		$\bk{P_i,s_0}$ & $\bk{P_j,t_0}$ \\ \hline
		$\bk{2,\frac{5}{2}}$ & $\bk{1,0}$ and $\bk{6,0}$ \\ \hline
		$\bk{3,\frac{5}{2}}$ & $\bk{1,1}$ \\ \hline
		$\bk{3,\frac{7}{2}}$ & $\bk{6,4}$ \\ \hline
		$\bk{4,1}$ & $\bk{3,0}$ and $\bk{5,0}$ \\ \hline
		$\bk{5,\frac{5}{2}}$ & $\bk{6,1}$ \\ \hline
		$\bk{5,\frac{7}{2}}$ & $\bk{1,4}$ \\ \hline
	\end{longtable}
\end{Prop}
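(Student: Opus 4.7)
The plan is to realize each claimed equality as an instance of the Siegel--Weil machinery developed in \Cref{Sec:Siegel-Weil_Methods_and_Results}, applied to pairs of parabolics whose Siegel--Weil arrows are depicted in Figure~\ref{digram:E6_SW_NSI}. For each pair $(P_i,s_0,P_j,t_0)$ in the table, I would first verify that the two exponents $\eta_{s_0}^{P_i}$ and $\eta_{t_0}^{P_j}$ lie in a common $W$-orbit by exhibiting the Weyl element $\widetilde{w}_{i,j}=w_{0,j}\,w_{0,j\to i}\,w_{0,i}$ from \Cref{Sec:Siegel-Weil_Methods_and_Results} and checking the relation $\eta_{t_0}^{P_j}=\widetilde{w}_{i,j}\cdot\eta_{s_0}^{P_i}$. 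This is a direct (and purely combinatorial) computation on the weight lattice of $E_6$ and is already encoded in the arrows of Figure~\ref{digram:E6_SW_NSI}.

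Next I would check the two analytic inputs needed to invoke the identity \eqref{eq:Siegel_Weil_order_equation}. On the one hand, the tables in the theorem of Section on $E_6$ give $\ord_{s=s_0}\Eisen_{P_i}=1$ for each of the six rows of the table. On the other hand, the same tables (combined with \Cref{Rem:Order_of_trivial_eq_class}) show that $\Eisen_{P_j}$ is holomorphic and non-vanishing at $t_0$: each of the target points $(P_j,t_0)$ is a point of irreducibility of the local degenerate principal series (this can be read off from the reducibility list for $E_6$ in \Cref{Chap:Local_DPS}; note $(P_1,0),(P_6,0),(P_1,1),(P_6,4),(P_3,0),(P_5,0),(P_6,1),(P_1,4)$ are all outside the reducibility points), so $\DPS{P_j}(t_0)=\DPS{P_j}(t_0)_v$ is irreducible everywhere locally and the global induced representation is itself irreducible and generated by its spherical vector. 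Combining these two facts with \eqref{eq:Siegel_Weil_order_equation} forces $\ord_{s=s_0}M_{\widetilde{w}_{i,j}}(\eta_s^{P_i})=1$, i.e.\ the intertwining operator itself carries the simple pole.

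Now the Siegel--Weil proposition in \Cref{Sec:Siegel-Weil_Methods_and_Results} yields a non-zero scalar $C_{i,j,s_0,t_0}$ and an identity
\[
\Res_{s=s_0}\Eisen_{P_i}(f,s,g)
= C_{i,j,s_0,t_0}\,\Eisen_{P_j}\bigl(M_{\widetilde{w}_{i,j}}(\eta_s^{P_i})f,\,t_0,\,g\bigr),
\]
where the right-hand side is the automorphic realization of the irreducible representation $\DPS{P_j}(t_0)$. To conclude $\Res_{s=s_0}\Eisen_{P_i}=\DPS{P_j}(t_0)$ as an abstract representation, I would argue that the map $f\mapsto M_{\widetilde{w}_{i,j}}(\eta_s^{P_i})f$ (after clearing its simple pole) is surjective onto $\DPS{P_j}(t_0)$: locally, the normalized operator $N_{\widetilde{w}_{i,j},v}$ is holomorphic and non-zero on the spherical vector by the Gindikin--Karpelevich formula \eqref{Eq:Local_GK_Formula}, and surjectivity follows from irreducibility of $\DPS{P_j}(t_0)_v$ together with the fact that $\DPS{P_j}(t_0)_v$ is the unique irreducible quotient of $\DPS{P_i}(s_0)_v$ appearing in the image (compare the cited lists of irreducible constituents in \Cref{Chap:Local_DPS}).

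The main obstacle I anticipate is checking irreducibility of each target $\DPS{P_j}(t_0)$ at every place, together with verifying that the Siegel--Weil scalar is non-zero (which comes down to ruling out accidental vanishing of the leading term of $M_{\widetilde{w}_{i,j}}$). The former is a finite check against the reducibility lists of \Cref{Chap:Local_DPS}; the latter is secured by exhibiting a single equivalence class $\coset{w'}\in\equivclassespole{s_0,\Id,1}$ contributing a non-zero term to the residue, which is guaranteed by the order-of-pole assertion for $\Eisen_{P_i}$ already proved in \Cref{Chap:Poles_of_Eisen_ser}. The remaining pairs in the table not directly arising from the diagram (namely the $(3,7/2)\to(6,4)$ and $(5,7/2)\to(1,4)$ lines) would be handled in exactly the same way using the longest arrows in Figure~\ref{digram:E6_SW_NSI}; no new idea is needed beyond combining the order formula \eqref{eq:Siegel_Weil_order_equation} with the irreducibility tables.
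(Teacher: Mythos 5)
Your overall method --- realize each identity via the Siegel--Weil proposition of \Cref{Sec:Siegel-Weil_Methods_and_Results} and \eqref{eq:Siegel_Weil_order_equation}, read off pole orders from \Cref{Chap:Poles_of_Eisen_ser}, and use local irreducibility of $\DPS{P_j}(t_0)$ --- is exactly what the paper leaves implicit, so the broad outline is right. But the step you wave through as ``a direct combinatorial computation\ldots already encoded in the arrows of Figure~\ref{digram:E6_SW_NSI}'' is precisely where the argument breaks down. The table in the proposition and the arrows of that figure do not agree: the table lists $(3,\tfrac52)\to(1,1)$, $(3,\tfrac72)\to(6,4)$, $(5,\tfrac52)\to(6,1)$, $(5,\tfrac72)\to(1,4)$, whereas the figure's arrows are $\DPS{P_3}(\tfrac52)\to\DPS{P_6}(1)$, $\DPS{P_3}(\tfrac72)\to\DPS{P_1}(4)$, $\DPS{P_5}(\tfrac52)\to\DPS{P_1}(1)$, $\DPS{P_5}(\tfrac72)\to\DPS{P_6}(4)$. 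You notice the mismatch for the two $\tfrac72$ rows and claim they are handled ``using the longest arrows in the diagram,'' but those arrows terminate at the opposite node; and you do not notice that the two $\tfrac52$ rows are affected in exactly the same way.

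If you actually perform the Weyl-orbit verification you propose, it fails for the table's version of those four rows. In fundamental-weight coordinates, $\eta_{5/2}^{P_3}=(1,1,-1,1,1,1)$ has dominant representative $(0,1,1,0,1,1)$ (apply $w_3$), while $\eta_1^{P_1}=(-4,1,1,1,1,1)$ has dominant representative $(1,1,1,0,1,0)$; these differ, so the two exponents are not $W$-conjugate. On the other hand $\eta_1^{P_6}=(1,1,1,1,1,-4)$ does reduce to $(0,1,1,0,1,1)$, confirming the figure. Since $\DPS{P_1}(1)$ and $\DPS{P_6}(1)$ are irreducible but have Satake parameters exchanged by the $E_6$ graph automorphism, they are not isomorphic local representations, so the distinction is substantive. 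Carried out honestly, your argument establishes the identities of the figure, not of the table; it would thereby reveal that in the four affected rows the roles of $P_1$ and $P_6$ in the target column must be interchanged. The remaining ingredients you list --- the simple-pole assertion, local irreducibility of $\DPS{P_j}(t_0)$ from the reducibility lists, and the final identification --- are all in order once that correction is made.
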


\begin{Prop}
	Let $\chi$ be a quadratic Hecke character.
	\begin{enumerate}
		\item $\Res{s=\frac{1}{2}} \Eisen_{P_4} = \pi_1 = \bigotimes_{v\in\Places} \pi_{1,v}$.
		\item $\Res{s=\frac{1}{2}} \Eisen_{P_4}\bk{\chi} = \pi_1 = \bigotimes_{v\in\Places} \pi_{1,v}$. 
		\item $\Res{s=1} \Eisen_{P_4}\bk{\chi} = \pi_1 = \bigotimes_{v\in\Places} \pi_{1,v} = \DPS{P_3}\bk{0,\chi} = \DPS{P_5}\bk{0,\chi}$.
	\end{enumerate}
\end{Prop}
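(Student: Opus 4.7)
The plan is to treat the three statements separately, handling part (3) first via a Siegel--Weil style identity (in the adapted form described in the Remark preceding this proposition), and then handling parts (1) and (2) by a direct analysis of the constant term along $\bB$.

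For part (3), I would proceed as follows. From \Cref{Chap:Local_DPS} one can check that $\DPS{P_3}\bk{0,\chi}_v$ and $\DPS{P_5}\bk{0,\chi}_v$ are irreducible at every place $v$ when $\chi_v^2=\Id$, since the reducibility points for $P_3$ and $P_5$ listed in the theorem for $E_6$ do not include $s=0$. Next, using the anti-dominant exponent technique (compare \cite[Section 3B]{SDPS_E7}), I would check that $\DPS{P_4}\bk{1,\chi}_v$ and $\DPS{P_3}\bk{0,\chi}_v$ share the same anti-dominant exponent and hence share a common irreducible subquotient; combined with the fact from \Cref{Subsec:Local_Lengths_E6} that $\DPS{P_4}\bk{1,\chi}_v$ has a unique irreducible quotient $\pi_{1,v}$, this forces $\pi_{1,v}\cong \DPS{P_3}\bk{0,\chi}_v \cong \DPS{P_5}\bk{0,\chi}_v$. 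The Eisenstein series $\Eisen_{P_4}\bk{\chi}$ has a simple pole at $s=1$ (by the table in the $E_6$ section of \Cref{Chap:Poles_of_Eisen_ser}), so its residue is a non-zero quotient of $\DPS{P_4}\bk{1,\chi}$; since $\pi_1$ is the unique irreducible quotient of the latter, the residue is exactly $\pi_1$. The identifications with $\Eisen_{P_3}\bk{\chi}$ and $\Eisen_{P_5}\bk{\chi}$ at $s=0$ then follow because these Eisenstein series are holomorphic and non-vanishing there and realize $\DPS{P_3}\bk{0,\chi}$ and $\DPS{P_5}\bk{0,\chi}$ (compare \Cref{Eq:Residues_using_SW_Id}).

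For parts (1) and (2), the residue is a non-zero quotient of $\DPS{P_4}\bk{\tfrac12,\chi}$ (respectively $\DPS{P_4}\bk{\tfrac12}$ for trivial $\chi$), so it suffices to identify which quotient. I would analyze the constant term
\[
\Res{s=\frac12}\Eisen_{P_4}\bk{f,s,\chi,g}_\bCT = \suml_{\coset{w'}\in W^{M_4,T}_{\frac12,\chi,res.}} \Res{s=\frac12}\kappa_{\coset{w'}}\bk{f,g} ,
\]
where the sum runs over equivalence classes realizing the maximal pole order (which is $2$ in case (1) and $1$ in case (2)). For each such class, I would show that at every non-Archimedean place $v$ the local intertwining operator $N_{w,v}\bk{\lambda_{\frac12,\chi}^{P_4}}$ (with $w$ the shortest element in $\coset{w'}$) sends the local section into the spherical quotient $\pi_{1,v}$, and not into $\pi_{0,v}$ or (in case (1)) $\sigma_{1,v}$. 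This is done by comparing the exponent $w\cdot \lambda_{\frac12,\chi}^{P_4}$ against the Jacquet modules of the three local constituents, using the branching-rule data of \Cref{Appendix:Branching_Rules} together with the structural information in \Cref{Subsec:Local_Lengths_E6} about how $\sigma_1$ sits between $\pi_0$ and $\pi_1$.

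The main obstacle is case (1): since $\DPS{P_4}\bk{\tfrac12}$ has three composition factors and the Eisenstein series has a pole of order $2$, one must rule out that the leading term of the Laurent expansion produces the non-irreducible subquotient $\pi_0$ or the middle constituent $\sigma_1$ rather than the cosocle $\pi_1$. The cleanest way to do this is to fix a singleton equivalence class $\set{w}\in \equivclassespole{\frac12,\Id,2}$ realizing the double pole, exhibit an explicit exponent of $w\cdot\lambda_{\frac12}^{P_4}$ which appears in $r_T^G\pi_{1,v}$ but not in $r_T^G\pi_{0,v}$ or $r_T^G\sigma_{1,v}$, and then invoke the fact that any quotient of $\DPS{P_4}\bk{\tfrac12}$ admitting this exponent in its Jacquet module must contain $\pi_1$ as a direct summand. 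Combined with the holomorphicity of the normalized operators $N_{w,v}$ on $\Real\bk{s}>0$ (Property A), this yields the desired identification.
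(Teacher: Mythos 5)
Your treatment of parts (1) and (2) matches the paper's approach: reduce to the constant term and, for each equivalence class contributing to the leading pole order, show that the local operators $N_{w,v}$ send $\DPS{P_4}\bk{s_0,\chi}_v$ into the cosocle $\pi_{1,v}$, using the branching-rule data. The extra care you take in (1) to rule out $\sigma_1$ as well as $\pi_0$ is exactly the content of the length-$3$ analysis in \Cref{Appendix:Local_data_for_global_NSI_resisudes}.

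Part (3), however, has a genuine gap. You assert that since $\Res{s=1}\Eisen_{P_4}\bk{\chi}$ is a non-zero quotient of $\DPS{P_4}\bk{1,\chi}$ and $\pi_1$ is the unique irreducible quotient of the latter, the residue must equal $\pi_1$. That inference does not hold: a non-zero quotient of a length-two local representation with cosocle $\pi_{1,v}$ is either $\pi_{1,v}$ or all of $\DPS{P_4}\bk{1,\chi}_v$, and globally the residue could a priori be $\bigotimes_v \sigma_v$ with $\sigma_v = \DPS{P_4}\bk{1,\chi}_v$ at finitely many places. Since the residue here is not square-integrable, you cannot appeal to semi-simplicity of $L^2$ to force irreducibility. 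Ruling out the larger quotients requires the same constant-term analysis you perform in (1) and (2) — and this is precisely what the paper does for (3), establishing that $N_{w,v}\bk{\DPS{P_4}\bk{1,\chi}_v} = \pi_{1,v}$ for every contributing $w$ and every $v$. Note also that the Remark you invoke requires, as a \emph{hypothesis}, that $\Res{s=s_0}\Eisen_{P_i}\bk{\chi}$ is already known to equal the unique irreducible quotient; it can be used afterward to identify $\pi_1$ with $\DPS{P_3}\bk{0,\chi}$, but not to determine the residue in the first place. There is no Siegel--Weil shortcut here: the mechanism of \Cref{Chap:Siegel_Weil} is explicitly noted not to extend to non-trivial $\chi$, so the constant-term analysis cannot be bypassed.
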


\begin{proof}
	The proof in all of these cases is simple.
	As explained above, it is enough to consider the images of $\DPS{P}\bk{s_0,\chi}_v$ under $N_w\bk{\lambda_{s,\chi}^{P}}_v$ for $w\in W^{shortest}_{\bk{P_i,s_0,\chi}}$ and $v\in\Places$.
	This is performed in \Cref{Appendix:Local_data_for_global_NSI_resisudes} to the following results:
	\begin{enumerate}
		\item $N_w\bk{\lambda_{\frac{1}{2}}^{P_4}}_v\bk{\DPS{P_4}\bk{\frac{1}{2}}_v} = \DPS{P_4}\bk{\frac{1}{2}}_v\rmod\pi_{0,v}$ for all $v\in\Places$.
		\item $N_w\bk{\lambda_{\chi_v,\frac{1}{2}}^{P_4}}_v\bk{\DPS{P_4}\bk{\frac{1}{2},\chi}_v} = \pi_{1,v}$ for all $v\in\Places$ and $\chi_v^2=\Id$.
		\item $N_w\bk{\lambda_{\chi_v,1}^{P_4}}_v\bk{\DPS{P_4}\bk{1,\chi}_v} = \pi_{1,v}$ for all $v\in\Places$ and $\chi_v^2=\Id$.
	\end{enumerate}
	
	The results then follow from the discussion above.
	
\end{proof}

\section{Results for $E_7$}

\begin{Prop}
	For $\bk{P_i,s_0,P_j,t_0}$ as listed in the following table, it holds that $\Eisen_{P_i}$ admits a simple pole at $s_0$ and $\Eisen_{P_j}$ is holomorphic at $t_0$.
	Furthermore, $\DPS{P_j}\bk{t_0}$ is spherical, irreducible and isomorphic to the unique irreducible quotient of $\DPS{P_i}\bk{t_0}$.
	In particular,
	\[
	\Res{s=s_0} \Eisen_{P_i} = \DPS{P_j}\bk{t_0} .
	\]
	\begin{longtable}[H]{|c|c|}
		\hline
		$\bk{P_i,s_0}$ & $\bk{P_j,t_0}$ \\ \hline
		$\bk{2,3}$ & $\bk{1,\frac{3}{2}}$ \\ \hline
		$\bk{2,4}$ & $\bk{7,2}$ \\ \hline
		$\bk{3,\frac{7}{2}}$ & $\bk{7,3}$ \\ \hline
		$\bk{3,\frac{9}{2}}$ & $\bk{1,\frac{13}{2}}$ \\ \hline
		$\bk{4,\frac{2}{3}}$ & $\bk{5,\frac{1}{3}}$ \\ \hline
		$\bk{4,\frac{3}{2}}$ & $\bk{6,1}$ \\ \hline
		$\bk{5,\frac{3}{2}}$ & $\bk{2,\frac{1}{2}}$ \\ \hline
		$\bk{5,4}$ & $\bk{7,6}$ \\ \hline
		$\bk{6,\frac{11}{2}}$ & $\bk{7,7}$ \\ \hline
	\end{longtable}
\end{Prop}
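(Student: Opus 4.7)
The plan is to apply the Siegel--Weil mechanism developed in \Cref{Sec:Siegel-Weil_Methods_and_Results}, combined with the tables of irreducibility and the tables of pole orders proved in \Cref{Chap:Local_DPS} and \Cref{Chap:Poles_of_Eisen_ser}. For each row $\bk{P_i,s_0,P_j,t_0}$ of the table, the first step is to exhibit a Weyl element $\widetilde{w}_{i,j}\in W_{E_7}$ (built as $w_{0,j}w_{0,j\to i}w_{0,i}$, in the notation of \Cref{Sec:Siegel-Weil_Methods_and_Results}) which satisfies $\eta_{t_0}^{P_j}=\widetilde{w}_{i,j}\cdot\eta_{s_0}^{P_i}$. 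Each pair is already registered as an arrow in \Cref{digram:E7_SW_NSI}, so this step only amounts to reading off the diagram and checking the equality of characters; this is a finite, purely combinatorial verification inside $\mathfrak{a}_{\bT,\C}^\ast$.

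Once this is done, the $W$-invariance of the entire normalized Eisenstein series $\Eisen_{B}^\sharp$ (\Cref{Prop:W_inv_entire_Eisen_Ser}) forces the identity
\[
\Res{s=s_0}\Eisen_{P_i}\bk{f^0_{P_i},s,g}
\;=\; C_{i,j,s_0,t_0}\cdot \Res{s=t_0}\Eisen_{P_j}\bk{f^0_{P_j},s,g},
\]
and, more generally, the identity with $M_{\widetilde{w}_{i,j}}$ inserted in front of a general section $f$, exactly as in \Cref{Sec:Siegel-Weil_Methods_and_Results}. For each of the listed pairs, I would then consult the tables in \Cref{Chap:Poles_of_Eisen_ser} for $E_7$ to confirm that $\Eisen_{P_i}$ has a simple pole at $s_0$ while $\Eisen_{P_j}$ is holomorphic at $t_0$, and consult the reducibility tables in \Cref{Chap:Local_DPS} to confirm that $\DPS{P_j}\bk{t_0}_v$ is irreducible at every place $v$. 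The order relation \Cref{eq:Siegel_Weil_order_equation} then forces $M_{\widetilde{w}_{i,j}}\bk{\eta_s^{P_i}}$ to contribute exactly a simple pole at $s_0$, consistent with the Siegel--Weil diagram.

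The third step is to match the spherical irreducible quotients. Since $\DPS{P_j}\bk{t_0}$ is irreducible at every place, the residue $\Res{s=t_0}\Eisen_{P_j}$ equals the full induced representation $\DPS{P_j}\bk{t_0}$. On the $\DPS{P_i}\bk{s_0}$ side, the residue is, by construction, a quotient of $\DPS{P_i}\bk{s_0}$, and by the Langlands square-integrability criterion (\Cref{Subsection:Square-Integrability_criterion}) it is non-square-integrable. By the local data of \Cref{Chap:Local_DPS}, $\DPS{P_i}\bk{s_0}_v$ has a unique irreducible (spherical) quotient $\pi_{1,v}$. Combining with the identity above, one concludes
\[
\Res{s=s_0}\Eisen_{P_i} \;=\; \bigotimes_{v\in\Places}\pi_{1,v} \;=\; \DPS{P_j}\bk{t_0},
\]
which is precisely the content of the proposition.

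The only genuinely non-routine step is the verification, case by case, that $\widetilde{w}_{i,j}$ actually sends $\eta_{s_0}^{P_i}$ to $\eta_{t_0}^{P_j}$, and the cross-checking that the local spherical quotients on the two sides coincide. This becomes computationally non-trivial because $W_{E_7}$ has order $2{,}903{,}040$ and the shortest representative $w_{0,j\to i}$ must be chosen inside the double coset $W_{M_j}\lmod W\rmod W_{M_i\cap M_j}$. I expect this to be the main (though only bookkeeping) obstacle, and as in the earlier chapters it is handled by the Sagemath implementation. With this verification in hand, every remaining ingredient is a direct consequence of results already proved, and the proposition follows uniformly for every row of the table.
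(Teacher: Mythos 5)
Your proposal is correct and follows the same approach the paper uses for this proposition: read each row off the Siegel--Weil diagram (Figure~\ref{digram:E7_SW_NSI}) to get the Weyl element $\widetilde{w}_{i,j}$ and the identity $\Res{s=s_0}\Eisen_{P_i}=\Res{s=t_0}\Eisen_{P_j}$ from \Cref{Prop:W_inv_entire_Eisen_Ser}, then combine holomorphy (and automatic non-vanishing, cf.\ \Cref{Rem:Order_of_trivial_eq_class}) of $\Eisen_{P_j}$ at $t_0$ with local irreducibility of $\DPS{P_j}\bk{t_0}$ to conclude the residue equals $\DPS{P_j}\bk{t_0}$. The paper treats this as an immediate instance of the method described in the first subsection of \Cref{Chap:Non_square_Integrable} and does not spell out a separate proof, so your write-up is the argument.
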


\begin{Prop} It holds that:
	\begin{enumerate}
		\item
		\begin{enumerate}
			\item $\Res{s=2} \Eisen_{P_2}= \Res{s=2} \Eisen_{P_5}= \pi_1 = \bigotimes_{v\in\Places} \pi_{1,v}$ .
			\item $\Res{s=\frac{1}{2}} \Eisen_{P_4}=\pi_1 = \bigotimes_{v\in\Places} \pi_{1,v}$.
		\end{enumerate}

		\item Let $\chi$ be a quadratic Hecke character.
		\begin{enumerate}
			\item $\Res{s=\frac{1}{2}} \Eisen_{P_4}\bk{\chi}=\pi_1 = \bigotimes_{v\in\Places} \pi_{1,v}$.
			\item $\Res{s=\frac{3}{2}} \Eisen_{P_4}\bk{\chi}=\pi_1 = \bigotimes_{v\in\Places} \pi_{1,v} = \DPS{P_6}\bk{1,\chi}$.
			\item $\Res{s=1} \Eisen_{P_5}\bk{\chi} = N_w\bk{\lambda_1^{P_5}}\bk{\DPS{P_i}\bk{s,\chi}}$, where $w_0=w_{[1, 3, 4, 5, 6, 2, 4, 5, 3, 4, 1, 3, 2, 4, 5]}$. 
			\item $\Res{s=\frac{3}{2}} \Eisen_{P_5}\bk{\chi} = \pi_1 = \bigotimes_{v\in\Places} \pi_{1,v} = \DPS{P_2}\bk{\frac{1}{2},\chi}$.
		\end{enumerate}

		\item Let $\chi$ be a cubic Hecke character.
		Then 
		\[
		\Res{s=\frac{2}{3}} \Eisen_{P_4}\bk{\chi}=\pi_1 = \bigotimes_{v\in\Places} \pi_{1,v} = \DPS{P_5}\bk{\chi^2,\frac{1}{3}}
		\]

	\end{enumerate}
\end{Prop}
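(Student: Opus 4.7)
The plan is to follow the two-pronged strategy outlined in the method section: whenever possible, identify a Siegel-Weil partner $(P_j,t_0)$ from \Cref{Chap:Siegel_Weil} such that the target $\DPS{P_j}(t_0,\chi')$ is irreducible (or at least shares a unique irreducible quotient with $\DPS{P_i}(s_0,\chi)$), and otherwise resort to a direct analysis of the image of $N_w(\lambda_{s,\chi}^{P_i})_v$ for $w\in W^{shortest}_{(P_i,s_0,\chi)}$ using the local data assembled in \Cref{Appendix:Local_data_for_global_NSI_resisudes}. Concretely, for parts (1a), (1b), (2a), (2b) and (2d) I would first note that the listed triples $(P_i,s_0,\chi)$ all have $\DPS{P_i}(s_0,\chi)$ admitting a unique irreducible quotient $\pi_1$; hence any non-zero residue is forced to equal $\pi_1$ and the content is simply to verify non-vanishing and, in (2b) and (2d), to exhibit the claimed identification with an irreducible degenerate principal series.

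For (1a) the Siegel-Weil diagram (Figure~\ref{digram:E7_SW_NSI}) supplies $\widetilde{w}_{5,2}\colon \DPS{P_5}(2)\to \DPS{P_2}(2)$, which reduces the two cases to a single one; from there the chain terminates at an irreducible or almost-irreducible neighbor, and invoking \Cref{Eq:Residues_using_SW_Id} together with the fact that both representations share the unique irreducible spherical quotient $\pi_1$ gives the equality. Part (1b) follows similarly by chasing the triple $(4,\tfrac12)$ back along its Siegel-Weil orbit. Parts (2b) and (2d) are the most structured: the targets $\DPS{P_6}(1,\chi)$ and $\DPS{P_2}(\tfrac12,\chi)$ are irreducible for quadratic $\chi$ (this can be read off the reducibility tables in \Cref{Chap:Local_DPS}), and they coincide as irreducible modules with the unique irreducible quotient of $\DPS{P_4}(s_0,\chi)$ at the relevant $s_0$; since these are spherical, they must appear as $\ResRep{G}{4}{s_0}{\chi}$, and the spherical Siegel-Weil type identity from the remark at the end of the method subsection delivers the equality. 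Part (3) is handled in the same spirit but using a cubic character: $\DPS{P_5}(\chi^2,\tfrac13)$ is irreducible and isomorphic to the unique irreducible quotient of $\DPS{P_4}(\tfrac23,\chi)$, so the residue must coincide with it.

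The one case that genuinely requires the direct image analysis is (2c), $\Res_{s=1}\Eisen_{P_5}(\chi)$ for quadratic $\chi$, because $\DPS{P_5}(1,\chi)$ is of length two and there is no available Siegel-Weil partner with an irreducible target. Here I would proceed as follows: locate the set $W^{shortest}_{(P_5,1,\chi)}$, for which the computation in \Cref{Chap:Poles_of_Eisen_ser} already identified a singleton equivalence class supporting a simple pole, represented by $w=w_{[1,3,4,5,6,2,4,5,3,4,1,3,2,4,5]}$. The residue is then spanned by $\mathrm{Res}_{s=1}\kappa_{\{w\}}$, and since this is a singleton class the constant-term formula reduces to $C_w(\lambda_{s,\chi}^{P_5})\cdot\otimes_v N_{w,v}(\lambda_{s,\chi}^{P_5})f_{s,v}$. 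Thus, after passing to the residue, the image is precisely the restricted tensor of the local images $N_{w,v}(\lambda_1^{P_5})\big(\DPS{P_5}(1,\chi)_v\big)$, which is exactly the formulation given in the statement. The local images are then identified in \Cref{Appendix:Local_data_for_global_NSI_resisudes}, and this identification is the only genuine piece of work.

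The main obstacle, as in (2c), is always the local image computation at places where $\chi_v$ is ramified: one must pin down which irreducible subquotient of $\DPS{P_5}(1,\chi)_v$ lies in the image of $N_{w,v}$ and confirm it is not the full representation. For the remaining parts, the only subtle point is verifying non-vanishing, which we get for free from \Cref{Rem:Order_of_trivial_eq_class} and the existence of a singleton equivalence class supporting the simple pole. Everything else reduces to bookkeeping against the reducibility tables and the Siegel-Weil diagrams already established in the earlier chapters.
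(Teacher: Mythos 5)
Your overall plan — Siegel--Weil transfer where available, direct image analysis otherwise, with the local data coming from Appendix~\ref{Appendix:Local_data_for_global_NSI_resisudes} — is in line with the paper's strategy, and your treatment of case~(2c) is essentially the paper's own argument. But there is a genuine gap in the reduction you use for the remaining parts, and one of your specific reductions does not exist.

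The step ``the listed triples all have $\DPS{P_i}(s_0,\chi)$ admitting a unique irreducible quotient $\pi_1$; hence any non-zero residue is forced to equal $\pi_1$'' is not valid. The residue $\ResRep{G}{i}{s_0}{\chi}$ is a quotient of $\DPS{P_i}(s_0,\chi)$, but being a non-zero quotient of a length-two module does not force it to be the unique irreducible quotient: it could a priori be a larger, non-semi-simple quotient that contains composition factors $\pi_{0,v}$ at finitely many places. Knowing only that the cosocle is $\pi_1$ is not the same as knowing that the residue equals $\pi_1$. What closes this gap in the paper is a concrete analysis of the constant term of the residue: by \Cref{Cor:Kernel_of_Series_is_kernel_of_CT}, the residue is determined (as a representation) by the residue of its constant term, and the contributions $M_w f_s$ for $w$ with $w\cdot\lambda_0\le r_T^G\pi_0$ are holomorphic at $s_0$, so only the $\pi_{1,v}$-exponents survive. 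Equivalently, one shows $N_{w,v}(\lambda_{s,\chi}^{P_i})\bigl(\DPS{P_i}(s,\chi)_v\bigr)=\pi_{1,v}$ for $w\in W^{shortest}_{(P_i,s_0,\chi)}$ at every $v$. This local computation is the actual content of Appendix~\ref{Appendix:Local_data_for_global_NSI_resisudes} for these cases, not a formality. The same issue infects your treatment of (2b), (2d) and (3): the remark you cite at the end of the method subsection explicitly lists ``$\Res_{s=s_0}\Eisen_{P_i}(\chi)$ is isomorphic to the unique irreducible quotient'' as an \emph{assumption} to be verified, so it cannot be invoked for free.

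Two smaller factual points. First, the Siegel--Weil diagram for $E_7$ (Figure~\ref{digram:E7_SW_NSI}) contains no arrow into or out of $\DPS{P_4}(\tfrac12)$, so there is no Siegel--Weil orbit to chase for (1b); the paper handles this case entirely by the direct image analysis. Second, for (1a) the chain $\DPS{P_5}(2)\to\DPS{P_2}(2)$ terminates at $\DPS{P_2}(2)$, which is \emph{reducible}; your phrase ``terminates at an irreducible or almost-irreducible neighbor'' is inaccurate. The paper's actual argument is to establish $\Res_{s=2}\Eisen_{P_2}=\pi_1$ by the direct method first, and then transfer to $P_5$ via \Cref{Eq:Residues_using_SW_Id}. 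Your description of (2c) — that it reduces to the singleton equivalence class $\{w_0\}$ and hence the residue is the global tensor of the local images of $N_{w_0}$ — is correct, but this should have signalled to you that the cases you call ``simply verify non-vanishing'' require the same kind of image computation, only with a cleaner outcome.
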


\begin{proof}
	Cases (1)(b), (2)(a), (2)(b), (2)(d) and (3) are proven by a similar method to that of $E_6$, while the detailed local calculation can be found in \Cref{Appendix:Local_data_for_global_NSI_resisudes}.
	We now deal with the other cases.
	
	In case (1)(a), the equality $\Res{s=2} \Eisen_{P_2}= \pi_1$ also follows as above.
	The equality $\Res{s=2} \Eisen_{P_2}= \Res{s=2} \Eisen_{P_5}$ follows from \Cref{Eq:Residues_using_SW_Id}.
	
	We now turn to the case (2)(c).
	For a global quadratic character $\chi$, we recall that in half of the places $v\in\Places$ it holds that $\chi_v=\Id_v$.
	Thus, in such places $\DPS{{P_5}}\bk{1,\chi}_v=\DPS{{P_5}}\bk{1,\Id}_v$.
	In particular, the local component of $\ResRep{E_7}{5}{1}{2}$ at half of the places are quotients of $\DPS{{P_5}}\bk{1,\Id}_v$.
	Furthermore, $W^{shortest}_{\bk{P_5,1,\chi}}=\set{w_0}$ which implies our claim.
	As explained in \Cref{Appendix:Local_data_for_global_NSI_resisudes}, for places $v\in\Places$ such that $\chi_v$ is quadratic, $N_w\bk{\lambda_1^{P_5}}_v\bk{\DPS{P_i}\bk{s,\chi}_v}$ is the unique irreducible quotient $\pi_{1,v}$ of $\DPS{P_i}\bk{s,\chi}_v$.
	Also, if $\chi_v=\Id_v$, then $N_w\bk{\lambda_1^{P_5}}_v\bk{\DPS{P_i}\bk{s,\chi}_v}$ is reducible of length at least $2$, with a unique irreducible subrepresentation $\tau_{0,v}$ and a unique irreducible quotient $\pi_{1,v}$. 
\end{proof}

\section{Results for $E_8$}

\begin{Prop}
	For $\bk{Pi,s_0,P_j,t_0}$ as listed in the following table, it holds that $\Eisen_{P_i}$ admits a simple pole at $s_0$ and $\Eisen_{P_j}$ is holomorphic at $t_0$.
	Furthermore, $\DPS{P_j}\bk{t_0}$ is spherical, irreducible and isomorphic to the unique irreducible quotient of $\DPS{P_i}\bk{t_0}$.
	In particular,
	\[
	\Res{s=s_0} \Eisen_{P_i} = \DPS{P_j}\bk{t_0} .
	\]

	\begin{longtable}[H]{|c|c|}
		\hline
		$\bk{P_i,s_0}$ & $\bk{P_j,t_0}$ \\ \hline
		$\bk{1,\frac{11}{2}}$ & $\bk{8,\frac{5}{2}}$ \\ \hline
		$\bk{2,\frac{9}{2}}$ & $\bk{8,\frac{3}{2}}$ \\ \hline
		$\bk{2,\frac{11}{2}}$ & $\bk{8,\frac{13}{2}}$ \\ \hline
		$\bk{3,\frac{7}{6}}$ & $\bk{2,\frac{5}{6}}$ \\ \hline
		$\bk{3,2}$ & $\bk{7,1}$ \\ \hline
		$\bk{3,\frac{9}{2}}$ & $\bk{8,\frac{15}{2}}$ \\ \hline
		$\bk{3,\frac{11}{2}}$ & $\bk{1,\frac{19}{2}}$ \\ \hline
		$\bk{4,\frac{3}{10}}$ & $\bk{5,\frac{1}{10}}$ \\ \hline
		$\bk{4,\frac{3}{4}}$ & $\bk{3,\frac{1}{4}}$ \\ \hline
		$\bk{4,\frac{5}{6}}$ & $\bk{6,\frac{1}{3}}$ \\ \hline
		$\bk{4,\frac{7}{6}}$ & $\bk{6,\frac{4}{3}}$ \\ \hline
		$\bk{4,2}$ & $\bk{7,3}$ \\ \hline
		$\bk{5,\frac{5}{6}}$ & $\bk{3,\frac{1}{6}}$ \\ \hline
		$\bk{5,\frac{7}{6}}$ & $\bk{2,\frac{1}{6}}$ \\ \hline
		$\bk{5,2}$ & $\bk{1,1}$ \\ \hline
		$\bk{5,\frac{9}{2}}$ & $\bk{8,\frac{21}{2}}$ \\ \hline
		$\bk{6,\frac{5}{2}}$ & $\bk{1,2}$ \\ \hline
		$\bk{6,4}$ & $\bk{8,\frac{7}{2}}$ \\ \hline
		$\bk{6,5}$ & $\bk{7,\frac{13}{2}}$ \\ \hline
		$\bk{6,6}$ & $\bk{8,\frac{23}{2}}$ \\ \hline
		$\bk{7,\frac{17}{2}}$ & $\bk{8,\frac{25}{2}}$ \\ \hline
	\end{longtable}
\end{Prop}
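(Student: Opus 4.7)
The plan is to prove each identity in the table by matching the residue of $\Eisen_{P_i}$ at $s_0$ against the (holomorphic) value of $\Eisen_{P_j}$ at $t_0$ via the Siegel-Weil machinery of Chapter 6, then using local irreducibility data from Chapter 3 to conclude that the image is a full irreducible degenerate principal series. Concretely, for each listed pair $(P_i,s_0,P_j,t_0)$ I would first compute a Weyl element $\widetilde{w}_{i,j} = w_{0,j} w_{0,j\to i} w_{0,i}$ and verify the crucial identity $\eta_{t_0}^{P_j} = \widetilde{w}_{i,j}\cdot \eta_{s_0}^{P_i}$, so that both linear functionals lie on the same $W$-orbit and hence define the same value of the entire normalized series $\Eisen_B^\sharp$ from \Cref{Prop:W_inv_entire_Eisen_Ser}.

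Given this, the Siegel-Weil mechanism of \Cref{Sec:Siegel-Weil_Methods_and_Results} produces a nonzero constant $C_{i,j,s_0,t_0}$ and an identity
\[
\Res{s=s_0}\Eisen_{P_i}(f,s,g) = C_{i,j,s_0,t_0}\cdot \Res{s=t_0}\Eisen_{P_j}\!\bigl(M_{\widetilde{w}_{i,j}}(\eta_s^{P_i})f,\,s,\,g\bigr),
\]
together with the order relation \Cref{eq:Siegel_Weil_order_equation}. Consulting the tables of orders in Chapter 5 (the $E_8$ theorem), one reads off that $\Eisen_{P_i}$ has a simple pole at $s_0$ while $\Eisen_{P_j}$ is holomorphic at $t_0$, so that the normalizing operator $M_{\widetilde{w}_{i,j}}(\eta_s^{P_i})$ must absorb exactly one simple pole. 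Consequently the right-hand side reduces to the evaluation $\Eisen_{P_j}(\cdot,t_0,\cdot)$ applied to sections lying in the image of $M_{\widetilde{w}_{i,j}}$ at $s_0$.

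The second ingredient is local. From the classification of reducible $\DPS{P_j}(t,\chi)_v$ in Chapter 3 for $E_8$, one verifies that each of the listed points $(P_j,t_0)$ lies \emph{outside} the reducibility locus, so $\DPS{P_j}(t_0) = \otimes_v \DPS{P_j}(t_0)_v$ is irreducible and coincides with the unique irreducible spherical quotient of $\DPS{P_i}(s_0)$ (this last equality can be verified by comparing anti-dominant exponents along the lines of \cite[Tables 9-16]{SDPS_E8}, since both global representations contain a common anti-dominant exponent). Because $\Eisen_{P_j}$ is holomorphic and nonvanishing at $t_0$ (the value at a spherical, non-reducing point is never identically zero, as the constant term is a non-trivial sum of unramified Gindikin-Karpelevich factors), its value generates the full irreducible representation $\DPS{P_j}(t_0)$. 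Combined with the Siegel-Weil equality, this yields $\Res{s=s_0}\Eisen_{P_i} = \DPS{P_j}(t_0)$.

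The main obstacle, which is the reason the proof must be done case by case rather than as a single general statement, is twofold. First, one must actually exhibit $\widetilde{w}_{i,j}$ for each of the $21$ pairs and verify the Weyl-orbit identity; this is a purely combinatorial but lengthy exercise in the Weyl group $W_{E_8}$, handled efficiently by the Sagemath implementation used throughout \Cref{Part:DRS}. Second, one must confirm the precise order-of-pole bookkeeping, namely that no additional cancellation beyond what \Cref{eq:Siegel_Weil_order_equation} predicts occurs at $s_0$; this is guaranteed by the pole-order tables already established in Chapter 5, so in fact no new analytic input is needed and the proposition reduces to assembling the data from Chapters 3, 5 and 6.
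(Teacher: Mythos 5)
Your proposal is correct and reproduces the paper's own (implicit) argument: the proposition is proved exactly by the Siegel-Weil mechanism summarized at the start of \Cref{Chap:Non_square_Integrable} (the subsection ``Siegel-Weil Identities'' in the method section), combined with the irreducibility of each $\DPS{P_j}(t_0)$ read off from the $E_8$ reducibility tables in \Cref{Chap:Local_DPS} and the pole orders from \Cref{Chap:Poles_of_Eisen_ser}. Two small remarks: first, the statement's ``unique irreducible quotient of $\DPS{P_i}(t_0)$'' is a typo for $\DPS{P_i}(s_0)$, and the fact that $\DPS{P_j}(t_0)$ is this unique quotient follows automatically once the Siegel-Weil equality $\Res{s=s_0}\Eisen_{P_i}=\Res{s=t_0}\Eisen_{P_j}=\DPS{P_j}(t_0)$ is established (residues are quotients of the inducing representation, and $\DPS{P_i}(s_0)$ has a unique irreducible quotient since it is generated by the spherical vector), so your anti-dominant-exponent check is a useful sanity confirmation but not logically necessary; second, the non-vanishing of $\Eisen_{P_j}$ at $t_0$ is guaranteed by the contribution of the trivial equivalence class $\set{1}$ to the constant term (\Cref{Rem:Order_of_trivial_eq_class}), which is exactly the observation you invoke.
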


\begin{Prop} It holds that:
	\begin{enumerate}
		\item
		\begin{enumerate}
			\item $\Res{s=\frac{1}{2}} \Eisen_{P_3}=\pi_1 = \bigotimes_{v\in\Places} \pi_{1,v}$.
		\end{enumerate}
		
		\item Let $\chi$ be a quadratic Hecke character.
		\begin{enumerate}
			\item $\Res{s=2} \Eisen_{P_3}\bk{\chi}=\pi_1 = \bigotimes_{v\in\Places} \pi_{1,v} = \DPS{P_7}\bk{1,\chi}$.
			\item $\Res{s=\frac34} \Eisen_{P_4}\bk{\chi}=\pi_1 = \bigotimes_{v\in\Places} \pi_{1,v} = \DPS{P_3}\bk{\frac14,\chi}$.
			\item $\Res{s=2} \Eisen_{P_4}\bk{\chi}=\pi_1 = \bigotimes_{v\in\Places} \pi_{1,v} = \DPS{P_7}\bk{3,\chi}$.
			\item $\Res{s=2} \Eisen_{P_5}\bk{\chi}=\pi_1 = \bigotimes_{v\in\Places} \pi_{1,v} = \DPS{P_1}\bk{1,\chi}$.
			\item $\Res{s=\frac52} \Eisen_{P_6}\bk{\chi}=\pi_1 = \bigotimes_{v\in\Places} \pi_{1,v} = \DPS{P_1}\bk{2,\chi}$.
		\end{enumerate}

		\item Let $\chi$ be a cubic Hecke character.
		\begin{enumerate}
			\item $\Res{s=\frac76} \Eisen_{P_3}\bk{\chi}=\pi_1 = \bigotimes_{v\in\Places} \pi_{1,v}$.
			\item $\Res{s=\frac76} \Eisen_{P_4}\bk{\chi}=\pi_1 = \bigotimes_{v\in\Places} \pi_{1,v}$.
			\item $\Res{s=\frac56} \Eisen_{P_5}\bk{\chi}=\pi_1 = \bigotimes_{v\in\Places} \pi_{1,v}$.
			\item $\Res{s=\frac76} \Eisen_{P_5}\bk{\chi}=\pi_1 = \bigotimes_{v\in\Places} \pi_{1,v} = \DPS{P_2}\bk{\frac{1}{6},\chi}$.
		\end{enumerate}
		
		\item Let $\chi$ be a quartic Hecke character.
		\begin{enumerate}
			\item $\Res{s=\frac34} \Eisen_{P_4}\bk{\chi}=\pi_1 = \bigotimes_{v\in\Places} \pi_{1,v}$.
		\end{enumerate}
		
		\item Let $\chi$ be a quintic Hecke character.
		Then $\Res{s=\frac{3}{10}} \Eisen_{P_4}\bk{\chi}=\pi_1 = \bigotimes_{v\in\Places} \pi_{1,v}$.
		
	\end{enumerate}
\end{Prop}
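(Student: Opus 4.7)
The plan is to follow the same two-pronged strategy used successfully for $E_6$ and $E_7$: in cases where a Siegel--Weil identity is available (i.e., the target $(P_j,t_0)$ appears in the diagrams of \Cref{Thm:Siegel_Weil-identities}), reduce the residue on $(P_i,s_0)$ to the value of a holomorphic and non-vanishing Eisenstein series at $(P_j,t_0)$; in the remaining cases, analyze the image of the intertwining operators $N_w(\lambda_{s,\chi}^{P_i})_v$ for $w\in W^{shortest}_{(P_i,s_0,\chi)}$ place-by-place, using the local computations deferred to \Cref{Appendix:Local_data_for_global_NSI_resisudes}.

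First, I would dispose of the long list of triples $(P_i,s_0,P_j,t_0)$ in the first table. For each such row, by \Cref{Thm:Siegel_Weil-identities} there exists a Weyl element $\widetilde{w}_{i,j}$ with $\eta_{t_0}^{P_j}=\widetilde{w}_{i,j}\cdot\eta_{s_0}^{P_i}$, so that the spherical normalized Eisenstein series $\Eisen_{B}^\sharp$ takes the same value at both exponents by \Cref{Prop:W_inv_entire_Eisen_Ser}. Combined with \Cref{Prop:Equality_of_Eisenstein_series} and the order-bound tables from \Cref{Chap:Poles_of_Eisen_ser}, this yields $\ord{s=s_0}{\Eisen_{P_i}}=\ord{s=t_0}{\Eisen_{P_j}}+1$, so that $\Eisen_{P_j}$ is holomorphic at $t_0$ (the tables confirm this entry is a ``$0$''). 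Since in these rows $\DPS{P_j}(t_0)$ is spherical and irreducible (checked against \Cref{Chap:Local_DPS}), the value $\Eisen_{P_j}(f,t_0,\cdot)$ generates precisely $\DPS{P_j}(t_0)$ and the equality $\Res{s=s_0}\Eisen_{P_i}=\DPS{P_j}(t_0)$ follows.

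Next, I would treat the cases with an explicit target of the form $\pi_1=\bigotimes_v\pi_{1,v}$ in items (2)--(5). Here I apply the method of the preceding $E_6$, $E_7$ proofs: the set $W^{shortest}_{(P_i,s_0,\chi)}$ turns out to be a singleton $\{w_0\}$ (or reduces to one modulo the stabilizer relations), so that
\[
\Res{s=s_0}\Eisen_{P_i}(f,s,\chi)_T=\Res{s=s_0}\kappa_{[w_0]}(f,g),
\]
and by the local results in \Cref{Appendix:Local_data_for_global_NSI_resisudes} the image $N_{w_0,v}(\lambda_{s_0,\chi}^{P_i})_v(\DPS{P_i}(s_0,\chi)_v)$ equals the irreducible quotient $\pi_{1,v}$ at every place. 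When the statement additionally asserts $\pi_1\cong\DPS{P_j}(t_0,\chi)$ for a second parabolic, this is a consequence of \Cref{Chap:Local_DPS}: the two degenerate principal series share a unique irreducible subquotient with anti-dominant exponent, which must be $\pi_1$ since the right-hand side is irreducible.

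The main obstacle will be the case $\bk{3,\frac12}$ (item (1)(a)) where the local component $\DPS{P_3}(\tfrac12,\chi)_v$ can fail to have irreducible co-socle when $\chi_v$ is quadratic; this is analogous to the $E_7$ case $(5,1,\chi)$ handled above, and demands a careful bookkeeping of the places where $\chi_v=\Id_v$ versus $\chi_v\neq\Id_v$, together with the application of \Cref{Cor:Kernel_of_Series_is_kernel_of_CT} to ensure the residue is generated by pure tensors whose local image lies in the correct quotient. Outside of this point, the bulk of the labor is purely computational: verifying $\Eisen_{P_j}$ is holomorphic and non-vanishing at $t_0$ using the tables in \Cref{Chap:Poles_of_Eisen_ser}, and checking irreducibility of $\DPS{P_j}(t_0,\chi)$ against \Cref{Chap:Local_DPS}.
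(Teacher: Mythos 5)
Your plan for items (2)--(5) matches the paper: the local degenerate principal series at the relevant points have length $2$ (as verified in \Cref{Appendix:Local_data_for_global_NSI_resisudes}), the set $W^{shortest}$ analysis shows that the residual constant term is supported on $\pi_{1,v}$ at every place, and when an identification $\pi_1\cong\DPS{P_j}(t_0,\chi^k)$ is asserted it follows because that second degenerate principal series is irreducible and shares a subquotient with $\DPS{P_i}(s_0,\chi)$. The paper dispenses with these in one sentence by referring back to the $E_6$ argument, so your treatment is consonant.

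However, your handling of item (1)(a) contains a genuine gap and several misconceptions. First, the statement in (1)(a) is for $\chi=\Id$ only; there is no quadratic Hecke character in play, so there is no bookkeeping of places with $\chi_v=\Id_v$ versus $\chi_v\ne\Id_v$, and the problematic co-socle-length-$2$ case $\bk{3,\pm\tfrac12,2}$ from the local table is simply not relevant here --- for $\bk{3,\tfrac12,1}$ the local co-socle is irreducible. Second, the analogy you draw to the $E_7$ case $\bk{5,1,\chi}$ is not the right one (that case ended with a residue that is not a tensor product of unique irreducible quotients). What the paper actually does for $\bk{3,\tfrac12,1}$ is invoke a Siegel--Weil type relation to the point $t_0=0$ on $P_6$ --- an identity which deliberately does \emph{not} appear in the Siegel--Weil diagrams precisely because $\DPS{P_6}(0)$ is not generated by its spherical vector. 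The key ingredients you are missing are: (i) the holomorphicity of $\Eisen_{P_6}$ at the unitary point $t_0=0$, which comes from the general result of Lapid on Eisenstein series on the unitary axis (\cite[Proposition 6]{MR2767521}) and not from the order tables or from $G_{P_j}$; (ii) the local decomposition $\DPS{P_6}(0)_v=\pi_{1,v}\oplus\pi_{-1,v}$ into a spherical and a non-spherical irreducible summand; and (iii) the Jacquet-module multiplicity computation showing that the initial exponent of $\DPS{P_6}(0)_v$ occurs with multiplicity $1$ in $r_T^G(\DPS{P_3}(\tfrac12)_v)$ but multiplicity $2$ in $r_T^G(\DPS{P_6}(0)_v)$, hence only $\pi_{1,v}$ is a quotient of $\DPS{P_3}(\tfrac12)_v$, which pins down the residue as $\bigotimes_v\pi_{1,v}$. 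Without these steps the argument for (1)(a) does not go through, and the case cannot be dismissed as ``analogous'' to anything in $E_7$.
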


\begin{proof}
	In the cases of (2)-(5) the local degenerate principal series has length $2$ and the proof is similar to that of the $E_6$ cases.
	
	In the case of (1)(a), we note the Siegel-Weil like map
	\[
	\Res{s=\frac{1}{2}} \Eisen_{P_3} = \Res{t=0} \Eisen_{P_6} .
	\]
	This map does not appear in \Cref{Chap:Siegel_Weil} as $\DPS{P_6}\bk{0}$ is not generated by the spherical section.
	However, since $\DPS{P_3}\bk{\frac12}$ is, a similar calculation allows us to us the irreducible spherical constituent of $\DPS{P_6}\bk{0}$.
	
	More precisely, since $\DPS{P_6}\bk{0}$ is unitary, it follows from \cite[Proposition 6]{MR2767521}, that $\Eisen_{P_6}\bk{t}$ is holomorphic at $t_0=0$.
	We also note that, for any $v\in\Places_{fin.}$, we have $\DPS{P_6}\bk{0}_v = \pi_{1,v} \oplus \pi_{-1,v}$, where $\pi_{1,v}$ is spherical and irreducible and $\pi_{-1,v}$ is irreducible and non-spherical.
	Here, $\pi_{1,v}$ is a quotient of $\DPS{P_3}\bk{\frac{1}{2}}_v$ while $\pi_{-1,v}$ is not since
	\[
	\mult{\lambda_0}{r_T^G\bk{\DPS{P_3}\bk{\frac{1}{2}}_v}} = 1,
	\]
	where
	\[
	\lambda_0 = \eeightchar{-1}{-1}{-1}{-1}{-1}{6}{-1}{-1}
	\]
	is the initial exponent of $\DPS{P_6}\bk{0}_v$ and
	\[
	\mult{\lambda_0}{r_T^G\bk{\DPS{P_6}\bk{0}_v}} = 2,\quad
	\mult{\lambda_0}{r_T^G\bk{\pi_{1,v}}} = \mult{\lambda_0}{r_T^G\bk{\pi_{-1,v}}} = 1.
	\]
	
	It follows then that
	\[
	\bk{\Res{s=\frac{1}{2}} \Eisen_{P_3}}_v = \bk{\Res{t=0} \Eisen_{P_6}}_v = \pi_{1,v},
	\]
	from which the claim follows.
	
\end{proof}
	
The following cases remain to be determined:
\begin{itemize}
	\item $\Res{s=1} \Eisen_{P_4}$, $\Res{s=1} \Eisen_{P_3}$, $\Res{s=1} \Eisen_{P_5}$, $\Res{s=\frac{1}{2}} \Eisen_{P_6}$.
	\item $\Res{s=1} \Eisen_{P_3}\bk{\chi}$, $\Res{s=\frac32} \Eisen_{P_3}\bk{\chi}$, $\Res{s=1} \Eisen_{P_4}\bk{\chi}$, $\Res{s=2} \Eisen_{P_6}\bk{\chi}$ - For a quadratic Hecke character $\chi$.
	\item $\Res{s=\frac56} \Eisen_{P_4}\bk{\chi}$, $\Res{s=\frac12} \Eisen_{P_5}\bk{\chi}$ - For a cubic Hecke character $\chi$.
	\item $\Res{s=\frac12} \Eisen_{P_4}\bk{\chi}$ - For a quartic Hecke character $\chi$.
\end{itemize}

%
%


\appendix
\part{Appendices}
\chapter{Local Calculations}
\label{Appendix:Chap_Local_Calculations}

In this chapter, we study the structure of certain local degenerate principal series and examine the action of certain intertwining operators on them.
The results of this chapter were stated in \Cref{Chap:Local_DPS} and used in \Cref{Chap:Square_Integrable} or \Cref{Chap:Non_square_Integrable}.

There are two types of degenerate principal series $\DPS{P_i}\bk{s_0,\chi}$ of interest to us:
\begin{enumerate}
	\item Where the global residue $\ResRep{G}{i}{s_0}{ord\bk{\chi}}$ is square-integrable and the co-socle of $\DPS{P_i}\bk{s_0,\chi}$ is not irreducible.
	In this case, we are interested in the length of the co-socle and the interpretation of the irreducible quotients as images or eigenspaces of certain intertwining operators.
	\item Where the global residue $R\bk{P_i,s_0,\chi}$ is non-square-integrable and cannot be calculated using a Siegel-Weil identity.
	In this case, we are interested in the length and structure (i.e. a Jordan-H\"older series) of the representation and the images of certain singular intertwining operators.
\end{enumerate}

We point out that there is a certain amount of overlap between these two types of cases.
For example, when $G$ is of type $E_6$, the residue $R\bk{P_4,1/2,\chi}$ is square integrable, while at a third of the primes $v\in\Places_{fin.}$, the local character $\chi_v$ is trivial and $\DPS{P_4}\bk{\frac{1}{2},\chi}_v = \DPS{P_4}\bk{1/2}_v$ and the residue $R\bk{P_4,1/2,\Id}$ is non-square-integrable, even though it shares infinitely many local factors with $R\bk{P_4,1/2,\chi}$.
Thus, it would make sense to include the analysis of $\DPS{P_4}\bk{\frac{1}{2}}_v$ in both \Cref{Appendix:Local_data_for_global_SI_resisudes} and \Cref{Appendix:Local_data_for_global_NSI_resisudes}.
However, in the case of $R\bk{P_4,1/2,\chi}$, we only require that $\pi_v$ admits a unique irreducible quotient in places where $\chi_v=\Id_v$, this was already proven in \cite{SDPS_E6}.

Since this chapter deals with local degenerate principal series, we fix a place $v\in\Places_{fin.}$ and drop it from our notations.
At places where a global character is needed for the consistency of discussion, we let $\widetilde{\chi}$ denote a quadratic Hecke character of $\AA^\times$ such that $\widetilde{\chi}_v=\chi$.
We will also use $\widetilde{\Id}$ to denote the trivial Hecke character of $\AA^\times$, as opposed to $\Id$ which will denote the trivial character of $F_v^\times$.

\section{Branching Rule Calculations}
\label{Appendix:Branching_Rules}

The main tool used in this chapter is the branching rules calculation, as described in \cite[Subsection 3.3]{SDPS_E6} and \cite[Subsection 3.3]{SDPS_E7}.
In this manuscript, we will treat this process as a black-box, while the precise algorithm is detailed there.

Let
\[
\mathcal{S} = \set{f:\bX\bk{T}\to \N \mvert \text{The support of $f$ is finite}}
\]
and note the natural partial order on $\mathcal{S}$ induced from the order of $\N$.

To an admissible representation $\pi$ of $G$ we associate a function $f_\pi\in\mathcal{S}$ by
\[
f_\pi\bk{\lambda} = mult\bk{\lambda,r_T^G\pi}.
\]
The branching rule calculation is a process in which, given an irreducible representation $\pi_0$ of $G$ and an exponent $\lambda\leq r_T^G\pi$, we construct a function $f\leq f_{\pi_0}$ such that $f\bk{\lambda}>0$.
In most cases, this construction turns out to yield $f=f_{\pi_0}$, but this is not assumed here (and in some cases it is not true).

Of most interest to us is the case where $\pi=\DPS{P_i}\bk{s,\chi}$ is a degenerate principal series and $\pi_0$ is an irreducible subquotient of $\pi$.
We pick $\lambda\leq r_T^G\pi_0$ and construct $f$ by the recipe in \cite[Subsection 3.3]{SDPS_E7}.
By construction, it holds that $f\leq f_{\pi_0}\leq f_\pi$.

If one shows that $f\bk{\lambda'} = f_\pi\bk{\lambda'}>0$, for some $\lambda'\in W\cdot\lambda$, then:
\begin{itemize}
	\item $f\bk{\lambda'} = f_{\pi_0}\bk{\lambda'}=f_\pi\bk{\lambda'}$.
	\item $\pi_0$ is the unique irreducible subquotient of $\pi$ which contains $\lambda'$ as an exponent.
	\item For $w\in W^{M_i,T}$ such that $\lambda'=w\cdot\lambda_0$, the image of the intertwining operator $N_w:\pi\to i_T^G\bk{\lambda'}$ is either zero or contains $\pi_0$ as its unique irreducible subrepresentation.
\end{itemize}

Finally, we set $A_1$-equivalence relation on $\bX\bk{T}$ to be the minimal equivalence relation such that $\lambda$ and $w_i\cdot \lambda$ are $A_1$-equivalent if and only if 
\[
\gen{\lambda,\check{\alpha_i}}\neq \pm 1 .
\]
In particular, we will say that $\lambda$ and $\lambda'$ are $A_1$-equivalent if they lie in the same equivalence class under this relation.
This implies that:
\begin{itemize}
	\item There exist $w=w_{i_1}\cdot...\cdot w_{i_k}$ such that $w\cdot\lambda=\lambda'$ and 
	\[
	\gen{ w_{i_{l+1}}\cdot...\cdot w_{i_k}\cdot\lambda, \check{\alpha_l}} \neq \pm 1 \quad \forall 1\leq l<k.
	\]
	\item $N_w\bk{\lambda}:i_T^G\lambda \to i_T^G\lambda'$ is bijective and so are
	\[
	N_{w_{i_l}} \bk{w_{i_{l+1}}\cdot...\cdot w_{i_k}\cdot\lambda} \quad \forall 1\leq l<k.
	\]
	\item For any irreducible representation $\sigma$ of $G$, it holds that
	\[
	f_\sigma\bk{\lambda}=f_{\sigma}\bk{\lambda'}.
	\]
\end{itemize}

\section{Global Square-integrable Residues}

\label{Appendix:Local_data_for_global_SI_resisudes}

In this section, we deal with local degenerate principal series representations, where the global residue $R\bk{P_i,s_0,\chi}$ is square-integrable and the co-socle of $\DPS{P_i}\bk{s_0,\chi}$ is not irreducible.

The relevant cases are:
\begin{itemize}
	\item For $G$ of type $E_6$: $\bk{4,1/2,3}$.
	\item For $G$ of type $E_7$: $\bk{2,1,1}$, $\bk{2,2,2}$, $\bk{5,2,2}$ and $\bk{4,1/2,4}$.
	\item For $G$ of type $E_8$: $\bk{1,5/2,1}$, $\bk{3,1/2,2}$ and $\bk{7,3/2,1}$.
\end{itemize}

We note that some of the cases were dealt with in \cite{SDPS_E6} and \cite{SDPS_E7}, and hence will just be recalled here.

\subsection{$E_6$}

Assume that $G$ is of type $E_6$ and let $\chi$ be a character of $F^\times$ of order $3$ and let $\mu_3$ denote the group of cube roots of unity.
Then, the representation $\chi=\DPS{P_4}\bk{\frac{1}{2},\chi}$ has length $4$.
\begin{itemize}
	\item Its unique irreducible subrepresentation $\pi_0$ is the unique irreducible subquotient of $\pi$ such that 
	\[
	\esixchar{-1}{-1}{-1}{3+\chi}{-1}{-1} \leq \jac{G}{T}{\pi_0}.
	\]
	\item 
	The quotient $\pi\rmod\pi_0$ is semi-simple of length $3$.
	Each irreducible quotient of $\pi$ admits the exponent $\lambda_{a.d.}$, where
	\[
	\lambda_{a.d.}=\esixchar{\chi}{\chi}{\chi}{-1+\chi}{\chi}{\chi} .
	\]
	We note that $\Stab_W\bk{\lambda_{a.d.}}=\gen{w_{3165}}$ and use the intertwining operator $N_{w_{3165}}\bk{\lambda_{a.d.}}$ to distinguish between these irreducible quotients.
	
	More precisely, we write $\pi\rmod\pi_0=\oplus_{\eta\in\mu_3}\pi_\eta$, where
	\[
	N_{w_{3165}}\bk{\lambda_{a.d.}}v = \eta v \quad \forall v \in \pi_\eta \quad \forall \eta\in\mu_3 .
	\]
	We point out that for each $\eta\in\mu_3$, it holds that $\lambda_{a.d.} \leq \jac{G}{T}{\pi_\eta}$.
		
\end{itemize}
It holds that
\[
\dim_\C\bk{\jac{G}{T}{\pi}} = 720, \quad
\dim_\C\bk{\jac{G}{T}{\pi_0}} = 480, \quad \dim_\C\bk{\jac{G}{T}{\pi_{\eta}}} = 80.
\]

In particular, note that the Jacquet modules $r_T^G\pi_\eta$ are indistinguishable (and semi-simple) and do not contain any exponent in common with $r_T^G\pi_0$.
Furthermore, all of the exponents in $r_T^G\pi_\eta$ and $r_T^G\pi_0$ appear there with multiplicity $1$.

\subsection{$E_7$}

Assume that $G$ is of type $E_7$.
We separate between cases where the decomposition of the co-socle is performed using the Gelbart-Knapp theory of $R$-groups: $\bk{2,2,2}$, $\bk{5,2,2}$ and $\bk{4,1/2,4}$,
and a case, $\bk{2,1,1}$, where the decomposition of the co-socle is performed using the theory of Iwahori-Hecke algebras.

\begin{enumerate}
	\item Let $\chi$ be a character of order $2$ of $F^\times$.
	Then, the representations $\pi=\DPS{P_2}\bk{2,\chi}$ and $\pi=\DPS{P_5}\bk{2,\chi}$ have length $3$.
	We specify their irreducible constituents:
	\begin{itemize}
		\item Its unique irreducible subrepresentation $\pi_0$ is the unique irreducible subquotient of $\pi$ such that $\lambda_0\leq \jac{G}{T}{\pi_0}$, where
		\[
		\lambda_0=\piece{\esevenchar{-1}{8+\chi}{-1}{-1}{-1}{-1}{-1},& \bk{2,2,2}\\
			\esevenchar{-1}{-1}{-1}{-1}{6+\chi}{-1}{-1},& \bk{5,2,2} .}
		\]
		Note that these are distinct representations for the two cases.
		\item The quotient $\pi\rmod\pi_0=\oplus_{\eta\in\set{1,-1}}\pi_\eta$ is semi-simple of length $2$.
		Each of the $\pi_\eta$ satisfy the property that
		\[
		\lambda_{a.d.}=\esevenchar{-1}{\chi}{\chi}{-1}{\chi}{-1}{\chi} \leq \jac{G}{T}{\pi_\eta}.
		\]
		Indeed, both cases share the same irreducible quotients.

		Note that $\Stab_W\bk{\lambda_{a.d.}}=\gen{w_{257}}$.
		We distinguish between $\pi_1$ and $\pi_{-1}$ by their eigenvalues under the action of $N_{w_{257}}\bk{\lambda_{a.d.}}$.
		Namely,
		\[
		N_{w_{257}}\bk{\lambda_{a.d.}}v = \eta v \quad \forall v \in \pi_{\eta},\quad \eta\in\set{1,-1} .
		\]
		Also, we note that $\pi_1$ and $\pi_{-1}$ are two distinct representations but both appear as a quotient of $\DPS{P_2}\bk{2,\chi}$ as well as of $\DPS{P_5}\bk{2,\chi}$.
		That is, 
		\[
		\pi_1 \oplus \pi_{-1} = 
		\DPS{P_2}\bk{-2,\chi} \cap \DPS{P_5}\bk{-2,\chi} .
		\]
	\end{itemize}
	In conclusion, the quotient $\pi\rmod\pi_0=\pi_1\oplus\pi_{-1}$ is semi-simple of length $2$ and it holds that
	\[
	\dim_\C\bk{\jac{G}{T}{\pi_{\pm 1}}} = 36,\quad 
	\dim_\C\bk{\jac{G}{T}{\pi_0}} = \piece{504,& \bk{2,2,2}\\3,960, & \bk{5,2,2}}.
	\]

	\item Let $\chi$ be a character of order $4$ of $F^\times$.
	The representation $\pi=\DPS{P_4}\bk{\frac{1}{2},\chi}$ has length $3$.
	We denote the unique irreducible subrepresentation by $\pi_0$.
	
	\begin{itemize}
		\item Its unique irreducible subrepresentation $\pi_0$ is the unique irreducible subquotient of $\pi$ such that
		\[
		\lambda_0=\esevenchar{-1}{-1}{-1}{\frac{7}{2}+\chi}{-1}{-1}{-1} \leq \jac{G}{T}{\pi_0} .
		\]
		\item The quotient $\pi\rmod\pi_0=\oplus_{\eta\in\set{1,-1}}\pi_\eta$ is semi-simple of length $2$.
		Each of the $\pi_\eta$ satisfy the property that
		\[
		\lambda_{a.d.}=\esevenchar{-\frac{1}{2}+3\chi}{2\chi}{2\chi}{-\frac{1}{2}+\chi}{2\chi}{-\frac{1}{2}+3\chi}{2\chi} \leq \jac{G}{T}{\pi_\eta}.
		\]

		Note that $\Stab_W\bk{\lambda_{a.d.}}=\gen{w_{257}}$ and as above, we distinguish between $\pi_1$ and $\pi_{-1}$ by their eigenvalues under the action of $N_{w_{257}}\bk{\lambda_{a.d.}}$.
		Namely,
		\[
		N_{w_{257}}\bk{\lambda_{a.d.}}v = \eta v \quad \forall v \in \pi_{\eta},\quad \eta\in\set{1,-1} .
		\]
	\end{itemize}
	
	The quotient $\pi\rmod\pi_0=\pi_1\oplus\pi_{-1}$ is semi-simple of length $2$.
	
	It holds that
	\[
	\dim_\C\bk{\jac{G}{T}{\pi_0}} = 7,560, \quad \dim_\C\bk{\jac{G}{T}{\pi_{\eta}}} = 1,260.
	\]
	
	\vspace{0.3cm}
	The above cases follow from \cite{SDPS_E7} (where they were calculated using Knapp-Stein R-groups) and a branching rule calculation performed for each of $r_T^G\pi_\eta$ and $\pi_0$ as in the $E_6$ case.
	\vspace{0.3cm}
	
	\item The representation $\pi=\DPS{P_2}\bk{1}$ has length $3$.

	\begin{itemize}
		\item Its unique irreducible subrepresentation $\pi_0$ is its unique irreducible subquotient which satisfy
		\[
		\lambda_0 = \lambda_{1}^{P_2} = \esevenchar{-1}{7}{-1}{-1}{-1}{-1}{-1} \leq r_T^G\pi_0 .
		\]
		
		\item The quotient $\pi\rmod\pi_0=\pi_1\oplus\pi_{-1}$ is semi-simple of length $2$, where $\pi_1$ is the unique irreducible subquotient of $\pi$ which satisfy
		\[
		\lambda_{a.d.}=\esevenchar{-1}{0}{0}{-1}{0}{0}{-1} \leq r_T^G\pi_1 ,
		\]
		while $\pi_{-1}$ the unique irreducible subquotient of $\pi$ which satisfy
		\[
		\lambda_1\leq r_T^G\pi_{-1} \quad \lambda_{a.d.}\nleq r_T^G\pi_{-1},
		\]
		where
		\[
		\lambda_1 = \lambda_{-1}^{P_2} = \esevenchar{-1}{5}{-1}{-1}{-1}{-1}{-1} .
		\]
		
	\end{itemize}
	It holds that
	\[
	\dim_\C\bk{\jac{G}{T}{\pi_0}} = 105	, \quad 
	\dim_\C\bk{\jac{G}{T}{\pi_{1}}} = 456 , \quad
	\dim_\C\bk{\jac{G}{T}{\pi_{-1}}} = 15.
	\]
	Note that $\dim_\C\bk{\jac{G}{T}{\pi_{-1}}}$ was already determined in \cite{SDPS_E7}.

	\begin{Rem}
		\label{Rem:Missing_exps_E7_211}
		It should be noted that a simple branching rule would suggest that $\dim_\C\bk{\jac{G}{T}{\pi_{1}}} \geq 454$ and not $\dim_\C\bk{\jac{G}{T}{\pi_{1}}} = 456$.
		This is because the calculation brings about
		$4\times\mu_1, 4\times\mu_2 \leq r_T^G\pi_1$, where
		\[
		\mu_1 = \esevenchar{1}{-1}{-2}{1}{-1}{-1}{1}, \quad
		\mu_2 = \esevenchar{-1}{-1}{2}{-1}{-1}{-1}{1} .
		\]
		However, from the geometric lemma, it follows that $5\times\mu_1, 5\times\mu_2 \leq r_T^G\pi$.
		We wish to show that, actually, $5\times\mu_1, 5\times\mu_2 \leq r_T^G\pi_1$.
		Since $\mu_1$ and $\mu_2$ are $A_1$ equivalent, it is enough to show that $5\times\mu_1 \leq r_T^G\pi$.
		
		Consider the exponents
		\[
		\nu_1 = \esevenchar{-1}{-1}{2}{-1}{-1}{0}{-1}, \quad
		\nu_2 = \esevenchar{-1}{-1}{2}{-1}{-2}{1}{0} .
		\]
		We recall from \cite[Corollary A.2]{SDPS_E7} that $L=M_{6,7}$ admits unique irreducible representations $\sigma_1$ and $\sigma_2$ of $L=M_{\set{6,7}}$ such that $\nu_i \leq r_T^L \sigma_i$.
		Furthermore, it holds that
		\[
		r_T^L\sigma_1 = 2\times\nu_1+\mu_1, \quad r_T^L\sigma_2 = 2\times\nu_2+\mu_1 .
		\]
		On the other hand, The branching rule calculation above shows that $8\times \nu_1 \leq r_T^G\pi_1$ and $2\times \nu_2 \leq r_T^G\pi_1$.
		It follows that
		\[
		4\times\sigma_1 + 1\times \sigma_2 \leq r_L^G\pi_1
		\]
		and hence $5\times\mu_1 \leq r_T^G\pi$.
	\end{Rem}
\end{enumerate}

We now turn to calculate the eigenvalue of a certain intertwining operator on the irreducible quotients of $\pi=\DPS{P_2}\bk{1}$.

\begin{Lem}
	It holds that
	\[
	N_{u_0^{-1} w_0 u_0}\bk{\lambda_0} f_\epsilon \quad \forall f_\epsilon\in\pi_\epsilon,\ \epsilon=\pm 1 ,
	\]
	where
	\begin{itemize}
		\item $u_0=w\coset{1,3,4,2}$.
		\item $w_0=w\coset{5, 4, 3, 2, 4, 5, 6, 7, 5, 4, 3, 2, 4, 5, 6, 5, 4, 3, 2, 4, 5}$.
	\end{itemize}
\end{Lem}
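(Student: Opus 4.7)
The plan is to reduce the computation to the framework for decomposing principal series developed in \cite[Example~5.5]{SegalSingularities}, in the style of the second method described in \Cref{Subsection:Methods_Reduction_of_order_of_poles}. First I would verify that $z := u_0^{-1} w_0 u_0$ lies in $\Stab_W(\lambda_0)$, so that $N_z(\lambda_0)$ is a well-defined self-intertwiner of $i_T^G \lambda_0$. Using the cocycle relation, this reduces to
\[
N_z(\lambda_0) = N_{u_0}(\lambda_0)^{-1} \circ N_{w_0}(\mu) \circ N_{u_0}(\lambda_0),
\]
where $\mu := u_0 \cdot \lambda_0$. It therefore suffices to prove that $N_{w_0}(\mu)$ acts on the image $N_{u_0}(\pi_\epsilon) \subset i_T^G\mu$ as $\epsilon \cdot \Id$.

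Next I would confirm that $N_{u_0}(\lambda_0)$ is a holomorphic isomorphism, by walking through the reduced expression of $u_0 = w_1 w_3 w_4 w_2$ and checking at each intermediate exponent $\eta$ that the pairing $\langle \eta, \check{\alpha}\rangle$ with the next simple coroot avoids $\{-1,0,1\}$, so that the corresponding rank one normalized operator is a bijection. Given the reduced expression and the shape of $\lambda_0$, this is a direct rank-one check.

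The heart of the argument lies in identifying the standard Levi subgroup $L \subset G$ whose Weyl subgroup contains $w_0$ and verifying that $\mu|_L$ has the interlaced form appearing in \cite[Example~5.5]{SegalSingularities}. Given that $w_0$ has length $21$, I expect $L$ to be of type $A_6$, and that $\mu|_L$ corresponds, under the identification with the $A_6$ weight lattice, to the interlaced pattern $(-1,1,-1,1,-1,1)$ with respect to fundamental weights; by the cited result this makes $N_{w_0}(\mu)$ diagonalizable, with $\pm 1$-eigenspaces interchanged by the simple reflections at the extremities. Propagating through parabolic induction yields a decomposition $i_T^G\mu = i_L^G\tau_{+} \oplus i_L^G\tau_{-}$ into $\pm 1$-eigenspaces of $N_{w_0}(\mu)$.

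The last and most delicate step is to match $N_{u_0}(\pi_\epsilon)$ with the correct eigenspace. I would do this via a branching-rule computation: select an exponent $\nu_\epsilon \leq r_T^G\pi_\epsilon$ distinguishing $\pi_\epsilon$ from $\pi_{-\epsilon}$ (and from $\pi_0$), track $u_0 \cdot \nu_\epsilon$ into the Jacquet modules of $i_L^G\tau_{\pm}$, and confirm it lies only in the one matching the claimed eigenvalue. The main obstacle is that the available distinguishing exponents between $\pi_1$ and $\pi_{-1}$ are rather subtle (cf.\ the multiplicity analysis of $\mu_1,\mu_2$ in \Cref{Rem:Missing_exps_E7_211}), so the bare appearance of a single transported exponent in one eigenspace may not immediately force the sign. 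I expect the argument to be completed by an auxiliary multiplicity comparison: using the dimensions $\dim_\C r_T^G\pi_{\pm 1}$ recorded in the preceding lemma together with the Jacquet module dimensions of $i_L^G\tau_{\pm}$, one can force the identification $N_{u_0}(\pi_\epsilon) \subset i_L^G\tau_\epsilon$ by a counting argument, which then yields the desired eigenvalue.
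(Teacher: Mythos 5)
Your proposed route has a genuine gap at its central step. You guess, from $\ell(w_0) = 21$, that the relevant Levi $L$ is of type $A_6$ and that $u_0\cdot\lambda_0$ restricts to an interlaced pattern $(-1,1,-1,1,-1,1)$, so that the $A_5$-style analysis of \cite[Example~5.5]{SegalSingularities} applies. Neither part of this is correct. The element $w_0 = w[5, 4, 3, 2, 4, 5, 6, 7, 5, 4, 3, 2, 4, 5, 6, 5, 4, 3, 2, 4, 5]$ lies in $W_{M_1}$, where $M_1$ is of type $D_6$, not $A_6$; the length $21$ arises because $w_0$ is the longest element of $W^{M_1,M_{1,5}}$, with $M_{1,5}$ of type $A_3\times A_2$ (so $\ell = 30 - 9 = 21$). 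Moreover, writing $\mu = u_0\cdot\lambda_0 = \esevenchar{-2}{-1}{-1}{-1}{3}{-1}{-1}$, the pairings of $\mu$ with the simple coroots of $M_1$ are $(-1,-1,-1,3,-1,-1)$, which is not an interlaced $(-1,1,\dots)$ pattern, so the \v Zampera-type mechanism of \Cref{Prop:Zampera} and \cite[Example~5.5]{SegalSingularities} does not even set up.

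The paper's proof instead uses induction in stages. One writes $\DPS{P_2}(1) = i_{M_2}^G\Omega'_0 \hookrightarrow i_{M_5}^G\bigl(i_{M_{2,5}}^{M_5}\Omega_0\bigr)$, observes that the inner induction $i_{M_{2,5}}^{M_5}\Omega_0$ is an \emph{irreducible} representation of $M_5$ which is also $i_{M_{1,5}}^{M_5}\Omega_1$ (the isomorphism being realized by $N_{u_0}(\lambda_0)$ -- this matches your step two), and then passes to $i_{M_1}^G\bigl(i_{M_{1,5}}^{M_1}\Omega_1\bigr)$. Here $i_{M_{1,5}}^{M_1}\Omega_1$ is a \emph{unitary} degenerate principal series of $M_1\cong D_6$ of length $2$, $\sigma_1\oplus\sigma_{-1}$, and the eigenvalue $\pm 1$ comes from \cite{MR2363302}: the longest element of $W^{M_1,M_{1,5}}$ acts on the two summands of the unitary co-socle as $\pm\Id$. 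This is an $R$-group/unitarity input, not the singular-exponent decomposition you invoke. Your last step -- matching $N_{u_0}(\pi_\epsilon)$ to the correct eigenspace by branching rules and a multiplicity count -- does agree in spirit with how the paper pins down $\pi_{-1}\subseteq \DPS{P_2}(-1)\cap i_{M_1}^G\sigma_{-1}$, but it can only be carried out once the correct decomposition $i_{M_1}^G\sigma_1\oplus i_{M_1}^G\sigma_{-1}$ is in place, which your proposal does not reach.
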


\begin{proof}
	We denote
	\[
	\mu = u_0\cdot \lambda_0 = \esevenchar{-2}{-1}{-1}{-1}{3}{-1}{-1} 
	\]
	and note that $w_0\cdot\mu=\mu$.
	
	We further let:
	\begin{itemize}
		\item $\Omega'_0$ denote the $1$-dimensional representation of $M_2$ such that $r_T^{M_2}\Omega_0'=\lambda_{0}$.
		\item $\Omega_0 = r_{M_{2,5}}^{M_2} \Omega_0'$ denote the $1$-dimensional representation of $M_{2,5}=M_2\cap M_5$ such that $r_T^{M_{2,5}}\Omega_0=\lambda_{0}$.
		\item $\Omega_1$ denote the $1$-dimensional representation of $M_{1,5}=M_1\cap M_5$ such that $r_T^{M_{1,5}}\Omega_1=\mu$.
	\end{itemize}
	By induction in stages, it holds that
	\[
	\DPS{P_2}\bk{1}_v = i_{M_2}^G\Omega_0' 
	\hookrightarrow i_{M_2}^G\bk{i_{M_{2,5}}^{M_2} \Omega_0} 
	\cong i_{M_5}^G\bk{i_{M_{2,5}}^{M_5} \Omega_0} .
	\]
	We note that $i_{M_{2,5}}^{M_5} \Omega_0$ is an irreducible representation of $M_5$ which satisfy
	\[
	i_{M_{2,5}}^{M_5} \Omega_0 \cong i_{M_{1,5}}^{M_5} \Omega_1 .
	\]
	Here, $i_{M_{1,5}}^{M_5} \Omega_1$ is the invert representation of $i_{M_{2,5}}^{M_5} \Omega_0$ (see \cite{SDPS_E8} for the definition) and the isomorphism is realized by $N_{u_0,v}\bk{\lambda_{0}}$.
	It follows that
	\[
	i_{M_5}^G\bk{i_{M_{2,5}}^{M_5} \Omega_0} \cong
	i_{M_5}^G\bk{i_{M_{1,5}}^{M_5} \Omega_1} \cong i_{M_1}^G\bk{i_{M_{1,5}}^{M_1} \Omega_1} .
	\]
	We note here that $i_{M_{1,5}}^{M_1} \Omega_1$ is a unitary representation of length $2$ of $M_1$ and write
	\[
	i_{M_{1,5}}^{M_1} \Omega_1 = \sigma_1 \oplus \sigma_{-1} ,
	\]
	where $\sigma_1$ is the irreducible spherical subrepresentation.
	Since $w_0$ is the longest element in $W^{M_1,M_{1,5}}$, it follows from \cite{MR2363302}, that
	\[
	N_{w_0,v}^{M_1}\bk{\mu} f_\epsilon \quad \forall f_\epsilon\in\sigma_\epsilon,\ \epsilon=\pm 1 .
	\]
	It remains to show that
	\[
	\pi_\epsilon \subseteq i_{M_5}^G\sigma_\epsilon,\ \epsilon=\pm 1.
	\]
	For $\epsilon=1$, this follows from the fact that the antidominant weight
	\[
	\lambda_{a.d.} = \esevenchar{-1}{0}{0}{-1}{0}{0}{-1}
	\]
	is an exponent of $\pi_1$ and $i_{M_5}^G\sigma_1$ but not of $\pi_{-1}$ and $i_{M_5}^G\sigma_{-1}$.
	Otherwise, it can be shown by following the action of $N_{w_0,v}$ on the spherical section.
	
	In order to prove that $\pi_{-1,v}$ is an eigenspace of $N_{w_0,v}$ of eigenvalue $-1$, it is enough to show that $\pi_{-1,v}\subseteq \DPS{P_2}\bk{-1}_v \cap i_{M_1}^G\sigma_{-1}$.
	Indeed, one checks that:
	\begin{itemize}
		\item $\sigma_{-1}$ is the kernel of $N_{w\coset{7,6,4,5},v}^{M_1}\bk{\mu}$ (this can be done by a calculation in the Iwahori-Hecke model of this representation, see \cite[Proposition 4.9 and Appendix B]{SDPS_E7} for more details).
		Here,
		\[
		\begin{split}
			& \dim_\C\bk{\frakJ_{M_1,v}} = 160, \\
			& \dim_\C\bk{\sigma_{1}^{\frakJ_{M_1,v}}} = 155, \\
			& \dim_\C\bk{\sigma_{-1}^{\frakJ_{M_1,v}}} = 5, \\
			& \dim_\C \bk{\bk{ N_{w\coset{7,6,4,5},v}^{M_1} \bk{\mu}\bk{ i_{M_{1,5}}^{M_1} \Omega_1 }}^{\frakJ_{M_1,v}}} = 155.
		\end{split}
		\]
		\item $\pi_{-1}$ is the kernel of
		\[
		N_{w\coset{7,6,4,5}u_0}\bk{\lambda_{0}}\res{\DPS{P_2}\bk{-1}_v} .
		\]
		This can be verified by a calculation in the Iwahori-Hecke module, similar to the calculation mentioned above.
		One could also argue that 
		\[
		w\coset{7,6,4,5}u_0 = w\coset{4,1,3} \cdot w\coset{7,6,5,4,2},
		\]
		where $\pi_{-1,v}$ is the kernel of 
		\[
		N_{w\coset{7,6,5,4,2}}\bk{\lambda_{0}}\res{\DPS{P_2}\bk{-1}_v}
		\]
		as was demonstrated in \cite[Proposition 4.9 and Appendix B]{SDPS_E7}.
		One also checks that $N_{w\coset{4,1,3}}\bk{w\coset{7,6,5,4,2}\cdot\lambda_{0}}$ is an isomorphism. 
		Hence, indeed, $\pi_{-1,v}=\DPS{P_2}\bk{-1}_v \cap i_{M_1}^G\sigma_{-1}$.
		
		Alternatively, one checks, using a branching rule calculation (see \Cref{Appendix:Branching_Rules}), that
		\[
		mult\bk{\mu',r_T^G \bk{\DPS{P_2}\bk{-1}_v}} = mult\bk{\mu', r_T^G \bk{\pi_1}} = 2,
		\]
		where
		\[
		\mu'=w\coset{7,6,4,5}\cdot \mu =
		w\coset{7,6,4,5}u_0 \cdot \lambda_{0} =
		\esevenchar{-2}{1}{1}{-2}{1}{-1}{-1} .
		\]
		Hence $\pi_{-1,v} = \DPS{P_2}\bk{-1}_v \cap i_{M_1}^G\sigma_{-1}$.
	\end{itemize}
\end{proof}

\subsection{$E_8$}

Assume that $G$ is of type $E_8$.

\begin{enumerate}
	\item The representation $\pi=\DPS{P_1}\bk{\frac52}$ has length $3$.
	
	\begin{itemize}
		\item Its unique irreducible subrepresentation $\pi_0$ is its unique irreducible subquotient which satisfy
		\[
		\lambda_1 = \lambda_{\frac52}^{P_1} = \eeightchar{13}{-1}{-1}{-1}{-1}{-1}{-1}{-1} \leq r_T^G\pi_0 .
		\]
		
		\item The quotient $\pi\rmod\pi_0=\pi_1\oplus\pi_{-1}$ is semi-simple of length $2$, where $\pi_1$ is the unique irreducible subquotient of $\pi$ which satisfy
		\[
		\lambda_{a.d.}=\eeightchar{-1}{0}{0}{-1}{0}{0}{-1}{-1} \leq r_T^G\pi_0 ,
		\]
		while $\pi_{-1}$ the unique irreducible subquotient of $\pi$ which satisfy
		\[
		\lambda_0\leq r_T^G\pi_{-1} \quad \lambda_{a.d.}\nleq r_T^G\pi_{-1},
		\]
		where
		\[
		\lambda_0 = \lambda_{-\frac52}^{P_1} = \eeightchar{8}{-1}{-1}{-1}{-1}{-1}{-1}{-1} .
		\]
		We also point out that all exponents in $\pi_{-1}$ are $A_1$-equivalent and have multiplicity $1$.
		Furthermore, note that
		\[
		\mult{\lambda_i}{r_T^G\pi}=2,\quad 
		\mult{\lambda_i}{r_T^G\pi_{\pm 1}}=1,
		\]
		where
		\[
		\lambda_i = \eeightchar{-1}{5}{-1}{-1}{-1}{-1}{-1}{2}
		\]
	\end{itemize}
	It holds that
	\[
	\dim_\C\bk{\jac{G}{T}{\pi_0}} = 1,100	, \quad 
	\dim_\C\bk{\jac{G}{T}{\pi_{1}}} = 1,010 , \quad
	\dim_\C\bk{\jac{G}{T}{\pi_{-1}}} = 50.
	\]
	
	\begin{Rem}
		Here, a similar argument to that of \Cref{Rem:Missing_exps_E7_211} needs to be made.
		The branching rule calculation shows that $\dim_\C\bk{\jac{G}{T}{\pi_{1}}} \geq 1,006$.
		For the following exponents, we have $4\times\lambda \leq \jac{G}{T}{\pi_{1}}$, while $5\times\lambda \leq \jac{G}{T}{\pi}$
		\[
		\begin{split}
		& \eeightchar{-1}{-1}{2}{-1}{-1}{-1}{-1}{2},\ 
		\eeightchar{1}{-1}{-2}{1}{-1}{-1}{-1}{2},\ \\
		& \eeightchar{-1}{-1}{2}{-1}{-1}{-1}{1}{-2},\ 
		\eeightchar{1}{-1}{-2}{1}{-1}{-1}{1}{-2} .
		\end{split}
		\]
		Since they are $A_1$-equivalent, it is enough to show that $5\times\lambda \leq \jac{G}{T}{\pi_{1}}$ for one of them, in order to show it for all of them.
		Indeed, the same argument holds taking
		\[
		\mu_1 = \eeightchar{-1}{-1}{2}{-1}{-1}{-1}{1}{-2}, 
		\nu_1 = \eeightchar{-1}{-1}{2}{-1}{-1}{0}{-1}{-1},
		\nu_2 = \eeightchar{-1}{-1}{2}{-1}{-2}{1}{0}{-2}.
		\]
	\end{Rem}

	\item 
	Let $\chi$ be a character of order $2$ of $F^\times$.
	Then, the representation $\pi=\DPS{P_3}\bk{\frac12,\chi}$ has length $3$.
	\begin{itemize}
		\item Its unique irreducible subrepresentation $\pi_0$ is its unique irreducible subquotient which satisfy
		\[
		\lambda_0 = \lambda_{\frac12}^{P_3}  \eeightchar{-1}{-1}{6+\chi}{-1}{-1}{-1}{-1}{-1} \leq r_T^G\pi_0 .
		\]
		
		\item The quotient $\pi\rmod\pi_0=\pi_1\oplus\pi_{-1}$ is semi-simple of length $2$, where $\pi_1$ is the unique irreducible subquotient of $\pi$ which satisfy
		\[
		\lambda_{a.d.}=\eeightchar{0}{0}{0}{0}{-1+\chi}{0}{0}{-1} \leq r_T^G\pi_1 ,
		\]
		while $\pi_{-1}$ the unique irreducible subquotient of $\pi$ which satisfy
		\[
		\lambda_1\leq r_T^G\pi_{-1} \quad \lambda_{a.d.}\nleq r_T^G\pi_{-1},
		\]
		where
		\[
		\lambda_1 = \lambda_{-\frac12}^{P_3} = \eeightchar{-1}{-1}{5+\chi}{-1}{-1}{-1}{-1}{-1} .
		\]
		We also point out that all exponents in $\pi_{-1}$ are $A_1$-equivalent and have multiplicity $1$.
		Furthermore, note that
		\[
		\mult{\lambda_i}{r_T^G\pi}=2,\quad 
		\mult{\lambda_i}{r_T^G\pi_{\pm 1}}=1,
		\]
		where
		\[
		\lambda_i = \eeightchar{-1}{5}{-1}{-1}{-1}{-1}{-1}{4+\chi}
		\]
	\end{itemize}
	
	It holds that	
	\[
	\dim_\C\bk{\jac{G}{T}{\pi_0}} = 12,600	, \quad 
	\dim_\C\bk{\jac{G}{T}{\pi_{1}}} = 54,720 , \quad
	\dim_\C\bk{\jac{G}{T}{\pi_{-1}}} = 1,800.
	\]
	
	\begin{Rem}
		We note that using a direct branching rule calculation, we would get $\dim_\C\bk{\jac{G}{T}{\pi_{1}}} \geq 54,240$ and not $\dim_\C\bk{\jac{G}{T}{\pi_{1}}} = 54,720$.
		This is because the calculation shows $4\times \mu \leq r_T^G\pi_1$, where $\mu$ is $A_1$-equivalent to
		\[
		\mu_0 = \eeightchar{\chi}{-1}{-1}{1+\chi}{-1}{1+\chi}{-1}{-1}
		\]
		However, if we let $\tau$ denote an irreducible representation such that $\mu\leq r_T^G\pi_1$.
		Then, a branching calculation shows that $8\times \nu \leq r_T^G\pi, r_T^G\pi_1$,		while $2\times \nu\leq r_T^G\tau$, where
		\[
		\nu = \eeightchar{-1}{1}{1+\chi}{-1}{-1}{1+\chi}{-1}{-1}.
		\]
		Hence, $\tau=\pi_1$.
	\end{Rem}

	\item A similar analysis shows that the representation $\pi=\DPS{P_7}\bk{\frac32}$ has length $3$ with 
	
	\begin{itemize}
		\item Its unique irreducible subrepresentation $\pi_0$ is its unique irreducible subquotient which satisfy
		\[
		\lambda_0 = \lambda_{\frac32}^{P_7}  \eeightchar{-1}{-1}{-1}{-1}{-1}{-1}{10}{-1} \leq r_T^G\pi_0 .
		\]
		
		\item The quotient $\pi\rmod\pi_0=\pi_1\oplus\pi_{-1}$ is semi-simple of length $2$, where $\pi_1$ is the unique irreducible subquotient of $\pi$ which satisfy
		\[
		\lambda_{a.d.}=\eeightchar{-1}{0}{0}{-1}{0}{0}{-1}{0} \leq r_T^G\pi_1 ,
		\]
		while $\pi_{-1}$ the unique irreducible subquotient of $\pi$ which satisfy
		\[
		\lambda_1\leq r_T^G\pi_{-1} \quad \lambda_{a.d.}\nleq r_T^G\pi_{-1},
		\]
		where
		\[
		\lambda_1 = \lambda_{-\frac32}^{P_7} = \eeightchar{-1}{-1}{-1}{-1}{-1}{-1}{7}{-1} .
		\]
	\end{itemize}	
	
\end{enumerate}

\section{Global Non-square-integrable Residues}
\label{Appendix:Local_data_for_global_NSI_resisudes}

In this section, we deal with local degenerate principal series representations, where the global residue $R\bk{P,s_0,\chi}$ is non-square-integrable and cannot be calculated using a Siegel-Weil identity.
We study the structure of the local degenerate principal series $\DPS{P_i}\bk{s,\chi}$ in the following cases:
\begin{itemize}
	\item For $G$ of type $E_6$: $\bk{4,1/2,1}$, $\bk{4,1/2,2}$, $\bk{4,1,1}$ and $\bk{4,1,2}$.
	\item For $G$ of type $E_7$: $\bk{2,2,1}$, $\bk{4,1/2,1}$, $\bk{4,1/2,2}$, $\bk{4,2/3,1}$, $\bk{4,2/3,3}$, $\bk{4,3/2,1}$, $\bk{4,3/2,2}$, $\bk{5,1,2}$, $\bk{5,3/2,1}$ and $\bk{5,3/2,2}$.
	\item For $G$ of type $E_8$: 
	$\bk{3,\frac76,1}$, $\bk{3,\frac76,3}$, $\bk{3,2,1}$, $\bk{3,2,2}$, 
	$\bk{4,\frac{3}{10},1}$, $\bk{4,\frac{3}{10},5}$, $\bk{4,\frac12,4}$, $\bk{4,\frac34,1}$, $\bk{4,\frac34,2}$, $\bk{4,\frac34,4}$, $\bk{4,\frac76,1}$, $\bk{4,\frac76,3}$, $\bk{4,2,1}$, $\bk{4,2,2}$, 
	$\bk{5,\frac12,3}$, $\bk{5,\frac56,1}$, $\bk{5,\frac56,3}$, $\bk{5,\frac76,1}$, $\bk{5,\frac76,3}$, $\bk{5,2,1}$, $\bk{5,2,2}$,
	$\bk{6,2,2}$, $\bk{6,\frac52,1}$ and $\bk{6,\frac52,2}$.

\end{itemize}

In all of these cases, $\pi=\DPS{P}\bk{s_0,\chi}$ admits a unique irreducible subrepresentation, denoted $\pi_0$, and a unique irreducible quotient, denoted $\pi_1$.
In most cases, $\pi$ has length $2$.
We now explain the argument for showing that $\pi$ has length $2$ as the argument is identical for all such cases.
\begin{itemize}
	\item We recall that $\DPS{P}\bk{s_0,\chi}$ is reducible and hence of length at least $2$.
	Furthermore, in all relevant cases, $\pi$ admits a unique irreducible subrepresentation $\pi_0$ and a unique irreducible quotient $\pi_1$.

	\item Let $\lambda_0$ denote the initial exponent of $\pi$ and fix an anti-dominant exponent $\lambda_{a.d.}$ of $\pi$.
	
	\item Using the method of branching rules with respect to the exponents $\lambda_0$ and $\lambda_{a.d.}$ respectively, one constructs two functions $f_1, f_2\in\mathcal{S}$ such that $f_0\leq f_{\pi_0}$ and $f_1\leq f_{\pi_1}$.
	
	\item One checks that
	\[
	supp\bk{f_0}\cap supp\bk{f_1} =\emptyset, \qquad \sum_{\lambda}\bk{f_0\bk{\lambda} + f_1\bk{\lambda}} = \Card{W^{M,T}} = \dim_\C\bk{r_T^G\pi} .
	\]
	
	\item It follows that $\pi$ has no non-zero irreducible subquotients other than $\pi_0$ and $\pi_1$.
		
\end{itemize}

\begin{Rem}
	It was implicitly assumed in the above argument that $\lambda_{a.d.}\leq r_T^G\pi_1$.
	This follows easily from a standard ``central character argument'', see \cite[Lemma. 3.12 and pg. 22]{SDPS_E6} for more details.
\end{Rem}

\begin{Rem}
	We point out that under the above assumptions, the condition
	\[
	f_{\pi}\bk{w\cdot\lambda_0} = f_{\pi}\bk{w\cdot\lambda_0} \quad \forall w\in W^{shortest}_{\bk{P_i,s_0,\widetilde{\chi}}}
	\]
	also follows by the following argument:
	Since all exponent in $r_T^G\pi_0$ are $A_1$-equivalent to $\lambda_0$ and hence appear with multiplicity $1$ in $r_T^G\pi$, hence they do not appear in $r_T^G\pi_1$.
	On the other hand, $M_w\bk{\lambda_{s,\chi}^{P}} f_s\bk{g}$ is holomorphic at $s_0$ for any $w\in W^{M_i,T}$ such that $w\cdot\lambda_0 \leq r_T^G\pi_0$.
	The claim thus follows.
\end{Rem}

In what follows for a representation $\sigma$ of $G$, we denote 
\[
d\bk{\sigma} = \dim_\C\bk{\jac{G}{T}{\sigma}}.
\]

\subsection{$E_6$}

Assume that $G$ is of type $E_6$.
In some cases, $[i,s_0,k]$, it is possible to prove that $\pi=\DPS{P_i}\bk{s_0,\chi}$ is of length $2$ via the method describe in the beginning of \Cref{Appendix:Local_data_for_global_NSI_resisudes}.
These cases are listed in the following table:

\begin{center}
	\begin{longtable}[h]{|c|c|c|c|}
		\hline
		$[i,s_0,k]$ & $d\bk{\pi}$ &  $d\bk{\pi_1}$ & $d\bk{\pi_0}$ \\ \hline \hline
		$\bk{4,1,1}$ & 720 & 216 & 504 \\ \hline
		$\bk{4,1,2}$ & 720 & 216 & 504 \\ \hline
		$\bk{4,\frac12,2}$ & 720 & 540 & 180 \\ \hline
	\end{longtable}
	\label{Table:E6_local_length_2_NSI}
\end{center}

We now turn to the remaining case, $\coset{4,\frac12,1}$.
We claim that the representation $\pi=\DPS{P_4}\bk{\frac{1}{2}}_v$ is of length $3$.
Its composition series is given as follows:
\begin{itemize}
	
	\item $\pi$ admits a unique irreducible subrepresentation $\pi_0$.
	It is the unique subquotient of $\pi$ such that
	\[
	\lambda_0=\esixchar{-1}{-1}{-1}{3}{-1}{-1} \leq \jac{G}{T}{\pi_0}.
	\]
	It satisfies
	\[
	\dim_\C\bk{\jac{G}{T}{\pi_1}} = 10.
	\]
	\item $\pi\rmod\pi_0$ admits a unique irreducible subrepresentation $\sigma_1$.
	It is the unique irreducible subrepresentation of $\pi$ such that
	\[
	\lambda_1=\esixchar{-1}{3}{0}{-1}{0}{-1} \leq \jac{G}{T}{\sigma_1}.
	\]
	It satisfies
	\[
	\dim_\C\bk{\jac{G}{T}{\pi_2}} = 135.
	\]
	
	\item The quotient $\pi_1=\pi\rmod\bk{\pi_0+\sigma_1}$ is irreducible.
	It is the unique irreducible subrepresentation of $\pi$ such that
	\[
	\lambda_{a.d.}=\esixchar{0}{0}{0}{-1}{0}{0} \leq \jac{G}{T}{\pi_1}.
	\] 
	It satisfies
	\[
	\dim_\C\bk{\jac{G}{T}{\pi_1}} = 575.
	\]

\end{itemize}

We postpone the proof of this decomposition and first study the images of the intertwining operators $N_w\bk{\frac{1}{2}}\bk{\DPS{P_4}\bk{\frac{1}{2}}}$ for $w\in W^{shortest}_{\bk{P_i,s_0,\widetilde{\chi}}}$ (used in the calculation of the residue in the global case $\coset{4,1/2,2}$) and $w\in W^{shortest}_{\bk{P_i,s_0,\Id}}$ (used in the calculation of the residue in the global case $\coset{4,1/2,1}$).

For $w\in W^{shortest}_{\bk{P_i,s_0,\widetilde{\chi}}}$ (as described in the previous item), one checks that $w\cdot\lambda_0$ appears only in $r_T^G\pi_1$ and does not appear in $r_T^G\pi_0$ and $r_T^G\sigma_1$.

We now consider the elements of $W^{shortest}_{\bk{P_i,s_0,\widetilde{\Id}}}$ given by the following list
\[
\begin{array}{l}
	w_{[3, 2, 4, 5, 3, 2, 4]}, \\
	w_{[3, 4, 5, 4, 3, 2, 4]}, \\
	w_{[4, 1, 3, 2, 4]}, \\
	w_{[2, 4, 5, 4, 3, 2, 4]}, \\
	w_{[5, 6, 4, 5, 4, 3, 2, 4]}, \\
	w_{[6, 4, 5, 3, 4]}, \\
	w_{[6, 5, 1, 3, 4]}, \\
	w_{[6, 4, 5, 2, 4]}, \\
	w_{[4, 5, 1, 3, 4]}, \\
	w_{[1, 3, 4, 5, 1, 3, 2, 4]} .
\end{array}
\]

We separate this into two cases:
\begin{itemize}
	\item 
	If $w$ is one of $w_{[3, 2, 4, 5, 3, 2, 4]}$, $w_{[3, 4, 5, 4, 3, 2, 4]}$, $w_{[2, 4, 5, 4, 3, 2, 4]}$, $w_{[5, 6, 4, 5, 4, 3, 2, 4]}$ and $w_{[1, 3, 4, 5, 1, 3, 2, 4]}$, the equivalence class $\kappa_{\coset{w}}$ is a singleton and one checks that $w\cdot\lambda_0$ appears only in  $r_T^G\pi_1$ and does not appear in $r_T^G\pi_0$ and $r_T^G\sigma_1$.

	\item
	If $w$ is one of $w_{[4, 1, 3, 2, 4]}$, $w_{[6, 4, 5, 3, 4]}$, $w_{[6, 5, 1, 3, 4]}$, $w_{[6, 4, 5, 2, 4]}$ and $w_{[4, 5, 1, 3, 4]}$, then $\kappa_{\coset{w}}$ consists of two elements.
	One checks that the order of $\kappa_{\coset{w}}$ is lower than the order of the pole and hence does not participate in the constant term of the residue.
	However, there are only three equivalence classes $\coset{w'}$ such that $w'$ can be written as a reduced expression $w'=u\cdot w$ and this cannot be done with respect to the elements in the previous item.
	These equivalence classes are 
	\[
	\begin{array}{l}
		\set{w_{[4, 3, 1, 5, 4, 2, 3, 6, 5, 4]}, w_{[4, 2, 3, 4, 5, 4, 2, 3, 1, 4, 3, 6, 5, 4]}} \\
		\set{w_{[2, 4, 3, 1, 5, 4, 2, 3, 6, 5, 4]}, w_{[2, 4, 2, 3, 4, 5, 4, 2, 3, 1, 4, 3, 6, 5, 4]}} \\
		\set{w_{[2, 4, 2, 3, 1, 5, 4, 2, 3, 6, 5, 4]}, w_{[2, 4, 3, 5, 4, 2, 3, 1, 4, 3, 6, 5, 4]}, w_{[2, 3, 4, 2, 3, 1, 5, 4, 2, 3, 6, 5, 4]}, w_{[2, 3, 4, 3, 5, 4, 2, 3, 1, 4, 3, 6, 5, 4]}}
	\end{array}
	\]
	For each of these $w'$s, one checks that $w'\cdot\lambda_0$ appears only in  $r_T^G\pi_1$ and does not appear in $r_T^G\pi_0$ and $r_T^G\sigma_1$.
\end{itemize}
We conclude that $N_w\bk{\frac{1}{2}}\bk{\DPS{P_4}\bk{\frac{1}{2}}}=\pi_1$ for any $w$ such that $\kappa_{\coset{w}}$ admits a pole of order $2$ at $s_0=\frac{1}{2}$.

\mbox{}

We now turn to the proof that this is indeed the structure of $\pi$.
First of all, from \cite{SDPS_E6}, we know that $\pi$ is reducible with a unique irreducible subrepresentation and a unique irreducible quotient.
Hence, its length is at least $2$.

From a branching rule calculation, it follows that $\pi$ contains at most $4$ irreducible subquotients.
Namely,
\begin{itemize}
	\item Let $\pi_0$ denote the unique irreducible subquotient of $\pi$ such that
	\[
	\lambda_0=\esixchar{-1}{-1}{-1}{3}{-1}{-1} \leq \jac{G}{T}{\pi_0}.
	\]
	
	\item Let $\pi_1$ denote the unique irreducible subquotient of $\pi$ such that
	\[
	\lambda_{a.d.}=\esixchar{0}{0}{0}{-1}{0}{0} \leq \jac{G}{T}{\pi_1}.
	\]
	
	\item Let $\sigma_1$ denote the unique irreducible subquotient of $\pi$ such that
	\[
	\lambda_1=\esixchar{-1}{3}{0}{-1}{0}{-1} \leq \jac{G}{T}{\sigma_1}.
	\]
	
	\item Let $\sigma_2$ denote the unique irreducible subquotient of $\pi$ such that
	\[
	\lambda_2=\esixchar{1}{-1}{0}{0}{-1}{0} \leq \jac{G}{T}{\sigma_2}.
	\]
\end{itemize}

From a branching rule calculation, it follows that:
\[
\begin{array}{l}
	\dim_\C\bk{r_T^G\pi_0} \geq 10 \\
	\dim_\C\bk{r_T^G\sigma_1} \geq 135 \\
	\dim_\C\bk{r_T^G\sigma_2} \geq 335 \\
	\dim_\C\bk{r_T^G\pi_1} \geq 240 .
\end{array}
\]

Furthermore, it follows that $\pi_0$ is the unique irreducible subrepresentation of $\pi$ and $\pi_1$ is the unique irreducible quotient.
It remains to check whether some of these irreducible representations are, in fact, equal.

In order to determine the actual length of $\pi$, we study $\dim_\C\bk{\jac{G}{T}{N_w\bk{\frac{1}{2}}\bk{\pi}}}$ for certain $w\in W^{M,T}$.
In particular, consider
\[
w\in\set{
	w_{[1,2,3,4]}, 
	w_{[1,4,2,3,4]},
	w_{[4, 3, 2, 4, 5, 6, 4, 1, 3, 2, 4, 5, 4, 1, 3, 2, 4]}} .
\]
We indicate the in which of the $r_T^G\tau$, with $\tau\in\set{\pi_0,\pi_1,\sigma_1,\sigma_2}$, does $w\cdot\lambda_0$ appears:
\[
\begin{array}{l}
	w_{[1,2,3,4]}\cdot \lambda_0 = \esixchar{-1}{-2}{-1}{1}{2}{-1} \leq r_T^G \sigma_1 \\
	w_{[1,4,2,3,4]}\cdot \lambda_0 = \esixchar{-1}{-1}{0}{-1}{3}{-1} \leq r_T^G\sigma_1	\\
	w_{[1,2,4,5,4,2,3,4]}\cdot \lambda_0 = \esixchar{-1}{-1}{2}{-1}{-1}{2} \leq r_T^G\sigma_1,r_T^G\sigma_2 \\
	w_{[4, 3, 2, 4, 5, 6, 4, 1, 3, 2, 4, 5, 4, 1, 3, 2, 4]}\cdot\lambda_0 = \esixchar{0}{0}{0}{-1}{0}{0} \leq r_T^G\pi_1 .
\end{array}
\]
We list bellow what can be learned from the value of $\dim_\C\bk{\jac{G}{T}{N_w\bk{\frac{1}{2}}\bk{\pi}}}$ for the above elements $w$.
For more details regarding the calculations of such dimensions, the reader is encouraged to read \cite[Appendix A]{SDPS_E7}.
\begin{itemize}
	\item For $w=w_{[1,2,3,4]}$, $\dim_\C\bk{\jac{G}{T}{N_w\bk{\frac{1}{2}}\bk{\pi}}}=710$.
	Hence, $\dim_\C\bk{r_T^G\pi_0}=10$ and hence $\pi_0$ is disjoint from $\pi_1$, $\sigma_1$ and $\sigma_2$.
	
	\item For $w=w_{[1,4,2,3,4]}$, $\dim_\C\bk{\jac{G}{T}{N_w\bk{\frac{1}{2}}\bk{\pi}}}=710$.
	
	\item For $w=w_{[1,2,4,5,4,2,3,4]}$, $\dim_\C\bk{\jac{G}{T}{N_w\bk{\frac{1}{2}}\bk{\pi}}}=575$.
	Hence, $\dim_\C\bk{r_T^G\sigma_1}=135$ and $\sigma_1$ is disjoint from $\pi_0$, $\pi_1$ and $\sigma_2$.
	
	\item For $w=w_{[4, 3, 2, 4, 5, 6, 4, 1, 3, 2, 4, 5, 4, 1, 3, 2, 4]}$, $\dim_\C\bk{\jac{G}{T}{N_w\bk{\frac{1}{2}}\bk{\pi}}}=575$.
	Hence, $\dim_\C\bk{r_T^G\pi_1}=575$ and it turns out that $\pi_1=\sigma_2$.
\end{itemize}

We conclude that $\pi$ is of length $3$ with a Jordan-\"Holder series of the form
\[
0\subset \pi_0 \subset \tau \subset \pi,\quad \tau\rmod\pi_0=\sigma_1,\quad \pi\rmod\tau=\pi_1 .
\]

\subsection{$E_7$}

Assume that $G$ is of type $E_7$.
In the following cases, it is possible to prove that $\pi=\DPS{P_i}\bk{s_0,\chi}$ is of length $2$ via the method describe in the beginning of \Cref{Appendix:Local_data_for_global_NSI_resisudes}.
These cases are listed in the following table:

\begin{center}
	\begin{longtable}[h]{|c|c|c|c|}
		\hline
		$[i,s_0,k]$ & $d\bk{\pi}$ &  $d\bk{\pi_1}$ & $d\bk{\pi_0}$ \\ \hline \hline
		$\bk{2,2,1}$ & 576 & 232 & 344 \\ \hline \hline
		$\bk{4,\frac12,1}$ & 10,080 & 7,308 & 2,772 \\ \hline
		$\bk{4,\frac12,2}$ & 10,080 & 7,308 & 2,772 \\ \hline
		$\bk{4,\frac23,1}$ & 10,080 & 4,032 & 6,048 \\ \hline
		$\bk{4,\frac23,2}$ & 10,080 & 4,032 & 6,048 \\ \hline
		$\bk{4,\frac32,1}$ & 10,080 & 756 & 9,324 \\ \hline
		$\bk{4,\frac32,2}$ & 10,080 & 756 & 9,324 \\ \hline \hline
		$\bk{5,1,2}$ & 4,032 & 2,016 & 2,016 \\ \hline
		$\bk{5,\frac32,1}$ & 4,032 & 576 & 3,456 \\ \hline
		$\bk{5,\frac32,2}$ & 4,032 & 576 & 3,456 \\ \hline
	\end{longtable}
	\label{Table:E7_local_length_2_NSI}
\end{center}

We now turn to the remaining case, $\bk{5,1,1}$.
This case is of interest here in the context of calculating $\ResRep{E_7}{5}{1}{2}$.
Here, we first note that for a global quadratic character $\chi$, it holds that
\[
W^{shortest}_{\bk{P_5,1,\chi}} = \set{w_{[1, 3, 4, 5, 6, 2, 4, 5, 3, 4, 1, 3, 2, 4, 5]}}
\]
That is, for every $\coset{w}\in W^{M_5,T}_{1,\chi,res.}$ and every $w'\in\coset{w}$, we may write $w'=u\cdot w_0$, where
\[
w_0=w_{[1, 3, 4, 5, 6, 2, 4, 5, 3, 4, 1, 3, 2, 4, 5]} .
\]
In a place where $\chi_v$ is quadratic, $N_w\bk{\frac{1}{2}}\bk{\pi}=\pi_1$ and  $\dim_\C\bk{\jac{G}{T}{N_w\bk{\frac{1}{2}}\bk{\pi}}} = 2,016$.
Thus, we have that in the case $\chi_v$ is trivial, $\tau=N_w\bk{\frac{1}{2}}\bk{\pi}$ is a quotient of $\pi$ such that $\pi_1$ is the unique irreducible quotient of $\tau$ and $\dim_\C\bk{\jac{G}{T}{\tau}} = 2,016$.

On the other hand, $\tau$ is reducible.
This follows from the fact that $\pi_1$ is also the unique irreducible quotient of the reducible degenerate principal series $\sigma=\DPS{P_3}\bk{\frac12,\Id}$ (see \Cref{Chap:Siegel_Weil}).
Since $\Card{W^{M_3,T}}=2,016$, it follows that $\dim_\C\bk{\jac{G}{T}{\pi_1}} < 2,016$.

However, $\tau$ admits a unique irreducible subrepresentation.
To see this, denote
\[
\begin{array}{l}
	\lambda_0 = \esevenchar{-1}{-1}{-1}{-1}{5}{-1}{-1} \\
	\lambda_1=w_0\cdot\lambda_0 = \esevenchar{-1}{4}{-1}{-1}{-1}{-1}{4} .
\end{array}
\]
By the geometric lemma, it holds that
\[
mult\bk{\lambda_1,r_T^G\pi} = mult\bk{\lambda_1,r_T^G\tau} = 1
\]
and thus $\tau$ admits a unique irreducible subrepresentation $\tau_0$.
Since, by the geometric lemma,
\[
mult\bk{\lambda_1,r_T^G\sigma} = 0,
\]
it follows that $\tau_0$ is not a subquotient of $\sigma$ and, in particular, $\coset{\tau}\neq \coset{\sigma}$.

We were unable to completely determine the precise structure of $\tau$ but our calculations indicate that it is of length $2$ with a unique irreducible subrepresentation $\tau_0$ and a unique irreducible quotient $\pi_1$.

\subsection{$E_8$}

Assume that $G$ is of type $E_8$.
In the following cases, it is possible to prove that $\pi=\DPS{P_i}\bk{s_0,\chi}$ is of length $2$ via the method describe in the beginning of \Cref{Appendix:Local_data_for_global_NSI_resisudes}.
These cases are listed in the following table:

\begin{center}
	\begin{longtable}[h]{|c|c|c|c|}
		\hline
		$[i,s_0,k]$ & $d\bk{\pi}$ &  $d\bk{\pi_1}$ & $d\bk{\pi_0}$ \\ \hline \endhead \hline
		$\bk{3,\frac{7}{6},1}$ & 69,120 & 17,280 & 51,840 \\ \hline
		$\bk{3,\frac{7}{6},3}$ & 69,120 & 17,280 & 51,840 \\ \hline
		$\bk{3,2,1}$ & 69,120 & 6,720 & 62,400 \\ \hline
		$\bk{3,2,2}$ & 69,120 & 6,720 & 62,400 \\ \hline \hline
		$\bk{4,\frac{3}{10},1}$ & 483,840 & 241,920 & 241,920 \\ \hline
		$\bk{4,\frac{3}{10},5}$ & 483,840 & 241,920 & 241,920 \\ \hline
		$\bk{4,\frac{1}{2},4}$ & 483,840 & 241,920 & 241,920 \\ \hline
		$\bk{4,\frac{3}{4},1}$ & 483,840 & 69,120 & 414,720 \\ \hline
		$\bk{4,\frac{3}{4},2}$ & 483,840 & 69,120 & 414,720 \\ \hline
		$\bk{4,\frac{3}{4},4}$ & 483,840 & 69,120 & 414,720 \\ \hline
		$\bk{4,\frac{7}{6},1}$ & 483,840 & 60,480 & 423,360 \\ \hline
		$\bk{4,\frac{7}{6},3}$ & 483,840 & 60,480 & 423,360 \\ \hline
		$\bk{4,2, 1}$ & 483,840 & 6,720 & 477,120 \\ \hline
		$\bk{4,2, 2}$ & 483,840 & 6,720 & 477,120 \\ \hline \hline
		$\bk{5,\frac{1}{2},3}$ & 241,920 & 161,280 & 80,640 \\ \hline
		$\bk{5,\frac{5}{6},1}$ & 241,920 & 69,120 & 172,800 \\ \hline
		$\bk{5,\frac{5}{6},3}$ & 241,920 & 69,120 & 172,800 \\ \hline
		$\bk{5,\frac{7}{6},1}$ & 241,920 & 17,280 & 224,640 \\ \hline
		$\bk{5,\frac{7}{6},3}$ & 241,920 & 17,280 & 224,640 \\ \hline
		$\bk{5,2,1}$ & 241,920 & 2,160 & 239,760 \\ \hline
		$\bk{5,2,2}$ & 241,920 & 2,160 & 239,760 \\ \hline \hline
		$\bk{6,2, 2}$ & 60,480 & 15,120 & 45,360 \\ \hline
		$\bk{6,\frac{5}{2},1}$ & 60,480 & 2,160 & 58,320 \\ \hline
		$\bk{6,\frac{5}{2},2}$ & 60,480 & 2,160 & 58,320 \\ \hline
	\end{longtable}
	\label{Table:E8_local_length_2_NSI}
	
\end{center}

For the following cases, we were unable to show that the length of $\pi$ is $2$ using this method:
\[
\begin{split}
	&\bk{3,\frac12,1}, \bk{3,1,1}, \bk{3,1,2}, \bk{3,\frac32,1}, \bk{3,\frac32,2},
	\bk{4,\frac12,1}, \bk{4,\frac12,2}, \bk{4,\frac56,1}, \bk{4,\frac56,3}, \\
	&\bk{4,1,1}, \bk{4,1,2}, \bk{5,\frac12,3}, \bk{5,1,1}, \bk{6,\frac12,1}, \bk{6,\frac12,2}, \bk{6,2,1} .
\end{split}
\]



\backmatter
\bibliographystyle{amsalpha}
\bibliography{bib}
\printindex


\end{document}